\documentclass{amsart}

\usepackage[breaklinks,bookmarks=false]{hyperref}
\hypersetup{colorlinks,linkcolor=blue,citecolor=blue,urlcolor=blue,plainpages=false}

\usepackage[foot]{amsaddr}

\usepackage{cancel}
\usepackage{amsmath}
\usepackage{amsthm}
\usepackage{mathrsfs}
\usepackage{stmaryrd}
\usepackage{tikz}
\usepackage{pgfplots}
\newcommand{\SlopeTriangle}[6]
{

    \pgfplotsextra
    {
        \pgfkeysgetvalue{/pgfplots/xmin}{\xmin}
        \pgfkeysgetvalue{/pgfplots/xmax}{\xmax}
        \pgfkeysgetvalue{/pgfplots/ymin}{\ymin}
        \pgfkeysgetvalue{/pgfplots/ymax}{\ymax}

        \pgfmathsetmacro{\xArel}{#1}
        \pgfmathsetmacro{\yArel}{#3}
        \pgfmathsetmacro{\xBrel}{#1-#2}
        \pgfmathsetmacro{\yBrel}{\yArel}
        \pgfmathsetmacro{\xCrel}{\xArel}

        \pgfmathsetmacro{\lnxB}{\xmin*(1-(#1-#2))+\xmax*(#1-#2)} 
        \pgfmathsetmacro{\lnxA}{\xmin*(1-#1)+\xmax*#1} 
        \pgfmathsetmacro{\lnyA}{\ymin*(1-#3)+\ymax*#3} 
        \pgfmathsetmacro{\lnyC}{\lnyA+#4*(\lnxA-\lnxB)}
        \pgfmathsetmacro{\yCrel}{\lnyC-\ymin)/(\ymax-\ymin)} 

        \coordinate (A) at (rel axis cs:\xArel,\yArel);
        \coordinate (B) at (rel axis cs:\xBrel,\yBrel);
        \coordinate (C) at (rel axis cs:\xCrel,\yCrel);

        \draw[#6]   (A)-- node[anchor=north] {#5}
                    (B)--
                    (C)--
                    cycle;
    }
}

\usepackage[backend = biber]{biblatex}
\usepackage{a4wide}

\usepackage{amssymb}
\usepackage{mathrsfs}

\newcommand{\eq}{:=}

\newcommand{\grad}{\boldsymbol \nabla}
\renewcommand{\div}{\grad \cdot}

\newcommand{\BA}{\boldsymbol A}
\newcommand{\BB}{\boldsymbol B}

\newcommand{\BI}{\boldsymbol I}

\newcommand{\be}{\boldsymbol e}

\newcommand{\bm}{\boldsymbol m}
\newcommand{\bn}{\boldsymbol n}

\newcommand{\bq}{\boldsymbol q}
\newcommand{\br}{\boldsymbol r}
\newcommand{\bs}{\boldsymbol s}

\newcommand{\bx}{\boldsymbol x}
\newcommand{\by}{\boldsymbol y}
\newcommand{\bz}{\boldsymbol z}

\newcommand{\LH}{\mathscr H}

\newcommand{\N}{\mathbb N}
\newcommand{\Z}{\mathbb Z}
\newcommand{\R}{\mathbb R}
\newcommand{\C}{\mathbb C}

\newcommand{\bzero}{\boldsymbol 0}

\newcommand{\bxx}{\boldsymbol{ x }}
\newcommand{\bxi}{\boldsymbol{\xi}}

\newcommand{\xxhm}{\bxx^{\hbar,\bm}}
\newcommand{\xihn}{\bxi^{\hbar,\bn}}

\newcommand{\dist}{\operatorname{dist}}

\newcommand{\CIB}{C^\infty_{\rm b}}

\newcommand{\gs}[3]{\Psi_{#1,#2,#3}}

\newcommand{\gshmn}{\gs{\hbar}{\bm}{\bn}}

\newcommand{\xxkm}{\bx^{k,\bm}}

\newcommand{\xikn}{\bxi^{k,\bn}}

\newcommand{\balpha}{\boldsymbol{\alpha}}
\newcommand{\bbeta}{\boldsymbol{\beta}}

\newcommand{\HH}{\widehat{H}}

\DeclareMathOperator{\Ker}{Ker}

\newcommand{\wit}{\widetilde}
\newcommand{\h}{{\hbar}}
\newcommand{\Inv}{{\mathcal{I}_\h^0}}
\newcommand{\cD}{{\mathcal{D}}}
\newcommand{\cR}{{\mathcal{R}}}
\newcommand{\Vect}{{\mathrm{Vect}}}

\newcommand{\cP}{\mathcal{P}}

\newcommand{\card}{{\mathrm{Card}}}
\newcommand{\alphavec}{{\boldsymbol{\alpha}}}
\newcommand{\betavec}{{\boldsymbol{\beta}}}

\newcommand{\epsilonvec}{{\boldsymbol{\epsilon}}}

\newcommand{\gammavec}{{\boldsymbol{\gamma}}}
\newcommand{\mvec}{{\boldsymbol{m}}}
\newcommand{\nvec}{{\boldsymbol{n}}}
\newcommand{\qvec}{{\boldsymbol{q}}}
\newcommand{\rvec}{{\boldsymbol{r}}}
\newcommand{\svec}{{\boldsymbol{s}}}
\newcommand{\xvec}{{\boldsymbol{x}}}
\newcommand{\xivec}{{\boldsymbol{\xi}}}
\newcommand{\zerovec}{{\boldsymbol{0}}}
\newcommand{\B}{\mathcal{B}}

\newcommand{\eg}[2][\hbar]{\Psi_{#1, #2}}
\newcommand{\w}[2][\hbar]{\Phi_{#1, #2}}
\newcommand{\xm}[1]{\xvec^{\hbar, #1}}
\newcommand{\xin}[1]{\xivec^{\hbar, #1}}

\newcommand{\vA}{v_{\hbar, 0, 3}}
\newcommand{\vB}{v_{\hbar, 4}}
\newcommand{\vC}{v_{\hbar, 5, J+1}}

\newcommand{\wzero}{w^0_\hbar}

\newcommand{\Csym}{C_{\rm sym}}

\newtheorem{theorem}{Theorem}
\newtheorem{lemma}[theorem]{Lemma}
\newtheorem{corollary}[theorem]{Corollary}
\newtheorem{proposition}[theorem]{Proposition}
\newtheorem{remark}[theorem]{Remark}

\newtheorem{assumption}{Assumption}
\numberwithin{equation}{section}
\numberwithin{theorem}{section}

\newcommand{\cO}{\mathcal{O}}

\begin{document}

\title[A Galerkin method for scattering problems using Wilson bases]%
{An efficient Galerkin method for high-frequency scattering problems using Wilson bases}
\author{T. Chaumont-Frelet$^\sharp$}
\author{M. Ingremeau$^\dagger$}
\author{F. Proust$^\ddagger$}

\begin{abstract}
We propose a new Galerkin discretization scheme for wave scattering problems that is
based on microlocalised basis functions. We show that the proposed method can be
made uniformly accurate for large wavenumbers $k$ with a number of degrees of freedom
only scaling as $k^{d-1/2}$, while leading to an essentially sparse linear system.
In contrast, finite element methods are known to require a number of degrees of
freedom scaling at least as $k^d$ to achieve the same property. A similar method
based on a Gabor frame was previously introduced by two of the authors, but it was
suffering from severe conditioning issues. In the present work, by replacing the
Gabor frame by a Wilson basis, we completely alleviate this problem. We rigorously
establish error estimates and condition number bounds for the proposed method, and
we provide one-dimensional numerical examples illustrating our theoretical findings.
\end{abstract}

\address{\vspace{-.5cm}}
\address{\noindent \tiny \textup{$^\sharp$ Inria, Univ. Lille, CNRS, UMR 8524 -- Laboratoire Paul Painlev\'e}}
\address{\noindent \tiny \textup{$^\dagger$ Univ. Grenoble Alpes, CNRS, IF}}
\address{\noindent \tiny \textup{$^\ddagger$ Universit\'e C\^ote d'Azur, Inria, CNRS, LJAD}}

\maketitle
\thispagestyle{empty}

\section{Introduction}

The numerical simulation of wave propagation phenomena is of paramount
importance in modern technological developments crucial 
to our society. However, designing approximation
methods that are at the same time robust, efficient and accurate remains a
challenging open problem when the size of the domain of interest is large
compared to the wavelength. This situation is often known as the ``high-frequency
regime''.

In this work, we focus on the following model scattering problem
in dimension $d \geq 1$. Given a wavenumber $k> 0$ and an incident
field $u^{\rm inc}: \R^d \to \C$, find the (unique) scattered field
$u^{\rm sca}: \R^d \to \C$ satisfying
\begin{equation}
\label{eq_helmholtz_intro}
\left \{
\begin{array}{rcll}
-k^2 \nu u^{\rm tot} - \div(\BB\nabla u^{\rm tot}) &=& 0 & \text{ in } \R^d
\\
\partial_r u^{\rm sca} -iku^{\rm sca} &\to& 0 & \text{ as } r \to \infty
\end{array}
\right .
\end{equation}
where $u^{\rm tot} \eq u^{\rm sca} + u^{\rm inc}$ is the total field.
In~\eqref{eq_helmholtz_intro}, $\nu: \Omega \to \R$ and $\BB: \Omega \to \mathcal{S}_d(\R)$
are smooth coefficients which describe the heterogeneity causing
the scattering of the incident field. These coefficients are required
to coincide with $1$ and the identity matrix $\BI$ outside a ball $B_R$
of radius $R > 0$. Here, $\partial_r$ denotes the radial derivative, and the
second equation in~\eqref{eq_helmholtz_intro} is known as the Sommerfeld
radiation condition.

To approximate~\eqref{eq_helmholtz_intro}, ``full-wave'' methods,
(such as finite element, finite difference, or boundary element
techniques) directly compute an approximation to~\eqref{eq_helmholtz_intro}.
These methods can be made arbitrarily accurate for any frequency
$k > 0$ if the number of degrees of freedom $N$ employed to approximate
$u^{\rm sca}$ is sufficiently increased. However, as $k$ increases,
they quickly become computationally infeasible, due to the
necessary increase of $N$. Specifically, to remain uniformly accurate,
finite element and finite difference methods require to solve a sparse
linear system with size $N$ growing (at least) as $(kR)^d$,
see~\cite{bernkopf2025wavenumber,chaumontfrelet_nicaise_2019a,melenk_sauter_2010a}. On the other
hand, boundary elements only need $N$ to grow as $(kR)^{d-1}$, but they
have the disadvantage of producing a dense linear system with entries that
might be challenging to compute, see~\cite{galkowski2023does,lohndorf2011wavenumber}.
Another important class of techniques includes
``Trefftz'' methods which employ basis functions satisfying the homogeneous
PDE such as plane waves, see e.g.~\cite{farhat2001discontinuous,hiptmair_moiola_perugia_2016a}.
They are often considered more efficient than finite element methods for high-frequency problems.
However, the frequency-explicit convergence analysis of these methods is
rarely dealt with in the literature. Besides, the references actually
proposing a frequency-explicit convergence analysis tend to show that
there is no qualitative theoretical improvement over finite element
methods, despite the quantitatively better behaviour observed in practice,
see e.g.~\cite{amara2009convergence,chaumontfrelet_valentin_2020a,hiptmair2011plane}.

On the other hand, ``asymptotic'' methods do not immediately 
tackle~\eqref{eq_helmholtz_intro}, but rather, numerically
approximate a simpler PDE (typically, an eikonal or kinetic
equation) which is a formal limit of~\eqref{eq_helmholtz_intro}
as $k \to \infty$, see e.g.~\cite{engquist_runborg_2003a}. These methods typically
exhibit a computational cost independent of $k$, since the limit
problem is itself frequency-independent. However, they can only reach
a fixed accuracy for any fixed value of $k$, and the validity of
the asymptotic model is typically only guaranteed under restrictive
theoretical assumptions which are hard to check in practice.

Recently, in~\cite{CFDI}, two of the authors introduced a
new full-wave method that
approximates~\eqref{eq_helmholtz_intro}, but using a discrete
space built with asymptotic properties of the solution in mind.
This method is based on Gabor wavelets and produces an essentially
sparse matrix,
but with only about $(kR)^{d-1/2}$ degrees of freedom.
The crucial ingredient is to employ shape functions of the form
\begin{equation}
\label{eq_intro_gabor}
\Psi_{k,\bm,\bn}(\bx)
\eq
\prod_{j=1}^d \Psi_{k,m_j,n_j}(x_j)
\eq
\left (
\frac{k}{\pi}
\right )^{d/4}
e^{-\frac{k}{2}|\bx-\bx^{k,\bm}|^2}
e^{ik \bxi^{k,\bn} \cdot (\bx-\bx^{k,\bm})},
\qquad
\bm,\bn \in \Z^d
\end{equation}
with $\bx^{k,\bm} \eq \sqrt{\pi/k} \bm$ and $\bxi^{k,\bn} \eq \sqrt{\pi/k} \bn$,
to build the discretisation space. These functions, called \emph{Gaussian coherent states},
have microlocalisation properties, meaning that both $\Psi_{k,\bm,\bn}$ and its
Fourier transform are localised, respectively around $\bx^{k,\bm}$ and $\bxi^{k,\bn}$.
This localisation aspect enables us to cherry-pick which wavelets are introduced
in the discretisation space, thereby only selecting functions that oscillate
around the natural frequency for solutions to~\eqref{eq_helmholtz_intro}.
Specifically, we only retain indices $\bm,\bn \in \Z^d$ for which
$\BB(\bx^{k,\bm}) \bxi^{k,\bn} \cdot \bxi^{k,\bn}-\nu(\bx^{k,\bm})$
is ``small''. This selection can be done inexpensively in a preprocessing
step before assembling the discretisation matrix.
It is of interest to note that contrary to Trefftz methods which employ
basis functions sitting in the kernel of the differential operator,
the proposed method samples the phase space ``around'' the kernel.

The method initially proposed in~\cite{CFDI} has, however, two major shortcomings
that the present work addresses. First, the family of Gaussian coherent states
$(\Psi_{k,\bm,\bn})_{[\bm,\bn] \in \Z^{2d}}$ is a \emph{frame} for $L^2(\R^d)$, but it is not a \emph{basis}. Such a construction is called
a \emph{Gabor frame}~\cite{Gro}, and from a numerical viewpoint,
it means that discretisations directly built from Gaussian coherent states
are intrinsically ill-conditioned (see in particular~\cite[Theorem 8.4.1]{Gro}).
Second, as far as theoretical error
estimates are concerned, we have only been able to employ the Gabor frame
within a least-squares weak formulation of~\eqref{eq_helmholtz_intro}, but
not in the standard Galerkin weak formulation, which further exacerbates the
conditioning issues in practice. In this work, we remove these two limitations
by employing a \emph{Wilson basis} rather than a Gabor frame.

In a Wilson basis, the Gabor functions are only employed as pairs, meaning
that the discrete space consists of functions
\begin{equation}
\label{eq_intro_wilson}
\Phi_{k,\bm,\bn}(\bx)
\eq
\prod_{j=1}^d(\Psi_{k,\bm_j,\bn_j} + (-1)^{\bm_j+\bn_j} \Psi_{k,\bm_j,-\bn_j})(x_j),
\end{equation}
for $\bm,\bn \in \Z^d$ with $\bn \geq 0$ componentwise. (Note that the definition and
index set we will take in the rest of the paper will slightly differ from~\eqref{eq_intro_wilson}:
see section \ref{subsec:Wilson}.)
The slight drawback of~\eqref{eq_intro_wilson} as compared to~\eqref{eq_intro_gabor}
is that the Fourier transform of $\Phi_{k,\bm,\bn}$ is localised around $2^d$ points
rather than one. However, this does not affect the discretisation method when the
PDE under consideration has a symmetric weak formulation
(as in~\eqref{eq_helmholtz_intro}), and only has a weak impact otherwise.
On the other hand, the crucial advantage of~\eqref{eq_intro_wilson}
is that the corresponding family of functions, called a Wilson basis, is a Riesz basis for
$L^2(\R^d)$. This fact completely removes the ill-conditioning issue associated with the
Gabor frame, and allows us to prove error estimates for the standard Galerkin
weak formulation.

Since we are proposing a ``volumic'' method, we do not immediately
approximate~\eqref{eq_helmholtz_intro}, but rather, its standard
reformulation using a perfectly matched layer (PML). This approach
is also known as ``complex scaling'', and is already routinely
employed within finite element and finite difference methods,
see e.g.~\cite{berenger_1994,galkowski2021perfectly,zworski2012semiclassical,}.
We therefore look for $u \in H^1(\R^d)$ such that
\begin{equation}
\label{eq_helmholtz_intro_PML}
-k^2 \mu u - \div(\BA\nabla u) = f \text{ in } \R^d,
\end{equation}
where $f \eq -\Delta \chi u^{\rm inc} + 2\nabla \chi \cdot \nabla u^{\rm inc}$
with a smooth compactly supported cutoff $\chi$ such that $\chi = 1$
on $B_R$. The coefficients $\mu$ and $\BA$ respectively correspond to
$\nu$ and $\BB$ in $B_R$, but are artificially modified outside $B_R$
to reformulate the radiation condition as the (numerically convenient)
decay condition that $u \in H^1(\R^d)$. This PML technique is designed
such that $u = u^{\rm tot}$ in $B_R$, which is the area where we wish
to compute the total field. If needed, the total field can also be
recovered from $u$ outside $B_R$ using integral representations~\cite{sauter_schwab_2010a}.

Let us assume for simplicity in this introduction that
the coefficients $\mu$ and $\BA$ correspond to a non-trapping setting
(see e.g.~\cite{galkowski_spence_wunsch_2020a}). Then, if $u^{\rm inc}$ is smooth and solves
$-k^2 u^{\rm inc}-\Delta u^{\rm inc} = 0$
in the support of $\nabla \chi$, our main results are as follows. For any
$\lambda>0$ and  $0<\varepsilon< \frac{1}{2}$, we let
\begin{equation*}
V \eq \left \{ \Phi_{k,\bm,\bn} \quad | \quad [\bm,\bn] \in \Z^{2d}; \;
|\BA(\bx^{k,\bm}) \bxi^{k,\bn} \cdot \bxi^{k,\bn}-\mu(\bx^{k,\bm})|
\leq
\lambda k^{-1/2+\varepsilon}
\right \}.
\end{equation*}
This space is inexpensive to assemble, and we have
$\dim V \sim_{k \to +\infty} (kR)^{1/2+\varepsilon}$. Then, for $k$
sufficiently large, there exists a unique $v \in V$
solution to the Galerkin weak formulation
\begin{equation*}
-k^2 \langle \mu v,w \rangle + \langle \BA\nabla v,\nabla w \rangle = \langle f,w \rangle
\qquad
\forall w \in V.
\end{equation*}
In addition, we have
\begin{equation}
\label{eq_intro_error_estimate}
\|u-v\|_{\HH^1_k} = O(k^{-\infty}) \|u^{\rm inc}\|_{\HH^1_k},
\end{equation}
meaning that the convergence is super-algebraic as the frequency $k$
increases. The notation in~\eqref{eq_intro_error_estimate} are rigorously
introduced below. Besides, the estimate in~\eqref{eq_intro_error_estimate}
remains valid under the weaker assumption that the scattering problem is
\emph{polynomially stable}, as per Assumption~\ref{Hyp:PolynResolv} below.
This assumption is known to be essentially true
generically~\cite{lafontaine2021most}, so that~\eqref{eq_intro_error_estimate}
is valid under mild assumptions. Alternatively,
for any fixed $k > 0$, an arbitrary level of accuracy might be achieved
by increasing $\lambda$, though we do not give a proof of this statement in the present paper.
We additionally show that the matrix $\textup{A}$ corresponding
to the Galerkin discretization of~\eqref{eq_helmholtz_intro_PML}
with subspace $V$ has a condition number bounded, in the nontrapping setting, as
\begin{equation*}
\|\textup{A}\|_{\ell^2 \to \ell^2} \|\textup{A}^{-1}\|_{\ell^2 \to \ell^2}
\leq
C k^{(1+\varepsilon)/2}.
\end{equation*}

The remainder of this work is organised as follows. In Section~\ref{sec:setting},
we make the setting precise, and rigorously state our main results.
In Section~\ref{section_gabor_wilson}, we recall the definition of Gabor
frames and Wilson basis, and establish some basic properties used throughout
this work. Section~\ref{section_parametrix} is dedicated to the proof of
a more complicated technical result related to Wilson bases. We prove the
main results stated in Section~\ref{sec:setting} in Sections~\ref{section_approximation}
and~\ref{section_convergence_galerkin}. Finally, we present numerical examples that
illustrate the theoretical findings in Section~\ref{section_numerics}.

\subsection*{Acknowledgements}
This project was partially funded by Inria, through the exploratory action POPEG,
and by the Agence Nationale de la Recherche, through the NuHeMiBa project (ANR-24-CE40-3760-01). 

\section{Setting and main results}
\label{sec:setting}

The proposed method actually applies in an abstract setting that is more general
than the scattering problem considered in~\eqref{eq_helmholtz_intro_PML}
in the introduction. In this section, we make this setting precise and
state our main results.

\subsection{Notations}
\label{section_h_notations}

\subsubsection{Frequency set}

Throughout this work $\hbar \in \LH \subset (0,1]$ will denote a small parameter.
When applying our general results to the Helmholtz equation
in~\eqref{eq_helmholtz_intro_PML}, we will have
$\hbar \sim (kR)^{-1}$ where $k$ is the wave number and $R$ is the size of the scatterer.
As a result, considering the set $(0,1]$ amounts to ignoring low frequencies, and focusing on
high frequencies corresponds to the regime $\hbar \to 0$. Our motivation for considering
a subset $\LH$ will be explained in Assumption~\ref{Hyp:PolynResolv} below. 
Notice that the case $\LH = (0,1]$ is not excluded.
In fact, many of the technical results of this paper hold for all $\h\in (0,1]$, and we will only need to consider $\h$ in a subset $\LH$ of $(0,1]$ when we apply Assumption \ref{Hyp:PolynResolv}.

\subsubsection{Basic notation}

Throughout, $d \geq 1$ is the number of space dimensions.
The canonical basis of $\R^d$ or of $\C^d$ will be denoted by
$(\boldsymbol{e}_1,..., \boldsymbol{e}_d)$. If $\bx,\by \in \C^d$, we write
$\bx \cdot \by \eq x_1 y_1 + \ldots + x_d y_d$ for the usual dot product
without complex conjugation on the second argument, so that
$|\bx| = (\bx \cdot \overline{\bx})^{1/2}$
is the usual Euclidean norm.

When $\epsilonvec\in \{-1, 1\}^d$ and $\bx\in \R^d$, we will write
\begin{equation}\label{eq:NotationChangementSigne}
\epsilonvec \bx := (\epsilon_1 x_1, \dots, \epsilon_d x_d)
\end{equation}
for the component-wise multiplication.

We will write $\N = \{0, 1, ...\}$ for the set of non-negative integers, and $\N^* = \N \setminus \{0\}$.

For a multi-index $\balpha \in \N^d$, $[\balpha] \eq \alpha_1+\dots+\alpha_d$
denotes its usual $\ell_1$ norm. If $v: \R^d \to \C$, the notation
\begin{equation*}
\partial^{\balpha} v
\eq
\frac{\partial^{\alpha_1}}{\partial x_1^{\alpha_1}}
\dots
\frac{\partial^{\alpha_d}}{\partial x_d^{\alpha_d}} v
\end{equation*}
is employed for the partial derivatives in the sense of distributions, whereas
$\bx^{\balpha} \eq x_1^{\alpha_1} \cdot \ldots \cdot x_d^{\alpha_d}$.
Finally, if $\bbeta \in \N^d$ is another multi-index, we will sometimes
need the notation
\begin{equation*}
\left (
\begin{array}{c}
\balpha \\ \bbeta
\end{array}
\right )
=
\prod_{j=1}^d
\left (
\begin{array}{c}
\alpha_j \\ \beta_j
\end{array}
\right ),
\end{equation*}
and the notation $\balpha \leq \bbeta$ means that $\alpha_j \leq \beta_j$
for all $j \in \{1,\dots,d\}$. 

If $\bn \in \Z^d$, we employ the notation $|\bn|^2 \eq n_1^2 + \dots + n_d^2$
for its $\ell_2$ norm. Finally, if $\Lambda \subset \Z^{2d}$, $\ell^2(\Lambda)$
has its usual definition, and we denote by $\|\cdot\|_{\ell^2(\Lambda)}$ its usual norm.

\subsubsection{Superalgebraically small quantities}

If $a(\h)$ is a quantity depending on $\h\in (0,1]$, we will write $a(\h)= O(\h^\infty)$ if,
for any $m\in \N$, we may find a constant $C_m \geq 0$ such that, for any $\h\in (0,1]$,
we have
\begin{equation}\label{eq:NotationOhInfini}
|a(\h)| \leq C_m \h^m.
\end{equation}
If the quantity $a(\h)$ depends on some other parameters $\bullet$ and $\circ$,
we will write $a(\h ; \bullet,\circ) = O_{\bullet}(\h^\infty)$ to underline
the fact that the constants $C_m$ in \eqref{eq:NotationOhInfini} also depend on
the parameters $\bullet$, but not on the parameters $\circ$. Finally, if $\LH'\subset (0,1]$,
we will say that ``$a(\h)= O(\h^\infty)$ for $\h\in \LH'$'' if the estimates~\eqref{eq:NotationOhInfini}
only hold for $\h\in \LH'$.

For other quantities $b(\h)$ and $c(\h)$, we will also respectively employ the notation
\begin{equation*}
c(\h) = O_{\bullet}(\h^\infty) \times b(\h),
\qquad
c(\h) = b(\h)+O_{\bullet}(\h^\infty),
\end{equation*}
whenever $|c(\h)| \leq |a(\h)| |b(\h)|$ and
$c(\h) = b(\h) + a(\h)$, with $a(\h)$ as above. 

Given $m_0\in \R$, we will adopt similar notations,
writing $a(\h) = O_\bullet (\h^{m_0})$ if~\eqref{eq:NotationOhInfini}
holds for $m=m_0$ with a constant depending only on $\bullet$.

\subsubsection{Key functional spaces}

In what follows, $L^2(\R^d)$ is the usual Lebesgue space of complex-valued square integrable
functions over $\R^d$. The usual norm and inner products of $L^2(\R^d)$ are respectively
denoted by $\|\cdot\|_{L^2}$ and $\langle \cdot,\cdot\rangle$.

Since we are dealing with the (unbounded) $\R^d$ space, following
\cite{chaumontfrelet_ingremeau_2022a}, our analysis will require the
weighted Sobolev spaces
\begin{equation*}
\HH_{\h}^p(\R^d) \eq \left \{
v \in L^2(\R^d) \; | \;
\bx^{\balpha} \partial^{\bbeta} v \in L^2(\R^d)
\quad \forall \balpha,\bbeta \in \N^d; \; [\balpha],[\bbeta] \leq p
\right \},
\end{equation*}
that we equip with the family of $\hbar$-weighted norms given by
\begin{equation*}
\|v\|_{\HH^p_\hbar}^2
\eq
\sum_{[\balpha] \leq p} \sum_{q \leq p- [\balpha]}
\hbar^{2[\balpha]} \||\bx|^q \partial^{\balpha} v\|_{L^2}^2
\end{equation*}
for all $p \in \N$.
If $\cO\subset \R^d$ is an open set, we will sometimes use the notation
\begin{equation*}
\|v\|_{H^p_\hbar(\cO)}^2
\eq
\sum_{[\balpha] \leq p}
\hbar^{2[\balpha]} \|\partial^{\balpha} v\|_{L^2(\cO)}^2
\end{equation*}
for all $p \in \N$, where $\|{\cdot}\|_{L^2(\cO)}$ has its usual meaning.

We also denote by $C^0(\R^d)$ the set of complex-valued continuous functions
defined over $\R^d$, and by $C^\ell(\R^d)$ the set of functions $v: \R^d \to \C$ such that
$\partial^{\balpha} v \in C^0(\R^d)$ for all $\balpha \in \N^d$ with $[\balpha] \leq \ell$.
We introduce the notation
\begin{equation*}
\|v\|_{C^\ell}
\eq
\max_{\substack{\balpha \in \N^d \\ [\balpha] \leq \ell}}
\sup_{\bx \in \R^d} |(\partial^{\balpha} v)(\bx)|
\qquad \forall v \in C^\ell(\R^d)
\end{equation*}
and $C^\ell_{\rm b}(\R^d)$ is the subset of functions $v \in C^\ell(\R^d)$
such that $\|v\|_{C^\ell(\R^d)} < +\infty$. We also set
\begin{equation*}
C^\infty_{\rm b}(\R^d) \eq \bigcap_{\ell \in \N} C^\ell_{\rm b}(\R^d).
\end{equation*}

\subsection{Gaussian coherent states, Gabor frames and Wilson bases}
\label{section_frame}

\subsubsection{Gaussian coherent states}

For $[\bm,\bn] \in \Z^{2d}$, we define the Gaussian state
\begin{equation}\label{eq:DefGaborState}
\gshmn(\bx)
\eq
(\pi\hbar)^{-d/4}
e^{-\frac{1}{2\hbar}|\bx-\xxhm|^2} e^{\frac{i}{\hbar}\xihn \cdot (\bx-\xxhm)},
\end{equation}
where $\xxhm \eq \sqrt{\pi\hbar} \bm$ and $\xihn \eq \sqrt{\pi\hbar} \bn$.
These functions are $L^2$-normalised, and it was proved in \cite{daubechies_grossman_meyer_1986a} that the family of Gaussian coherent states $(\gshmn)_{[\bm,\bn] \in \Z^{2d}}$
forms a \emph{frame} for $L^2(\R^d)$, meaning
there exist two constants $0 < \alpha_{\mathcal{G}} \leq  \beta_{\mathcal{G}} < +\infty$ such that 
\begin{equation}\label{eq:FrameBound}
\alpha_{\mathcal{G}} \|v\|_{L^2}^2
\leq
\sum_{[\bm,\bn] \in \Z^{2d}} |\langle v,\gshmn \rangle|^2
\leq
\beta_{\mathcal{G}} \|v\|_{L^2}^2 \qquad \forall v \in L^2(\R^d).
\end{equation}
Here, the constants $\alpha_{\mathcal{G}}$ and $\beta_{\mathcal{G}}$ can be chosen
independent of $\h$: this follows from the fact that, for any $\h>0$, the operator
$I_\hbar : L^2(\R^d) \longrightarrow L^2(\R^d)$ given by
$(I_{\hbar} u)(\xvec) := \hbar^\frac{d}{4} u\left(\sqrt{\hbar} \xvec\right)$
is a bijective isometry such that $\Psi_{1, \bm, \bn} = I_{\hbar}(\Psi_{\h, \bm, \bn})$.

Actually, the frame property implies (see \cite[Chapter 5]{Gro}) that there exists another family of functions
$(\gshmn^\star)_{[\bm,\bn] \in \Z^{2d}}$ called the dual frame such that
\begin{equation}
\label{eq_dual_frame}
v
=
\sum_{[\bm,\bn] \in \Z^{2d}} \langle v,\gshmn^\star \rangle \gshmn.
\end{equation}
Here, the sum is to be understood in the sense of unconditional convergence,
as in~\cite[Chapter 5.3]{Gro}.

However, the family $(\gshmn)_{[\bm,\bn] \in \Z^{2d}}$ is \emph{not} a Riesz basis,
so that the decomposition~\eqref{eq_dual_frame} of $v$ as a sum of $\gshmn$ is not unique,
as discussed in Remark~\ref{rem:IllConditionned} below. The aim of this paper is to overcome
this issue by working with another family of functions, called the \emph{Wilson states}.

\subsubsection{Wilson states}\label{subsec:Wilson}

The Wilson states will not be indexed by $\Z^{2d}$, but by a slightly more complicated set,
which is essentially ``$2^d$ times smaller''. With the notation 
$E := (\Z \times \N^*) \cup (2 \Z \times \{0\})$, this index set is defined
as
\begin{equation*}
E_d
:=
\left \{
[\mvec,\nvec] \in \Z^{2d} \; | \;
[m_j, n_j] \in E, \;
\forall j \in \llbracket 1, d \rrbracket
\right \}.
\end{equation*}

When $d=1$, the Wilson states are then defined, for all $[m, n] \in E$, as
\begin{equation*}
\forall x \in \R, \quad \w{m, n}(x) := c_n \left(\eg{m, n}(x) + (-1)^{m + n} \eg{m, -n}(x)\right),
\end{equation*}
where $c_0 := \frac{1}{2}$ and $c_n := \frac{1}{\sqrt{2}}$ for all $n \in \N^*$.
When $d\geq 2$,  the Wilson states are defined analogously,  for all $[\mvec, \nvec ] \in E_d$, as
\begin{equation}\label{eq:DefWilsonState}
\begin{aligned}
\forall \xvec \in \R^d, \quad \w{\mvec, \nvec}(\xvec)
&:=
(\w{m_1, n_1} \otimes \dots \otimes \w{m_d, n_d})(\xvec)
\\
&=
c_\nvec \sum_{\epsilonvec \in \{-1, 1\}^d}
\epsilonvec^{\mvec + \nvec} \eg{\mvec, \epsilonvec \nvec}(\xvec),
\end{aligned}
\end{equation}
where we used notation \eqref{eq:NotationChangementSigne}, and we set
$c_\nvec := \prod\limits_{j = 1}^d c_{n_j}$,
and $\epsilonvec^{\mvec + \nvec} := \prod\limits_{j = 1}^d \epsilon_j^{m_j + n_j}$.

The family $(\w{\mvec, \nvec})_{[\mvec, \nvec]
 \in E_d}$ then forms a \emph{Riesz basis},
which means that it satisfies bounds analogous to \eqref{eq:FrameBound}, but that furthermore,
the expansion \eqref{eq_dual_frame} is unique. These functions were introduced
in~\cite{wilsongeneralized}, but the Riesz basis property was only proved later
in~\cite{daubechies1991simple}.

Since the family of Wilson states is a frame, it has an associated dual frame,
and we recall here the relation between the dual frames
$\Psi_{\h,\bm,\bn}^\star$ and $\Phi_{\h,\bm,\bn}^\star$. Namely, for any
$[\mvec, \nvec]\in E_d$, it follows from the definition of the dual frames
in terms of frame operators \cite[Corollary 5.1.3]{Gro} along with the
relation between frame operators of Gabor and Wilson systems~\cite[Theorem 3.1]{bownik2017wilson}
that we have
\begin{equation}
\label{eq:ExpressionDualBasis}
\Phi_{\h, \mvec, \nvec}^\star
=
2^d c_{\nvec}
\sum_{\epsilonvec \in \{-1, 1\}^d}
\epsilonvec^{\mvec + \nvec} \eg{\mvec, \epsilonvec \nvec}^\star.
\end{equation}

\subsection{Settings and key assumptions}
\label{ssec:general}

Throughout this work, we consider a second order differential operator on
$\R^d$ depending on $\hbar$, and taking the form
\begin{equation}
\label{eq:FormeGeneraleOperateur}
(P_{\hbar} v)(\bx)
=
-\hbar^2
\sum_{j,\ell=1}^d
a_{j\ell}^{\hbar}(\bx)
\frac{\partial^2 v}{\partial x_j \partial x_\ell}(\bx)
+
i\hbar\sum_{j=1}^d b_{j}^\hbar(\bx) \frac{\partial v}{\partial  x_j }(\bx)
+
c^\hbar(\bx) v(\bx),
\end{equation}
where $a_{j\ell}^\hbar,b_j^\hbar,c^\hbar \in \CIB(\R^d)$ for $1 \leq j,\ell \leq d$.
Here, the coefficients are allowed to depend on $h$, though in many applications, they don't.

Throughout this work, we consider smooth and bounded coefficients.
Specifically, we demand the following.

\begin{assumption}\label{Hyp1:BoundCoefs}
For every $p\in \N$, the following quantity is finite:
\begin{equation*}
C_{{\rm coef},p}
\eq
\sup_{\hbar \in (0,1]}
\left (
\sum_{j,\ell=1}^d \|a_{j,\ell}^\hbar\|_{C^p}
+
\sum_{j=1}^d \|b_j^\hbar\|_{C^p}
+
\|c^\hbar\|_{C^p}
\right ) < +\infty.
\end{equation*}
\end{assumption}

We note that under the above assumption, the operator
$P_\h : \widehat{H}_\hbar^{p+2}(\R^d) \longrightarrow \widehat{H}_\hbar^{p}(\R^d)$
is bounded for all $p \in \N$. The proof is completely standard in non-weighted
Sobolev spaces, and for completeness, we go through the case of weighted spaces
in Appendix~\ref{app:ProofContinuity} below.

\begin{lemma}
\label{lemme: P est continu}
Under Assumption~\ref{Hyp1:BoundCoefs}, for any $p \in \N$,
there exists $C_p>0$ such that for all $\h \in (0,1]$ and all 
$w \in \widehat{H}_\h^{p+2}(\R^d)$, we have
\begin{align*}
\|P_\h w\|_{\widehat{H}_\hbar^p} \leq C_p \|w\|_{\widehat{H}_\h^{p+2}}.
\end{align*}
\end{lemma}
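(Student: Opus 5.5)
We argue term by term. By the definition of $\|\cdot\|_{\HH^p_\hbar}$ it suffices to bound, for every multi-index $\balpha$ with $[\balpha]\leq p$ and every $q \leq p-[\balpha]$, the quantity $\hbar^{[\balpha]}\||\bx|^q \partial^{\balpha}(P_\hbar w)\|_{L^2}$ by $C_p\|w\|_{\HH^{p+2}_\hbar}$. Since $P_\hbar w$ is a finite sum of terms of the form $\hbar^{[\bgamma]} g^\hbar \partial^{\bgamma} w$ with $[\bgamma]\leq 2$ and $g^\hbar \in \{a^\hbar_{j\ell}, b^\hbar_j, c^\hbar\}$ (up to harmless constant factors $-1$, $i$), it is enough to treat one such term. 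We first argue for smooth rapidly decaying $w$ (e.g. Schwartz functions) and conclude by density, or directly in the distributional sense, since all quantities involved are controlled by the $\HH^{p+2}_\hbar$ norm.

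For a fixed term, we apply the Leibniz rule:
\begin{equation*}
\partial^{\balpha}\bigl(g^\hbar \partial^{\bgamma} w\bigr)
=
\sum_{\bbeta \leq \balpha}
\left (
\begin{array}{c}
\balpha \\ \bbeta
\end{array}
\right )
\partial^{\balpha-\bbeta} g^\hbar \, \partial^{\bbeta+\bgamma} w .
\end{equation*}
Multiplying by $\hbar^{[\balpha]}\hbar^{[\bgamma]}|\bx|^q$, each summand is bounded pointwise by
\begin{equation*}
\hbar^{[\balpha]-[\bbeta]}\,\|g^\hbar\|_{C^p}\,\hbar^{[\bbeta+\bgamma]}|\bx|^q|\partial^{\bbeta+\bgamma} w|
\leq
C_{{\rm coef},p}\,\hbar^{[\bbeta+\bgamma]}|\bx|^q|\partial^{\bbeta+\bgamma} w|,
\end{equation*}
where we used $\hbar \leq 1$ to discard $\hbar^{[\balpha]-[\bbeta]}$, together with Assumption~\ref{Hyp1:BoundCoefs}, which gives a bound uniform in $\hbar$. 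Taking $L^2$ norms, the remaining quantity $\hbar^{[\bbeta+\bgamma]}\||\bx|^q\partial^{\bbeta+\bgamma}w\|_{L^2}$ is one of the terms in $\|w\|_{\HH^{p+2}_\hbar}$, because $[\bbeta+\bgamma]\leq p+2$ and $q\leq p-[\balpha]\leq (p+2)-[\bbeta+\bgamma]$. Indeed $[\bbeta]\leq[\balpha]$ and $[\bgamma]\leq 2$, so $q+[\bbeta+\bgamma]\leq p-[\balpha]+[\balpha]+2=p+2$. Summing the finitely many contributions, with a number of terms depending only on $p$ and $d$, gives the claim with $C_p$ depending on $p$, $d$ and $C_{{\rm coef},p}$.

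The only point requiring care is this index bookkeeping: each derivative falling on $w$ carries exactly one power of $\hbar$, and the total weight-plus-derivative budget $q+[\bbeta+\bgamma]$ stays within $p+2$. The weight $|\bx|^q$ commutes with the multiplication by the coefficients and never needs to be differentiated, so this causes no difficulty. No genuine obstacle is expected.
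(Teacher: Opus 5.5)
Your proof is correct and follows essentially the same route as the paper's appendix: the Leibniz rule applied to each coefficient term, derivatives of the coefficients bounded by $C_{{\rm coef},p}$, and the index check $q+[\bbeta+\gammavec]\leq p+2$. The only cosmetic difference is that you keep $\hbar^{[\gammavec]}$ attached to the derivatives of $w$ throughout. The paper instead first proves $\|a^\hbar_{j\ell}\partial^2_{j\ell}w\|_{\HH^p_\hbar}\leq C_p\hbar^{-2}\|w\|_{\HH^{p+2}_\hbar}$ (and analogously for the lower-order terms), and then multiplies by the prefactor $\hbar^2$.
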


Along with the smoothness and boundedness of the coefficients, we make three key assumptions.

\begin{assumption}\label{Hyp:PolynResolv}
We assume that there exist $N_0\geq 1$ and a set $\LH \subset (0,1]$
such that the following holds. For every $p\in \N$, there exists $C_{{\rm sol},p}>0$
such that, for every $\hbar\in \LH$, the operator $P_\hbar: \HH_{\h}^{p+2}(\R^d) \to \HH_{\h}^p(\R^d)$
 is invertible, and we have 
\begin{equation*}
\|P_{\hbar}^{-1}\|_{\HH^p_{\h} \to \HH^p_{\h}} \leq C_{{\rm sol},p} \hbar^{-N_0}.
\end{equation*}
\end{assumption}

This assumption is satisfied for instance when the underlying classical dynamics
has no trapped set (with $N_0 = 1$ and $\LH = (0,1]$), or when we exclude a
set of exceptional frequencies: this is the reason why we consider a subset
$\LH$ of $(0,1]$. We refer the reader to \cite[Remark 4.1]{CFDI} for more details
and references.

The symbol of $P_\hbar$ is the function $p_\hbar \in C^\infty(\R^{2d})$ defined by
\begin{equation}
\label{eq_symbol}
p_\hbar(\bx,\bxi)
\eq
\sum_{j,\ell=1}^d
a_{j,\ell}^\hbar(\bx) \xi_j \xi_\ell
+
\sum_{j=1}^d b_j^\hbar(\bx) \xi_j
+
c^\hbar(\bx).
\end{equation}

An essential role will be played by sets of the form
\begin{equation*}
\{ (\bx,\bxi) \in \R^{2d} \; | \; |p_{\h}(\bx,\bxi)| \leq C\sqrt{\h} \}.
\end{equation*}
Such sets will sometimes be called \emph{characteristic sets} (though the expression
will be used only for the discussion, and we will not need a precise definition),
and our next assumption ensures us that they are compact for $\h$ small enough.

\begin{assumption}\label{Hyp:BoundedLayers}
There exist $\delta_0, D_0 > 0$ such that
\begin{equation}
\label{eq_assumption_symbol_bounded}
\forall \hbar \in (0,1], ~~ 
\{(\bx,\bxi)\in \R^{2d} ; \; |p_\hbar(\bx,\bxi)| < \delta_0\} \subset \B(\bzero, D_0).
\end{equation}
\end{assumption}

Finally,  for some of our results, we will need a last assumption of quasi-symmetry of the symbol. 
\begin{assumption}\label{Hyp4:SymetrieSymbole}
There exists $\Csym \geq 0$ such that, recalling notation \eqref{eq:NotationChangementSigne},
we have for all $\h±\in (0,1]$
\begin{equation}
\label{eq: hypothèse sur le symbole}
\forall (\xvec, \xivec) \in (\R^d)^2,
\quad
\forall \epsilonvec \in \{-1, 1\}^d,
\quad
|p_\hbar(\xvec, \xivec) - p_\hbar(\xvec, \epsilonvec \xivec)| \leq \Csym \hbar |\xivec|.
\end{equation}
\end{assumption}

We emphasise that all these abstract assumptions are satisfied for the
Helmholtz model problem mentioned above in~\eqref{eq_helmholtz_intro_PML}.
This is detailed in our previous work~\cite{CFDI}, and we do not
reproduce the arguments here for shortness.

In the remainder of this work, we allow generic constants $C$ (including the ones
in the $O(\h^\infty)$ notations) to depend on $\Csym$,
$N_0$ and $D_0$, and on a finite number of coefficients $\{C_{{\rm coef},p}\}_{p \in \N}$,
$\{C_{{\rm sol},p}\}_{p \in \N}$. They may also depend on the parameters
$\alpha, \lambda, \lambda',\lambda''$ and $\varepsilon$ appearing
in the statement of the Theorems below. We also employ the notation $C_{\bullet}$
if the constant $C$ is additionally allowed to depend on other previously
introduced quantities $\bullet$.

\subsection{Statement of the approximability result}
\label{ssec:statement}

Let $0 < \varepsilon < 1/2$, let $\lambda>0$ and $0 \leq \alpha < \delta_0/2$.
We define the index set
\begin{equation}\label{eq:EnergyLayers}
\Lambda_\h(\lambda) = \Lambda_\h(\lambda ; \varepsilon,  \alpha) :=  \left \{
[\bm,\bn] \in E_{d}
\; | \;
\min_{\epsilonvec\in \{-1,1\}^d}|p_\hbar(\xxhm,\epsilonvec\xihn)| < \alpha + \lambda \hbar^{1/2-\varepsilon}
\right \},
\end{equation}
and the corresponding space
\begin{equation*}
W_\h(\lambda) := \mathrm{Vect} \left\{ \w{\mvec, \nvec} ; [\mvec, \nvec]\in \Lambda_\h(\lambda) \right\}.
\end{equation*}

Our first result shows that, if one considers the equation $P_\h u_\h = f_\h$
with $f_\h\in W_\h(\lambda)$, then for any $\lambda'> \lambda$,  the solution $u_\h$
may be very well approximated in $W_\h(\lambda')$.

\begin{remark}
Note that, given any such triplet $(\lambda,\varepsilon,\alpha)$,
Assumption~\ref{Hyp:BoundedLayers} implies that for all $\h$ small enough,
the set $\Lambda_\h(\lambda;\varepsilon,\alpha)$ is finite, with a cardinal growing
at most as $O_{\h\to 0}(\h^{-d})$.  Actually, in many situations, this cardinal is much
smaller. For instance, suppose that $p_\h =p_0 +O(\h)$, where $p_0$ does not depend on $\h$,
and is non-degenerate, in the sense that
\begin{equation*}
\inf_{(\xvec, \xivec)\in p_0^{-1}(\{0\})} \min_{\epsilonvec\in \{-1,1\}^d}
|\nabla p_{0}(\xvec, \epsilonvec\xivec)| >0.
\end{equation*}
 We then have
\begin{equation*}
\mathrm{Card} \left(  \Lambda_\h(\lambda ; \varepsilon,  0)  \right) = O_{\h \to 0} \left(\h^{-d + \frac{1}{2}- \varepsilon}\right).
\end{equation*}
This is in particular the case for the scattering problem in~\eqref{eq_helmholtz_intro_PML},
so that the discretization spaces we consider are much smaller than in standard
volumic methods.
\end{remark}

\begin{remark}
\label{remark_symetric_symbol}
We also point out that when Assumption~\ref{Hyp4:SymetrieSymbole} holds,
then the set 
\begin{equation*}
\Lambda'_\h(\lambda) := \left \{
[\bm,\bn] \in E_{d}
\; | \;
|p_\hbar(\xxhm,\xihn)| < \alpha + \lambda\hbar^{1/2-\varepsilon}
\right \}
\end{equation*}
differs from $\Lambda_\h(\lambda)$ by few elements, and we could work with $\Lambda'_\h(\lambda)$ instead of $\Lambda_\h(\lambda)$.
\end{remark}

\begin{theorem}
\label{theorem_approximability}
Suppose that Assumptions~\ref{Hyp1:BoundCoefs},~\ref{Hyp:PolynResolv}
and~\ref{Hyp:BoundedLayers} hold. Let $0 < \varepsilon < 1/2$, $0 \leq \alpha < \delta_0/2$
and $\lambda'> \lambda>0$. For every $\h\in \LH$, consider a function $f_h\in W_\h(\lambda)$.
Then, for every $p\in \N$, the solution of $P_\h u_\h = f_\h$ satisfies
\begin{equation}
\label{eq_approximability_estimate}
\left \|
u_\hbar
-
\sum_{[\bm, \bn] \in \Lambda_{\hbar}(\lambda')} \langle u_\hbar,\w{\mvec, \nvec}^\star\rangle \w{\mvec, \nvec}
\right \|_{\HH_\hbar^p}
= O_{p}(\h^\infty) \|f_\h\|_{L^2}
\end{equation}
for all $\h\in \LH$ small enough that $\Lambda_{\hbar}(\lambda')$ is finite.
\end{theorem}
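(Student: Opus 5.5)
The plan is to show that the Wilson coefficients $\langle u_\h,\w{\mvec,\nvec}^\star\rangle$ are superalgebraically small, in a weighted sense, for all indices outside $\Lambda_\h(\lambda')$, and then to sum these tails in $\HH^p_\h$. By linearity we may replace $u_\h$ by $v_\h := u_\h$ and write the error as
\[
R_\h := \sum_{[\bm,\bn]\in E_d\setminus\Lambda_\h(\lambda')}\langle u_\h,\w{\mvec,\nvec}^\star\rangle \w{\mvec,\nvec}.
\]
This expression makes sense because the expansion in the Riesz basis converges unconditionally.

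\textbf{Step 1: a microlocal parametrix.} The core ingredient is a parametrix statement, presumably the ``more complicated technical result'' of Section~\ref{section_parametrix}. Fix an index $[\bm,\bn]\notin\Lambda_\h(\lambda')$. By \eqref{eq:ExpressionDualBasis}, $\w{\mvec,\nvec}^\star$ is a combination of the $2^d$ dual Gabor states $\eg{\mvec,\epsilonvec\nvec}^\star$. Each of these is microlocalised near $(\xxhm,\epsilonvec\xihn)$, up to exponentially decaying tails in phase space. At each such point we have $|p_\h|\ge\alpha+\lambda'\h^{1/2-\varepsilon}$.

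I would then construct $\Theta_{\mvec,\nvec}$ with $P_\h^*\Theta_{\mvec,\nvec}=\w{\mvec,\nvec}^\star+O(\h^\infty)$, using a Neumann-type series built around the frozen symbol $\overline{p_\h(\xxhm,\epsilonvec\xihn)}$. The point is that a Gaussian state of width $\sqrt\h$ sees $P_\h^*$ as multiplication by the frozen symbol plus an error of size $O(\h^{1/2})$ times a comparable state. Dividing by $|p_\h|\gtrsim \lambda'\h^{1/2-\varepsilon}$ therefore gains a factor $\h^{\varepsilon}$ at each iteration, and after $N/\varepsilon$ iterations the remainder is $O(\h^N)$. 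Once this is in place, we get
\[
\langle u_\h,\w{\mvec,\nvec}^\star\rangle=\langle f_\h,\Theta_{\mvec,\nvec}\rangle+O(\h^\infty)\|u_\h\|.
\]
Assumption~\ref{Hyp:PolynResolv} gives $\|u_\h\|\le C\h^{-N_0}\|f_\h\|_{L^2}$, so the polynomial loss is absorbed into the $O(\h^\infty)$. This step is where I expect the main obstacle.

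The first difficulty is that the dual Gabor frame is not Gaussian, only exponentially localised. I would handle it by expanding $\eg{}^\star$ in the Gabor frame itself, with exponentially decaying coefficients, and accepting that the neighbouring frozen points also satisfy $|p_\h|\gtrsim\h^{1/2-\varepsilon}$ within distance $\h^{1/2-\varepsilon/2}$. Farther points are handled by the exponential decay.

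\textbf{Step 2: separation from $f_\h$.} Since $f_\h\in W_\h(\lambda)$, it is a finite combination of $\w{}$ with indices in $\Lambda_\h(\lambda)$. Because $\lambda'>\lambda$ and $p_\h$ is Lipschitz on compact sets (Assumptions~\ref{Hyp1:BoundCoefs} and~\ref{Hyp:BoundedLayers}), these indices are at phase-space distance at least $c\h^{1/2-\varepsilon}$ from those of $\Theta_{\mvec,\nvec}$. In units of $\sqrt\h$ this distance is $\gtrsim\h^{-\varepsilon}$, so Gaussian overlaps give $|\langle \w{\mvec',\nvec'},\Theta_{\mvec,\nvec}\rangle|=O(\h^\infty)\langle[\bm-\bm',\bn-\bn']\rangle^{-M}$. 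Combining this with the Riesz property (and Cauchy--Schwarz) yields
\[
|\langle f_\h,\Theta_{\mvec,\nvec}\rangle|\le O(\h^\infty)\langle[\bm,\bn]\rangle^{-M}\|f_\h\|_{L^2}.
\]
Here the decay in $[\bm,\bn]$ for large indices comes from Assumption~\ref{Hyp:BoundedLayers}: far from $\B(\bzero,D_0)$, $|p_\h|$ grows, and $f_\h$ lives in a bounded phase-space region.

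\textbf{Step 3: summation.} We have $\|\w{\mvec,\nvec}\|_{\HH^p_\h}\le C(1+|\xxhm|+|\xihn|)^{p}$. Summing over $E_d\setminus\Lambda_\h(\lambda')$ with the weighted superalgebraic bound from Step 2 then gives $\|R_\h\|_{\HH^p_\h}=O_p(\h^\infty)\|f_\h\|_{L^2}$. The index count is at most polynomial in $\h^{-1}$ on bounded regions, and the weight $\langle[\bm,\bn]\rangle^{-M}$, with $M$ large, takes care of the remaining indices. This is exactly \eqref{eq_approximability_estimate}.
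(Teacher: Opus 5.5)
Your Steps~1 and~2 are sound for indices $\bq$ with $\bz^{\h,\bq}$ in a fixed ball, and they take a genuinely different route from the paper's. You invert $P_\h^*$ on the test side, applying the Gaussian parametrix of Remark~\ref{Rem:GeneralisationPropClef} (with $\cP_\h=P_\h^*$) to each component $\eg{\bm,\epsilonvec\bn}^\star$ of $\w{\bq}^\star$. The separation you claim in Step~2 does hold for all $2^d$ components: both $\Lambda_\h(\lambda)$ and its complement are defined through $\min_{\epsilonvec}$, so every Gaussian component of $f_\h$ lies at index distance $\gtrsim \h^{-\varepsilon}$ from every component of $\Theta_\bq$. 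This handles non-symmetric symbols directly. The paper instead inverts $P_\h$ on the source side via \cite[Theorem~2.4]{CFDI}, converts Gabor sums into Wilson sums, identifies coefficients through the lower Riesz bound, and needs a separate Step~3$'$ for components of $f_\h$ outside $\wit{\Lambda}_{\h,0,1}$. The gap is in your Step~3, where you sum over the whole infinite set $E_d\setminus\Lambda_\h(\lambda')$.

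The decay weight you obtain in Step~2 only concerns $\langle f_\h,\Theta_\bq\rangle$. The Step~1 remainder $\langle u_\h,P_\h^*\Theta_\bq-\w{\bq}^\star\rangle$ is only $O(\h^\infty)\|u_\h\|_{L^2}$, with no decay in $\bq$. Hence $\sum_{\bq}O(\h^\infty)\|u_\h\|_{L^2}(1+|\bq|)^p$ diverges.

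Moreover, the parametrix itself is not uniform far out in phase space. The frozen-symbol error grows with $|\bxi^{\h,\bn_\bq}|$ (cf.~\eqref{eq:OperatorCLoseToSymbolGaussians}; the Lipschitz constant of $p_\h$ grows polynomially in $\bxi$). Assumption~\ref{Hyp:BoundedLayers} only gives $|p_\h|\geq\delta_0$ outside $\B(\bzero,D_0)$, not the growth you invoke. This is why the parametrix of Section~\ref{section_parametrix} is stated only for $\bq_0\in\B(\bzero,D\h^{-1/2})$, with $D$-dependent constants.

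You therefore need a separate argument showing that $u_\h$ carries negligible mass far out in phase space. The paper obtains this from \cite[Theorem~2.4]{CFDI}, which gives an $\HH^p_\h$-approximation of $u_\h$ by a finite sum. It then never bounds tail coefficients one by one: it uses the lower Riesz bound to compare coefficients only on the finite set $\Lambda_\h(\lambda')$.

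Within your framework, one possible repair goes as follows. Use $\|u_\h\|_{\HH^m_\h}\leq C_m\h^{-N_0-d/2}\|f_\h\|_{L^2}$ (Assumption~\ref{Hyp:PolynResolv} and Lemma~\ref{lem:LePtiLemKiSauveToutLTemps}) to get decay of the form $|\langle u_\h,\w{\bq}^\star\rangle|\leq C_m\|u_\h\|_{\HH^m_\h}(1+|\bz^{\h,\bq}|)^{-m}$. Discard all $\bq$ with $|\bz^{\h,\bq}|\geq\h^{-\eta}$. Then run your parametrix only for $|\bz^{\h,\bq}|\leq\h^{-\eta}$, with $\eta$ small enough that the frozen-symbol errors stay small compared with $\delta_0$. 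Either way, this step has to be supplied; it does not follow from Steps~1--2 as written.
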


An analogue of Theorem~\ref{theorem_approximability} was proven in~\cite{CFDI},
for the Gaussian states $\gshmn$ rather than for the Wilson states $\w{\mvec, \nvec}$.
We also have the following estimate, which immediately follows
from the proof of Theorem~\ref{theorem_approximability} (see
Section~\ref{subsec:ApproxInGoodSpace}), a triangle inequality,
Assumption~\ref{Hyp:PolynResolv}, and Corollary~\ref{corollary_rhs_uinc}
below.

\begin{corollary}
\label{corollary_uinc}
Under the assumption of Theorem~\ref{theorem_approximability},
consider a bounded open set $\cO\subset \R^d$ such that
\begin{equation*}
\delta \eq 
\dist(\cO,\Sigma^{\rm c})
>
0,
\quad
\text{with}
\quad
\Sigma
\eq
\{
\bx \in \R^d \; | \;
\forall \h \in \LH, \forall \bxi \in \R^d, ~~
p_\h(\bx,\bxi) = |\bxi|^2-1
\}
\end{equation*}
and a cutoff $\chi \in C^\infty_{\rm c}(\cO)$.
Consider an incident field $u_\h^{\rm inc} \in H^1(\cO)$ with
$\h^2\Delta u_\h^{\rm inc}+u_\h^{\rm inc} = 0$ in $\cO$.
Then, if $f_\h = (1+\h^2\Delta)(\chi u_\h^{\rm inc})$
and $P_\h u_\h = f_\h$, whenever $\h\in \LH$ is small
enough so that $\Lambda_{\hbar}(\lambda)$ is finite,
we have
\begin{equation}
\label{eq_approximability_estimate_uinc}
\left \|
u_\hbar
-
\sum_{[\bm, \bn] \in \Lambda_{\hbar}(\lambda)} \langle u_\hbar,\w{\mvec, \nvec}^\star\rangle \w{\mvec, \nvec}
\right \|_{\HH_\hbar^p}
=
O(\h^\infty) \|u_\h^{\rm inc}\|_{H^1_\h(\cO)}
\end{equation}
for all $p \in \N$ and $\lambda,\varepsilon > 0$, where the implied constants in the
$O(\h^\infty)$ notation only depends on $\cO$, $\varepsilon$, $\lambda$, $p$, the norms
$\|\chi\|_{W^{\ell,\infty}(\cO)}$ for $\ell \in \N$, $\delta$, and the smallest $R$
such that $|\bx| \leq R$ for all $\bx \in \cO$.
\end{corollary}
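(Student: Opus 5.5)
The plan is to reduce Corollary~\ref{corollary_uinc} to the mechanism behind Theorem~\ref{theorem_approximability}. The right-hand side $f_\h = (1+\h^2\Delta)(\chi u_\h^{\rm inc})$ is not exactly in $W_\h(\lambda)$, but it is microlocalised near $\{|\bxi|^2 = 1\}$ over $\operatorname{supp}\chi$. Indeed, since $(1+\h^2\Delta)u_\h^{\rm inc} = 0$ in $\cO$, the commutator formula gives
\begin{equation*}
f_\h = \h^2(\Delta\chi) u_\h^{\rm inc} + 2\h^2\nabla\chi\cdot\nabla u_\h^{\rm inc}.
\end{equation*}
This is a first-order semiclassical expression, supported in $\operatorname{supp}\nabla\chi \Subset \cO$, acting on a solution of the free Helmholtz equation. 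On $\Sigma$ the symbol is exactly $|\bxi|^2-1$. So the phase-space points $(\bx,\bxi)$ with $\bx$ near $\operatorname{supp}\chi$ and $|p_\h(\bx,\bxi)|\geq \lambda\h^{1/2-\varepsilon}/2$ are those where $\bigl||\bxi|^2-1\bigr|\gtrsim \h^{1/2-\varepsilon}$.

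I expect Corollary~\ref{corollary_rhs_uinc} to be precisely the statement that the Wilson coefficients $\langle f_\h,\Phi^\star_{\h,\bm,\bn}\rangle$ for $[\bm,\bn]\notin\Lambda_\h(\lambda/2)$ are $O(\h^\infty)\|u_\h^{\rm inc}\|_{H^1_\h(\cO)}$, in a summable way that controls $\HH^p_\h$ norms. I will take it as given. Its proof would go through ellipticity of $1+\h^2\Delta$ away from the sphere: one writes $\chi' f_\h = (1+\h^2\Delta)\chi'' (\ldots)$ and integrates by parts against the coherent states. The Gaussian localisation gives $\h^{\infty}$ decay at frequency distance $\h^{1/2-\varepsilon}$, since the coherent states have width $\sqrt\h$ and the gain is $\h^{\varepsilon}$ per step. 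The dependence on $\delta$, $R$ and the $\|\chi\|_{W^{\ell,\infty}}$ enters there.

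The main argument then goes as follows. First, split $f_\h = f_\h^{\rm in} + f_\h^{\rm out}$, where
\begin{equation*}
f_\h^{\rm in} \eq \sum_{[\bm,\bn]\in\Lambda_\h(\lambda/2)} \langle f_\h,\Phi^\star_{\h,\bm,\bn}\rangle \Phi_{\h,\bm,\bn} \in W_\h(\lambda/2).
\end{equation*}
By Corollary~\ref{corollary_rhs_uinc}, $\|f_\h^{\rm out}\|_{\HH^p_\h} = O(\h^\infty)\|u_\h^{\rm inc}\|_{H^1_\h(\cO)}$. Second, split $u_\h = u^{\rm in}_\h + u^{\rm out}_\h$ accordingly, with $u^{\rm in}_\h = P_\h^{-1}f_\h^{\rm in}$ and $u^{\rm out}_\h = P_\h^{-1}f_\h^{\rm out}$. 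Assumption~\ref{Hyp:PolynResolv} gives
\begin{equation*}
\|u^{\rm out}_\h\|_{\HH^{p+2}_\h}\lesssim \h^{-N_0}\|f^{\rm out}_\h\|_{\HH^{p}_\h},
\end{equation*}
which is still $O(\h^\infty)$. Its Wilson projection onto $\Lambda_\h(\lambda)$ is also $O(\h^\infty)$, because the truncated expansion is bounded on $\HH^p_\h$ up to a polynomial loss in the finite cardinality $O(\h^{-d})$, using the Riesz basis bounds and the weighted-norm localisation of the $\Phi_{\h,\bm,\bn}$. Third, apply the estimate established in the proof of Theorem~\ref{theorem_approximability} (Section~\ref{subsec:ApproxInGoodSpace}) with $\lambda/2<\lambda$ to $u^{\rm in}_\h$. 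This gives an error of $O(\h^\infty)\|f^{\rm in}_\h\|_{L^2}$, and
\begin{equation*}
\|f^{\rm in}_\h\|_{L^2}\lesssim \|f_\h\|_{L^2}\lesssim \|u_\h^{\rm inc}\|_{H^1_\h(\cO)}.
\end{equation*}
A triangle inequality then concludes.

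The delicate point is tracking constants. The $O(\h^\infty)$ in Theorem~\ref{theorem_approximability} must depend only on the listed quantities, which is true since it depends on the assumptions and $p$. Corollary~\ref{corollary_rhs_uinc} must provide the microlocal smallness with explicit dependence on $\cO$, $\delta$, $R$ and the derivatives of $\chi$. I expect establishing that microlocal estimate near the boundary of $\Lambda_\h$, uniformly in the incident field, to be the real obstacle. The rest is bookkeeping.
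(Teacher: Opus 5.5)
Your skeleton is exactly the paper's: split $f_\h$ into a microlocalised part plus an $O(\h^\infty)$ remainder, control the remainder by Assumption~\ref{Hyp:PolynResolv} and Lemma~\ref{lem:LePtiLemKiSauveToutLTemps}, apply the approximability result to the main part, and finish with a triangle inequality. The gap is that the two ingredients you cite do not apply to the objects you feed them.

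Corollary~\ref{corollary_rhs_uinc} is a \emph{Gabor} statement. It says
\[
\Bigl\| f_\h-\sum_{\bq\in\wit{\Lambda}_{\h,0}}\langle f_\h,\eg{\bq}^\star\rangle\eg{\bq}\Bigr\|_{\HH^p_\h}=O(\h^\infty)\|u_\h^{\rm inc}\|_{H^1_\h(\cO)},
\]
with $\wit{\Lambda}_{\h,0}\subset\Z^{2d}$ defined through $|p_\h(\bz^{\h,\bq})|$ rather than through $\min_{\epsilonvec}$. It says nothing directly about your Wilson tail $f_\h^{\rm out}=\sum_{E_d\setminus\Lambda_\h(\lambda/2)}\langle f_\h,\w{\bq}^\star\rangle\w{\bq}$. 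Likewise, Section~\ref{subsec:ApproxInGoodSpace} only treats right-hand sides $\wit{\cD}_\h F'_\h$ with $F'_\h\in\ell^2(\wit{\Lambda}_{\h,0,1})$, because it rests on \cite[Theorem 2.4]{CFDI}; it does not treat elements of $W_\h(\lambda/2)$. So, as written, the step ``by Corollary~\ref{corollary_rhs_uinc}, $\|f^{\rm out}_\h\|_{\HH^p_\h}=O(\h^\infty)$'' is unsupported, and so is the application of Section~\ref{subsec:ApproxInGoodSpace} to $u_\h^{\rm in}$.

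The simplest repair is what the paper does: split in the Gabor frame instead. Set $F'_\h(\bq):=\langle f_\h,\eg{\bq}^\star\rangle$ for $\bq\in\wit{\Lambda}_{\h,0}$ built with $\lambda/2$, and $F'_\h(\bq):=0$ otherwise.
\begin{itemize}
\item By the dual frame bound, $\|F'_\h\|_{\ell^2}\lesssim\|f_\h\|_{L^2}\lesssim\|u_\h^{\rm inc}\|_{H^1_\h(\cO)}$.
\item Section~\ref{subsec:ApproxInGoodSpace}, with the pair $(\lambda/2,\lambda)$, then applies verbatim to $P_\h^{-1}\wit{\cD}_\h F'_\h$.
\item The remainder $f_\h-\wit{\cD}_\h F'_\h$ is handled by Corollary~\ref{corollary_rhs_uinc}, Assumption~\ref{Hyp:PolynResolv} and Lemma~\ref{lem:LePtiLemKiSauveToutLTemps}, exactly as you wrote. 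Use Assumption~\ref{Hyp:PolynResolv} as a bound $\HH^p_\h\to\HH^p_\h$; it does not give the $\HH^{p+2}_\h$ gain you claimed, and you do not need it.
\end{itemize}

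If you prefer to keep the Wilson split, you must first derive the Wilson tail bound from the Gabor one. Convert the Gabor approximant of $f_\h$ into Wilson states via Corollary~\ref{cor: formules d'approximation 1} and Lemma~\ref{lemme: épaisseur des niveaux d'énergie}. Then use uniqueness of the Riesz expansion together with Lemma~\ref{lem:LePtiLemKiSauveToutLTemps}, as in Steps~1--2 of Section~\ref{subsec:ApproxInGoodSpace}; this costs a small shift in the level $\lambda$. Finally, invoke the full Theorem~\ref{theorem_approximability} for $f^{\rm in}_\h$, not Section~\ref{subsec:ApproxInGoodSpace}. That works, but it is a detour, since Theorem~\ref{theorem_approximability} for Wilson data is itself proved by re-expanding in Gabor states, see \eqref{eq:DecompoPhiEnPsi}.
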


\subsection{Convergence of the Galerkin method}

Given a finite-dimensional subspace $W_\h\subset H^2(\R^d)\subset L^2(\R^d)$,
and given $f_\h\in L^2(\R^d)$, we will be interested in the Galerkin problem:
\begin{equation}\label{eq:DefProblemGalerkin}
\text{Find } v_\h\in W_\h \text{ such that } \forall w_\hbar \in W_\h, \quad \langle P_\hbar v_\hbar, w_\hbar\rangle = \langle f_\hbar, w_\hbar\rangle.
\end{equation}

Note that, if we denote by $\Pi_{W_\h}$ the $L^2$-orthogonal projection on $W_\h$,
and by $T_{W_\h} : W_\h \longrightarrow W_\h$ the operator given by
\begin{equation}
T_{W_\h}= \Pi_{W_\h} \circ P_\h,
\end{equation}
then the problem~\eqref{eq:DefProblemGalerkin} may be rewritten as:
Find $v_\h\in W_\h$ such that
\begin{equation*}
T_{W_\h} v_\h = \Pi_{W_\h} f_\h.
\end{equation*}

Our main theorem ensures that this problem is well-posed,
and that its solution is close to the real one.

\begin{theorem}
\label{theorem_accuracy}
Suppose that
Assumptions~\ref{Hyp1:BoundCoefs},~\ref{Hyp:PolynResolv},~\ref{Hyp:BoundedLayers}
and~\ref{Hyp4:SymetrieSymbole} hold. Let $0 < \varepsilon < 1/2$, $0 \leq \alpha < \delta_0/2$
and $\lambda'> \lambda>0$. There exists $\h_0>0$ such that the following
holds for every $\h \in \LH \cap (0, \h_0]$.

For every $\h \in \LH \cap (0, \h_0]$, consider a function $f_h\in L^2(\R^d)$
and a set of indices $\Lambda_\h$ such that
\begin{equation*}
\Lambda_\h(\lambda')
\subset
\Lambda_\h
\subset
\left \{
[\bm,\bn] \in E_{d}
\; | \;
|p_\hbar(\xxhm,\xihn)| < \delta_0
\right \}. 
\end{equation*}
  
Then there exists a unique
$v_\h \in W_\h := \mathrm{Vect} \left\{ \w{\mvec,\nvec} ; [\mvec,\nvec] \in \Lambda_\h \right\}$
solution to~\eqref{eq:DefProblemGalerkin}.
Furthermore, if $f_\h\in W_\h(\lambda)$ and if $u_\h$ is the solution of $P_\h u_\h = f_\h$, then for any $p\in \N$, we have
\begin{equation}
\label{eq: convergence asymptotique pour Galerkine Wilson}
\|u_\hbar-v_\hbar\|_{\widehat{H}_\hbar^p} = O_{p} (\h^\infty) \|f_\hbar\|_{L^2}.
\end{equation}
\end{theorem}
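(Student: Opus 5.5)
Our plan is to show that $T_{W_\h}$ is invertible on $W_\h$ with a polynomially bounded inverse,
\begin{equation*}
\|T_{W_\h}^{-1}\|_{L^2 \to L^2} \leq C \h^{-M}
\end{equation*}
for some $M$. Once this holds, existence and uniqueness of $v_\h$ follow at once, because $W_\h$ is finite-dimensional. For the error estimate we then combine this stability bound with Theorem~\ref{theorem_approximability}.

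\textbf{Stability.} Take $w \in W_\h$ and set $g = P_\h w$. We split $g = \Pi_{W_\h} g + (I-\Pi_{W_\h})g$. The goal is to show that $P_\h w$ lies essentially in $W_\h$, up to a term controlled by $w$ itself. The reason is that $w$ is built from Wilson states with $|p_\h(\xxhm,\xihn)|<\delta_0$. Since $P_\h$ is a differential operator, $P_\h\w{\mvec,\nvec}$ is microlocalised near the $2^d$ points $(\xxhm,\epsilonvec\xihn)$, where $p_\h$ takes the value $p_\h(\xxhm,\xihn)+O(\h|\xihn|)$ by Assumption~\ref{Hyp4:SymetrieSymbole}.

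A cleaner route is a parametrix, in the spirit of the technical result of Section~\ref{section_parametrix}. Decompose $w = w_{\rm in} + w_{\rm out}$, where $w_{\rm out}$ is supported on indices outside $\Lambda_\h(\lambda'')$, for some $\lambda<\lambda''<\lambda'$. On $w_{\rm out}$ the symbol satisfies $|p_\h|\geq c\,\h^{1/2-\varepsilon}$ at the relevant phase-space points. We therefore expect an elliptic estimate
\begin{equation*}
\|w_{\rm out}\|_{L^2} \leq C\h^{-1/2+\varepsilon}\,\|\Pi_{W_\h}P_\h w_{\rm out}\|_{L^2} + O(\h^\infty)\|w\|_{L^2}.
\end{equation*}
The quasi-symmetry assumption is what makes the testing against $W_\h$ effective: the Wilson pairing mixes $\bxi$ and $\epsilonvec\bxi$, and the symbol is nearly equal at these points. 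For $w_{\rm in}$, we use Assumption~\ref{Hyp:PolynResolv} to write $w_{\rm in} = P_\h^{-1}P_\h w_{\rm in}$. The function $P_\h w_{\rm in}$ is microlocalised in $\Lambda_\h(\lambda'')$. The argument of Theorem~\ref{theorem_approximability}, which propagates microlocalisation via the parametrix, then shows that $P_\h w_{\rm in}$ is $O(\h^\infty)$-close to its projection onto $W_\h$, since $\Lambda_\h(\lambda')\subset\Lambda_\h$. Combining the two pieces yields
\begin{equation*}
\|w\|_{L^2}\leq C\h^{-N_0-1}\|T_{W_\h}w\|_{L^2}.
\end{equation*}

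\textbf{Error estimate.} Let $\tilde u_\h := \sum_{\Lambda_\h(\lambda')}\langle u_\h,\w{\mvec,\nvec}^\star\rangle\w{\mvec,\nvec}\in W_\h$. By Theorem~\ref{theorem_approximability}, $\|u_\h-\tilde u_\h\|_{\HH^{p}_\h}=O(\h^\infty)\|f_\h\|_{L^2}$ for every $p$. Galerkin orthogonality gives
\begin{equation*}
T_{W_\h}(v_\h-\tilde u_\h)=\Pi_{W_\h}P_\h(u_\h-\tilde u_\h).
\end{equation*}
Lemma~\ref{lemme: P est continu} bounds the right-hand side by $O(\h^\infty)\|f_\h\|_{L^2}$. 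The stability bound then gives $\|v_\h-\tilde u_\h\|_{L^2}=O(\h^\infty)\|f_\h\|_{L^2}$. To upgrade to $\HH^p_\h$ norms on $W_\h$, we use an inverse inequality: every Wilson state with $|p_\h|<\delta_0$ has phase-space centre in $\B(\bzero,D_0)$ by Assumption~\ref{Hyp:BoundedLayers}. Combined with the Riesz basis property, this gives $\|w\|_{\HH^p_\h}\leq C_p\|w\|_{L^2}$ on $W_\h$. A triangle inequality then concludes.

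\textbf{Main obstacle.} We expect the hardest step to be the stability estimate for $T_{W_\h}$, and specifically controlling $(I-\Pi_{W_\h})P_\h w$ for $w$ near the boundary of the index set $\Lambda_\h$. Our first attempt would be to choose a smooth phase-space cutoff, adapted to $\Lambda_\h(\lambda'')$, that commutes with $P_\h$ up to $O(\h^{1/2+\varepsilon})$-type errors. We would then absorb the commutator terms using the gap between $\lambda''$ and $\lambda'$.
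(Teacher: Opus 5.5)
\textbf{A genuine gap: the discrete stability bound.} Your error-estimate step is sound, and it differs genuinely from the paper's route. Granted $\|T_{W_\h}^{-1}\|_{L^2\to L^2}\le C\h^{-M}$, the result follows from three ingredients:
\begin{itemize}
\item the identity $T_{W_\h}(v_\h-\tilde u_\h)=\Pi_{W_\h}P_\h(u_\h-\tilde u_\h)$;
\item Theorem~\ref{theorem_approximability};
\item Lemma~\ref{lem:LePtiLemKiSauveToutLTemps}. Its $\h^{-d/2}$ loss is harmless here. Your loss-free inverse inequality would need an almost-orthogonality argument, not just the Riesz property.
\end{itemize}
The paper never proves discrete stability first. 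It analyses an arbitrary Galerkin solution a posteriori and shows it is $O(\h^\infty)$-close to $v_{\h,0,3}$. It then deduces $\|P_\h v_\h-f_\h\|=O(\h^\infty)\|f_\h\|$ and gets uniqueness from the case $f_\h=0$. Only afterwards, in Theorem~\ref{theorem_conditioning}, does it derive the bound on $\|T_{W_\h}^{-1}\|$.

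The gap is in the stability step itself. Each of your two estimates is correct; the one for $w_{\rm out}$ follows from Lemma~\ref{lemme: erreur entre P étoile v67tilde et v67} with $\Inv(P_\h^*)$, since $\Inv w_{\rm out}\in W_\h$. But the two estimates do not decouple. We have $\Pi_{W_\h}P_\h w_{\rm in}=T_{W_\h}w-\Pi_{W_\h}P_\h w_{\rm out}$, and $\Pi_{W_\h}P_\h w_{\rm out}$ is not small: it is as large as the coefficients of $w$ near the interface. So you only get $\|w_{\rm in}\|\lesssim\h^{-N_0}(\|T_{W_\h}w\|+\|w_{\rm out}\|)$ and $\|w_{\rm out}\|\lesssim\h^{-1/2+\varepsilon}(\|T_{W_\h}w\|+\|w_{\rm in}\|)$, which cannot be closed.

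Your proposed remedy fails for the same reason. A commutator error of size $\h^{1/2+\varepsilon}\|w\|$ enters the inner estimate multiplied by $\h^{-N_0}$, with $N_0\ge 1$, so it cannot be absorbed.

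\textbf{The missing ingredient} is an independent bound, in terms of $\|T_{W_\h}w\|$, on the coefficients of $w$ in an elliptic buffer layer lying strictly inside $\Lambda_\h(\lambda')$.
\begin{itemize}
\item \emph{Where the bound comes from.} Apply Proposition~\ref{prop: proposition-clé} with $\mathcal P_\h=P_\h^*$ and use the exact identity $\langle w,P_\h^*\Phi_{\h,\br}\rangle=\langle T_{W_\h}w,\Phi_{\h,\br}\rangle$ for $\br\in\Lambda_\h$. This gives $|\langle w,\Phi_{\h,\bs}\rangle|\lesssim\h^{-1/2-c}\|T_{W_\h}w\|+O(\h^\infty)\|w\|$ on that layer.
\item \emph{Why the cross terms disappear.} With the inner and outer parts separated by such a buffer, their index distance is $\gtrsim\h^{-\varepsilon}$. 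Lemmas~\ref{lemme: P w scalaire w} and~\ref{lemme: épaisseur des niveaux d'énergie} then make all cross pairings $O(\h^\infty)$.
\item \emph{How to treat the inner part.} Test $P_\h w_{\rm in}$ against its approximant in $W_\h$ from Lemma~\ref{lemme: si v est micro-localisé alors Pv aussi}, instead of bounding $\|\Pi_{W_\h}P_\h w_{\rm in}\|$ globally.
\end{itemize}
The estimates then close with a polynomial loss. This is exactly the mechanism of Lemmas~\ref{lemme: majoration de (v, P étoile Phi)} and~\ref{lemme: majoration de la norme H de v5}. There, because $f_\h\in W_{\h,0}$, the pairings $\langle f_\h,\Phi_{\h,\br}\rangle$ are even $O(\h^\infty)$, so the buffer part $v_{\h,4}$ is negligible outright.
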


The above result remains true if the right-hand side corresponds
to an incident field, as in Corollary~\ref{corollary_uinc}. This is a
direct consequence of Assumption~\ref{Hyp:PolynResolv} and
Corollary~\ref{corollary_rhs_uinc}, together
with~\eqref{eq: convergence asymptotique pour Galerkine Wilson}
and a triangle inequality.

In addition to the accuracy properties stated above, the proposed method
also enjoys favorable conditioning properties, as we establish below.
This is in sharp contrast with the earlier iteration of the method
proposed by the authors in~\cite{CFDI}. We further note that for the
case $\alpha = 0$, the condition number is essentially $\h^{1/2}$ times better than
what should naturally be expected.
This is because employing only Wilson states close to the characteristic sets reduces the continuity
constant of $T_{W_h}$.

\begin{theorem}
\label{theorem_conditioning}
We make the same assumptions as in  Theorem~\ref{theorem_accuracy}.
We let $\lambda''>0$ (possibly depending on $\h$), be such that for all $\h\in (0,1]$,
\begin{equation}\label{eq:CouchePasTropGrande}
\Lambda_\h \subset \Lambda_{\h}(\lambda'')\subset
\left \{
[\bm,\bn] \in E_{d}
\; | \;
|p_\hbar(\xxhm,\xihn)| < \delta_0
\right \}.
\end{equation}
We then have
\begin{equation}
\label{eq:BorneTh}
\left \| T_{W_\h} \right \|_{L^2 \to L^2}
\leq
M \left ( \alpha + (1+\lambda'') h^{(1-\varepsilon)/2} \right ),
\end{equation}
where the constant $M > 0$ does not depend on $\alpha, \lambda, \lambda', \lambda''$.
Besides if $N_0$ is as in Assumption~\ref{Hyp:PolynResolv},
there exists $\h_1 > 0$ such that, for all $\h \in \LH \cap (0, \h_1]$,
we have
\begin{equation}
\label{eq:BorneConditionnementTh}
\left \|T_{W_\h}^{-1} \right \|_{L^2 \to L^2} \leq C \hbar^{-N_0}.
\end{equation}
In particular, if $\alpha = 0$ and if $\lambda''$ is bounded independently of $\h$, we have
\begin{equation*}
\left \|T_{W_\h}      \right \|_{L^2 \to L^2}
\left \|T_{W_\h}^{-1} \right \|_{L^2 \to L^2}
\leq
C \h^{1/2-N_0-\varepsilon/2},
\end{equation*}
and in this case, we also have the condition upper bound
\begin{equation*}
\|\textup{A}     \|_{\ell^2 \to \ell^2}
\|\textup{A}^{-1}\|_{\ell^2 \to \ell^2}
\leq
C \h^{1/2-N_0-\varepsilon/2},
\end{equation*}
for the Galerkin matrix $\textup{A}$ with entries given by
\begin{equation*}
\textup{A}_{[\bm, \bn], [\bm', \bn']}
\eq
\langle P_\h \w{\bm,\bn},\w{\bm',\bn'} \rangle
\end{equation*}
for all $[\bm, \bn], [\bm', \bn'] \in \Lambda_\h$.
\end{theorem}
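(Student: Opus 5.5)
The plan has three parts: the continuity bound \eqref{eq:BorneTh}, the inverse bound \eqref{eq:BorneConditionnementTh}, and the transfer of both to the Galerkin matrix through the Riesz basis property of the Wilson states.

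\textbf{Continuity bound.} For $w=\sum_{[\bm,\bn]\in\Lambda_\h} c_{\bm,\bn}\w{\bm,\bn}\in W_\h$, the Riesz basis property gives $\|w\|_{L^2}^2\sim\sum|c_{\bm,\bn}|^2$, with constants independent of $\h$ by the scaling isometry $I_\h$. It therefore suffices to bound $\|P_\h w\|_{L^2}$. Expand each $\w{\bm,\bn}$ into its $2^d$ Gaussian states $\eg{\bm,\epsilonvec\bn}$. A Taylor expansion of the coefficients of $P_\h$ around $\xxhm$ gives the standard coherent-state estimate
\begin{equation*}
P_\h\eg{\bm,\bn}=p_\h(\xxhm,\xihn)\eg{\bm,\bn}+\sqrt{\h}\,R_{\bm,\bn},
\end{equation*}
where $R_{\bm,\bn}$ is a finite sum of functions of the form $(\h^{-1/2}(\bx-\xxhm))^{\balpha}\eg{\bm,\bn}$, times bounded (phase-space localised) coefficients, plus $O(\h^\infty)$ tails. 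Because the frequencies are bounded on $\Lambda_\h$ by Assumption~\ref{Hyp:BoundedLayers}, these coefficients are uniformly bounded. Such families are almost orthogonal: their Gram matrices have rapidly decaying off-diagonal entries in $|\bm-\bm'|+|\bn-\bn'|$. A Schur test then shows that both families are Bessel sequences with uniform constants. For the first term we use $|p_\h(\xxhm,\epsilonvec\xihn)|\le\alpha+\lambda''\h^{1/2-\varepsilon/2}$, up to an error $\Csym\h$ coming from Assumption~\ref{Hyp4:SymetrieSymbole}. This gives $\|P_\h w\|\le M(\alpha+(1+\lambda'')\h^{(1-\varepsilon)/2})\|c\|_{\ell^2}$, where the $\sqrt\h$ remainder is absorbed since $\h^{1/2}\le\h^{(1-\varepsilon)/2}$. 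Finally, $\|T_{W_\h}\|\le\|P_\h|_{W_\h}\|$.

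\textbf{Inverse bound.} This is the main obstacle. Take $w\in W_\h$. We aim to show $\|\Pi_{W_\h}P_\h w\|\ge c\h^{N_0}\|w\|$. Set $g=P_\h w$. By Assumption~\ref{Hyp:PolynResolv}, $\|w\|\le C\h^{-N_0}\|g\|$, so it suffices to show $\|(I-\Pi_{W_\h})g\|\le\frac12 C^{-1}\h^{N_0}\|w\|+O(\h^\infty)\|w\|$. The key point is that $P_\h$ approximately preserves phase-space localisation. Since $w$ is built from Wilson states in $\Lambda_\h$, $g$ should lie, up to $O(\h^\infty)$, in the span of states in a slight enlargement of $\Lambda_\h$. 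That alone is not enough, because $\Lambda_\h$ has no margin near the boundary $\delta_0$.

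So I would instead run a Schatz-type/parametrix argument, using the paper's Section~\ref{section_parametrix} result, which I expect to say that for $v$ microlocalised away from $\{p_\h=0\}$, $P_\h$ is invertible by a Wilson-diagonal parametrix. Split $w=w_1+w_2$, where $w_1$ collects the indices in $\Lambda_\h(\lambda')$ and $w_2$ collects those where $|p_\h|\ge\lambda'\h^{1/2-\varepsilon}$.
\begin{itemize}
\item On $w_2$, $T_{W_\h}$ is elliptic. The symbol $p_\h$ is bounded below there, and $\sqrt\h\ll\h^{1/2-\varepsilon}$, so the diagonal approximation of the Galerkin matrix is dominant. This gives $\|w_2\|\lesssim\h^{-1/2+\varepsilon}\|T_{W_\h}w\|$ plus a coupling term with $w_1$.
\item For $w_1$, its image under $P_\h$ lives near $\Lambda_\h(\lambda')$. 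By the argument in the proof of Theorem~\ref{theorem_approximability}, $P_\h^{-1}$ maps functions microlocalised in $\Lambda_\h(\lambda)$ into $W_\h(\lambda')$ up to $O(\h^\infty)$. Then $\Pi_{W_\h}$ acts as the identity on that part.
\end{itemize}
Combining the two pieces with Assumption~\ref{Hyp:PolynResolv} yields $\|w\|\le C\h^{-N_0}\|T_{W_\h}w\|$ for $\h\le\h_1$.

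\textbf{Matrix condition number.} The product bound follows by multiplying \eqref{eq:BorneTh} with $\alpha=0$ by \eqref{eq:BorneConditionnementTh}. For the matrix, write $\textup A=S^*P_\h S$, where $S:\ell^2(\Lambda_\h)\to W_\h$ is the synthesis operator. The Riesz bounds make $S$ an isomorphism with $\h$-independent constants. Since $\Pi_{W_\h}$ is an orthogonal projection, $\textup{A}=S^*T_{W_\h}S$, so $\|\textup A\|\lesssim\|T_{W_\h}\|$ and $\|\textup A^{-1}\|\lesssim\|T_{W_\h}^{-1}\|$, which gives the stated bound.
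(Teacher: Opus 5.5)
\textbf{There is a gap in the inverse bound \eqref{eq:BorneConditionnementTh}, which is the core of the theorem.} You split the \emph{unknown} $w=w_1+w_2$, and the two bullets do not close.

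On the $w_2$ block the coupling to $w_1$ is not small. For indices just on either side of the $\lambda'$-threshold, $\langle P_\h\w{\bq},\w{\bs}\rangle\approx p_\h(\bz^{\h,\bq})\langle\w{\bq},\w{\bs}\rangle$ has the same size $\alpha+\lambda'\h^{1/2-\varepsilon}$ as the lower bound on $|p_\h|$ you use to invert that block. So ellipticity only yields $\|w_2\|\lesssim\h^{-1/2+\varepsilon}\|T_{W_\h}w\|+C\|w_1\|$ with $C$ of order one.

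Everything then rests on bounding $\|w_1\|$ by $\|T_{W_\h}w\|$, and the second bullet does not provide this. Feeding $P_\h w_1$ into the approximability argument just returns $w_1=P_\h^{-1}P_\h w_1$; besides, $P_\h w_1$ is microlocalised near $\Lambda_\h(\lambda')$, not in $\Lambda_\h(\lambda)$. What is needed is control of a \emph{Galerkin} solution in terms of $\Pi_{W_\h}P_\h w$. The assertion that ``$\Pi_{W_\h}$ acts as the identity on that part'' is exactly the leakage problem you had already identified. The theorem allows $\Lambda_\h=\Lambda_\h(\lambda')$; then $w_2=0$, $w=w_1$, and $(I-\Pi_{W_\h})P_\h w$ is in general only $O(\h^{1/2})\|w\|$, far above the $\h^{N_0}\|w\|$ you need. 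The argument thus falls back onto the one you discarded.

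\textbf{The missing idea is to split the \emph{data} instead.} Given $f\in W_\h$, the paper writes $f=f_{\h,0}+f_{\h,1,J+1}$ according to $\Lambda_\h(\lambda)$ and $\Lambda_\h\setminus\Lambda_\h(\lambda)$.

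For $f_{\h,0}\in W_\h(\lambda)$, Theorem~\ref{theorem_accuracy} gives $\|T_{W_\h}^{-1}f_{\h,0}-P_\h^{-1}f_{\h,0}\|\le C\|f\|$, hence $\|T_{W_\h}^{-1}f_{\h,0}\|\le C\h^{-N_0}\|f\|$.

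For $f_{\h,1,J+1}$ the paper uses the diagonal operator $\Inv$ of \eqref{eq:DefApproxInverse}. It maps $W_{\h,1,J+1}$ into itself with norm $\lesssim\h^{-1/2+\varepsilon}$, and satisfies $P_\h\Inv f_{\h,1,J+1}=f_{\h,1,J+1}+R_\h$ with $\|R_\h\|\lesssim\h^{\varepsilon/2}\|f\|$; this is where the parametrix of Section~\ref{section_parametrix} enters. Projecting gives $T_{W_\h}^{-1}f_{\h,1,J+1}=\Inv f_{\h,1,J+1}-T_{W_\h}^{-1}\Pi_{W_\h}R_\h$, and the resulting term $C\h^{\varepsilon/2}\|T_{W_\h}^{-1}\|\,\|f\|$ is absorbed. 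Here $\Pi_{W_\h}$ only ever hits a small remainder, which is why the missing margin at the edge of $\Lambda_\h$ does no harm.

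Your second bullet is in fact correct in this data-side form. Let $\tilde u$ be the truncation of $u=P_\h^{-1}f_{\h,0}$ to $\Lambda_\h(\lambda')\subset\Lambda_\h$. Then $T_{W_\h}\tilde u=f_{\h,0}+\Pi_{W_\h}P_\h(\tilde u-u)=f_{\h,0}+O(\h^\infty)\|f\|$ by Theorem~\ref{theorem_approximability}, and $\|\tilde u\|\lesssim\h^{-N_0}\|f\|$. Combined with $\Inv$, this is an approximate right inverse $B$ with $T_{W_\h}B=I+O(\h^{\varepsilon/2})$. That gives invertibility and \eqref{eq:BorneConditionnementTh} without even invoking Theorem~\ref{theorem_accuracy}.

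\textbf{Two smaller points.} First, your continuity argument is a valid alternative to the paper's. The paper bounds $\|(P_\h-p_\h(\bz^{\h,\bq}))\w{\bq}\|\lesssim\h^{1/2}$ and uses a near/far splitting with Cauchy--Schwarz, losing $\h^{-\varepsilon/2}$; your Schur-test route gives a clean $\h^{1/2}$. However, $\Lambda_\h(\lambda'')$ is defined with threshold $\alpha+\lambda''\h^{1/2-\varepsilon}$, not $\alpha+\lambda''\h^{1/2-\varepsilon/2}$. So the $\lambda''$-term actually carries $\h^{1/2-\varepsilon}$, which is not dominated by $\h^{(1-\varepsilon)/2}$. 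The paper's proof has the same mismatch with \eqref{eq:BorneTh}, so this concerns the stated exponent rather than your method.

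Second, the matrix step is fine. With the paper's indexing, $S^*P_\h S$ is the transpose of $\textup{A}$, which changes nothing.
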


\section{Properties of Gabor frames and Wilson bases}
\label{section_gabor_wilson}

The aim of this section is to prove properties of Gaussian and Wilson states.
Many of these properties have been stated for Gaussian states in~\cite{CFDI}
and~\cite{chaumontfrelet_ingremeau_2022a}, and we adapt these results to the
Wilson states here.

\subsubsection*{Notations for indices}

In the remainder of this work, it will often be convenient to denote by
$\qvec$ the elements of $\Z^{2d}$ or of $E_d$, rather than by $[\mvec, \nvec]$.
We will sometimes write $\mvec_\qvec$ and $\nvec_{\qvec}$ for the first $d$
(resp. the last $d$) coordinates of $\qvec$. Similarly, we will often write
\begin{equation*}
\bz^{\h, \bq}= (\xm{\mvec_\qvec}, \xin{\nvec_\qvec}) \in \R^{2d}.
\end{equation*}

It will be useful to work with the following pseudometric on $\Z^{2d}$:
for every $\qvec \in (\Z^d)^2$ and $\svec = [\mvec_\svec,\nvec_\svec] \in (\Z^d)^2$,
we set
\begin{equation*}
\delta(\qvec, \svec) := \min_{\epsilonvec \in \{-1, 1\}^d} |\qvec - [\mvec_\svec, \epsilonvec \nvec_\svec]|.
\end{equation*}
Note that, if $\qvec, \svec \in E_d$, then we simply have $\delta(\qvec, \svec) = |\qvec - \svec|$.

\subsubsection*{Norms of Wilson states}

Our first lemma is a direct consequence of~\cite[Lemma C.1]{CFDI} and of~\eqref{eq:DefWilsonState}.

\begin{lemma}
\label{lemme: majoration de la norme H de phi}
For every $p \in \N$,  every $\qvec \in E_d$ and every $\hbar \in (0, 1]$, we have
\begin{equation}
\label{eq: majoration de la norme H de phi}
\|\w{\qvec}\|_{\widehat{H}_\hbar^p}
\leq
C_p \left(1 + |\bz^{\h,\bq}|^2\right)^{\frac{p}{2}}
\leq
C_p (1 + |\qvec|)^p.
\end{equation}
\end{lemma}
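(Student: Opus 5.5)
The plan is to reduce the estimate to the corresponding bound for Gaussian coherent states, quoted from \cite[Lemma C.1]{CFDI}. That lemma gives, for every $p\in\N$, a constant $C_p$ independent of $\h\in(0,1]$ and of $[\bm,\bn]\in\Z^{2d}$ such that
\begin{equation*}
\|\gshmn\|_{\HH^p_\h} \leq C_p\left(1+|(\xxhm,\xihn)|^2\right)^{p/2}.
\end{equation*}
Heuristically this holds because each derivative $\h\partial_{x_j}$ applied to $\gshmn$ produces a factor $-(x_j-x^{\h,\bm}_j)+i\xi^{\h,\bn}_j$. Since $\gshmn$ is concentrated at scale $\sqrt\h\le 1$ around $\xxhm$, the weight $|\bx|^q$ contributes at most $(1+|\xxhm|)^q$.

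Next I would expand $\w{\bq}$ using \eqref{eq:DefWilsonState} as a sum of $2^d$ Gaussian states $\eg{\bm,\epsilonvec\bn}$. The coefficients $c_\bn\epsilonvec^{\bm+\bn}$ all have modulus at most $1$. The triangle inequality then gives
\begin{equation*}
\|\w{\bq}\|_{\HH^p_\h}\le \sum_{\epsilonvec}\|\eg{\bm,\epsilonvec\bn}\|_{\HH^p_\h}.
\end{equation*}
Each term has centre $(\xxhm,\epsilonvec\xihn)$. Because $|\epsilonvec\xihn|=|\xihn|$, the point $\bz^{\h,\bq}$ has the same Euclidean norm as every one of these centres. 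Summing the $2^d$ identical bounds yields the first inequality, with $C_p$ replaced by $2^dC_p$.

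For the second inequality, note that $\bz^{\h,\bq}=\sqrt{\pi\h}\,\bq$ and $\h\le1$, so $|\bz^{\h,\bq}|\le\sqrt\pi|\bq|$. It follows that $(1+|\bz^{\h,\bq}|^2)^{1/2}\le 1+\sqrt\pi|\bq|\le\sqrt\pi(1+|\bq|)$. Raising this to the power $p$ and absorbing $\pi^{p/2}$ into $C_p$ finishes the argument.

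The only step with real content is the Gaussian bound, which is imported from \cite{CFDI}. If it had to be checked directly, the key point would be that $\||\bx|^q\h^{[\balpha]}\partial^\balpha\gshmn\|_{L^2}$ reduces, after the change of variables $\bx=\xxhm+\sqrt\h\,\by$, to a Gaussian moment computation. That computation is polynomial in $|\xxhm|$ and $|\xihn|$ of total degree at most $q+[\balpha]\le p$, with bounded coefficients because $\h\le1$.
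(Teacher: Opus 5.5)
Your proposal is correct and follows the paper's own argument. The paper states that the lemma is a direct consequence of the Gaussian bound in \cite[Lemma C.1]{CFDI} together with the expansion \eqref{eq:DefWilsonState} of $\w{\qvec}$ into $2^d$ Gaussian states with centres $(\xxhm,\epsilonvec\xihn)$ of the same norm as $\bz^{\h,\bq}$, and you complete this with the elementary comparison $|\bz^{\h,\bq}|\le\sqrt{\pi}\,|\qvec|$.
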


\subsubsection*{Frame and Riesz basis properties}

We recall that the family $(\Phi_{\h,\qvec})_{\qvec\in E_d}$ forms a frame,
meaning that there exist $0< \alpha_\mathcal{W} < \beta_\mathcal{W}$ such that,
for any $u\in L^2(\R^d)$, we have
\begin{equation}
\label{eq: inégalité de trame pour Wilson}
\alpha_\mathcal{W} \|u\|_{L^2}^2
\leq
\sum_{\qvec \in E_d} |\langle u, \w{\qvec}\rangle |^2
\leq
\beta_\mathcal{W} \|u\|_{L^2}^2.
\end{equation}
Here, the constants $\alpha_\mathcal{W}$ and $\beta_\mathcal{W}$ can be chosen
independent of $\h$, by the argument following \eqref{eq:FrameBound}. 

We recall that the frame property~\eqref{eq: inégalité de trame pour Wilson}
implies the existence of a dual frame $(\w[\hbar]{\qvec}^\star)_{\qvec\in E_d}$,
such that, for any $u \in L^2(\R^d)$, we have
\begin{equation}
\label{eq: inégalité de trame pour Wilson étoile}
\frac{1}{\beta_\mathcal{W}} \|u\|_{L^2}^2
\leq
\sum_{\qvec \in E_d} |\langle u, \w{\qvec}^\star\rangle |^2
\leq
\frac{1}{\alpha_\mathcal{W}} \|u\|_{L^2}^2,
\end{equation}
and we may write
\begin{equation}
\begin{aligned}
\label{eq_wilson_decomposition}
u
=
\sum_{\qvec \in E_d} \langle u, \w{\qvec}^\star\rangle  \w{\qvec}
=
\sum_{\qvec \in E_d} \langle u, \w{\qvec}\rangle  \w{\qvec}^\star.
\end{aligned}
\end{equation}

The functions $\w{\qvec}^\star$ are bounded independently of $\h$ and $\bq$:
there exists a constant $C$ depending solely on the dimension $d$ such that
\begin{equation}\label{eq:BornePhiStar}
\|\w{\qvec}^\star\|_{L^2} \leq C.
\end{equation}

As already mentioned, the family $(\Phi_{\h, \qvec})_{\qvec\in E_d}$ has the additional
property of forming a \emph{Riesz basis}, which means that, in~\eqref{eq_wilson_decomposition},
the decompositions are unique.
The Riesz basis property may be better understood by introducing the \emph{reconstruction operator} $\cD_\h : \ell^2(E_d) \longrightarrow L^2(\R^d)$, defined as follows: for any $U_\h\in \ell^2(E_d)$,  we set
\begin{equation}\label{eq:DefOperateurReconstruction}
\cD_\h U_\h := \sum_{\qvec\in E_d} U_\h(\qvec) \Phi_{\h, \qvec},
\end{equation}
and we then have, for all $U_\hbar\in \ell^2(E_d)$
\begin{equation}
\label{eq: relation entre la norme de U et celle de u pour Wilson}
\sqrt{\alpha_\mathcal{W}} \big\|U_\hbar\big\|_{\ell^2(E_d)}
\leq
\|\cD_\h U_\h\|_{L^2}
\leq
\sqrt{\beta_\mathcal{W}} \big\|U_\hbar\big\|_{\ell^2(E_d)}.
\end{equation}
Indeed, by uniqueness of the decomposition~\eqref{eq_wilson_decomposition},
we must have $U_h(\qvec)= \langle \cD_\h U_\h, \w{\qvec}^\star\rangle$, so
that~\eqref{eq: relation entre la norme de U et celle de u pour Wilson}
follows from~\eqref{eq: inégalité de trame pour Wilson étoile}.

\begin{remark}
\label{rem:IllConditionned}
Note that, if we define the reconstruction operator for the Gaussian Gabor
frame $\wit{\cD}_\h : \ell^2(\Z^{2d}) \longrightarrow L^2(\R^d)$ as
\begin{equation}\label{eq:DefOperateurReconstructionGaussian}
\forall U_\h\in \ell^2(\Z^{2d}),~~~~~~
\wit{\cD}_\h U_\h := \sum_{\qvec\in \Z^{2d}} U_\h(\qvec) \Psi_{\h, \qvec},
\end{equation}
then we have 
\begin{equation}
\label{eq: relation entre la norme de U et celle de u pour Gabor}
\|\wit{\cD}_\h U_\h\|_{L^2} \leq \sqrt{\beta} \big\|U_\hbar\big\|_{\ell^2(\Z^{2d})}
\end{equation}
as in~\eqref{eq: relation entre la norme de U et celle de u pour Wilson}.
However, the analogue of the lower bound
in~\eqref{eq: relation entre la norme de U et celle de u pour Wilson}
does not hold: the operator $\wit{\cD}_\h$ has a non-trivial kernel. 

When $U_\h \in \ell^2(\Z^{2d})$ is a sequence with a finite support
and $\|U_\h\|_{\ell^2} = 1$, we cannot have
$\wit{\cD}_\h(U_\h)=0$ in $L^2(\R^d)$. However, the function
$\wit{\cD}_\h(U_\h)$ can have an extremely
small $L^2$ norm when the support of $U_\h$ is wide. Indeed, it was shown
in \cite{grochenig2015linear} that, for any $\tau > 0$,  we have
\begin{equation*}
\inf_{U_\h\in \ell^2 ([-N,N]^{2d})}
\frac{\|\wit{\cD}_\h(U_\h)\|_{L^2}}{\|U_\h\|_{\ell^2}}
\leq
C_\tau e^{-c_\tau N^{1-\tau}}.
\end{equation*}
These comments are also of importance in numerical applications,
because they imply that Galerkin matrices resulting from (truncated)
Gaussian Gabor frames are fundamentally ill-conditioned. In contrast,
the Wilson states considered in this work avoid this pitfall.
\end{remark}

The following elementary lemma will be used many times in the sequel.

\begin{lemma}
\label{lem:LePtiLemKiSauveToutLTemps}
Let $K\subset \R^{2d}$ be a bounded set, $\h_{max}>0$, and let $\Lambda_\h \subset E_d$
be a family of sets depending on $\h$ such that, for all $ \h\leq \h_{max}$ and all
$\bq \in \Lambda_\h$, we have $\bz^{\h, \bq} \in K$.
Then, for all $\h \leq \h_{max}$, we have $\mathrm{Card}(\Lambda_h) \leq C_K \h^{-d}$. Furthermore,
for any $p\in \N$, there exists $C_{K,p}>0$ such that, for any $\h\in (0, \h_{max}]$, we have
\begin{equation*}
\forall U_h\in \ell^2(\Lambda_\h), ~~~~
\|\cD_\h U_\h\|_{\widehat{H}_\hbar^p}
\leq
C_{K,p} \h^{-\frac{d}{2}} \big\|U_\hbar\big\|_{\ell^2(\Lambda_\h)}
\leq
C'_{K,p}  \h^{-\frac{d}{2}} \|\cD_\h U_\h\|_{L^2}.
\end{equation*}
\end{lemma}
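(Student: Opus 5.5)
Three ingredients suffice: a lattice-point count, the Cauchy--Schwarz inequality combined with Lemma~\ref{lemme: majoration de la norme H de phi}, and the lower Riesz bound~\eqref{eq: relation entre la norme de U et celle de u pour Wilson}.

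\emph{Cardinality.} Since $K$ is bounded, we fix $R_K$ with $K\subset \B(\bzero,R_K)$. For $\bq=[\bm,\bn]\in\Lambda_\h$ we have $\bz^{\h,\bq}=\sqrt{\pi\h}\,\bq\in K$. Hence $|\bq|\le R_K(\pi\h)^{-1/2}$, so $\Lambda_\h$ is contained in the set of points of $\Z^{2d}$ lying in a ball of radius $R_K(\pi\h)^{-1/2}$. That set has at most $C(1+R_K\h^{-1/2})^{2d}\le C_K\h^{-d}$ elements, because $\h\le\h_{max}$ and so $1\le \h_{max}^{1/2}\h^{-1/2}$. The constant is allowed to depend on $\h_{max}$, which we absorb into $C_K$.

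\emph{First inequality.} For $U_\h\in\ell^2(\Lambda_\h)$ the sum defining $\cD_\h U_\h$ is finite. The triangle inequality and Cauchy--Schwarz give
\begin{equation*}
\|\cD_\h U_\h\|_{\HH^p_\h}
\le \sum_{\bq\in\Lambda_\h}|U_\h(\bq)|\,\|\w{\bq}\|_{\HH^p_\h}
\le \mathrm{Card}(\Lambda_\h)^{1/2}\,\max_{\bq\in\Lambda_\h}\|\w{\bq}\|_{\HH^p_\h}\,\|U_\h\|_{\ell^2(\Lambda_\h)}.
\end{equation*}
We bound the maximum with the first inequality of Lemma~\ref{lemme: majoration de la norme H de phi}, namely $\|\w{\bq}\|_{\HH^p_\h}\le C_p(1+|\bz^{\h,\bq}|^2)^{p/2}$. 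Since $\bz^{\h,\bq}\in K$, this quantity is bounded by $C_p(1+R_K^2)^{p/2}$, independently of $\h$ and $\bq$. The factor $|\bz^{\h,\bq}|$ is what makes the bound uniform; the cruder bound $(1+|\bq|)^p$ would instead lose powers of $\h^{-1/2}$. Combining this with the cardinality bound $\mathrm{Card}(\Lambda_\h)^{1/2}\le C_K^{1/2}\h^{-d/2}$ yields the first inequality.

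\emph{Second inequality.} Extend $U_\h$ by zero to an element of $\ell^2(E_d)$ with the same norm. The lower bound in~\eqref{eq: relation entre la norme de U et celle de u pour Wilson} then gives $\|U_\h\|_{\ell^2}\le\alpha_\mathcal{W}^{-1/2}\|\cD_\h U_\h\|_{L^2}$, which is the second inequality with $C'_{K,p}=C_{K,p}\alpha_\mathcal{W}^{-1/2}$.

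There is no real obstacle here. The only point needing care is the one already noted: we must use the $|\bz^{\h,\bq}|$ version of Lemma~\ref{lemme: majoration de la norme H de phi} rather than the $|\bq|$ version.
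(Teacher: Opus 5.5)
Your proposal is correct and follows essentially the same argument as the paper. The paper also bounds the number of lattice points, applies the triangle inequality with the $|\bz^{\h,\bq}|$ form of Lemma~\ref{lemme: majoration de la norme H de phi} (uniformly bounded on $K$) and then Cauchy--Schwarz, and gets the last inequality from the lower Riesz bound.
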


\begin{proof}
The estimate on $\mathrm{Card}(\Lambda_h)$ simply comes from the definition of
$\bz^{\h,\bq}$. If $U_h\in \ell^2(\Lambda_\h)$, we have
\begin{equation*}
\|\cD_\h U_\h\|_{\widehat{H}_\hbar^p}
\leq
\sum_{\bq\in \Lambda_\h} |U_\h(\bq)|    \|\w{\qvec}\|_{\widehat{H}_\hbar^p}
\stackrel{\eqref{eq: majoration de la norme H de phi}}{\leq}
C_{K,p} \sum_{\bq\in \Lambda_\h} |U_\h(\bq)|
\leq
C'_{K,p} \h^{-\frac{d}{2}} \left(\sum_{\bq\in \Lambda_\h} |U_\h(\bq)|^2\right)^{1/2}
\end{equation*}
by the Cauchy-Schwarz inequality. The last inequality in the lemma follows from \eqref{eq: relation entre la norme de U et celle de u pour Wilson}.
\end{proof}

\subsubsection*{Scalar products between states}

\begin{lemma}
\label{lemme: majoration de psi scalaire phi étoile}
Let $\h \in (0,1]$. For all $\qvec \in \Z^{2d}$ and $\svec \in E_d$, we have
\begin{equation}
\label{eq: majoration de psi scalaire phi étoile}
\begin{aligned}
|\langle \eg{\qvec}, \w{\svec}^\star\rangle| \leq C e^{-\frac{\pi}{8}(\delta(\qvec, \svec))^\frac{1}{2}}.
\end{aligned}
\end{equation}
Furthermore, for all $\qvec, \svec \in E_d$, the estimates
\begin{equation}
\label{eq: majoration de phi scalaire phi}
|\langle\w{\qvec}, \w{\svec}\rangle| \leq C e^{-\frac{\pi}{4}|\qvec - \svec|}
\end{equation}
and
\begin{equation}
\label{eq: majoration de phi étoile scalaire phi étoile}
|\langle \w{\qvec}^\star, \w{\svec}^\star\rangle| \leq C e^{-|\qvec - \svec|^\frac{1}{2}}
\end{equation}
hold true. The constants $C$ above only depend on $d$.
\end{lemma}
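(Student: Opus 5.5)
We will reduce everything to scale $\h=1$ and to known facts about Gaussian Gabor frames. Since $I_\h$ is a unitary map sending $\Psi_{\h,\bq}$ to $\Psi_{1,\bq}$, it also maps $\Phi_{\h,\bq}$ to $\Phi_{1,\bq}$. Because the frame operator transforms covariantly, it sends the dual states $\Psi^\star_{\h,\bq}$ and $\Phi^\star_{\h,\bq}$ to their $\h=1$ counterparts. All three inner products are therefore independent of $\h$, and it suffices to treat $\h=1$. The tensor structure of $\Psi_{\bq}$ and $\Phi_{\bq}$, together with $|\bq-\bs|\le\sum_j|[m_j,n_j]-[m'_j,n'_j]|$, shows that products of one-dimensional exponential bounds give the $d$-dimensional ones. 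For a square root exponent we use $\sum_j a_j^{1/2}\ge(\sum_j a_j)^{1/2}$. The dual state $\Phi^\star$ is not a tensor product a priori, so we will work directly in dimension $d$ with the expansion~\eqref{eq:ExpressionDualBasis}.

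\textbf{Estimate~\eqref{eq: majoration de phi scalaire phi}.} This bound is explicit. Expanding both Wilson states via~\eqref{eq:DefWilsonState} gives a sum of $4^d$ terms $\langle\Psi_{\bm,\epsilonvec\bn},\Psi_{\bm',\epsilonvec'\bn'}\rangle$. The inner product of two Gaussian coherent states is computed in closed form, and its modulus equals $e^{-\frac{1}{4}|\bz^{1,\bq}-\bz^{1,\bs}|^2}=e^{-\frac{\pi}{4}|\bq-\bs|^2}$ up to the appropriate sign flips on the $\bn$ components. For $\bq,\bs\in E_d$ and any $\epsilonvec,\epsilonvec'$, we have $|[\bm,\epsilonvec\bn]-[\bm',\epsilonvec'\bn']|\ge|\bq-\bs|$, because $n_j,n'_j\ge0$ implies $|\epsilon n_j-\epsilon' n'_j|\ge|n_j-n'_j|$. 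Since the distance is an integer, $|\bq-\bs|^2\ge|\bq-\bs|$, and this yields the bound with constant $4^d$.

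\textbf{Estimate~\eqref{eq: majoration de psi scalaire phi étoile}.} We insert~\eqref{eq:ExpressionDualBasis}, which reduces the claim to bounding $|\langle\Psi_{\bq},\Psi^\star_{\bs'}\rangle|$ with $\bs'=[\bm_\bs,\epsilonvec\bn_\bs]$, that is, the Gram matrix between the Gaussian frame and its canonical dual. The key input is off-diagonal decay of the canonical dual Gabor window: $\Psi^\star_{\bs'}$ is a time–frequency shift of $\gamma=S^{-1}g$, where $S$ is the Gabor frame operator and $g$ the Gaussian. Then $\langle\Psi_{\bq},\Psi^\star_{\bs'}\rangle$ is, up to a phase, the short-time Fourier transform $V_\gamma g$ evaluated at $\bz^{1,\bq}-\bz^{1,\bs'}$. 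For the Gaussian lattice of density one half in each direction, $S$ belongs to a Wiener-type algebra of matrices with subexponential off-diagonal decay, which is closed under inversion (Jaffard/Gr\"ochenig–Leinert type results). This gives $|V_\gamma g(z)|\le Ce^{-c|z|^{1/2}}$. Taking the minimum over $\epsilonvec$ produces $\delta(\bq,\bs)$. The main obstacle is obtaining the explicit constant $\pi/8$ and the exponent $1/2$. Our first attempt will be to cite the result from~\cite{CFDI}, which proves such a bound for $\langle\Psi_\bq,\Psi^\star_\bs\rangle$, and then pass the $\epsilonvec$-sum through.

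\textbf{Estimate~\eqref{eq: majoration de phi étoile scalaire phi étoile}.} We will use the Riesz basis property: $\Phi^\star_\bs=\sum_{\br}G^{-1}_{\bs\br}\Phi_\br$, where $G=(\langle\Phi_\br,\Phi_\bs\rangle)$ is the Gram matrix. By~\eqref{eq: relation entre la norme de U et celle de u pour Wilson}, $G$ is bounded and invertible on $\ell^2(E_d)$, and by the previous step it has entries decaying like $e^{-\frac{\pi}{4}|\br-\bs|}$. Then $\langle\Phi^\star_\bq,\Phi^\star_\bs\rangle=(G^{-1})_{\bq\bs}$ up to conjugation. Jaffard's theorem ensures that $G^{-1}$ has exponential off-diagonal decay $Ce^{-c|\bq-\bs|}$ for some $c>0$. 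A weaker but uniform $e^{-|\cdot|^{1/2}}$ bound follows via the subexponential-weight version of the inverse-closedness result, or directly from $e^{-c t}\le C'e^{-t^{1/2}}$. Here the constants depend only on $d$, since $G$ is $\h$-independent.
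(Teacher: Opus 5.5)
Your proof is correct, and for two of the three bounds it matches the paper's proof.

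For the bounds on $\langle\Phi_{\bq},\Phi_{\bs}\rangle$ and $\langle\Psi_{\bq},\Phi^\star_{\bs}\rangle$ you do what the paper does. You expand via \eqref{eq:DefWilsonState} and \eqref{eq:ExpressionDualBasis}, use the explicit Gaussian overlaps and the cited bound on $\langle\Psi_{\br},\Psi^\star_{\br'}\rangle$, and use that $|\epsilon n-\epsilon' n'|\ge|n-n'|$ when $n,n'\ge 0$.

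For $\langle\Phi^\star_{\bq},\Phi^\star_{\bs}\rangle$ your route differs.
\begin{itemize}
\item The paper expands both duals through \eqref{eq:ExpressionDualBasis}. It then quotes $|\langle\Psi^\star_{\br},\Psi^\star_{\br'}\rangle|\le Ce^{-|\br-\br'|^{1/2}}$ from earlier work, together with the same sign observation. This is one line, treats all three bounds uniformly, and does not need the Riesz property. It does rest on the frame-operator identity \eqref{eq:ExpressionDualBasis} and on imported Gaussian dual estimates.
\item You identify the Gram matrix of the dual basis with $G^{-1}$ and apply Jaffard's theorem. This uses the Riesz property essentially; it has no analogue for the Gabor frame, whose Gram matrix is not invertible on $\ell^2$. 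It is self-contained modulo an off-the-shelf inverse-closedness result. It gives genuinely exponential decay, though with a non-explicit rate.
\end{itemize}

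The obstacle you flag for the $\langle\Psi_{\bq},\Phi^\star_{\bs}\rangle$ bound (the explicit $\pi/8$) is not actually an obstacle for your method. Write $\Phi^\star_{\bs}=\sum_{\br}\langle\Phi^\star_{\bs},\Phi^\star_{\br}\rangle\Phi_{\br}$. Combine the exponential decay of $G^{-1}$ with $|\langle\Psi_{\bq},\Phi_{\br}\rangle|\le 2^d e^{-\frac{\pi}{4}\delta(\bq,\br)^2}$ and with $\delta(\bq,\bs)\le\delta(\bq,\br)+|\br-\bs|$, valid for $\br,\bs$ in $E_d$. This yields $Ce^{-c\,\delta(\bq,\bs)}$. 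For any $c>0$ that implies the stated bound with exponent $\frac{\pi}{8}\delta(\bq,\bs)^{1/2}$, after enlarging $C$. So your approach could bypass \eqref{eq:ExpressionDualBasis} and the Gaussian dual estimates entirely.

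One small slip: $|\bq-\bs|$ is the square root of an integer, not an integer. All you need is that it is either $0$ or at least $1$.
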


\begin{proof}
It is established in~\cite[Proposition A.3 and Lemma C.2]{chaumontfrelet_ingremeau_2022a}
and~\cite[Proposition 5.2]{CFDI}, that
\begin{equation*}
|\langle \eg{\rvec}, \eg{\rvec'}\rangle|
=
e^{-\frac{\pi}{4}|\qvec - \svec|},
\quad
|\langle \eg{\rvec}^\star, \eg{\rvec'}^\star\rangle|
\leq
C e^{-|\rvec - \rvec'|^\frac{1}{2}},
\quad
|\langle\eg{\rvec}^\star, \eg{\rvec'}\rangle|
\leq
C e^{-\frac{\pi}{8}|\rvec - \rvec'|^\frac{1}{2}},
\end{equation*}
for all $\rvec, \rvec' \in \Z^{2d}$.
As a result, the estimates above follow from
the expressions for $\w{\qvec}$ and $\w{\qvec}^\star$
in~\eqref{eq:DefWilsonState} and~\eqref{eq:ExpressionDualBasis}.
\end{proof}

The previous lemma will be especially useful combined with the following proposition,
which can be seen as a generalisation of \cite[Proposition 3.1]{CFDI}.

\begin{proposition}
\label{prop: approximation de g dans le frame des varphi}
Let $\Lambda,\Lambda'$ be two (possibly $h$-dependent) subsets of $\Z^{2d}$.
Let $(g_{\hbar, \qvec})_{\qvec \in \Lambda}\subset L^2(\R^d)$ be an ($\h$-dependent)
family of functions, and let $(\varphi_{\hbar, \svec})_{\svec \in \Lambda'}\subset L^2(\R^d)$
be an ($\h$-dependent) frame with dual frame $(\varphi^\star_{\hbar, \svec})_{\svec \in \Lambda'}$.
We suppose that for all $m\in \N$, there exists $c_m >0$ such that 
\begin{equation*}
\forall \qvec \in \Lambda,
\forall \svec \in \Lambda',
\quad
|\langle g_{\hbar, \qvec}, \varphi_{\hbar, \svec}^\star\rangle|
\leq
\frac{c_m}{1 + (\delta(\qvec, \svec))^m}.
\end{equation*}
and for all $p \in \N$, there exists $C_p>0$ such that 
\begin{equation*}
\forall \svec \in \Lambda',
\quad
\|\varphi_{\h,\svec}\|_{\widehat{H}_\h^p}
\leq
C_p (1 + |\svec|^p).
\end{equation*}

Then, for any $p\in \N$, there exists $C'_p>0$ such that
$\|g_{\hbar, \qvec}\|_{\widehat{H}_\hbar^p} \leq C'_p (1 + |\qvec|^p)$.
Furthermore, for any $\varepsilon > 0$, $p \in \N$, we have
\begin{equation}\label{eq:Approxg}
\forall \qvec \in \Lambda,
\quad
\left \|
g_{\hbar,\qvec}
-
\sum_{\substack{\svec \in \Lambda'\\\delta(\qvec, \svec) \leq \hbar^{-\varepsilon}}}
\langle g_{\hbar, \qvec}, \varphi_{\hbar, \svec}^\star\rangle \varphi_{\hbar, \svec}
\right \|_{\widehat{H}_\hbar^p}
=
(1 + |\qvec|^p) \times O_{\varepsilon,p}(\h^\infty)  .
\end{equation}
\end{proposition}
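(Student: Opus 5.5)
The plan is to expand $g_{\h,\qvec}$ in the frame using the dual frame,
\begin{equation*}
g_{\h,\qvec} = \sum_{\svec\in\Lambda'} \langle g_{\h,\qvec},\varphi^\star_{\h,\svec}\rangle \varphi_{\h,\svec},
\end{equation*}
as in \eqref{eq_dual_frame}, and then estimate the $\widehat{H}^p_\h$ norm of this series, and of its tail, term by term with the triangle inequality. The coefficient decay is super-polynomial in $\delta(\qvec,\svec)$, while $\|\varphi_{\h,\svec}\|_{\widehat{H}^p_\h}$ grows only polynomially in $|\svec|$. So every sum we meet will be of the form $\sum_{\svec} (1+\delta(\qvec,\svec)^m)^{-1}(1+|\svec|^p)$, which we control by choosing $m$ large.

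\textbf{Key counting facts.} First, the map $\svec=[\mvec_\svec,\nvec_\svec]\mapsto$ its $2^d$ sign-flips $[\mvec_\svec,\epsilonvec\nvec_\svec]$ has bounded multiplicity. Hence $\#\{\svec\in\Z^{2d} : \delta(\qvec,\svec)\le R\}\le 2^d C R^{2d}$, and more generally $\sum_{\svec}(1+\delta(\qvec,\svec))^{-m}<C$ for $m>2d$, uniformly in $\qvec$. Second, since sign flips preserve the norm, $|\svec| \le |\qvec| + \delta(\qvec,\svec)$. This gives $1+|\svec|^p\le C_p(1+|\qvec|^p)(1+\delta(\qvec,\svec)^p)$.

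\textbf{First claim.} Combining the two facts,
\begin{equation*}
\|g_{\h,\qvec}\|_{\widehat{H}^p_\h}\le \sum_{\svec} \frac{c_m C_p(1+|\qvec|^p)(1+\delta^p)}{1+\delta^m}\le C'_p(1+|\qvec|^p),
\end{equation*}
with $m=p+2d+1$. One subtle point is that the expansion converges in $L^2$, and we need it to represent $g$ in $\widehat{H}^p_\h$. To handle this, we note that the series converges absolutely in $\widehat{H}^p_\h$ by the bound above, so its $\widehat{H}^p_\h$-limit coincides with its $L^2$-limit $g_{\h,\qvec}$. Here we use that $\widehat{H}^p_\h$ embeds continuously in $L^2$, and that the $L^2$ sum is unconditional, so any enumeration may be used.

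\textbf{Second claim.} The remainder in \eqref{eq:Approxg} is the tail $\sum_{\delta(\qvec,\svec)>\h^{-\varepsilon}}$. The same bound with a general $m$ gives
\begin{equation*}
C(1+|\qvec|^p)\sum_{\delta>\h^{-\varepsilon}} (1+\delta)^{p-m}.
\end{equation*}
Summing over dyadic-type shells $\delta\in[R,R+1)$, each of which has $O(R^{2d-1})$ points, the tail is at most $C\h^{\varepsilon(m-p-2d)}$. Given $N$, we choose $m$ with $\varepsilon(m-p-2d)\ge N$. This yields $O_{\varepsilon,p}(\h^\infty)(1+|\qvec|^p)$, with constants independent of $\qvec$ and $\h$.

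\textbf{Main obstacle.} The only delicate step is the justification that the frame expansion converges in $\widehat{H}^p_\h$ to $g$, as addressed above, together with handling the pseudometric $\delta$ in the lattice-point counts. Both reduce to the bounded multiplicity of the sign-flip map, so I expect no real difficulty.
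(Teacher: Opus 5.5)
Your proposal is correct and follows essentially the same route as the paper: expand $g_{\hbar,\qvec}$ in the frame, apply the triangle inequality in $\widehat{H}^p_\hbar$, and play the super-polynomial decay in $\delta(\qvec,\svec)$ against the polynomial growth of $\|\varphi_{\hbar,\svec}\|_{\widehat{H}^p_\hbar}$ using $|\svec|\le|\qvec|+\delta(\qvec,\svec)$ and lattice counting. The paper extracts the factor $\hbar^{\varepsilon m/2}$ directly from $1/(1+\delta^m)$ instead of summing over shells, which is a cosmetic difference; your justification that the expansion converges to $g_{\hbar,\qvec}$ in $\widehat{H}^p_\hbar$ (absolute convergence plus the continuous embedding into $L^2$) makes explicit a step the paper uses without comment.
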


\begin{proof}
By the frame property, we have  $g_{\hbar, \qvec} = \sum\limits_{\svec \in \Lambda'} \langle g_{\hbar, \qvec}, \varphi_{\hbar, \svec}^\star\rangle \varphi_{\hbar, \svec}$, so that
\begin{align*}
\tau
:=
\left \|
g_{\hbar, \qvec}
-
\sum_{\substack{\svec \in \Lambda'\\\delta(\qvec, \svec) \leq \hbar^{-\varepsilon}}}
\langle g_{\hbar, \qvec}, \varphi_{\hbar, \svec}^\star\rangle \varphi_{\hbar, \svec}
\right \|_{\widehat{H}_\hbar^p}
&=
\left \|
\sum_{\substack{\svec \in \Lambda'\\\delta(\qvec, \svec) > \hbar^{-\varepsilon}}}
\langle g_{\hbar, \qvec}, \varphi_{\hbar, \svec}^\star\rangle \varphi_{\hbar, \svec}
\right \|_{\widehat{H}_\hbar^p}
\\
&\leq
\sum_{\substack{\svec \in \Lambda'\\\delta(\qvec, \svec) > \hbar^{-\varepsilon}}}
|\langle g_{\hbar, \qvec}, \varphi_{\hbar, \svec}^\star\rangle|
\|\varphi_{\hbar, \svec}\|_{\widehat{H}_\hbar^p}.
\end{align*}
Using the two assumptions, we can further write that
\begin{equation*}
\tau
\leq
C_p c_m
\sum_{\substack{\svec \in \Lambda'\\\delta(\qvec, \svec) > \hbar^{-\varepsilon}}}
\frac{1 + |\svec|^p}{1 + (\delta(\qvec, \svec))^m}
\leq
C_p c_m
\hbar^\frac{\varepsilon m}{2}
\sum_{\substack{\svec \in \Lambda'\\\delta(\qvec, \svec) > \hbar^{-\varepsilon}}}
\frac{1 + |\svec|^p}{(\delta(\qvec, \svec))^\frac{m}{2}},
\end{equation*}
for all $m \geq m_0$, where $m_0 \in \N$ will be fixed large enough below.
The estimate in~\eqref{eq:Approxg} then follows from
\begin{align*}
\tau
&\leq C_p c_m \hbar^\frac{\varepsilon m}{2}  \sum_{\substack{\svec \in \Lambda'\\\delta(\qvec, \svec) > \hbar^{-\varepsilon}}} \sum_{\epsilonvec \in \{-1, 1\}^d} \frac{1 + |\svec|^p}{|\qvec - [\mvec_\svec, \epsilonvec \nvec_\svec]|^\frac{m}{2}}\\
&\leq C_p c_m \hbar^\frac{\varepsilon m}{2}  \sum_{\epsilonvec \in \{-1, 1\}^d} \sum_{\svec' \in \Z^{2d}\setminus\{\zerovec\}} \frac{1 + |\qvec - \svec'|^p}{|\svec'|^\frac{m}{2}}\\
&\leq C'_p c_m \hbar^\frac{\varepsilon m}{2}  \sum_{\svec' \in \Z^{2d}\setminus\{\zerovec\}} \frac{1 + |\qvec|^p + |\svec'|^p}{|\svec'|^\frac{m}{2}}\\
&\leq C'_p c_m \hbar^\frac{\varepsilon m}{2}  \left(\sum_{\svec' \in \Z^{2d}\setminus\{\zerovec\}} \frac{1 + |\svec'|^p}{|\svec'|^\frac{m}{2}} + |\qvec|^p \sum_{\svec' \in \Z^{2d}\setminus\{\zerovec\}} \frac{1}{|\svec'|^\frac{m}{2}}\right)\\
&\leq C'_p c_m \hbar^\frac{\varepsilon m}{2} \left(1+ |\qvec|^p\right),
\end{align*}
provided $m_0$ is large enough so that the two sums converge.

To prove the estimate on $\|g_{\hbar, \qvec}\|_{\widehat{H}_\hbar^p}$, we simply note that, for any $m\in \N$, we have
\begin{align*}
\|g_{\hbar, \qvec}\|_{\widehat{H}_\hbar^p}
=
\left \|
\sum_{\svec \in \Lambda'}
\langle g_{\hbar, \qvec}, \varphi_{\hbar, \svec}^\star \rangle \varphi_{\hbar, \svec}
\right \|_{\widehat{H}_\hbar^p}
\leq
\sum_{\svec \in \Lambda'}
|\langle g_{\hbar, \qvec}, \varphi_{\hbar, \svec}^\star \rangle|
\|\varphi_{\hbar, \svec}\|_{\widehat{H}_\hbar^p}
\leq
C_p c_m
\sum_{\svec \in \Lambda'}
\frac{1 + |\svec|^p}{1 + (\delta(\qvec, \svec))^m},
\end{align*}
and we conclude as above.
\end{proof}

As an immediate consequence of the previous proposition and of Lemmas \ref{lemme: majoration de la norme H de phi} and \ref{lemme: majoration de psi scalaire phi étoile},  we obtain the following result.

\begin{corollary}
\label{cor: formules d'approximation 1}
Consider arbitrary $p\in  \N$ and $\varepsilon > 0$. We have
\begin{equation}
\label{eq_approx_Psi_star}
\left \|
\eg{\qvec} -
\sum_{\substack{\svec \in E_d\\\delta(\qvec, \svec) \leq \hbar^{-\varepsilon}}}
\langle\eg{\qvec}, \w{\svec}^\star\rangle  \w{\svec}
\right\|_{\widehat{H}_\hbar^p}
=
(1 + |\qvec|^p) \times O_{p,\varepsilon}(\h^\infty)
\end{equation}
for all $\qvec \in \Z^{2d}$. Besides, for $\qvec \in E_d$, the estimates
\begin{equation}
\label{eq_approx_Phi_star}
\left \|
\w{\qvec}^\star
-
\sum\limits_{\substack{\svec \in E_d\\|\qvec - \svec| \leq \hbar^{-\varepsilon}}}
\langle\w{\qvec}^\star, \w{\svec}^\star\rangle \w{\svec}
\right \|_{\widehat{H}_\hbar^p}
=
(1 + |\qvec|^p) \times O_{p,\varepsilon}(\h^\infty)
\end{equation}
and
\begin{equation}
\label{eq_norm_Phi_star}
\|\w{\qvec}^\star\|_{\widehat{H}_\hbar^p}
\leq
C_p (1 + |\qvec|^p)
\end{equation}
hold true.
\end{corollary}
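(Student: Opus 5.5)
Each of the three claims will come from Proposition~\ref{prop: approximation de g dans le frame des varphi}, applied with the frame $(\varphi_{\h,\svec})_{\svec\in\Lambda'} = (\w{\svec})_{\svec\in E_d}$ and $\Lambda' = E_d$. Its dual frame is $(\w{\svec}^\star)_{\svec \in E_d}$, by~\eqref{eq_wilson_decomposition}. The proposition has a norm hypothesis, $\|\w{\svec}\|_{\HH^p_\h} \leq C_p(1+|\svec|^p)$. This holds by Lemma~\ref{lemme: majoration de la norme H de phi}, after noting $(1+|\svec|)^p \leq 2^{p-1}(1+|\svec|^p)$. So in each case the only remaining task is to check the decay hypothesis on the coefficients $\langle g_{\h,\qvec},\w{\svec}^\star\rangle$.

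\emph{First estimate~\eqref{eq_approx_Psi_star}.} Take $\Lambda = \Z^{2d}$ and $g_{\h,\qvec} = \eg{\qvec}$. By~\eqref{eq: majoration de psi scalaire phi étoile},
\begin{equation*}
|\langle \eg{\qvec},\w{\svec}^\star\rangle| \leq C e^{-\frac{\pi}{8}\delta(\qvec,\svec)^{1/2}}.
\end{equation*}
Since $\sup_{t\geq 0}(1+t^m)e^{-\frac{\pi}{8}t^{1/2}} < \infty$ for every $m$, this gives a bound $c_m/(1+\delta(\qvec,\svec)^m)$. The proposition then yields~\eqref{eq:Approxg}, which is exactly~\eqref{eq_approx_Psi_star}.

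\emph{Second estimate~\eqref{eq_approx_Phi_star}.} Take $\Lambda = E_d$ and $g_{\h,\qvec} = \w{\qvec}^\star$. Estimate~\eqref{eq: majoration de phi étoile scalaire phi étoile} gives
\begin{equation*}
|\langle \w{\qvec}^\star,\w{\svec}^\star\rangle| \leq C e^{-|\qvec-\svec|^{1/2}}.
\end{equation*}
For $\qvec,\svec\in E_d$ we have $\delta(\qvec,\svec) = |\qvec-\svec|$, as noted after the definition of $\delta$. Hence the decay hypothesis holds by the same elementary bound. For the same reason, the summation condition $\delta(\qvec,\svec)\leq \h^{-\varepsilon}$ coincides with $|\qvec-\svec|\leq\h^{-\varepsilon}$.

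\emph{Norm bound~\eqref{eq_norm_Phi_star}.} This is the first conclusion of the proposition, applied to the same family $g_{\h,\qvec} = \w{\qvec}^\star$.

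There is no real obstacle here. The points to check are that the sub-exponential decays dominate any polynomial uniformly in $\h$, and the identification $\delta = |\cdot|$ on $E_d$. One should also make sure the cited constants are $\h$-independent, which holds because Lemma~\ref{lemme: majoration de psi scalaire phi étoile} gives constants depending only on $d$.
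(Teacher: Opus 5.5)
Your proposal is correct and follows the paper's own argument. The paper obtains this corollary by applying Proposition~\ref{prop: approximation de g dans le frame des varphi} with the Wilson frame $(\w{\svec})_{\svec\in E_d}$, taking $g_{\h,\qvec}=\eg{\qvec}$ and $g_{\h,\qvec}=\w{\qvec}^\star$ respectively, and using Lemmas~\ref{lemme: majoration de la norme H de phi} and~\ref{lemme: majoration de psi scalaire phi étoile} for the hypotheses. Your explicit checks of the polynomial comparison and of the identity $\delta(\qvec,\svec)=|\qvec-\svec|$ on $E_d$ spell out details the paper leaves implicit.
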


\subsubsection*{Interplay of Wilson states with differential operators}

We will also need a variant of~\eqref{eq: majoration de phi scalaire phi},
where we apply a differential operator to one of the Wilson states.
The following lemma directly follows from \cite[Lemma A.3]{CFDI},
and from the definition of the Wilson states.

\begin{lemma}
\label{lemme: P w scalaire w}
Let $n\in \N$,
$A
:=
\left (
A^\h_{\alphavec}
\right )_{\alphavec \in \N^d, \hbar \in ]0, 1]}
\subset
\mathcal{C}^\infty_b(\R^d)$
be such that, for every $p \in \N$,
\begin{equation*}
|A|_p := \sup_{\hbar \in ]0, 1]} \sum_{\substack{\alphavec \in \N^d\\{[\alphavec]} \leq n}} \left\|A^\hbar_{\alphavec}\right\|_{C^p} < + \infty,
\end{equation*}
and consider the operator
\begin{equation*}
\mathcal{P}_{\h,A}
:=
\sum_{\substack{\alphavec \in \N^d\\{[\alphavec]} \leq n}}
\h^{[\alphavec]} A_{\alphavec}^\h\partial^{\alphavec}.
\end{equation*}
Then, for every $m\in \N$, there exists $C_{m}>0$
such that,
for every $\qvec, \svec \in E_d$,  we have
\begin{equation*}
|\langle \mathcal{P}_{\h,A} \w{\qvec}, \w{\svec}\rangle |
\leq
C_{m} |A|_m
\frac{1 + |\xin{\nvec_\qvec}|^{n}}{1 + |\qvec - \svec|^m}.
\end{equation*}
\end{lemma}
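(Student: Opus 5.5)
The plan is to reduce the estimate to the analogous bound for Gaussian states, \cite[Lemma A.3]{CFDI}, and then pass to Wilson states using the definition~\eqref{eq:DefWilsonState}. We take that lemma in the form: for all $\rvec, \rvec' \in \Z^{2d}$ and every $m \in \N$,
\begin{equation*}
|\langle \mathcal{P}_{\h,A} \eg{\rvec}, \eg{\rvec'}\rangle|
\leq
C_m |A|_m \frac{1 + |\xin{\nvec_\rvec}|^{n}}{1 + |\rvec - \rvec'|^m}.
\end{equation*}
If it is stated differently, it can be rederived. Apply Taylor expansion of $A^\h_{\alphavec}$ around $\xm{\mvec_\rvec}$ to order $m$. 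Then compute $\h^{[\alphavec]}\partial^{\alphavec}\eg{\rvec}$ explicitly: it equals a polynomial in $(\bx - \xm{\mvec_\rvec})/\sqrt{\h}$ and $\xin{\nvec_\rvec}$, of degree at most $n$ in the latter, times $\eg{\rvec}$. Finally use the Gaussian decay of the resulting overlaps in $|\rvec - \rvec'|$. Each Taylor term contributes $|\bx - \xm{\mvec_\rvec}|^{[\gammavec]}$, which pairs with the Gaussian factors to give decay in the distance between the centres.

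Next, expand both Wilson states with~\eqref{eq:DefWilsonState}:
\begin{equation*}
\langle \mathcal{P}_{\h,A}\w{\qvec},\w{\svec}\rangle
=
c_{\nvec_\qvec}c_{\nvec_\svec}
\sum_{\epsilonvec,\epsilonvec'\in\{-1,1\}^d}
\epsilonvec^{\mvec_\qvec+\nvec_\qvec}\epsilonvec'^{\mvec_\svec+\nvec_\svec}
\langle \mathcal{P}_{\h,A}\eg{\mvec_\qvec,\epsilonvec\nvec_\qvec},\eg{\mvec_\svec,\epsilonvec'\nvec_\svec}\rangle .
\end{equation*}
This is a sum of at most $4^d$ terms with coefficients bounded by $1$. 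Since $|\xin{\epsilonvec\nvec_\qvec}| = |\xin{\nvec_\qvec}|$, the numerator in the Gaussian bound is the same for every term. It therefore remains to compare the index distances $|[\mvec_\qvec,\epsilonvec\nvec_\qvec]-[\mvec_\svec,\epsilonvec'\nvec_\svec]|$ with $|\qvec-\svec|$.

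The key step is the elementary inequality: for $\qvec,\svec\in E_d$ and all sign vectors,
\begin{equation*}
|\nvec_\qvec-\nvec_\svec| \leq |\epsilonvec\nvec_\qvec-\epsilonvec'\nvec_\svec|.
\end{equation*}
It holds componentwise because $n_j, n'_j \geq 0$, so that $|n_j - n'_j| \leq |\epsilon_j n_j - \epsilon'_j n'_j|$. Consequently $|\qvec-\svec| \leq |[\mvec_\qvec,\epsilonvec\nvec_\qvec]-[\mvec_\svec,\epsilonvec'\nvec_\svec]|$, which is the remark $\delta(\qvec,\svec) = |\qvec-\svec|$ on $E_d$ made earlier. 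Each of the $4^d$ terms is then bounded by $C_m|A|_m(1+|\xin{\nvec_\qvec}|^n)/(1+|\qvec-\svec|^m)$, and summing gives the claim with the constant multiplied by $4^d$.

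The main obstacle is making sure the cited Gaussian estimate really carries the weight $1+|\xin{\nvec_\qvec}|^n$ with the $A$-dependence only through $|A|_m$, and in particular with no loss in $|\xm{\mvec_\qvec}|$. This holds because the coefficients are in $C^\infty_{\rm b}$ uniformly in $\h$, so the Taylor remainders are bounded globally. If a loss in $\xm{\mvec_\qvec}$ did appear, I would redo the Taylor-expansion argument sketched above, using only sup-norm bounds on derivatives of $A^\h_{\alphavec}$.
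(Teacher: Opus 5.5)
Your proposal is correct and follows the paper's argument: the paper obtains the lemma from the Gaussian estimate \cite[Lemma A.3]{CFDI} by expanding both Wilson states via \eqref{eq:DefWilsonState}. Your componentwise check that $|\qvec-\svec|\le|[\mvec_\qvec,\epsilonvec\nvec_\qvec]-[\mvec_\svec,\epsilonvec'\nvec_\svec]|$ for $\qvec,\svec\in E_d$ supplies the step the paper leaves implicit. One caveat concerns only your fallback sketch: if you Taylor-expand $A^\h_{\alphavec}$ and bound the remainder in absolute value, its contribution is of size $\h^{m/2}$ with no decay in $|\nvec_\rvec-\nvec_{\rvec'}|$, which is useless once $|\nvec_\rvec-\nvec_{\rvec'}|\gg\h^{-1/2}$; to rederive the Gaussian estimate you would instead integrate by parts against the phase $e^{\frac{i}{\h}(\xin{\nvec_\rvec}-\xin{\nvec_{\rvec'}})\cdot\bx}$, each step costing one derivative of $A^\h_{\alphavec}$.
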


Using the previous Lemma along with~\eqref{eq_norm_Phi_star} in
Corollary~\ref{cor: formules d'approximation 1}
and with Proposition~\ref{prop: approximation de g dans le frame des varphi}
(for $g_{\hbar, \qvec} = \mathcal{P}_{\hbar, A} \w{\qvec}/(1 + (\pi \hbar)^d |\qvec|^{2d})$),
we deduce the following result.

\begin{corollary}
\label{cor: formules d'approximation 2}
Let $\mathcal{P}_{\h,A}$ be as in~Lemma \ref{lemme: P w scalaire w}.
Let $N,s\in \N$ and let $\varepsilon>0$. There exists $p= p(N,s, \varepsilon)\in \N$
such that for all $\qvec \in E_d$,
\begin{equation}
\left \|
\mathcal{P}_{\h,A} \w{\qvec}
-
\sum\limits_{\substack{\svec \in E_d\\|\qvec - \svec| \leq \hbar^{-\varepsilon}}}
\langle \mathcal{P}_{\h,A} \w{\qvec}, \w{\svec}\rangle \w{\svec}^\star
\right \|_{\widehat{H}_\hbar^s}
\leq C_{\varepsilon,s,N,A_p}
(1 + |\qvec|^{n + s}) \h^N.
\end{equation}
\end{corollary}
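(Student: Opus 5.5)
The plan is to apply Proposition~\ref{prop: approximation de g dans le frame des varphi} to the normalised family
\begin{equation*}
g_{\h,\qvec} \eq \frac{\mathcal{P}_{\h,A}\w{\qvec}}{1+(\pi\h)^d|\qvec|^{2d}}, \qquad \qvec \in E_d,
\end{equation*}
with the frame $\varphi_{\h,\svec} \eq \w{\svec}^\star$, whose dual is $\varphi^\star_{\h,\svec} = \w{\svec}$. Here we use that the Wilson states form a Riesz basis, so the dual of the dual frame is the original frame. This matches the second decomposition in~\eqref{eq_wilson_decomposition}. Since $\qvec,\svec\in E_d$, we have $\delta(\qvec,\svec)=|\qvec-\svec|$, and the index set $\{\delta\le\h^{-\varepsilon}\}$ is exactly the one appearing in the statement.

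\textbf{Step 1: the two hypotheses of the Proposition.} The norm hypothesis $\|\varphi_{\h,\svec}\|_{\HH^p_\h}\le C_p(1+|\svec|^p)$ is exactly~\eqref{eq_norm_Phi_star} from Corollary~\ref{cor: formules d'approximation 1}. For the decay hypothesis, Lemma~\ref{lemme: P w scalaire w} gives
\begin{equation*}
|\langle g_{\h,\qvec},\w{\svec}\rangle| \le C_m|A|_m \frac{1+|\xin{\nvec_\qvec}|^n}{(1+(\pi\h)^d|\qvec|^{2d})(1+|\qvec-\svec|^m)}.
\end{equation*}
Since $|\xin{\nvec_\qvec}| = \sqrt{\pi\h}\,|\nvec_\qvec| \le \sqrt{\pi\h}\,|\qvec|$, the numerator is bounded by $C(1+(\pi\h)^{n/2}|\qvec|^n)$. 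The normalising factor is meant to absorb this growth uniformly in $\h$, which holds when $n\le 4d$. If $n$ is larger, I would instead normalise by $1+(\pi\h)^{n/2}|\qvec|^{n}$; this only changes the bookkeeping. We therefore get $c_m/(1+|\qvec-\svec|^m)$ with $c_m$ independent of $\h$ and $\qvec$.

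\textbf{Step 2: conclude.} Estimate~\eqref{eq:Approxg} now yields
\begin{equation*}
\left\| g_{\h,\qvec} - \sum_{|\qvec-\svec|\le\h^{-\varepsilon}} \langle g_{\h,\qvec},\w{\svec}\rangle \w{\svec}^\star \right\|_{\HH^s_\h} \le C_{\varepsilon,s,N}(1+|\qvec|^s)\h^N.
\end{equation*}
Multiplying back by $1+(\pi\h)^d|\qvec|^{2d}$ (respectively $1+(\pi\h)^{n/2}|\qvec|^n$), which is at most $C(1+|\qvec|^n)$ for $\h\le1$ after choosing the normalisation exponent appropriately, gives the bound $(1+|\qvec|^{n+s})\h^N$.

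\textbf{Main obstacle: tracking constants.} The main step to check is that the constant depends on $A$ only through finitely many seminorms $|A|_p$, with $p=p(N,s,\varepsilon)$. Inspecting the proof of the Proposition, to reach $\h^N$ one takes $m \ge 2N/\varepsilon$, with $m$ also large enough that the lattice sums $\sum_{\svec'\ne0}(1+|\svec'|^s)|\svec'|^{-m/2}$ converge. This requires $m > 2(s+2d)$. So $p = \max\{\lceil 2N/\varepsilon\rceil,\, 2s+4d+1\}$ suffices, and the resulting constant is $C_m|A|_m$ times a universal factor, as claimed.
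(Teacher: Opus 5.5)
Your proposal is correct and follows the paper's own argument. The paper also applies Proposition~\ref{prop: approximation de g dans le frame des varphi} to $\mathcal{P}_{\h,A}\w{\qvec}$ divided by a power of $\sqrt{\pi\h}\,|\qvec|$, with frame $\w{\svec}^\star$ and dual $\w{\svec}$, using Lemma~\ref{lemme: P w scalaire w} for the decay hypothesis and \eqref{eq_norm_Phi_star} for the norm hypothesis.

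One small correction: since $(\pi\h)^d|\qvec|^{2d}=(\sqrt{\pi\h}\,|\qvec|)^{2d}$, that normalisation absorbs the numerator only when $n\le 2d$, not $n\le 4d$. Multiplied back, it gives $1+|\qvec|^{2d+s}$ rather than $1+|\qvec|^{n+s}$. Your alternative normalisation $1+(\pi\h)^{n/2}|\qvec|^n$ should therefore be used throughout, and it yields the stated exponent $n+s$ for every $n$.
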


\begin{remark}
In the sequel,  many of the results will be of the form
\begin{equation}\label{eq:GeneralBound}
\forall \h \in (0,1], ~~ \|f_\h\|_{\HH_\h^s} \leq C_{s,N, \bullet} \h^N,
\end{equation}
where $f_\h$ is some function in $\HH_\h^s$, and the constant $C_{s,N, \bullet}$ is allowed to depend on some extra parameters $\bullet$.

To prove such results, we will always show that there exists $\h_0= \h_0(s,N, \bullet)$
such that $\forall \h \in (0,\h_0], ~~ \|f_\h\|_{\HH_\h^s} \leq C'_{s,N, \bullet} \h^N$.
We will leave it to the reader to readily check that, in all the situations considered below,
we have $\forall \h \in (\h_0,1], ~~ \|f_\h\|_{\HH_\h^s} \leq C''_{s,N, \bullet}$, so
that~\eqref{eq:GeneralBound} holds with $C_{s,N, \bullet} = \max \left( C_{s,N, \bullet}', \frac{C''_{s,N, \bullet}}{\h_0^N}\right)$.
\end{remark}

\section{A discrete parametrix away from the characteristic sets}
\label{section_parametrix}

In all this section, we suppose that Assumption~\ref{Hyp4:SymetrieSymbole} holds,
along with Assumptions~\ref{Hyp1:BoundCoefs}
and~\ref{Hyp:BoundedLayers}.

The aim of this section is to prove the following proposition.
Since, in the rest of the proof, we will need results that apply
both to the operator $P_\h$ and to its adjoint $P_\h^*$, we will
denote by $\cP_\h$ an operator that is either $P_\h$ or $P_\h^*$.
Correspondingly, $\pi_\h$ will either refer to $p_\h$
or to $\overline{p_\h}$. All the quantities introduced below in
this section may implicitly depend on our choice of $P_\h$ or
of $P_\h^*$.

\begin{proposition}
\label{prop: proposition-clé}
Let $\varepsilon, \varepsilon' \in \left]0, \frac{1}{2}\right[$, $D>0$, $N\in \N$.
There exists $C=C_{\varepsilon, \varepsilon',N, D}>0$ such that the
following holds for all $0< \h<1$. For any $\qvec_0 \in E_d\cap \B(\bzero, \h^{-1/2} D)$ with
$2 \hbar^{\frac{1}{2} - \varepsilon} \leq \left|p_\h(\bz^{\h, \bq_0})\right|$, there exist coefficients $U_{\hbar, \qvec_0, \svec}= U_{\hbar, \qvec_0, \svec}^{\varepsilon, \varepsilon', N} \in \C$ such that $\left|U_{\hbar, \qvec_0, \svec}\right| \leq C\hbar^{-\frac{1}{2}}$,  and the following holds.  For any $p\in \N$,  there exists a constant $C_{p, \varepsilon,  \varepsilon', N,D}>0$ such that
\begin{equation*}
\left\|\w{\qvec_0} - \sum_{\substack{\svec \in E_d\\|\svec - \qvec_0| \leq \hbar^{-\varepsilon'}}} U_{\hbar, \qvec_0, \svec}\mathcal{P}_\hbar \w{\svec}\right\|_{\widehat{H}_\hbar^p} \leq C_{p, \varepsilon,  \varepsilon', N,D}\h^N.
\end{equation*}
\end{proposition}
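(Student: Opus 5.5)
We will build an approximate inverse of $\mathcal{P}_\hbar$ acting on $\w{\qvec_0}$ by iterating a local ``freezing of coefficients'' argument, in the spirit of the Gaussian-state parametrix of \cite{CFDI}. The basic observation is that, for a Gaussian state, $\mathcal{P}_\hbar \eg{\rvec} = \pi_\hbar(\bz^{\h,\rvec})\eg{\rvec} + R_{\h,\rvec}$. The remainder $R_{\h,\rvec}$ comes from Taylor expanding the coefficients around $\xm{\mvec_\rvec}$ and the symbol around $\xin{\nvec_\rvec}$. It has size $O(\h^{1/2})$ and is a finite combination of $\h^{j/2}$ times polynomial-Gaussian wave packets centred at $\bz^{\h,\rvec}$.

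For the Wilson state we sum over the $2^d$ sign choices in \eqref{eq:DefWilsonState}. Assumption~\ref{Hyp4:SymetrieSymbole} gives $\pi_\hbar(\xm{\mvec_0},\epsilonvec\xin{\nvec_0}) = \pi_\hbar(\bz^{\h,\qvec_0}) + O(\h|\xin{\nvec_0}|)$. Since $|\bz^{\h,\qvec_0}|\le \sqrt{\pi}D$, this error is $O(\h)$. Hence
\begin{equation*}
\mathcal{P}_\hbar \w{\qvec_0} = \pi_\hbar(\bz^{\h,\qvec_0})\,\w{\qvec_0} + \h^{1/2} g_{\h,\qvec_0},
\end{equation*}
where $g_{\h,\qvec_0}$ is a combination of polynomially weighted Gaussians centred at the points $(\xm{\mvec_0},\epsilonvec\xin{\nvec_0})$, with $\HH^p_\h$ norms bounded in terms of $D$.

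The first step is to divide by $\pi_0:=\pi_\hbar(\bz^{\h,\qvec_0})$, which is allowed because $|\pi_0|\ge 2\h^{1/2-\varepsilon}$. This gives
\begin{equation*}
\w{\qvec_0} = \pi_0^{-1}\mathcal{P}_\hbar\w{\qvec_0} - \h^{1/2}\pi_0^{-1} g_{\h,\qvec_0}.
\end{equation*}
The coefficient $\pi_0^{-1}$ is at most $\h^{-1/2+\varepsilon}/2\le C\h^{-1/2}$, and the new remainder has size $O(\h^{\varepsilon})$.

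The second step is to re-expand $g_{\h,\qvec_0}$ in the Wilson basis. The scalar products $\langle g_{\h,\qvec_0},\w{\svec}^\star\rangle$ decay rapidly in $\delta(\qvec_0,\svec)$; this follows as in Lemma~\ref{lemme: majoration de psi scalaire phi étoile} from the Gaussian estimates of \cite{CFDI,chaumontfrelet_ingremeau_2022a}. So Proposition~\ref{prop: approximation de g dans le frame des varphi} lets us truncate to $|\svec-\qvec_0|\le \h^{-\varepsilon''}$ with an $O(\h^\infty)$ error, for some small $\varepsilon''<\varepsilon'$. Because all the $\epsilonvec$-images project onto the same Wilson indices, we may use $\delta(\qvec_0,\svec) = |\qvec_0-\svec|$ here.

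The third step applies the first step to each $\w{\svec}$ in this sum. Here we must check that $|\pi_\hbar(\bz^{\h,\svec})|$ is still large. We have $|\bz^{\h,\svec}-\bz^{\h,\qvec_0}|\le C\h^{1/2-\varepsilon''}$, and the symbol is Lipschitz on bounded sets. Hence $|\pi_\hbar(\bz^{\h,\svec})|\ge 2\h^{1/2-\varepsilon} - C\h^{1/2-\varepsilon''}\ge \h^{1/2-\varepsilon}$ once $\varepsilon''<\varepsilon$ and $\h$ is small. For larger $\h$ the statement is trivial, as explained in the Remark following Corollary~\ref{cor: formules d'approximation 2}.

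Each iteration gains a factor $\h^{\varepsilon}$ in the remainder. The coefficients in front of $\mathcal{P}_\hbar\w{\svec}$ at iteration $j$ are bounded by $C\h^{-1/2+\varepsilon}\,(C\h^{\varepsilon})^{j}$ times $\ell^1$-sums of the rapidly decaying overlap coefficients. These sums stay bounded uniformly, as in the proof of Proposition~\ref{prop: approximation de g dans le frame des varphi}. The remainder left in $\w{\qvec_0} - \sum U\,\mathcal{P}_\hbar\w{\svec}$ is a sum of polynomially many truncated Wilson expansions; its $\HH^p_\h$ norm is controlled via Lemma~\ref{lemme: majoration de la norme H de phi}.

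After $J \approx N/\varepsilon$ steps, and with the truncation radius at step $j$ chosen as $j\h^{-\varepsilon''}$ with $J\varepsilon''<\min(\varepsilon,\varepsilon')$, all indices satisfy $|\svec-\qvec_0|\le \h^{-\varepsilon'}$. The remainder is then $O(\h^N)$ in $\HH^p_\h$, and the accumulated coefficients stay $\le C\h^{-1/2}$.

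The main obstacle is the second and third steps. They require making the Taylor remainder $g_{\h,\rvec}$ precise enough to obtain uniform decay of its Wilson coefficients, and keeping the bounds uniform over the growing number of iterated terms. For the first of these, we would try expressing $g_{\h,\rvec}$ via Lemma~\ref{lemme: P w scalaire w} applied to operators with coefficients $a(\bx)-a(\xm{\mvec_\rvec})$. These coefficients are no longer bounded globally, so they have to be cut off near $\xm{\mvec_\rvec}$, which costs only $O(\h^\infty)$ by Gaussian decay.
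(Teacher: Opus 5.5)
Your outline is the paper's proof. It uses Lemma~\ref{lemme: (P - p) Phi}, with Assumption~\ref{Hyp4:SymetrieSymbole} merging the $2^d$ Gaussian components, then divides by $\pi_\hbar(\bz^{\h,\qvec_0})$ and re-expands the $O(\h^{1/2})$ remainder in Wilson states within $O(\h^{-\varepsilon''})$ of $\qvec_0$. It iterates with the Lipschitz check that $|p_\hbar|$ stays above $\h^{1/2-\varepsilon}$ (Lemma~\ref{lemme: les qj ne bavent pas trop}), and stops after roughly $N/\varepsilon$ steps with a truncation exponent of order $\varepsilon^2/N$. That is exactly the paper's choice of $\varepsilon_1$ and $L$.

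\textbf{Where you depart, and why it is a gap as written.} You control the iterated coefficients by uniformly bounded $\ell^1$ sums, which is also the step you leave open. This needs $|\langle(\mathcal{P}_\hbar-\pi_\hbar(\bz^{\h,\rvec}))\w{\rvec},\w{\svec}^\star\rangle|\le C_m\h^{1/2}(1+|\rvec-\svec|)^{-m}$, i.e.\ decay with the $\h^{1/2}$ gain built in. Your suggested route does not give this. Lemma~\ref{lemme: P w scalaire w}, used as a black box, bounds the pairing by $|A|_m$, the $C^m$ norms of the coefficients. But $a-a(\xm{\mvec_\rvec})$ cut off near $\xm{\mvec_\rvec}$ has $C^1$ norm of order one, and $C^m$ norms that grow as the cut-off radius shrinks, so no factor $\h^{1/2}$ comes out.

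\textbf{How to close it.} The refined estimate is not needed.
\begin{enumerate}
\item Use decay without gain (Lemma~\ref{lemme: P w scalaire w} plus the expansion of $\w{\svec}^\star$, as in Corollary~\ref{cor: formules d'approximation 2}) only to truncate at radius $\h^{-\varepsilon''}$, at cost $O(\h^\infty)$.
\item Inside the truncation, bound each coefficient crudely by $\|(\mathcal{P}_\hbar-\pi_\hbar(\bz^{\h,\rvec}))\w{\rvec}\|_{L^2}\|\w{\svec}^\star\|_{L^2}\le C\h^{1/2}$, using Lemma~\ref{lemme: (P - p) Phi} and \eqref{eq:BornePhiStar}. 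After division by $|\pi_\hbar|\ge\h^{1/2-\varepsilon}$ each step coefficient is $O(\h^{\varepsilon})$.
\item Count the $O(\h^{-2d\varepsilon''})$ indices per step. Each iteration then gains $\h^{\varepsilon-2d\varepsilon''}$ rather than $\h^{\varepsilon}$.
\end{enumerate}
Your constraint $J\varepsilon''<\varepsilon$ already makes this loss harmless. After $J\ge N/\varepsilon+2d$ steps the remainder is $O(\h^{N})$. The chains of length $j$ contribute in total at most $C\h^{-1/2+\varepsilon+j(\varepsilon-2d\varepsilon'')}$ to any coefficient, so $|U_{\hbar,\qvec_0,\svec}|\le C\h^{-1/2}$. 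This counting is precisely the paper's bookkeeping: coefficients $\wit{V}$ of size $\h^{\varepsilon-2d\varepsilon_1}$, and at most $C\h^{-2d\varepsilon_1\ell}$ chains of length $\ell$.

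If you prefer your $\ell^1$ formulation, you can rescue it by interpolating the two bounds you already have:
$\min\bigl(C\h^{1/2},C_m(1+|\rvec-\svec|)^{-m}\bigr)\le C_m\h^{(1-\theta)/2}(1+|\rvec-\svec|)^{-\theta m}$.
With $\theta=\varepsilon$ and $m$ large, this gives a summable kernel and a gain $\h^{\varepsilon/2}$ per step after division by $|\pi_\hbar|$.
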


Note that the coefficients $U_{\hbar, \qvec_0, \svec}$ will a priori depend on $N$, so that we don't use the $O(\h^\infty)$ notation in the statement of Proposition \ref{prop: proposition-clé}.

\begin{remark}
The proof of the previous proposition does not use Assumption \ref{Hyp:PolynResolv}. However, under this assumption, Proposition \ref{prop: proposition-clé} implies that, when $\pi_\h(\bz^{\h, \bq_0})$ is away from the characteristic sets, we have a simple approximate expression (i.e., a parametrix) for $\cP_\h^{-1} \Phi_{\h, \bq_0}$, as a linear combination of $\Phi_{\h, \bs}$ with $\bs$ close to $\bq_0$. This result can be seen as an analogue of Theorem \ref{theorem_approximability} when the source term is away from the characteristic sets, and explains why, in most of the paper, we focus on source terms that are microlocalised on characteristic sets.
\end{remark}

\begin{remark}\label{Rem:GeneralisationPropClef}
In the proof of Proposition \ref{prop: proposition-clé}, we will never make use of the Riesz basis property satisfied by the Wilson states. We will only use the frame property,  along with Lemma \ref{lemme: majoration de psi scalaire phi étoile} and Corollary \ref{cor: formules d'approximation 2}. The Gaussian states $(\Psi_{\h, \bq})_{\bq \in \Z^{2d}}$ satisfy such properties, and hence all the proofs in this section would work in the same way if we replace $E_d$ with $\Z^{2d}$ and the $\Phi_{\h,\bq}$ with the  $\Psi_{\h,\bq}$.
Furthermore, in this section, we only use Assumption \ref{Hyp4:SymetrieSymbole} in Lemma \ref{lemme: (P - p) Phi}, which holds for Gaussian states without Assumption~\ref{Hyp4:SymetrieSymbole} (see \eqref{eq:OperatorCLoseToSymbolGaussians}).

 We would obtain the following result, which has interest on its own, holds without Assumption \ref{Hyp4:SymetrieSymbole},  and which we will use in Section~\ref{subsec:ProofApproxRealCase}. 
 
\smallskip

Let $\varepsilon, \varepsilon' \in \left]0, \frac{1}{2}\right[$, $D>0$ and $N\in \N$.
There exists $C=C_{\varepsilon, \varepsilon',N, D}>0$ such that the following holds
for any $0<\h < 1$. For any $\qvec_0 \in \Z^{2d}\cap \B(\bzero, \h^{-1/2} D)$ such that
$2 \hbar^{\frac{1}{2} - \varepsilon} \leq \left|p_\h(\bz^{\h, \bq_0})\right|$, there exist
coefficients $U_{\hbar, \qvec_0, \svec}= U_{\hbar, \qvec_0, \svec}^{\varepsilon, \varepsilon', N} \in \C$ such that $\left|U_{\hbar, \qvec_0, \svec}\right| \leq C\hbar^{-\frac{1}{2}}$,  and the following holds.  For any $p\in \N$,  there exists a constant $C_{p, \varepsilon,  \varepsilon', N,D}>0$ such that
\begin{equation}\label{eq:GeneralPropClef}
\left\|\Psi_{\h,\qvec_0} - \sum_{\substack{\svec \in \Z^{2d}\\|\svec - \qvec_0| \leq \hbar^{-\varepsilon'}}} U_{\hbar, \qvec_0, \svec}\mathcal{P}_\hbar \Psi_{\h,\svec}\right\|_{\widehat{H}_\hbar^p} \leq C_{p, \varepsilon,  \varepsilon', N,D}\h^N.
\end{equation}
\end{remark}

Before proving this proposition,  we need to state a few preliminary results. First of all,  \cite[Proposition 3.3]{CFDI} implies that there exists $C>0$ such that, for any $[\bm,\bn]\in \Z^{2d}$, we have
\begin{equation}\label{eq:OperatorCLoseToSymbolGaussians}
\left\|\left(\cP_\hbar - \pi_\h(\xm{\mvec}, \xin{\nvec})\right) \eg{\mvec,  \nvec}\right\|_{L^2(\R^d)}      \leq C (1 + |\xin{\nvec}|) \hbar^\frac{1}{2}.
\end{equation}
Recalling the definition of the Wilson states~\eqref{eq:DefWilsonState}
and Assumption~\ref{Hyp4:SymetrieSymbole}, we deduce the following result.

\begin{lemma}
\label{lemme: (P - p) Phi}
There exists $C>0$ such that,  for any $[\mvec, \nvec] \in E_d$, we have
\begin{equation*}
\left\|\left(\mathcal{P}_\hbar - \pi_\hbar(\xm{\mvec}, \xin{\nvec})\right) \w{\mvec, \nvec}\right\|_{L^2(\R^d)} \leq C (1 + |\xin{\nvec}|) \hbar^\frac{1}{2}.
\end{equation*}
\end{lemma}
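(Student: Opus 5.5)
The plan is to expand $\w{\mvec,\nvec}$ into its $2^d$ Gaussian components using \eqref{eq:DefWilsonState}. Each component is then handled with \eqref{eq:OperatorCLoseToSymbolGaussians}. The leftover error comes from replacing the symbol value at $(\xm{\mvec},\epsilonvec\xin{\nvec})$ by the single value at $(\xm{\mvec},\xin{\nvec})$, and Assumption~\ref{Hyp4:SymetrieSymbole} controls exactly that discrepancy.

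Concretely, we write
\begin{equation*}
\left(\mathcal{P}_\hbar - \pi_\hbar(\xm{\mvec}, \xin{\nvec})\right) \w{\mvec, \nvec}
=
c_\nvec \sum_{\epsilonvec \in \{-1,1\}^d} \epsilonvec^{\mvec+\nvec}
\Big[ \left(\mathcal{P}_\hbar - \pi_\hbar(\xm{\mvec}, \epsilonvec\xin{\nvec})\right)\eg{\mvec,\epsilonvec\nvec}
+ \big(\pi_\hbar(\xm{\mvec}, \epsilonvec\xin{\nvec}) - \pi_\hbar(\xm{\mvec}, \xin{\nvec})\big)\eg{\mvec,\epsilonvec\nvec}\Big].
\end{equation*}
Note that $\xin{\epsilonvec\nvec} = \epsilonvec\xin{\nvec}$, so \eqref{eq:OperatorCLoseToSymbolGaussians} applies to the index $[\mvec,\epsilonvec\nvec]\in\Z^{2d}$. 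This bounds the first bracket in $L^2$ by $C(1+|\epsilonvec\xin{\nvec}|)\hbar^{1/2} = C(1+|\xin{\nvec}|)\hbar^{1/2}$.

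For the second bracket, the Gaussian states are $L^2$-normalised. Assumption~\ref{Hyp4:SymetrieSymbole} gives $|p_\hbar(\xm{\mvec},\epsilonvec\xin{\nvec}) - p_\hbar(\xm{\mvec},\xin{\nvec})| \le \Csym\hbar|\xin{\nvec}|$. The same inequality holds for $\pi_\hbar=\overline{p_\hbar}$, since conjugation preserves moduli. Hence the second bracket is bounded by $\Csym\hbar|\xin{\nvec}| \le \Csym \hbar^{1/2}|\xin{\nvec}|$ because $\hbar\le 1$.

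Summing the $2^d$ terms with the triangle inequality and $c_\nvec\le 1$ gives the claimed bound, with $C$ depending only on $d$, $\Csym$ and the constant of \eqref{eq:OperatorCLoseToSymbolGaussians}.

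The only point needing care is that \eqref{eq:OperatorCLoseToSymbolGaussians} holds for both choices $\cP_\hbar\in\{P_\hbar,P_\hbar^*\}$ with the corresponding $\pi_\hbar$. The paper asserts this via \cite[Proposition 3.3]{CFDI}. If that reference only covers $P_\hbar$, I would note that $P_\hbar^*$ is again of the form \eqref{eq:FormeGeneraleOperateur}: its coefficients satisfy Assumption~\ref{Hyp1:BoundCoefs}, and its symbol agrees with $\overline{p_\hbar}$ up to $O(\hbar(1+|\bxi|))$. That difference is absorbed into the same $\hbar^{1/2}(1+|\xin{\nvec}|)$ bound. There is no real obstacle beyond this bookkeeping.
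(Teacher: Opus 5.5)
Your proof is correct and follows essentially the paper's route. The paper deduces this lemma in one line from the Gaussian estimate \eqref{eq:OperatorCLoseToSymbolGaussians}, the expansion \eqref{eq:DefWilsonState} of $\w{\mvec,\nvec}$ into the states $\eg{\mvec,\epsilonvec\nvec}$, and Assumption~\ref{Hyp4:SymetrieSymbole} to pass from $\pi_\hbar(\xm{\mvec},\epsilonvec\xin{\nvec})$ to $\pi_\hbar(\xm{\mvec},\xin{\nvec})$, which is exactly the argument you spell out; your remark that $P_\hbar^*$ has the same form, with symbol equal to $\overline{p_\hbar}$ up to $O(\hbar(1+|\xi|))$, is a sound supplement to what the paper takes directly from \cite[Proposition 3.3]{CFDI}.
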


The following lemma is a first step towards Proposition \ref{prop: proposition-clé}.

\begin{lemma}
\label{lemme: 1er lemme de la proposition clé}
Let $\varepsilon, \varepsilon_1 \in \left]0, \frac{1}{2}\right[$, $D>0$.
We may find $C>0$ such that the following holds for all $\h\in (0,1]$.
Let  $\qvec  \in E_d\cap \B(\bzero,  h^{-1/2}D)$ be such that
$\hbar^{\frac{1}{2} - \varepsilon} \leq |p_\hbar(\bz^{\h, \bq})| $.
We may then find a family of coefficients
$\left(\wit{V}_{\hbar, \qvec, \rvec,\varepsilon_1}\right)_{ \rvec \in E_d \cap \overline{\B}(\qvec, 2 \hbar^{-\varepsilon_1})}$ such that $|\wit{V}_{\hbar, \qvec, \rvec, \varepsilon_1}| \leq C_D \hbar^{\varepsilon - 2d \varepsilon_1}$,
and such that
\begin{equation*}
\w{\qvec} = \frac{1}{\pi_\hbar(\bz^{\h, \bq})} \mathcal{P}_\hbar \w{\qvec} + \sum_{\substack{\rvec \in E_d\\|\qvec - \rvec| \leq 2 \hbar^{-\varepsilon_1}}} \wit{V}_{\hbar, \qvec, \rvec, \varepsilon_1} \w{\rvec} + \gamma_{\hbar, \qvec, \varepsilon_1},
\end{equation*}
where for any $p\in \N$, we have $\|\gamma_{\hbar, \qvec, \varepsilon_1}\|_{\widehat{H}_\hbar^p}  = O_{\varepsilon_1, p,D}(\h^\infty)$.
\end{lemma}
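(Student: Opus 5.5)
The plan is to write the trivial identity
\begin{equation*}
\w{\qvec} = \frac{1}{\pi_\hbar(\bz^{\h,\bq})}\mathcal{P}_\hbar \w{\qvec} - \frac{1}{\pi_\hbar(\bz^{\h,\bq})}\bigl(\mathcal{P}_\hbar - \pi_\hbar(\bz^{\h,\bq})\bigr)\w{\qvec},
\end{equation*}
which makes sense since $|\pi_\hbar(\bz^{\h,\bq})| = |p_\hbar(\bz^{\h,\bq})| \geq \hbar^{1/2-\varepsilon}>0$. The remainder $g_{\hbar,\qvec} := (\mathcal{P}_\hbar - \pi_\hbar(\bz^{\h,\bq}))\w{\qvec}$ is then expanded in the Wilson frame and truncated. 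Note that $\mathcal{P}_\hbar - \pi_\hbar(\bz^{\h,\bq})$ is an operator of the form $\mathcal{P}_{\h,A}$ from Lemma~\ref{lemme: P w scalaire w} with $n=2$. Its zeroth-order coefficient is $c^\h - \pi_\h(\bz^{\h,\bq})$ (or its conjugate/adjoint variant). Because $|\bz^{\h,\bq}|\leq \sqrt{\pi}D$ uniformly, the seminorms $|A|_m$ are bounded by constants depending only on $D$, $m$ and Assumption~\ref{Hyp1:BoundCoefs}. This holds also for $P_\h^*$, after expanding the adjoint so that derivatives sit on the right.

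Next, apply Corollary~\ref{cor: formules d'approximation 2} to this operator with exponent $\varepsilon_1$. It gives
\begin{equation*}
g_{\hbar,\qvec} = \sum_{|\svec-\qvec|\leq \h^{-\varepsilon_1}} \langle g_{\hbar,\qvec},\w{\svec}\rangle \w{\svec}^\star + O_{p,\varepsilon_1,D}(\h^\infty)\ \text{in } \HH^p_\h,
\end{equation*}
where $(1+|\qvec|^{2+p})$ is harmless, being at most $C\h^{-(2+p)/2}$ and absorbed by taking $N$ larger. We still have dual states $\w{\svec}^\star$, so we then use \eqref{eq_approx_Phi_star} of Corollary~\ref{cor: formules d'approximation 1} to replace each $\w{\svec}^\star$ by $\sum_{|\svec-\rvec|\leq\h^{-\varepsilon_1}}\langle \w{\svec}^\star,\w{\rvec}^\star\rangle\w{\rvec}$, up to $(1+|\svec|^p)O(\h^\infty)$. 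There are $O(\h^{-2d\varepsilon_1})$ terms, and $|\svec|\lesssim \h^{-1/2}$, so the errors summed remain $O(\h^\infty)$. Collecting terms yields indices $\rvec$ with $|\rvec-\qvec|\leq 2\h^{-\varepsilon_1}$ and coefficients
\begin{equation*}
\wit V_{\hbar,\qvec,\rvec,\varepsilon_1} = -\frac{1}{\pi_\h(\bz^{\h,\bq})}\sum_{\svec}\langle g_{\hbar,\qvec},\w{\svec}\rangle\langle \w{\svec}^\star,\w{\rvec}^\star\rangle .
\end{equation*}
The remainder, divided by $\pi_\h(\bz^{\h,\bq})$ (a loss of only $\h^{-1/2}$), becomes $\gamma_{\hbar,\qvec,\varepsilon_1}$.

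For the coefficient bound, Cauchy--Schwarz together with Lemma~\ref{lemme: (P - p) Phi} gives $|\langle g_{\hbar,\qvec},\w{\svec}\rangle|\leq \|g_{\hbar,\qvec}\|_{L^2}\|\w{\svec}\|_{L^2}\leq C_D\h^{1/2}$, using $|\xin{\nvec}|\leq C D$. Also $|\langle\w{\svec}^\star,\w{\rvec}^\star\rangle|\leq C$ by \eqref{eq: majoration de phi étoile scalaire phi étoile}. The $\svec$-sum has at most $C\h^{-2d\varepsilon_1}$ terms, so
\begin{equation*}
|\wit V_{\hbar,\qvec,\rvec,\varepsilon_1}|\leq C_D\,\h^{1/2}\,\h^{-2d\varepsilon_1}/\h^{1/2-\varepsilon} = C_D\h^{\varepsilon-2d\varepsilon_1}.
\end{equation*}

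The main obstacle is bookkeeping rather than ideas. One needs uniformity in $\qvec$ of the constants in Corollary~\ref{cor: formules d'approximation 2}, since the operator itself depends on $\qvec$ through the constant $\pi_\h(\bz^{\h,\bq})$. This is handled by the uniform bound on $|A|_m$ above. The other point is controlling the polynomial losses $(1+|\qvec|)^{p}$ and the $\h^{-1/2}$ factor against the superpolynomial gains.
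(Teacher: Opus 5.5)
Your proposal is correct and follows the paper's proof essentially step for step. The paper uses the same identity with $R_{\h,\bq} = (\mathcal{P}_\h - \pi_\h(\bz^{\h,\bq}))\w{\qvec}$, then Corollary~\ref{cor: formules d'approximation 2}, then \eqref{eq_approx_Phi_star}, arriving at the same coefficients $V_{\h,\bq,\br,\varepsilon_1}$ and the same bound $C_D\h^{1/2-2d\varepsilon_1}/|\pi_\h(\bz^{\h,\bq})|$. Your one addition is to say explicitly why the constants in Corollary~\ref{cor: formules d'approximation 2} are uniform in $\bq$: the seminorms $|A|_m$ of $\mathcal{P}_\h - \pi_\h(\bz^{\h,\bq})$ are bounded for $\bq$ in the ball. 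The paper leaves this point implicit.
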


\begin{proof}
Let $R_{\hbar, \qvec} := \left(\mathcal{P}_\hbar - \pi_\hbar(\bz^{\h, \bq})\right) \w{\qvec}$.
Thanks to Lemma \ref{lemme: (P - p) Phi}, we have
\begin{equation*}
\|R_{\hbar, \qvec}\|_{L^2} \leq C_D \hbar^\frac{1}{2}.
\end{equation*}
Applying Corollary~\ref{cor: formules d'approximation 2}, we obtain
\begin{equation*}
R_{\hbar, \qvec} = \sum_{\substack{\svec \in E_d\\|\qvec - \svec| \leq \hbar^{-\varepsilon_1}}} \langle R_{\hbar, \qvec}, \w{\svec}\rangle \w{\svec}^\star + \rho_{\hbar, \qvec, \varepsilon_1},
\end{equation*}
where, for every $p\in \N$, we have
\begin{equation*}
\|\rho_{\hbar, \qvec, \varepsilon_1}\|_{\widehat{H}_\hbar^p}
=
(1 + |\qvec|^{2d + p}) \times O_{\varepsilon_1, p,D}(\h^\infty)
=
O_{\varepsilon_1, p,D}(\h^\infty).
\end{equation*}

Now, recall that, by Corollary \ref{cor: formules d'approximation 1}, we may write for any $\svec \in E_d$,
    \begin{equation*}
        \w{\svec}^\star = \sum_{\substack{\rvec \in E_d\\|\svec - \rvec| \leq \hbar^{-\varepsilon_1}}} \langle \w{\svec}^\star, \w{\rvec}^\star\rangle \w{\rvec} + \eta_{\hbar, \svec, \varepsilon_1}
    \end{equation*}
    where, for any $p\in \N$, we have $\|\eta_{\hbar, \svec, \varepsilon_1}\|_{\widehat{H}_\hbar^p(\R^d)} = (1 + |\svec|^p) \times O_{\varepsilon_1, p}(\h^\infty)$. We thus have
    \begin{align*}
        R_{\hbar, \qvec} &= \sum_{\substack{\svec \in E_d\\|\qvec - \svec| \leq \hbar^{-\varepsilon_1}}} \sum_{\substack{\rvec \in E_d\\|\svec - \rvec| \leq \hbar^{-\varepsilon_1}}} \langle R_{\hbar, \qvec}, \w{\svec}\rangle  \langle \w{\svec}^\star, \w{\rvec}^\star\rangle  \w{\rvec} + \sum_{\substack{\svec \in E_d\\|\qvec - \svec| \leq \hbar^{-\varepsilon_1}}} \langle R_{\hbar, \qvec}, \w{\svec}\rangle  \eta_{\hbar, \svec, \varepsilon_1} + \rho_{\hbar, \qvec, \varepsilon_1}\\
        &= \sum_{\substack{\rvec \in E_d\\|\qvec - \rvec| \leq 2\hbar^{-\varepsilon_1}}} V_{\hbar, \qvec, \rvec,\varepsilon_1} \w{\rvec} + \Tilde{\rho}_{\hbar, \qvec, \varepsilon_1}
    \end{align*}
with
    \begin{equation*}
        V_{\hbar, \qvec, \rvec, \varepsilon_1} := \sum\limits_{\substack{\svec \in E_d\\|\qvec - \svec| \leq \hbar^{-\varepsilon_1}\\|\svec - \rvec| \leq \hbar^{-\varepsilon_1}}} \langle R_{\hbar, \qvec}, \w{\svec}\rangle  \langle \w{\svec}^\star, \w{\rvec}^\star\rangle,
    \end{equation*}
which satisfies $|V_{\hbar, \qvec, \rvec, \varepsilon_1}| \leq C_D \hbar^{\frac{1}{2} - 2d\varepsilon_1}$, and 
    \begin{equation*}
        \Tilde{\rho}_{\hbar, \qvec, \varepsilon_1} := \sum\limits_{\substack{\svec \in E_d\\|\qvec - \svec| \leq \hbar^{-\varepsilon_1}}} \langle R_{\hbar, \qvec}, \w{\svec}\rangle  \eta_{\hbar, \svec, \varepsilon_1} + \rho_{\hbar, \qvec, \varepsilon_1}.
    \end{equation*}

In the sum above,  we always have $|\bs|= O_D(\h^{-1/2})$ and the sum contains a number of terms that is bounded polynomially in $\h^{-1}$.  We deduce that $\|\Tilde{\rho}_{\hbar, \qvec, \varepsilon_1}\|_{\widehat{H}_\hbar^p}= O_{\varepsilon_1, p,D}(\h^\infty)$ for any $p\in \N$.

    To conclude,  we note that, by definition of $R_{\hbar, \qvec}$,  we have
    
    \begin{equation*}
        \w{\qvec} = \frac{1}{\pi_\hbar(\bz^{\h, \bq})} \mathcal{P}_\hbar \w{\qvec} - \frac{1}{\pi_\hbar(\bz^{\h, \bq})} R_{\hbar, \qvec},
    \end{equation*}
so that
    \begin{equation*}
        \w{\qvec} = \frac{1}{\pi_\hbar(\bz^{\h, \bq})} \mathcal{P}_\hbar \w{\qvec} + \sum_{\substack{\rvec \in E_d\\|\qvec - \rvec| \leq 2\hbar^{-\varepsilon_1}}} \wit{V}_{\hbar, \qvec, \rvec, \varepsilon_1} \w{\rvec} + \gamma_{\hbar, \qvec, \varepsilon_1}
    \end{equation*}
with
$
\wit{V}_{\hbar, \qvec, \rvec, \varepsilon_1}
:=
-\frac{1}{\pi_\hbar(\bz^{\h, \bq})} V_{\hbar, \qvec, \rvec, \varepsilon_1}
$
and
$
\gamma_{\hbar, \qvec, \varepsilon_1}
:=
-\frac{1}{\pi_\hbar(\bz^{\h, \bq})} \Tilde{\rho}_{\hbar, \qvec, \varepsilon_1}
$.
Since
$|\pi_\hbar(\bz^{\h, \bq})| = |p_\hbar(\bz^{\h, \bq})| \geq \hbar^{\frac{1}{2} - \varepsilon}$,
we have
$|\wit{V}_{\hbar, \qvec, \rvec, \varepsilon_1}| \leq C_D \hbar^{\varepsilon - 2d \varepsilon_1}$
and, for any $p\in \N$,
$
\|\gamma_{\hbar, \qvec, \varepsilon_1}\|_{\widehat{H}_\hbar^p}
=
O_{\varepsilon_1, p,D}(\h^\infty)$.
\end{proof}

We will want to apply Lemma \ref{lemme: 1er lemme de la proposition clé} iteratively, and to this end, we will consider sequences of points of $E_d$.
For any $\qvec_0 \in E_d$, $L \in \N^*$ and $\tau > 0$,  we thus set
\begin{equation*}
F^L_{\hbar,\tau}(\bq_0)
:=
\left \{
(\qvec_1, \dots, \qvec_L) \in (E_d)^L
\ | \
\forall \ell \in\{1, ..., L  \}, \ 
|\qvec_{\ell-1} - \qvec_\ell| \leq 2 \hbar^{-\tau}
\right \}.
\end{equation*}

We have
$
F^L_{\hbar,\tau}(\bq_0)
\subset
\bigg(\overline{\B}(\qvec_0, 2 L \hbar^{-\tau}) \cap \Z^{2d}\bigg)^L
$
and hence
$\card\left(F^L_{\hbar,\tau}(\bq_0) \right) \leq C_L \hbar^{-2d \tau L}$.

\begin{lemma}
    \label{lemme: les qj ne bavent pas trop}
   Let $0< \varepsilon_1 < \varepsilon < \frac{1}{2}$, $D>0$, and let $L\in \N^*$.  There exists $\h_0 = \h_0(\varepsilon, \varepsilon_1, D, L)>0$ such that, for all $\h<\h_0$, the following holds.  If 
    $\qvec_0\in E_d\cap \B(\bzero,  D \h^{-1/2})$ is such that $2 \hbar^{\frac{1}{2} - \varepsilon} \leq \left|p_\hbar(\bz^{\h,\bq_0})\right| $,
    then for all $(\qvec_1, \dots, \qvec_L) \in F^L_{\hbar, \varepsilon_1}(\bq_0)$, for all $\ell\in \{1,..., L\}$, we have $|\bq_\ell|< 2D \h^{-\frac{1}{2}}$ and 
        \begin{equation*}
    \hbar^{\frac{1}{2} - \varepsilon} \leq \left|p_\hbar(\bz^{\h,\bq_\ell})\right|.
    \end{equation*}
\end{lemma}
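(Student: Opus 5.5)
The argument is a direct perturbation estimate. The key facts are that consecutive points in a chain from $F^L_{\hbar,\varepsilon_1}(\bq_0)$ are very close in index space, and that $p_\hbar$ is Lipschitz on bounded sets uniformly in $\hbar$.

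First, for $(\qvec_1,\dots,\qvec_L)\in F^L_{\hbar,\varepsilon_1}(\bq_0)$ the triangle inequality gives $|\qvec_\ell-\qvec_0|\le 2\ell\hbar^{-\varepsilon_1}\le 2L\hbar^{-\varepsilon_1}$ for every $\ell\le L$. Hence
\[
|\qvec_\ell|\le D\hbar^{-1/2}+2L\hbar^{-\varepsilon_1}.
\]
Since $\varepsilon_1<1/2$, we have $2L\hbar^{-\varepsilon_1}< D\hbar^{-1/2}$ as soon as $\hbar^{1/2-\varepsilon_1}<D/(2L)$. This yields the first claim $|\bq_\ell|<2D\hbar^{-1/2}$ for $\hbar<\h_0$.

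Second, we pass to phase space. Since $\bz^{\h,\bq}=\sqrt{\pi\hbar}\,\bq$, we get
\[
|\bz^{\h,\bq_\ell}-\bz^{\h,\bq_0}|\le 2L\sqrt{\pi}\,\hbar^{1/2-\varepsilon_1},
\]
and both points lie in the fixed ball $\B(\bzero,2\sqrt{\pi}D)$. By \eqref{eq_symbol} and Assumption~\ref{Hyp1:BoundCoefs}, $\nabla p_\hbar$ is bounded on this ball by a constant $M_D$ independent of $\hbar$. Indeed, the coefficients are bounded in $C^1$ uniformly in $\hbar$, and $|\bxi|\le 2\sqrt{\pi}D$ there. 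The mean value inequality along the segment joining the two points (it stays in the convex ball) then gives
\[
|p_\hbar(\bz^{\h,\bq_\ell})-p_\hbar(\bz^{\h,\bq_0})|\le 2L\sqrt{\pi}M_D\,\hbar^{1/2-\varepsilon_1}.
\]

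Third, we compare exponents. Because $\varepsilon_1<\varepsilon$, we have $\hbar^{1/2-\varepsilon_1}=\hbar^{\varepsilon-\varepsilon_1}\hbar^{1/2-\varepsilon}$, and $\hbar^{\varepsilon-\varepsilon_1}\to 0$. So for $\hbar$ small enough, depending on $\varepsilon,\varepsilon_1,D,L$, the perturbation is at most $\hbar^{1/2-\varepsilon}$. The reverse triangle inequality together with $|p_\hbar(\bz^{\h,\bq_0})|\ge 2\hbar^{1/2-\varepsilon}$ then gives $|p_\hbar(\bz^{\h,\bq_\ell})|\ge\hbar^{1/2-\varepsilon}$. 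We take $\h_0$ as the minimum of the two thresholds.

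The argument is elementary. The only real point is the uniform Lipschitz bound on $p_\hbar$: it requires confining all chain points to a fixed compact set of phase space, which is exactly what the first step provides.
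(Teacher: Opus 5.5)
Your proposal is correct and follows essentially the same route as the paper. You use the triangle inequality along the chain to confine the $\bq_\ell$ to $\B(\bzero,2D\h^{-1/2})$, hence the $\bz^{\h,\bq_\ell}$ to the fixed ball $\B(\bzero,2\sqrt{\pi}D)$, and then apply the uniform Lipschitz bound on $p_\h$ there with $\h^{1/2-\varepsilon_1}=o(\h^{1/2-\varepsilon})$; you additionally spell out the Lipschitz constant and the exponent comparison that the paper leaves implicit.
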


\begin{proof} 
All the $\bq_\ell$ belong to $\overline{\B}(\qvec_0, 2 L \hbar^{-\varepsilon_1})\subset \overline{\B}(\bzero, D \h^{-1/2} + 2L \h^{-\varepsilon_1})$,  which is included in $\B(\bzero,  2 D \h^{-1/2})$ provided $\h$ is small enough.  Then, all the $\bz^{\h, \bq_\ell}$ belong to $\B(\bzero, 2D \sqrt{\pi})$, and we may use the fact that $p_\h$ is Lipschitz on this ball to see that $\left| \left|p_\hbar(\bz^{\h,\bq_\ell})\right| -  \left|p_\hbar(\bz^{\h,\bq_0})\right|\right| \leq C_D L \h^{\frac{1}{2}- \varepsilon_1}$, from which we may conclude.
\end{proof}

We may now state an iterated version of Lemma \ref{lemme: 1er lemme de la proposition clé}.
In the following statement, the coefficients $\wit{V}_{\h, \bq, \br, \varepsilon_1}$ will be
as in Lemma \ref{lemme: 1er lemme de la proposition clé}.

\begin{lemma}
\label{lemme: récurrence pour la proposition-clé}
Let $0< \varepsilon_1 < \varepsilon < \frac{1}{2}$,  $D>0$ and let $L\in \N$.
There exists $\h_0 = \h_0(\varepsilon, \varepsilon_1, L, D)$ such that the following
holds for all $\h\in (0, \h_0)$. Let $\qvec_0 \in E_d\cap \B(\bzero,  D \h^{-1/2})$
be such that $2 \hbar^{\frac{1}{2} - \varepsilon} \leq |p_\hbar(\bz^{\h, \bq_0})|$.
We then have
\begin{equation}
\label{eq: récurrence pour la proposition-clé}
\begin{split}
\w{\qvec_0}
=
\frac{1}{\pi_\hbar(\bz^{\h, \bq_0})} \mathcal{P}_\hbar \w{\qvec_0}
&+
\sum_{\ell = 1}^L
\sum_{(\qvec_1, \dots, \qvec_\ell) \in  F^\ell_{\hbar, \varepsilon_1}(\bq_0)}
\frac{1}{\pi_\hbar(\bz^{\h, \bq_\ell})}
\left[\prod_{i = 1}^\ell \wit{V}_{\hbar, \qvec_{i - 1}, \qvec_i \varepsilon_1}\right]
\mathcal{P}_\hbar \w{\qvec_j}
\\
&+
\sum_{(\qvec_1, \dots, \qvec_{L + 1}) \in   F^{L+1}_{\hbar, \varepsilon_1}(\bq_0) }
\left[\prod_{i = 1}^{L + 1} \wit{V}_{\hbar, \qvec_{i - 1}, \qvec_i \varepsilon_1}\right]
\w{\qvec_{L + 1}} + \rho_{L, \hbar, \qvec_0, \varepsilon_1},
\end{split}
\end{equation}
where, for any $p\in \N$,  $\|\rho_{L, \hbar, \qvec_0, \varepsilon_1}\|_{\widehat{H}_\hbar^p} =O_{\varepsilon_1, p, L,D}(\h^\infty)$.
\end{lemma}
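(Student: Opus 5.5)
We argue by induction on $L$, with Lemma~\ref{lemme: 1er lemme de la proposition clé} as the engine and Lemma~\ref{lemme: les qj ne bavent pas trop} guaranteeing at every stage that this lemma may still be applied. We take $\h_0$ to be the minimum of the thresholds $\h_0(\varepsilon,\varepsilon_1,D,\ell)$ from Lemma~\ref{lemme: les qj ne bavent pas trop} for $\ell\le L+1$. Then, for $\h<\h_0$, every index $\qvec_\ell$ appearing in a chain $(\qvec_1,\dots,\qvec_\ell)\in F^\ell_{\h,\varepsilon_1}(\qvec_0)$ with $\ell\le L+1$ satisfies $|\qvec_\ell|<2D\h^{-1/2}$ and $|p_\h(\bz^{\h,\qvec_\ell})|\ge \h^{1/2-\varepsilon}$. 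Hence Lemma~\ref{lemme: 1er lemme de la proposition clé}, applied with $2D$ in place of $D$, holds at every such $\qvec_\ell$, with coefficients bounded by $C_D\h^{\varepsilon-2d\varepsilon_1}$ and remainders $\gamma_{\h,\qvec_\ell,\varepsilon_1}$ that are $O_{\varepsilon_1,p,D}(\h^\infty)$ uniformly in $\qvec_\ell$. (We read the index $\qvec_j$ in the middle sum of \eqref{eq: récurrence pour la proposition-clé} as $\qvec_\ell$.)

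The base case $L=0$ is exactly Lemma~\ref{lemme: 1er lemme de la proposition clé} at $\qvec_0$, with $\rho_{0}=\gamma_{\h,\qvec_0,\varepsilon_1}$, since $F^1_{\h,\varepsilon_1}(\qvec_0)$ is precisely the set of $\qvec_1\in E_d$ with $|\qvec_0-\qvec_1|\le 2\h^{-\varepsilon_1}$.

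For the inductive step $L-1\to L$, we start from the identity at level $L-1$. In its last sum we replace each $\w{\qvec_L}$ using Lemma~\ref{lemme: 1er lemme de la proposition clé} at $\qvec_L$:
\[
\w{\qvec_L}=\pi_\h(\bz^{\h,\qvec_L})^{-1}\mathcal{P}_\h\w{\qvec_L}+\sum_{|\qvec_L-\qvec_{L+1}|\le 2\h^{-\varepsilon_1}}\wit V_{\h,\qvec_L,\qvec_{L+1},\varepsilon_1}\w{\qvec_{L+1}}+\gamma_{\h,\qvec_L,\varepsilon_1}.
\]
This substitution produces three kinds of terms. The first term gives the $\ell=L$ contribution to the middle sum. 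The second extends chains of length $L$ to chains $(\qvec_1,\dots,\qvec_{L+1})\in F^{L+1}_{\h,\varepsilon_1}(\qvec_0)$, which is the new last sum. The third is absorbed into the remainder:
\[
\rho_L=\rho_{L-1}+\sum_{F^L_{\h,\varepsilon_1}(\qvec_0)}\Big[\prod_{i=1}^L\wit V_{\h,\qvec_{i-1},\qvec_i,\varepsilon_1}\Big]\gamma_{\h,\qvec_L,\varepsilon_1}.
\]

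The only real point is to check that this added remainder is still $O(\h^\infty)$. There are at most $C_L\h^{-2dL\varepsilon_1}$ terms, each product of coefficients is bounded by $(C_D\h^{\varepsilon-2d\varepsilon_1})^L\le C\h^{-2dL\varepsilon_1}$, and each $\gamma$ is $O_{\varepsilon_1,p,D}(\h^\infty)$ in $\widehat{H}^p_\h$ uniformly over the admissible $\qvec_L$. A polynomial loss in $\h^{-1}$ times $O(\h^\infty)$ is still $O_{\varepsilon_1,p,L,D}(\h^\infty)$, and by the remark following Corollary~\ref{cor: formules d'approximation 2} the threshold $\h_0$ causes no difficulty. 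The main obstacle is therefore the bookkeeping: the uniform control of all intermediate indices supplied by Lemma~\ref{lemme: les qj ne bavent pas trop}.
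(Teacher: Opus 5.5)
Your proposal is correct and is essentially the paper's proof. The paper also inducts on $L$, takes the one-step lemma (Lemma 4.3) at $\qvec_0$ as the base case, and uses the preceding lemma on chains to shrink $\h_0$ so that the one-step lemma applies at every chain endpoint $\qvec_{L+1}$. It absorbs the new $\gamma$-terms into $\rho_{L+1}$, controlled by a polynomial count of terms and coefficients times $O(\h^\infty)$, exactly as you do.
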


\begin{proof} 
The proof goes by induction. For $L=0$, the result is just
Lemma~\ref{lemme: 1er lemme de la proposition clé}.

Suppose that~\eqref{eq: récurrence pour la proposition-clé} holds at rank $L\in \N$.
Thanks to Lemma~\ref{lemme: les qj ne bavent pas trop}, up to taking $\h_0$ smaller,
we know that all the $\bq_{L+1}$ in the last sum are in $\B(\bzero, 2D \h^{-1/2})$ and satisfy 
\begin{equation*}
\hbar^{\frac{1}{2} - \varepsilon} \leq \left|p_\hbar(\bz^{\h, \bq_{L+1}}) \right|.
\end{equation*}
We may thus apply Lemma \ref{lemme: 1er lemme de la proposition clé} to each of the terms in
the last sum of~\eqref{eq: récurrence pour la proposition-clé} to write 
\begin{equation}
\label{eq: application du 1er lemme de la proposition-clé dans l'hérédité}
\w{\qvec_{L + 1}}
=
\frac{1}{\pi_\hbar(\bz^{\h, \bq_{L+1}})} \mathcal{P}_\hbar \w{\qvec_{L + 1}}
+
\sum_{\substack{\qvec_{L + 2} \in E_d\\|\qvec_{L + 1} - \qvec_{L + 2}|
\leq
2 \hbar^{-\varepsilon_1}}}
\wit{V}_{\hbar, \qvec_{L + 1}, \qvec_{L + 2}, \varepsilon_1} \w{\qvec_{L + 2}}
+
\gamma_{\hbar, \qvec_{L + 1}, \varepsilon_1},
\end{equation}
where for every $p\in \N$,
$
\|\gamma_{\hbar, \qvec_{L + 1}, \varepsilon_1}\|_{\widehat{H}_\hbar^p}
=
O_{\varepsilon_1, p,D}(\h^\infty)
$.
    
We may then inject \eqref{eq: application du 1er lemme de la proposition-clé dans l'hérédité} in the last sum of \eqref{eq: récurrence pour la proposition-clé} to obtain
    \begin{equation*}
        \begin{split}
            \w{\qvec_0} = \frac{1}{\pi_\hbar(\bz^{\h, \bq_0})} \mathcal{P_\hbar} \w{\qvec_0} &+ \sum_{\ell = 1}^{L + 1} \sum_{(\qvec_1, \dots, \qvec_\ell) \in F^\ell_{\hbar, \varepsilon_1}(\bq_0) } \frac{1}{\pi_\hbar(\bz^{\h, \bq_\ell})} \left[\prod_{i = 1}^\ell \wit{V}_{\hbar, \qvec_{i - 1}, \qvec_i, \varepsilon_1}\right] \mathcal{P}_\hbar \w{\qvec_j}\\
            &+ \sum_{(\qvec_1, \dots, \qvec_{L + 2}) \in F^{L+2}_{\hbar, \varepsilon_1}(\bq_0) } \left[\prod_{\ell = 1}^{L + 2} \wit{V}_{\hbar, \qvec_{l - 1}, \qvec_l, \varepsilon_1}\right] \w{\qvec_{L + 2}} + \rho_{L + 1, \hbar, \qvec_0, \varepsilon_1},
        \end{split}
    \end{equation*}
with
  
    \begin{equation*}
        \rho_{L + 1, \hbar, \qvec_0, \varepsilon_1} := \sum_{(\qvec_1, \dots, \qvec_{L + 1}) \in F^{L+1}_{\hbar, \varepsilon_1}(\bq_0) } \left[\prod_{\ell = 1}^{L + 1} \wit{V}_{\hbar, \qvec_{\ell - 1}, \qvec_\ell, \varepsilon_1}\right] \gamma_{\hbar, \qvec_{L + 1}, \varepsilon_1} + \rho_{L, \hbar, \qvec_0, \varepsilon_1}.
    \end{equation*}
    
Since the number of terms in the sum and the coefficients multiplying the $\gamma_{\hbar, \qvec_{L + 1}, \varepsilon_1}$ are all bounded by some negative power of $\h$, we deduce that for any $p\in \N$,    $\|\rho_{L + 1, \hbar, \qvec_0, \varepsilon_1}\|_{\widehat{H}_\hbar^p} =O_{\varepsilon_1, p, L,D}(\h^\infty)$,  which proves the result at rank $L+1$.
\end{proof}

We may now proceed with the proof of Proposition \ref{prop: proposition-clé}.

\begin{proof}[Proof of Proposition \ref{prop: proposition-clé}]
\textbf{Step 1: Applying Lemma  \ref{lemme: récurrence pour la proposition-clé}.}
Let $N\in \N$.  We set $\varepsilon_1 := \frac{1}{2} \min\left(\frac{\varepsilon^2}{2dN + 4d \varepsilon}, \varepsilon'\right)$, and $L := \left\lfloor \frac{\varepsilon}{2d \varepsilon_1} \right\rfloor$.  Note that $\varepsilon_1 < \frac{\varepsilon^2}{2dN + 4d \varepsilon} \leq \frac{\varepsilon}{4d}$, so that $L\geq 2$.  Furthermore,  since $\varepsilon_1 \leq \frac{\varepsilon^2}{2dN + 4d \varepsilon}$, we have $\frac{\varepsilon}{2d \varepsilon_1} - \frac{N}{\varepsilon - 4d \varepsilon_1} = \frac{\varepsilon^2- \varepsilon_1(2dN + 4d\varepsilon)}{2d\varepsilon_1(\varepsilon-4d\varepsilon_1)} \geq 0$, and thus
    \begin{equation*}
        \frac{N}{\varepsilon - 4d \varepsilon_1} - 1 \leq L \leq \frac{\varepsilon}{2d \varepsilon_1}.
    \end{equation*}
    
We may then apply Lemma \ref{lemme: récurrence pour la proposition-clé} to obtain
\begin{multline}
\label{eq: résultat de la récurrence utilisé dans la preuve de la proposition-clé}
\w{\qvec_0}
=
\frac{1}{\pi_\hbar(\bz^{\h, \bq_0})} \mathcal{P}_\hbar \w{\qvec_0}
\\
+
\sum_{\ell = 1}^L
\sum_{(\qvec_1, \dots, \qvec_\ell) \in F^{\ell}_{\hbar, \varepsilon_1}(\bq_0)}
\frac{1}{\pi_\hbar(\bz^{\h, \bq_\ell})}
\left[\prod_{i = 1}^\ell \wit{V}_{\hbar, \qvec_{i - 1}, \qvec_i, \varepsilon_1}\right]
\mathcal{P}_\hbar \w{\qvec_\ell}
+
\eta_{\hbar, \qvec_0, \varepsilon, \varepsilon', N}
\end{multline}
with
    \begin{equation*}
        \eta_{\hbar, \qvec_0, \varepsilon, \varepsilon', N} := \sum_{(\qvec_1, \dots, \qvec_{L + 1}) \in F^{L+1}_{\hbar, \varepsilon_1}(\bq_0)} \left[\prod_{\ell = 1}^{L + 1} \wit{V}_{\hbar, \qvec_{\ell - 1}, \qvec_\ell, \varepsilon_1}\right] \w{\qvec_{L + 1}} + \rho_{L, \hbar, \qvec_0, \varepsilon_1},
    \end{equation*}
    where, for any $p\in \N$,  $\| \rho_{L, \hbar, \qvec_0, \varepsilon_1}\|_{\widehat{H}_\hbar^p}  = O_{\varepsilon, \varepsilon', p, N,D} (\h^\infty)$.

    \textbf{Step 2: Bounding $\|\eta_{\hbar, \qvec_0, \varepsilon, \varepsilon', N}\|_{\widehat{H}_\hbar^p}$.}
    We have
    \begin{align*}
        \left\|\sum_{(\qvec_1, \dots, \qvec_{L + 1}) \in F^{L+1}_{\hbar, \varepsilon_1}(\bq_0)} \left[\prod_{\ell = 1}^{L + 1} \wit{V}_{\hbar, \qvec_{\ell - 1}, \qvec_\ell, \varepsilon_1}\right] \w{\qvec_{L + 1}}\right\|_{\widehat{H}_\hbar^p} 
        &\leq \sum_{(\qvec_1, \dots, \qvec_{L+ 1}) \in F^{L+1}_{\hbar, \varepsilon_1}(\bq_0)} \left[\prod_{\ell = 1}^{L + 1} \left|\wit{V}_{\hbar, \qvec_{\ell - 1}, \qvec_\ell, \varepsilon_1}\right|\right] \left\|\w{\qvec_{L + 1}}\right\|_{\widehat{H}_\hbar^p}\\
    &\leq C_{p,D} \sum_{(\qvec_1, \dots, \qvec_{L+ 1}) \in F^{L+1}_{\hbar, \varepsilon_1}(\bq_0)} \left[\prod_{\ell = 1}^{L + 1} \left|\wit{V}_{\hbar, \qvec_{\ell - 1}, \qvec_\ell, \varepsilon_1}\right|\right]\\
        &\leq C_{p, N,D, \varepsilon, \varepsilon'} \hbar^{(\varepsilon - 4d \varepsilon_1)(L+ 1)}.
    \end{align*}
    In the second inequality, we used Lemma \ref{lemme: majoration de la norme H de phi} along with the fact that all the $\bz^{\h,\bq_{L+1}}$ corresponding to the $\bq_{L+1}$ in the sum belong to a bounded set, depending only on $D$. In the last inequality, we used the fact that the $\left|\wit{V}_{\hbar, \qvec_{\ell - 1}, \qvec_\ell, \varepsilon_1}\right|$ are all $O_D(\hbar^{\varepsilon - 2d \varepsilon_1})$, while the number of terms in the sum is $ \leq C_L \hbar^{-2d \varepsilon_1 (L + 1)}$.
    
    Now, recalling that $(\varepsilon - 4d \varepsilon_1)(L+ 1)\geq N$, we deduce that, for any $p\in \N$, we may find $C_{\varepsilon, \varepsilon', p, N,D}>0$ such that $\|\eta_{\hbar, \qvec_0, \varepsilon, \varepsilon', N}\|_{\widehat{H}_\hbar^p} \leq C_{\varepsilon, \varepsilon', p, N,D} \hbar^N$.
    
    \textbf{Step 3: Studying the double sum in (\ref{eq: résultat de la récurrence utilisé dans la preuve de la proposition-clé}).}
    To rewrite the  double sum in (\ref{eq: résultat de la récurrence utilisé dans la preuve de la proposition-clé}), we start by noting that all the $\bq_\ell$ appearing there satisfy $|\qvec_\ell - \qvec_0| \leq 2\ell \hbar^{-\varepsilon_1} \leq 2L \hbar^{-\varepsilon_1} \leq \hbar^{-\varepsilon'}$ for $\hbar$ small enough (since $\varepsilon_1 < \varepsilon'$).  The formula  (\ref{eq: résultat de la récurrence utilisé dans la preuve de la proposition-clé}) may thus be rewritten, for $\h$ smaller than some $\h_0(\varepsilon, \varepsilon', N)$, as
    \begin{equation}
        \label{eq: formule de la proposition-clé dans la preuve}
        \w{\qvec_0} = \sum_{\substack{\svec \in E_d\\|\svec - \qvec_0| \leq \hbar^{-\varepsilon'}}} U_{\hbar, \qvec_0, \svec}^{\varepsilon, \varepsilon', N} \mathcal{P}_\hbar \w{\svec} + \eta_{\hbar, \qvec_0, \varepsilon, \varepsilon', N},
    \end{equation}
    for some family of coefficients $U_{\hbar, \qvec_0, \svec}^{\varepsilon, \varepsilon', N}$ which we now need to bound.
    
    Let $\svec \in E_d$ be such that $|\svec - \qvec_0| \leq \hbar^{-\varepsilon'}$.  In the double sum of \eqref{eq: résultat de la récurrence utilisé dans la preuve de la proposition-clé},  the number of terms involving $\mathcal{P}_\hbar \w{\svec}$ is smaller than the total number of terms, which is 
    \begin{equation*}
        \sum\limits_{\ell = 1}^L \card\left( F^{\ell}_{\hbar, \varepsilon_1}(\bq_0) \right) \leq \sum\limits_{\ell = 1}^L C_\ell \hbar^{-2d \varepsilon_1 \ell} \leq C_L \hbar^{-2d \varepsilon_1 L}.
    \end{equation*}

Furthermore,  in the double sum in \eqref{eq: résultat de la récurrence utilisé dans la preuve de la proposition-clé}, each term involving $\mathcal{P}_\hbar \w{\svec}$ comes with a coefficient of the form $\frac{1}{\pi_\hbar(\bz^{\h, \bq_\ell})} \prod\limits_{i = 1}^\ell \wit{V}_{\hbar, \qvec_{i - 1}, \qvec_li \varepsilon_1}$, which may be bounded as
    \begin{equation*}
        \left|\frac{1}{\pi_\hbar(\bz^{\h, \bq_\ell})} \prod_{i = 1}^l \wit{V}_{\hbar, \qvec_{i - 1}, \qvec_i, \varepsilon_1}\right| \leq \hbar^{-\frac{1}{2} + \varepsilon} \times C^\ell_D \hbar^{\ell(\varepsilon - 2d \varepsilon_1)} \leq C_{L, D} \hbar^{-\frac{1}{2} + \varepsilon}.
    \end{equation*}
Here, we used the fact that,  by Lemma \ref{lemme: les qj ne bavent pas trop},  for every $1\leq \ell \leq L$,  we have $\left|\pi(\bz^{\h, \bq_\ell})\right| = \left|p_\hbar(\bz^{\h, \bq_\ell})\right| \geq \hbar^{\frac{1}{2} - \varepsilon}$.

All in all, we obtain that,  if $\svec \neq \qvec_0$, then $\left|U_{\hbar, \qvec_0, \svec}^{\varepsilon, \varepsilon', N}\right| \leq C_{L, D} \hbar^{-\frac{1}{2} + \varepsilon - 2d \varepsilon_1 L} \leq C_{L, D} \hbar^{-\frac{1}{2}}$ (since $\varepsilon - 2d \varepsilon_1 L \geq 0$).  Since $L=L(\varepsilon, \varepsilon', N)$, we get $\left|U_{\hbar, \qvec_0, \svec}^{\varepsilon, \varepsilon', N}\right| \leq C_{\varepsilon, \varepsilon', N, D} \hbar^{-\frac{1}{2}}$, as announced.
When $\svec = \qvec_0$, we must also take into account the term $\frac{1}{\pi_\hbar(\bz^{\h, \bq_0})} \mathcal{P}_\hbar \w{\qvec_0}$,  and we obtain the same result since $\left|\frac{1}{\pi_\hbar(\bz^{\h, \bq_0})}\right| \leq  \frac{1}{2} \hbar^{-\frac{1}{2} + \varepsilon}$. 
\end{proof}

\section{Proof of the approximation theorem}
\label{section_approximation}

The aim of this section is to prove Theorem~\ref{theorem_approximability}.
To this end, we fix $0 < \varepsilon < 1/2$,  $0 \leq \alpha < \delta_0/2$
and $\lambda'> \lambda>0$. All the quantities introduced in this section
 may implicitly depend on $\varepsilon$, $\alpha$, $\lambda$,
$\lambda'$ and on the integer $J$ introduced below, but we will not always
write this dependence explicitly. In this section, we will always suppose that
$\h$ is small enough so that $\Lambda_\h(\lambda')$ is finite (which is possible
by Assumption \ref{Hyp:BoundedLayers}).

\subsection{Introducing intermediate energy layers}
\label{subsec:EnergyLayers}
In the proof of Theorems~\ref{theorem_approximability}
and~\ref{theorem_accuracy},
we will need to introduce several intermediate energy layers.
We thus take an integer $J\in \N^*$, and define a sequence $(\lambda_j)_{j=0,..., J}$ as
\begin{equation}
\lambda_j = \lambda + \frac{j}{J} (\lambda'-\lambda),
\end{equation}
so that $\lambda_0 = \lambda$ and $\lambda_J=\lambda'$. In the proof of
Theorem~\ref{theorem_approximability}, we will take $J = 3$, while in
the proof of Theorem~\ref{theorem_accuracy},
we will need $J = 6$. 

We then introduce, for $1\leq j \leq J$,  the sets
\begin{equation*}
\Lambda_{\h, j} :=  \left\{[\mvec, \nvec] \in E_d \;\Big|\; \alpha + \lambda_{j-1}  \hbar^{\frac{1}{2} - \varepsilon} \leq \min_{\epsilonvec \in \{-1, 1\}^d}\left|p_\hbar(\xm{\mvec}, \epsilonvec \xin{\nvec})\right| < \alpha + \lambda_j \hbar^{\frac{1}{2} - \varepsilon}\right\},
\end{equation*}
and write $\Lambda_{\h,0} := \Lambda_\h(\lambda)$, defined as in (\ref{eq:EnergyLayers}), so that 
\begin{equation*}
\Lambda_\h(\lambda') =  \bigsqcup_{j=0}^J \Lambda_{\h,j}.
\end{equation*}

If $0\leq  j_1\leq j_2 \leq J$,  we will also write
\begin{equation*}
\Lambda_{\h, j_1, j_2} := \bigsqcup_{j=j_1}^{j_2} \Lambda_{\h,j},
\end{equation*}
and $W_{\h, j_1, j_2} := \cD (\ell^2(\Lambda_{\h, \lambda_1, \lambda_2}))$ with the operator $\cD$ as in Section~\ref{section_gabor_wilson}. Note that, if $w\in W_{\h, j_1, j_2}$, the uniqueness of the decomposition \eqref{eq_wilson_decomposition} implies that we have
\begin{equation}\label{eq:UniquenessDecomposition}
w= \sum_{\bq\in \Lambda_{\h, j_1, j_2}} \langle w, \w{\qvec}^\star\rangle  \w{\qvec}.
\end{equation}

Finally, we introduce analogous sets, but in $\Z^{2d}$, to work with Gabor frames rather than Riesz bases. We define
\begin{equation}\label{eq:DefLambdaTilde}
\begin{aligned}
\wit{\Lambda}_{\h,0} &:=  \left\{[\mvec, \nvec] \in \Z^{2d} \;\Big|\; \left|p_\hbar(\xm{\mvec},   \xin{\nvec})\right| < \alpha + \lambda \hbar^{\frac{1}{2} - \varepsilon}\right\}\\
\wit{\Lambda}_{\h, j} &:=  \left\{[\mvec, \nvec] \in \Z^{2d} \;\Big|\; \alpha + \lambda_{j-1}  \hbar^{\frac{1}{2} - \varepsilon} \leq \left|p_\hbar(\xm{\mvec},   \xin{\nvec})\right| < \alpha + \lambda_j \hbar^{\frac{1}{2} - \varepsilon}\right\} ~~~~\text{ for } 1\leq j \leq J,\\
\wit{\Lambda}_{\h, j_1, j_2} &:= \bigsqcup_{j=j_1}^{j_2} \wit{\Lambda}_{\h, j}.
\end{aligned}
\end{equation}

We will often use the following lemma, which tells us that non-consecutive energy layers are well-separated from  each other.

\begin{lemma}
\label{lemme: épaisseur des niveaux d'énergie}
There exist $C, \h_0>0$ such that, for all $j\in \{0,\ldots, J-1\}$,
$\h\in (0,\h_0]$, $\qvec \in \wit{\Lambda}_{\h, j} \cup \Lambda_{\h,j}$
and $\svec \in E_d \setminus \Lambda_{\h,0,j+1}$, we have
\begin{equation}\label{eq:QSLoin}
\delta(\qvec, \svec) > C \h^{-\varepsilon}.
\end{equation}
In particular, for any $\varepsilon'\in (0, \varepsilon)$,
there exists $\h_1(\varepsilon')$ such that, for all $\h \in (0,\h_1(\varepsilon')]$,
$j\in \{0,..., J-1\}$, and $\qvec \in \wit{\Lambda}_{\h, j} \cup \Lambda_{\h,j}$,
we have
\begin{equation}
\label{eq: les s proches de q sont dans lambda 1,j+1}
\left \{
\svec \in E_d, \delta(\qvec, \svec) \leq \hbar^{-\varepsilon'}
\right \}
\subset
\Lambda_{\h,0,j+1},
\end{equation}
and, if $\bq\in \Lambda_{\h, j}$ with $j\in \{1,..., J-1\}$,  we have
\begin{equation}
\label{eq:s proches de q dans lambda j-1,j+1}
\left \{
\svec \in E_d, |\qvec - \svec| \leq \h^{-\varepsilon'}
\right \}
\subset
\Lambda_{\h,j-1,j+1}.
\end{equation}
\end{lemma}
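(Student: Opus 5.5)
The plan is to reduce everything to a Lipschitz estimate for $p_\h$ on a fixed compact set of phase space, together with the observation that consecutive layer boundaries are separated by a gap of size $\frac{\lambda'-\lambda}{J}\h^{1/2-\varepsilon}$ in the value of $\min_{\epsilonvec}|p_\h|$.

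Let $j\in\{0,\dots,J-1\}$ and $\qvec\in \wit\Lambda_{\h,j}\cup\Lambda_{\h,j}$. By definition (for $j=0$ also using $\Lambda_{\h,0}=\Lambda_\h(\lambda)$ and $\lambda_0=\lambda$) we have
\begin{equation*}
\min_{\epsilonvec}|p_\h(\xm{\mvec_\qvec},\epsilonvec\xin{\nvec_\qvec})| < \alpha+\lambda_j\h^{1/2-\varepsilon}.
\end{equation*}
Since $\alpha<\delta_0/2$, for $\h$ small this is $<\delta_0$. Assumption~\ref{Hyp:BoundedLayers} then places some point $(\xm{\mvec_\qvec},\epsilonvec_0\xin{\nvec_\qvec})$ in $\B(\bzero,D_0)$, so $|\bz^{\h,\qvec}|\le D_0$.

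Let $\svec\in E_d\setminus\Lambda_{\h,0,j+1}$. Since $\Lambda_{\h,0,j+1}$ is exactly the set of $E_d$-indices with $\min_\epsilonvec|p_\h|<\alpha+\lambda_{j+1}\h^{1/2-\varepsilon}$, we get
\begin{equation*}
\min_{\epsilonvec}|p_\h(\xm{\mvec_\svec},\epsilonvec\xin{\nvec_\svec})|\ge \alpha+\lambda_{j+1}\h^{1/2-\varepsilon}.
\end{equation*}
Pick $\epsilonvec_1$ realising $\delta(\qvec,\svec)=|\qvec-[\mvec_\svec,\epsilonvec_1\nvec_\svec]|$, and set $\svec'=[\mvec_\svec,\epsilonvec_1\nvec_\svec]$. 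Since $\epsilonvec\mapsto\epsilonvec\epsilonvec_1$ is a bijection of $\{-1,1\}^d$, the quantity $\min_\epsilonvec|p_\h(\xm{\mvec},\epsilonvec\xin{\nvec})|$ is the same for $\svec$ and $\svec'$; call this function $g$.

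Argue by contradiction. If $\delta(\qvec,\svec)\le 1$, then $\svec'$ lies in a unit ball around $\qvec$, so $|\bz^{\h,\svec'}-\bz^{\h,\qvec}|\le\sqrt{\pi\h}\,\delta(\qvec,\svec)$, and both points lie in the ball $\B(\bzero,D_0+\sqrt\pi)$. The function $g$ is a minimum of finitely many functions $|p_\h(\bx,\epsilonvec\bxi)|$, each Lipschitz on that ball with a constant $L$ uniform in $\h$ by Assumption~\ref{Hyp1:BoundCoefs}, since $p_\h$ is a quadratic polynomial in $\bxi$ with $C^1_{\rm b}$ coefficients. Hence
\begin{equation*}
\frac{\lambda'-\lambda}{J}\h^{1/2-\varepsilon}\le g(\bz^{\h,\svec'})-g(\bz^{\h,\qvec})\le L\sqrt{\pi\h}\,\delta(\qvec,\svec),
\end{equation*}
which gives $\delta(\qvec,\svec)\ge C\h^{-\varepsilon}$ with $C=\frac{\lambda'-\lambda}{JL\sqrt\pi}$. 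For $\h$ small this exceeds $1$, contradicting $\delta(\qvec,\svec)\le1$. So we may assume $\delta(\qvec,\svec)>1$.

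That case is the only real subtlety: the Lipschitz bound must be applied on a bounded set, and this is not automatic when $\svec'$ is far from $\qvec$. I would handle it by choosing the comparison radius as $\h^{-\varepsilon/2}$ instead of $1$. If $\delta(\qvec,\svec)\le\h^{-\varepsilon/2}$, then $|\bz^{\h,\svec'}-\bz^{\h,\qvec}|\le\sqrt\pi\,\h^{1/2-\varepsilon/2}\le\sqrt\pi$, and the same computation yields $\delta(\qvec,\svec)\ge C\h^{-\varepsilon}>\h^{-\varepsilon/2}$ for small $\h$, a contradiction. Hence $\delta(\qvec,\svec)>\h^{-\varepsilon/2}$ always, and in that regime the Lipschitz computation is valid and gives $\delta(\qvec,\svec)\ge C\h^{-\varepsilon}$; halving $C$ makes the inequality strict, which proves \eqref{eq:QSLoin}. (Equivalently: the bound $\delta\ge C\h^{-\varepsilon}$ holds whenever the segment stays in the ball, and otherwise $\delta\gtrsim\h^{-1/2}\ge\h^{-\varepsilon}$.)

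The inclusion \eqref{eq: les s proches de q sont dans lambda 1,j+1} is the contrapositive of \eqref{eq:QSLoin}: since $\h^{-\varepsilon'}<C\h^{-\varepsilon}$ for $\h\le\h_1(\varepsilon')$, no $\svec$ outside $\Lambda_{\h,0,j+1}$ can satisfy $\delta(\qvec,\svec)\le\h^{-\varepsilon'}$. For \eqref{eq:s proches de q dans lambda j-1,j+1} with $\qvec\in\Lambda_{\h,j}\subset E_d$, we have $\delta=|\cdot|$, so $\svec\in\Lambda_{\h,0,j+1}$ by the above. To exclude $\Lambda_{\h,0,j-2}$, run the same Lipschitz argument in the other direction: the index $\qvec$ has $g\ge\alpha+\lambda_{j-1}\h^{1/2-\varepsilon}$, while any index in $\Lambda_{\h,0,j-2}$ has $g<\alpha+\lambda_{j-2}\h^{1/2-\varepsilon}$ (with $\lambda_{-1}$ read as $\lambda_0$ for the layer $\Lambda_{\h,0}$). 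The gap is again $\frac{\lambda'-\lambda}{J}\h^{1/2-\varepsilon}$, so such indices are at distance at least $C\h^{-\varepsilon}>\h^{-\varepsilon'}$, and therefore $\svec\in\Lambda_{\h,j-1,j+1}$.
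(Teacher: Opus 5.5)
Your proposal is correct and follows essentially the paper's proof: you localize $\bz^{\h,\qvec}$ in $\B(\bzero,D_0)$ via Assumption~\ref{Hyp:BoundedLayers}, use the uniform Lipschitz bound for $|p_\h|$ on a fixed ball to turn the gap $\frac{\lambda'-\lambda}{J}\h^{1/2-\varepsilon}$ into $\delta(\qvec,\svec)\gtrsim\h^{-\varepsilon}$, and dispose of far-away $\svec$ via $\delta\gtrsim\h^{-1/2}$. The paper splits on whether $\bz^{\h,\svec}\in\B(\bzero,2D_0)$, and it gets the last inclusion by applying the second one twice with the roles of $\qvec,\svec$ swapped rather than rerunning the Lipschitz estimate.

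One caution concerns the paragraph with comparison radius $\h^{-\varepsilon/2}$: it does not close the argument on its own. Knowing $\delta(\qvec,\svec)>\h^{-\varepsilon/2}$ says nothing about $\bz^{\h,\svec'}$ staying in the ball, so your claim that the Lipschitz computation is ``valid in that regime'' is unjustified. What actually proves \eqref{eq:QSLoin} is your parenthetical dichotomy: either the segment stays in the ball, or $\delta\gtrsim\h^{-1/2}$. You can drop the radius-$1$ and radius-$\h^{-\varepsilon/2}$ contradiction steps entirely.
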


\begin{proof}
First of all, we note that, thanks to Assumption~\ref{Hyp:BoundedLayers},
we have $\bz^{\h,\qvec}\in \B(\bzero, D_0)$ for all $\h$ smaller than some $\h_0(\varepsilon, \lambda')$. When $\bz^{\h,\bs}\notin \B(\bzero,2D_0)$,
we have, for each $\epsilonvec \in \{-1, 1\}^d$,
\begin{equation*}
|\bq - [\bm_{\bs}, \epsilonvec \bn_{\bs}]| = \frac{1}{\sqrt{\pi \h}}|\bz^{\h, \bq} - [\xm{\bm_{\bs}}, \epsilonvec \xin{\bn_{\bs}}]| > \frac{D_0}{\sqrt{\pi \h}}
\end{equation*}
so that \eqref{eq:QSLoin} holds. We may thus suppose that $\bz^{\h,\bs}\in \B(\bzero, 2D_0)$.

 Now, thanks to Assumption \ref{Hyp1:BoundCoefs} and to (\ref{eq_symbol}),  $|\nabla p_\h|$ is bounded (independently of $\h$) on $\B(\bzero, 2D_0)$, and thus, $|p_h|$ is Lipschitz on $\B(\bzero, 2D_0)$.  We deduce that there exists $C>0$ such that, for any $\epsilonvec, \epsilonvec'\in \{-1, 1\}^d$
 we have
\begin{equation}\label{eq:LipschitzP}
 \left| p_\hbar(\xm{\mvec_\svec}, \epsilonvec \xin{\nvec_\svec})- p_\hbar(\xm{\mvec_\qvec},  \epsilonvec' \xin{\nvec_\qvec})\right| \leq C \left|  (\xm{\mvec_\svec}, \epsilonvec \xin{\nvec_\svec}) - (\xm{\mvec_\qvec},  \epsilonvec' \xin{\nvec_\qvec}) \right|. 
 \end{equation} 
First, if $\bq \in \wit{\Lambda}_{\h,j}$, we take $\epsilonvec'=1$, then $\epsilonvec = \epsilonvec_0$ realising the minimum in the right-hand side of (\ref{eq:LipschitzP}),  to obtain 
\begin{align*}
J^{-1} (\lambda'-\lambda)h^{\frac{1}{2}-\varepsilon} &< \min_{\epsilonvec \in \{-1, 1\}^d} \left| p_\hbar(\xm{\mvec_\svec}, \epsilonvec \xin{\nvec_\svec})\right| -   \left|p_\hbar(\xm{\mvec_\qvec}, \xin{\nvec_\qvec}) \right| \\
&\leq  \left| p_\hbar(\xm{\mvec_\svec}, \epsilonvec_0 \xin{\nvec_\svec})\right| -   \left|p_\hbar(\xm{\mvec_\qvec}, \xin{\nvec_\qvec}) \right|\\
&\leq  C    \left|(\xm{\mvec_\qvec}, \xin{\nvec_\qvec}) - (\xm{\mvec_\svec}, \epsilonvec_0 \xin{\nvec_\svec})\right|\\
&= C  \min_{\epsilonvec \in \{-1, 1\}^d}  \left|(\xm{\mvec_\qvec}, \xin{\nvec_\qvec}) - (\xm{\mvec_\svec}, \epsilonvec \xin{\nvec_\svec})\right|\\
&= C \sqrt{\pi\h}  \min_{\epsilonvec \in \{-1, 1\}^d}  \left|[\mvec_\qvec- \mvec_{\svec} , \nvec_\qvec -\epsilonvec\nvec_{\svec} ]\right|= C \sqrt{\pi\h}   \delta(\qvec, \svec),
\end{align*}
and we deduce (\ref{eq:QSLoin}).  When $\bq\in \Lambda_{\h,j}$,  we take $\epsilonvec'$ which minimises  $\left|p_\hbar(\xm{\mvec_\qvec},  \epsilonvec' \xin{\nvec_\qvec}) \right|$,  then $\epsilonvec = \epsilonvec_0$ realising the minimum in the right-hand side of (\ref{eq:LipschitzP}). We may then argue in the same way, starting from $J^{-1} (\lambda'-\lambda)h^{\frac{1}{2}-\varepsilon} < \min\limits_{\epsilonvec \in \{-1, 1\}^d} \left| p_\hbar(\xm{\mvec_\svec}, \epsilonvec \xin{\nvec_\svec})\right| -   \left|p_\hbar(\xm{\mvec_\qvec},  \epsilonvec' \xin{\nvec_\qvec}) \right|$.

Equation~\eqref{eq: les s proches de q sont dans lambda 1,j+1} directly follows from the contrapositive of \eqref{eq:QSLoin}.

Finally,~\eqref{eq:s proches de q dans lambda j-1,j+1} is proved using~\eqref{eq: les s proches de q sont dans lambda 1,j+1} twice: if $\svec \in E_d$ with $|\qvec - \svec| \leq \hbar^{-\varepsilon'}$, then by \eqref{eq: les s proches de q sont dans lambda 1,j+1}, we must have $\bs\in \Lambda_{\h, 0, j+1}$. But, if $\bs \in \Lambda_{\h, j'}$ for some $j' < j-1$, then using \eqref{eq: les s proches de q sont dans lambda 1,j+1} again, we would have $\bq\in \Lambda_{\h, 0, j'+1}$, which is a contradiction.
\end{proof}

The following lemma reflects the fact that the operator $P_\h$ is pseudodifferential,
so that it ``does not move things in phase space''.

\begin{lemma}
    \label{lemme: si v est micro-localisé alors Pv aussi}
 Let $0\leq j \leq J-2$, and let $w_\hbar \in W_{\h, 0,j}$.  There exists $\wzero \in  W_{\h, 0,j+2}$ such that, for all $p\in \N$, we have
    \begin{equation*}
        \left\|P_\hbar w_\hbar - \wzero \right\|_{\widehat{H}_\hbar^p} = O_{p} (\h^\infty)\times  \|w_\hbar\|_{L^2}.
    \end{equation*}
\end{lemma}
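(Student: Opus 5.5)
We expand $w_\hbar$ in the Wilson basis and treat each $P_\hbar\w{\qvec}$ by Corollary~\ref{cor: formules d'approximation 2}. By \eqref{eq:UniquenessDecomposition}, $w_\hbar = \sum_{\qvec\in\Lambda_{\h,0,j}} U(\qvec)\w{\qvec}$ with $U(\qvec) = \langle w_\hbar,\w{\qvec}^\star\rangle$, and by \eqref{eq: relation entre la norme de U et celle de u pour Wilson} we have $\|U\|_{\ell^2}\le \alpha_{\mathcal W}^{-1/2}\|w_\hbar\|_{L^2}$. The operator $P_\hbar$ has the form $\mathcal{P}_{\h,A}$ of Lemma~\ref{lemme: P w scalaire w} with $n=2$, and $|A|_p$ is bounded by Assumption~\ref{Hyp1:BoundCoefs}. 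Fix $\varepsilon'\in(0,\varepsilon)$. Corollary~\ref{cor: formules d'approximation 2} then gives
\begin{equation*}
P_\hbar\w{\qvec} = \sum_{\substack{\svec\in E_d\\|\qvec-\svec|\le \h^{-\varepsilon'}}}\langle P_\hbar\w{\qvec},\w{\svec}\rangle \w{\svec}^\star + r_{\h,\qvec},
\end{equation*}
with $\|r_{\h,\qvec}\|_{\HH^s_\h}\le C_{s,N}(1+|\qvec|^{2+s})\h^N$. Every $\qvec\in\Lambda_{\h,0,j}$ satisfies $\bz^{\h,\qvec}\in\B(\bzero,D_0)$ for $\h$ small, by Assumption~\ref{Hyp:BoundedLayers} and $\alpha<\delta_0/2$. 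Hence $|\qvec|\le C\h^{-1/2}$.

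The $\w{\svec}^\star$ are not in the Wilson span, so we re-expand them. Applying \eqref{eq_approx_Phi_star} with $\varepsilon'$ gives $\w{\svec}^\star=\sum_{|\svec-\rvec|\le\h^{-\varepsilon'}}\langle\w{\svec}^\star,\w{\rvec}^\star\rangle\w{\rvec}+\eta_{\h,\svec}$ with $\eta_{\h,\svec}$ of size $O(\h^\infty)$. We then define
\begin{equation*}
\wzero := \sum_{\qvec\in\Lambda_{\h,0,j}} U(\qvec)\sum_{|\qvec-\svec|\le\h^{-\varepsilon'}}\ \sum_{|\svec-\rvec|\le\h^{-\varepsilon'}}\langle P_\hbar\w{\qvec},\w{\svec}\rangle\langle\w{\svec}^\star,\w{\rvec}^\star\rangle\w{\rvec}.
\end{equation*}
To see that $\wzero\in W_{\h,0,j+2}$, we apply \eqref{eq: les s proches de q sont dans lambda 1,j+1} twice. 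For $\qvec\in\Lambda_{\h,j'}$ with $j'\le j$, the index $\svec$ lies in $\Lambda_{\h,0,j'+1}\subset\Lambda_{\h,0,j+1}$. Since $j+1\le J-1$, a second application gives $\rvec\in\Lambda_{\h,0,j+2}$. This is exactly why the statement requires $j\le J-2$.

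It remains to bound the error $P_\hbar w_\hbar-\wzero=\sum_\qvec U(\qvec)\bigl(r_{\h,\qvec}+\sum_\svec\langle P_\hbar\w{\qvec},\w{\svec}\rangle\eta_{\h,\svec}\bigr)$. Lemma~\ref{lemme: P w scalaire w} gives $|\langle P_\hbar\w{\qvec},\w{\svec}\rangle|\le C$, using $|\xin{\nvec_\qvec}|\le D_0$. The inner sum over $\svec$ has $O(\h^{-2d\varepsilon'})$ terms, each with $|\svec|\le C\h^{-1/2}$. Therefore each bracket is $O_p(\h^\infty)$ in $\HH^p_\h$, uniformly in $\qvec$. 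Summing over $\qvec$ by Cauchy--Schwarz, with $\mathrm{Card}(\Lambda_{\h,0,j})\le C\h^{-d}$ from Lemma~\ref{lem:LePtiLemKiSauveToutLTemps}, costs at most a factor $\h^{-d/2}\|U\|_{\ell^2}\le C\h^{-d/2}\|w_\hbar\|_{L^2}$. This polynomial loss is absorbed into $O(\h^\infty)$.

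The only delicate point is the index bookkeeping: one must make sure the two successive $\h^{-\varepsilon'}$-neighbourhoods stay inside the layers $\Lambda_{\h,0,j+2}$. Lemma~\ref{lemme: épaisseur des niveaux d'énergie} handles this, provided $\h\le\h_1(\varepsilon')$. For larger $\h$ the estimate is trivial, following the remark after Corollary~\ref{cor: formules d'approximation 2}.
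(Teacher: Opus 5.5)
Your proposal is correct and follows essentially the same route as the paper. You expand each $P_\hbar\w{\qvec}$ through Corollary~\ref{cor: formules d'approximation 2}, re-expand the dual states $\w{\svec}^\star$ through \eqref{eq_approx_Phi_star}, apply Lemma~\ref{lemme: épaisseur des niveaux d'énergie} twice to place the indices in $\Lambda_{\h,0,j+2}$, and absorb the polynomial losses into $O(\h^\infty)$. The only differences are cosmetic: you use a general $\varepsilon'<\varepsilon$ instead of $\varepsilon/2$, and you group the error per $\qvec$ rather than through the intermediate coefficients $\gamma_{\h,\svec}$.
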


\begin{proof}
Recall that by Corollary \ref{cor: formules d'approximation 2}, for any $\bq\in \Lambda_{\h, 0, j}$, we may write  
\begin{align*}
        P_\hbar \w{\qvec} = \sum_{\substack{\svec \in E_d\\|\qvec - \svec| \leq \hbar^{-\frac{\varepsilon}{2}}}} \langle P_\hbar \w{\qvec}, \w{\svec}\rangle  \w{\svec}^\star + R_{\hbar, \qvec}
    \end{align*}
    with $\|R_{\hbar, \qvec}\|_{\widehat{H}_\hbar^p} = O_{p} (\h^\infty)$ for any $p\in\N$. We thus have
    \begin{align*}
        P_\hbar w_\hbar &= \sum_{\qvec \in \Lambda_{\h,0, j}} \langle w_\hbar, \w{\qvec}^\star\rangle  P_\hbar \w{\qvec}\\
        &= \sum_{\qvec \in \Lambda_{\h,0, j}} \sum_{\substack{\svec \in E_d\\|\qvec - \svec| \leq \hbar^{-\frac{\varepsilon}{2}}}} \langle w_\hbar, \w{\qvec}^\star\rangle  \langle P_\hbar \w{\qvec}, \w{\svec}\rangle  \w{\svec}^\star + \sum_{\qvec \in \Lambda_{\h,0, j}} \langle w_\hbar, \w{\qvec}^\star\rangle  R_{\hbar, \qvec}.
    \end{align*}
    
    Let us denote by $R_{\hbar}(w_\h)$ the second term. Using Lemma \ref{lem:LePtiLemKiSauveToutLTemps} and \eqref{eq:BornePhiStar}, we have $\|R_{\hbar}(w_\h)\|_{\widehat{H}_\hbar^p} = O_{p} (\h^\infty) \times \|w_\h\|_{L^2}$ for any $p\in\N$. Thanks to \eqref{eq: les s proches de q sont dans lambda 1,j+1}, we have for $\h$ small enough and for all $\qvec \in  \Lambda_{\h, 0, j}$, $\left\{\svec \in E_d, |\qvec - \svec| \leq \hbar^{-\frac{\varepsilon}{2}}\right\} = \left\{\svec \in \Lambda_{\h, 0, j+1}, |\qvec - \svec| \leq \hbar^{-\frac{\varepsilon}{2}}\right\}$. We deduce that
    \begin{align*}
        P_\hbar w_\hbar &= \sum_{\qvec \in \Lambda_{\h, 0, j}} \sum_{\substack{\svec \in \Lambda_{\h, 0, j+1}\\|\qvec - \svec| \leq \hbar^{-\frac{\varepsilon}{2}}}} \langle w_\hbar, \w{\qvec}^\star\rangle  \langle P_\hbar \w{\qvec}, \w{\svec}\rangle  \w{\svec}^\star + R_{\hbar}(w_\h)\\
        &= \sum_{\svec \in \Lambda_{\h,0,j+1}} \gamma_{\hbar,  \svec} \w{\svec}^\star + R_{\hbar}(w_\h),
    \end{align*}
with $\gamma_{\hbar, \svec} := \sum\limits_{\substack{\qvec \in \Lambda_{\h,0,j}\\|\qvec - \svec| \leq \hbar^{-\frac{\varepsilon}{2}}}} \langle w_\hbar, \w{\qvec}^\star\rangle  \langle P_\hbar \w{\qvec}, \w{\svec}\rangle $. 

Next, we recall from Corollary \ref{cor: formules d'approximation 1},  that we have, for any $\bs\in \Lambda_{\h,0,j+1}$
    \begin{align*}
        \w{\svec}^\star = \sum_{\substack{\rvec \in E_d\\|\svec - \rvec| \leq \hbar^{-\frac{\varepsilon}{2}}}} \langle \w{\svec}^\star, \w{\rvec}^\star\rangle  \w{\rvec} + \rho_{\hbar, \svec},
    \end{align*}
with $\|\rho_{\hbar, \svec}\|_{\widehat{H}_\hbar^p} =O_{p} (\h^\infty)$ for any $p\in\N$.  We therefore obtain, using \eqref{eq: les s proches de q sont dans lambda 1,j+1} again,
    \begin{align*}
        P_\hbar w_\hbar &= \sum_{\svec \in \Lambda_{\h,0,j+1}} \sum_{\substack{\rvec \in \Lambda_{\h,0,j+2}\\|\svec - \rvec| \leq \hbar^{-\frac{\varepsilon}{2}}}} \gamma_{\hbar, \svec} \langle \w{\svec}^\star, \w{\rvec}^\star\rangle \w{\rvec} + \sum_{\svec \in \Lambda_{\h,0,j+1}} \gamma_{\hbar,  \svec} \rho_{\hbar, \varepsilon, \svec} + R_{\hbar}(w_\h)\\
        &= \sum_{\rvec \in \Lambda_{\h,0,j+2}} \gamma_{\hbar, \rvec}' \w{\rvec} + \sum_{\svec \in \Lambda_{\h,0,j+1}} \gamma_{\hbar, \svec} \rho_{\hbar, \svec} + R_{\hbar}(w_\h),
    \end{align*}
with $\gamma_{\hbar, \rvec}' := \sum\limits_{\substack{\svec \in \Lambda_{\h,0,j+1}\\|\svec - \rvec| \leq \hbar^{-\frac{\varepsilon}{2}}}} \gamma_{\hbar, \svec} \langle \w{\svec}^\star, \w{\rvec}^\star \rangle $. 

 We then set $\wzero := \sum\limits_{\rvec \in\Lambda_{\h,0,j+2}} \gamma_{\hbar, \rvec}' \w{\rvec} \in W_{\hbar, 0, j+2}$. 
We thus have, for any $p\in \N$,
    \begin{align*}
        \|P_\hbar w_\hbar - \wzero\|_{\widehat{H}_\hbar^p} \leq \sum_{\svec \in \Lambda_{\h,0, j+1}} |\gamma_{\hbar, \svec}| \times O_{ p} (\h^\infty) + O_{ p} (\h^\infty) \times \|w_\h\|_{L^2}.
    \end{align*}
    Now,  the sum above contains $O(\h^{-d})$ terms, and each of the $|\gamma_{\hbar, \svec}|$ is bounded as $O(\h^{-d})\times \|w_\h\|_{L^2}$, thanks to Lemma  \ref{lemme: P w scalaire w}. We thus have $ \|P_\hbar w_\hbar - \wzero\|_{\widehat{H}_\hbar^p} = O_{p} (\h^\infty) \times \|w_\h\|_{L^2}$ for every $p\in \N$, as announced.
\end{proof}

Recall that the reconstruction operator $\wit{\cD}_\h$ was introduced in \eqref{eq:DefOperateurReconstructionGaussian}.  The proof of Theorem \ref{theorem_approximability} will heavily rely on\footnote{Strictly speaking,  \cite[Theorem 2.4]{CFDI} only treats the case where $\lambda_1=2$ and $\lambda_2=4$. However,  the proof would work in exactly the same way for more general $\lambda_1<\lambda_2$. Alternatively,  we may take $\varepsilon'= \varepsilon/2$, $\alpha'= \alpha+ \lambda_1 \h^{1/2-\varepsilon} -2 \h^{1/2-\varepsilon'}$, and apply the result of \cite[Theorem 2.4]{CFDI} with $\varepsilon'$ and $\alpha'$ instead of $\varepsilon$ and $\alpha$, to deduce \eqref{eq: approximation de u par des psi}.} \cite[Theorem 2.4]{CFDI},  where the authors prove that, if $F'_\h\in \ell^2(\wit{\Lambda}_{\h, 0,1})$ and if $u_\h$ is the solution of $P_\h u_\h = \wit{\cD}_\h F'_\h$, then we have for any $p\in \N$
    \begin{equation}
        \label{eq: approximation de u par des psi}
       \left\|u_\hbar - \sum\limits_{\qvec \in \wit{\Lambda}_{\h, 0,2}} \langle u_\hbar, \eg{\qvec}^\star\rangle  \eg{\qvec}\right\|_{\widehat{H}_\hbar^p} = O_{p} (\h^\infty)\times  \|F'_\hbar\|_{\ell^2\left(\wit{\Lambda}_{\h, 0,1}\right)}.
    \end{equation}
    
Since~\eqref{eq: approximation de u par des psi} holds when
$f_\h = \wit{\cD}_\h(F_\h') \in \wit{\cD}_\h (\ell^2(\wit{\Lambda}_{\h, 0,1}) )$,
we will start by proving Theorem~\ref{theorem_approximability} in this case,
even though it is not exactly the assumption we made on $f_\h$ in the theorem.

\subsection{Proof of Theorem \ref{theorem_approximability} when $f_\h \in \wit{\cD}_\h (\ell^2(\wit{\Lambda}_{\h, 0,1}) )$}\label{subsec:ApproxInGoodSpace}

Let us take $f_\h = \wit{\cD}_\h(F_\h')$ with $F_\h'\in \ell^2(\wit{\Lambda}_{\h, 0,1})$.
The proof of the statement will go in two steps: we will first show that $u_\h$ may be well approximated in $\cD_\h (\ell^2(\Lambda_{\h, 0,3}))$
, and we will then identify the coefficients of the approximation in $\cD_\h (\ell^2(\Lambda_{\h}(\lambda')))$. 

\textbf{Step 1: Approaching $u_\h$ in $\cD_\h (\ell^2(\Lambda_{\h, 0,3}))$.}
    By Corollary \ref{cor: formules d'approximation 1},  we have, for any $\bq\in \wit{\Lambda}_{\h,0,2}$,
    \begin{align*}
        \eg{\qvec} = \sum_{\substack{\svec \in E_d\\\delta(\qvec, \svec) \leq \hbar^{-\frac{\varepsilon}{2}}}} \langle\eg{\qvec}, \w{\svec}^\star\rangle  \w{\svec} + \rho_{\hbar, \qvec}
    \end{align*}
where $\|\rho_{\hbar, \qvec}\|_{\widehat{H}_\hbar^p} = O_{p} (\h^\infty)$ for any $p\in \N$.  We may thus rewrite 
    \begin{align*}
        \sum_{\qvec \in \wit{\Lambda}_{\h,0,2}} \langle u_\hbar, \eg{\qvec}^\star\rangle  \eg{\qvec} &= \sum_{\qvec \in \wit{\Lambda}_{\h,0,2}} \langle u_\hbar, \eg{\qvec}^\star\rangle  \sum_{\substack{\svec \in E_d\\\delta(\qvec, \svec) \leq \hbar^{-\frac{\varepsilon}{2}}}} \langle \eg{\qvec}, \w{\svec}^\star\rangle  \w{\svec} + \sum_{\qvec \in \wit{\Lambda}_{\h,0,2}} \langle u_\hbar, \eg{\qvec}^\star\rangle  \rho_{\hbar, \qvec}\\
        &= \sum_{\qvec \in \wit{\Lambda}_{\h,0,2}} \sum_{\substack{\svec \in \Lambda_{\h,0,3}\\\delta(\qvec, \svec) \leq \hbar^{-\frac{\varepsilon}{2}}}} \langle u_\hbar, \eg{\qvec}^\star\rangle  \langle \eg{\qvec}, \w{\svec}^\star\rangle  \w{\svec} + \sum_{\qvec \in \wit{\Lambda}_{\h,0,2}} \langle u_\hbar, \eg{\qvec}^\star\rangle  \rho_{\hbar, \qvec} \quad \mbox{(by \eqref{eq: les s proches de q sont dans lambda 1,j+1})}\\
        &= \sum_{\svec \in \Lambda_{\h,0,3}} \alpha_{\hbar, \svec} \w{\svec} + \sum_{\qvec \in \wit{\Lambda}_{\h,0,2}} \langle u_\hbar, \eg{\qvec}^\star\rangle \rho_{\hbar, \qvec}
    \end{align*}
with  $\alpha_{\hbar, \svec} := \sum\limits_{\substack{\qvec \in \wit{\Lambda}_{\h,0,2}\\\delta(\qvec, \svec) \leq \hbar^{-\frac{\varepsilon}{2}}}} \langle u_\hbar, \eg{\qvec}^\star\rangle  \langle \eg{\qvec}, \w{\svec}^\star\rangle $.
    
We therefore have, for any $p\in \N$,
    \begin{align*}
        \left\|u_\hbar - \sum_{\svec \in \Lambda_{\h,0,3}} \alpha_{\hbar, \svec} \w{\svec}\right\|_{\widehat{H}_\hbar^p} &\leq \left\|u_\hbar - \sum_{\qvec \in\wit{\Lambda}_{\h,0,2}} \langle u_\hbar, \eg{\qvec}^\star\rangle  \eg{\qvec}\right\|_{\widehat{H}_\hbar^p} + \left\|\sum_{\qvec \in \wit{\Lambda}_{\h,0,2}} \langle u_\hbar, \eg{\qvec}^\star\rangle  \rho_{\hbar, \qvec}\right\|_{\widehat{H}_\hbar^p}\\
        &\leq O_{p} (\h^\infty)\times  \|F'_\hbar\|_{\ell^2\left(\wit{\Lambda}_{\h, 0,1}\right)} +\sum_{\qvec \in \wit{\Lambda}_{\h,0,2}} \left|\langle u_\hbar, \eg{\qvec}^\star\rangle  \right| \times O_{p} (\h^\infty)\\
      &=  O_{p} (\h^\infty)\times  \left( \|F'_\hbar\|_{\ell^2\left(\wit{\Lambda}_{\h, 0,1}\right)}+ \|u_\h\|_{L^2} \right),
    \end{align*}
    thanks to \eqref{eq: approximation de u par des psi}, the Cauchy-Schwarz inequality and Lemma~\ref{lem:LePtiLemKiSauveToutLTemps}.
    Now, by Assumption~\ref{Hyp:PolynResolv} and~\eqref{eq: relation entre la norme de U et celle de u pour Gabor}, we have $\|u_\hbar\|_{L^2} \leq C \hbar^{-N_0} \|f_\hbar\|_{L^2(\R^d)} \leq C \hbar^{-N_0}\|F'_\hbar\|_{\ell^2\left(\wit{\Lambda}_{\h, 0,1}\right)}$, so that we have
    \begin{align}
        \label{eq: approximation de la solution avec des alpha}
         \left\|u_\hbar - \sum_{\svec \in \Lambda_{\h,0,3}} \alpha_{\hbar, \svec} \w{\svec}\right\|_{\widehat{H}_\hbar^p} =  O_{p} (\h^\infty)\times  \|F'_\hbar\|_{\ell^2\left(\wit{\Lambda}_{\h,0,1}\right)}.
    \end{align}
    
    \textbf{Step 2: Identifying the coefficients of the approximation in $\cD_\h (\ell^2(\Lambda_{\h}(\lambda')))$.}
    Now,  remember that $u_\hbar = \sum\limits_{\svec \in \Lambda_{\h, 0,3}} \langle u_\hbar, \w{\svec}^\star \rangle  \w{\svec} + \sum\limits_{\svec \in E_d \setminus\Lambda_{\h, 0,3}} \langle u_\hbar, \w{\svec}^\star \rangle  \w{\svec}$.  We shall write $\alpha'_{\hbar, \svec} := \langle u_\hbar, \w{\svec}^\star\rangle  - \alpha_{\hbar, \svec}$ for $\svec \in \Lambda_{h,0,3}$ and  $\alpha'_{\hbar, \svec} := \langle u_\hbar, \w{\svec}^\star\rangle $ for all $\svec \in E_d \setminus \Lambda_{h,0,3}$, so that
        \begin{align}
        \label{eq: inégalité triangulaire pour remplacer les alpha par les u scalaire phi étoile}
        \left\|u_\hbar - \sum_{\svec \in \Lambda_{h}(\lambda')} \langle u_\hbar, \w{\svec}^\star\rangle  \w{\svec}\right\|_{\widehat{H}_\hbar^p} \leq \left\|u_\hbar - \sum_{\svec \in \Lambda_{h,0,3}} \alpha_{\hbar, \svec} \w{\svec}\right\|_{\widehat{H}_\hbar^p} + \left\|\sum_{\svec \in \Lambda_{h}(\lambda')} \alpha'_{\hbar, \svec} \w{\svec}\right\|_{\widehat{H}_\hbar^p}.
    \end{align}
    
We already know by \eqref{eq: approximation de la solution avec des alpha} that the first term is $O_{p} (\h^\infty)\times  \|F'_\hbar\|_{\ell^2\left(\wit{\Lambda}_{\h,0,1}\right)}$. To bound the second term, we may use Lemma \ref{lem:LePtiLemKiSauveToutLTemps} to get
\begin{align*}
\left \|
\sum_{\svec \in  \Lambda_{h}(\lambda')} \alpha_{\hbar, \svec}' \w{\svec}
\right \|_{\widehat{H}_\hbar^p}
&\leq
C_p \h^{-\frac{d}{2}} \left \| \alpha'_{\h } \right\|_{\ell^2(\Lambda_\h(\lambda'))}\\
&\leq
C_p \h^{-\frac{d}{2}} \left\| \alpha'_{\h }\right\|_{\ell^2(E_d)}
\\
&\leq
C'_p \h^{-\frac{d}{2}}
\left \|
\sum_{\svec \in E_d} \alpha_{\hbar,  \svec}' \w{\svec}
\right \|_{L^2}~~\text{by \eqref{eq: relation entre la norme de U et celle de u pour Wilson}} \\
&=
C_p \h^{-\frac{d}{2}}
\left \|
u_\hbar - \sum_{\svec \in \Lambda_{\h,0,3}} \alpha_{\hbar,  \svec} \w{\svec}
\right \|_{L^2}
\\
&=
O_{p} (\h^\infty) \times \|F'_\hbar\|_{\ell^2\left(\wit{\Lambda}_{\h,0,1}\right)}
\end{align*}
using~\eqref{eq: approximation de la solution avec des alpha} again. We therefore obtain 
\begin{equation*}
\left\|u_\hbar - \sum_{\svec \in  \Lambda_{h}(\lambda')} \langle u_\hbar, \w{\svec}^\star\rangle  \w{\svec}\right\|_{\widehat{H}_\hbar^p} = O_{ p} (\h^\infty)\times  \|F'_\hbar\|_{\ell^2\left(\wit{\Lambda}_{\h,0,1}\right)}
\end{equation*}
whenever $F'_\h \in \ell^2\left(\wit{\Lambda}_{\h,0,1}\right)$.
    
\subsection{Proof of Theorem \ref{theorem_approximability} when $f_\h \in \cD_\h (\ell^2(\Lambda_{\h}(\lambda)) )$}\label{subsec:ProofApproxRealCase}
    
In the  actual statement of Theorem \ref{theorem_approximability}, we have $f_\h= \cD_\h(F_\h)$ with $F_\h\in \ell^2(\Lambda_\h(\lambda)) =\ell^2(\Lambda_{\h,0})$, and we will now consider this situation.
     
\textbf{Step 1: Decomposing $u_\h$.}
Thanks to (\ref{eq:DefWilsonState}), we have
\begin{equation}\label{eq:DecompoPhiEnPsi}
f_\h
=
\sum_{\bq \in \Lambda_{\h,0}}
F_{\h,\bq} \Phi_{\h, \bq}
=
\sum_{\bq \in \Lambda_{\h,0}}
F_{\h,\bq}  c_{\nvec_{\bq}}
\sum_{\epsilonvec \in \{-1, 1\}^d}
\epsilonvec^{\mvec_{\bq} + \nvec_{\bq}} \eg{\mvec_{\bq}, \epsilonvec \nvec_{\bq}}
=:
\sum_{\bs \in  \Z^{2d}}
F'_{\h,\bs} \Psi_{\h, \bs}.
\end{equation}
Note that, in the sum above,  the non-vanishing terms only correspond to indices $\bs\in \Z^{2d}$ such that there exists $\bq\in \Lambda_{\h,0}$ with $\delta(\bq, \bs) = 0$. In particular, we have $\|F'_{\h}\|_{\ell^2} \leq C \|F_{\h}\|_{\ell^2}\leq C' \|f_{\h}\|_{L^2}$, and for $\h$ small enough, all the corresponding $\bz^{\h, \bs}$ belong to a bounded set.
     
We then define
\begin{equation*}
f_{\h,0} := \sum\limits_{\svec \in \wit{\Lambda}_{\h,0,1}} F'_{\h,\bs} \Psi_{\h, \bs},
\qquad
f_{\h,1} := \sum\limits_{\svec \in\Z^{2d}\setminus  \wit{\Lambda}_{\h,0,1}} F'_{\h,\bs} \Psi_{\h, \bs},
\end{equation*}
so that $f_{\h} = f_{\h,0} + f_{\h,1}$. 
Similarly, we decompose $u_\h= P_\h^{-1} f_\h$ as
\begin{equation*}
u_\h = P_\h^{-1} f_{\h,0} +  P_\h^{-1} f_{\h,1} =: u_{\h,0} + u_{\h,1}.
\end{equation*}

\textbf{Step 2: Approaching $u_{\h,0}$.}
We may apply the result of Section~\ref{subsec:ApproxInGoodSpace} to $u_{\h,0}$, to obtain
    \begin{equation}\label{eq:ApproxUh0}
        \left\|u_{\hbar,0} - \sum_{\svec \in \Lambda_{h,0,3}} \langle u_{\hbar,0}, \w{\svec}^\star\rangle  \w{\svec}\right\|_{\widehat{H}_\hbar^p} = O_{p} (\h^\infty)\times  \|F'_\hbar\|_{\ell^2\left(\wit{\Lambda}_{\h,0,1}\right)}.
    \end{equation}
    
\textbf{Step 3: Dealing with $u_{\h,1}$ when Assumption~\ref{Hyp4:SymetrieSymbole} holds.}
Now, we note that, when Assumption~\ref{Hyp4:SymetrieSymbole} holds (i.e. the symbol is nearly symmetric),
then for all $\h$ small enough, we have $f_{\h,1}= 0$.
Indeed, if $\bq\in \Lambda_{\h,0}$ and if $\bs \in \Z^{2d}$ is such that $\delta(\bq, \bs) =0$, 
then there exists $\epsilonvec, \epsilonvec'\in \{-1, 1\}^d$ 
such that $[\bm_{\bq}, \epsilonvec \bn_{\bq}]= [\bm_{\bs},  \bn_{\bs}]$, and such that $|p_\h(\xm{\bm_\qvec}, \epsilonvec' \xin{\bn_\qvec})| < \alpha + \lambda_0 \h^{\frac{1}{2}-\varepsilon}$. Thanks to  Assumption \ref{Hyp4:SymetrieSymbole}, we obtain
\begin{align*}
\left|p_\hbar(\xm{\mvec_\svec},  \xin{\nvec_\svec})\right| &\leq \left|  p_\hbar(\xm{\mvec_\qvec}, \epsilonvec'\xin{\nvec_\qvec})\right| 
+ \left|p_\hbar(\xm{\mvec_\qvec},  \epsilonvec'\xin{\nvec_\qvec}) - p_\hbar(\xm{\mvec_\qvec},  \epsilonvec \xin{\nvec_\qvec})  \right| \\
& \leq  \alpha+ \lambda_0 \h^{\frac{1}{2}-\varepsilon} + C' \h < \alpha+ \lambda_1 \h^{\frac{1}{2}-\varepsilon},
\end{align*}
for $\h$ small enough,  so $\bs \in \wit{\Lambda}_{\h,0,1}$ for $\h$ small enough.

We therefore have $f_{\h,1} = 0$, so that $u_\h = u_{\h,0}$, and the result follows from (\ref{eq:ApproxUh0}).

\textbf{Step 3': Approaching $u_{\h,1}$ when Assumption~\ref{Hyp4:SymetrieSymbole} does not hold.}
When Assumption~\ref{Hyp4:SymetrieSymbole} does not hold,
the term $f_{\h,1}$, and thus $u_{\h,1}$ is a priori not trivial.
The proof of Theorem~\ref{theorem_approximability} is then a bit
more tedious, and we will be more sketchy here.
    
First,  we note that, for $\h$ small enough, all the indices $\bs$ in $f_{\h,1}$ belong to a ball $\B(\bzero, D \h^{-1/2})\subset \R^{2d}$, with $D = \frac{D_0}{\sqrt{\pi}}$ which does not depend on $\h$.
We may use Remark~\ref{Rem:GeneralisationPropClef} to see that, for any $N\in \N$, there exist coefficients $U_{\h,\bs,\bs'} = U^{N,\varepsilon}_{\h,\bs,\bs'}$ such that for any $\svec \in \left( \Z^{2d} \setminus  \wit{\Lambda}_{\h,0,1}\right) \cap \B(\bzero, D \h^{-1/2})$ and any $p\in \N$, we have
    \begin{equation*}
        \left\|\Psi_{\h,\bs} - \sum_{\substack{\svec' \in \Z^{2d}\\|\svec' - \svec| \leq \hbar^{-\varepsilon/2}}} U_{\hbar, \svec, \svec'}P_\hbar \Psi_{\h,\svec'}\right\|_{\widehat{H}_\hbar^p} \leq C_{p,  N}\h^N
    \end{equation*}
    for some $C_{p, N}>0$.
Composing this equation with $P_\h^{-1}$ and using Assumption~\ref{Hyp:PolynResolv},
we obtain that 
 \begin{equation*}
        \left\|P_\h^{-1} \Psi_{\h,\bs} - \sum_{\substack{\svec' \in \Z^{2d}\\|\svec' - \svec| \leq \hbar^{-\varepsilon/2}}} U_{\hbar, \svec, \svec'} \Psi_{\h,\svec'}\right\|_{\widehat{H}_\hbar^p} \leq C'_{p, N}\h^{N-N_0}.
    \end{equation*}
    
    Furthermore,  by Corollary \ref{cor: formules d'approximation 1}, we see that this may be rewritten as
     \begin{equation*}
        \left\|P_\h^{-1} \Psi_{\h,\bs} - \sum_{\substack{\svec'' \in E_d\\ \delta( \svec'',  \svec) \leq 2 \hbar^{-\varepsilon/2}}} U'_{\hbar, \svec, \svec''} \Phi_{\h,\svec''}\right\|_{\widehat{H}_\hbar^p} \leq C'_{p,  N}\h^{N-N_0-\frac{p}{2}}.
    \end{equation*}
    for some choice of coefficients $U'_{\hbar, \svec, \svec''}$.
   
    We therefore get
    \begin{align*}
    &  \left\| u_{\h,1}- \sum_{\svec'' \in E_d}  \sum\limits_{\substack{\svec \in\Z^{2d}\setminus  \wit{\Lambda}_{\h,0,1}\\ \delta(\svec'', \svec) \leq 2\hbar^{-\varepsilon/2}}}  F'_{\h,\bs}   U'_{\hbar, \svec, \svec''} \Phi_{\h,\svec''}\right\|_{\widehat{H}_\hbar^p}\\    
    &=  \left\| \sum\limits_{\svec \in\Z^{2d}\setminus  \wit{\Lambda}_{\h,0,1}} F'_{\h,\bs} \left( P_\h^{-1} \Psi_{\h, \bs} - \sum_{\substack{\svec'' \in E_d\\ \delta (\svec'',  \svec) \leq 2 \hbar^{-\varepsilon/2}}} U'_{\hbar, \svec, \svec''} \Phi_{\h,\svec''}\right)  \right\|_{\widehat{H}_\hbar^p}\\
    &\leq C'_{p, N}\h^{N-N_0-\frac{p}{2}} \sum\limits_{\svec \in\Z^{2d}\setminus  \wit{\Lambda}_{\h,0,1}} |F'_{\h,\bs}|.
    \end{align*}
Since the number of non-zero terms in the sum above is $O(\h^{-d})$ by Lemma \ref{lem:LePtiLemKiSauveToutLTemps} and the discussion after \eqref{eq:DecompoPhiEnPsi},  we get
    \begin{equation}\label{eq:approxU1}
       \left\| u_{\h,1}- \sum_{\svec'' \in E_d}  \sum\limits_{\substack{\svec \in\Z^{2d}\setminus  \wit{\Lambda}_{\h,0,1}\\ \delta (\svec'',  \svec) \leq 2 \hbar^{-\varepsilon/2}}}   F'_{\h,\bs}  U_{\hbar, \svec, \svec''} \Phi_{\h,\svec''}\right\|_{\widehat{H}_\hbar^p}
       \leq C_{p, N}\h^{N-N_0-\frac{p}{2}-d} \|f_\h\|_{L^2}.
       \end{equation}
       
       Now,  by the discussion after (\ref{eq:DecompoPhiEnPsi}),  for every index $\bs''$ in the sum such that $F'_{\h, \bs} U_{\hbar, \svec, \svec''}\neq 0$, there exists $\bq\in \Lambda_{h,0}$ such that $\delta(\bq, \bs'') \leq 2 \h^{-\varepsilon/2}$.   By (\ref{eq: les s proches de q sont dans lambda 1,j+1}),  this implies that, provided $\h$ is small enough, we always have $\bs''\in \Lambda_{\h, 0,1}$.  Combining (\ref{eq:approxU1}) with (\ref{eq:ApproxUh0}), we have thus shown that, for any $p, N\in \N$, there exist coefficients $\alpha_{\h, N}(\bs)$ such that
           \begin{equation*}
         \left\|u_\hbar - \sum_{\svec \in \Lambda_{\h,0,3}} \alpha_{\hbar, N}(\svec) \w{\svec}\right\|_{\widehat{H}_\hbar^p} \leq C_{ p,N} \h^N \times  \|f_\h\|_{L^2}.
    \end{equation*}
    
    Arguing as in Step 2 in section \ref{subsec:ApproxInGoodSpace}, we obtain, for any $p, N\in \N$
\begin{equation*}   
      \left\|u_\hbar - \sum_{\svec \in \Lambda_{h}(\lambda')} \langle u_\hbar, \w{\svec}^\star\rangle  \w{\svec}\right\|_{\widehat{H}_\hbar^p} \leq C_{p,N} \h^{N-\frac{d}{2}} \times  \|f_\h\|_{L^2}.
      \end{equation*}
       Since this is true for any $N\in \N$, the result follows.

\subsection{Right-hand sides corresponding to incident fields: proof of Corollary \ref{corollary_uinc}}

So far, we have dealt with cases where $f_\h$ is a sum of coherent
states, which is not a practical assumption. As we show in Theorem \ref{thm:IncidentField}
below, however, any right-hand side corresponding to an incident field can be well-approximated
by such a sum. This result can be seen as an extension of \cite[Lemma 4.4]{CFDI}.
The main tool in the proof of Theorem \ref{thm:IncidentField} is the following lemma.
Recall that the set $\wit{\Lambda}_{\h,0}$ (which depends on $\lambda$, $\varepsilon$ and $\alpha$)
has been defined in~\eqref{eq:DefLambdaTilde}.

\begin{lemma}
\label{lemma_dot_uinc}
We consider a bounded open set $\cO\subset \R^d$ such that
\begin{equation*}
\delta \eq 
\dist(\cO,\Sigma^{\rm c})
>
0,
\quad
\text{with}
\quad
\Sigma
\eq
\{
\bx \in \R^d \; | \;
\forall \h \in \LH, \forall \bxi \in \R^d, ~~
p_\h(\bx,\bxi) = |\bxi|^2-1
\}
\end{equation*}
and a cutoff $\chi \in C^\infty_{\rm c}(\cO)$. Then, for all
incident fields $u_\h^{\rm inc} \in H^1(\cO)$ with
$\h^2\Delta u_\h^{\rm inc}+u_\h^{\rm inc} = 0$ in $\cO$,
and for all $\varepsilon,\lambda > 0$, we have
\begin{equation*}
|\langle f_\h,\eg{\bq} \rangle|
=
O(\h^\infty) \|u_\h^{\rm inc}\|_{H^1_\h(\cO)}
\qquad
\forall \bq \in \Z^{2d} \setminus  \wit{\Lambda}_{\h,0}
\end{equation*}
for $f_\h \eq \chi u_\h^{inc}$, where the implied constant in the $O(\h^\infty)$
notation only depends on $\lambda$, $\varepsilon$, the norms $\|\chi\|_{C^\ell}$
for $\ell \in \N$, $\delta$, and the smallest $R$ such that $|\bx| \leq R$
for all $\bx \in \cO$. In particular, the implied constant does not depend on $\bq$.
\end{lemma}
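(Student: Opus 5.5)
The plan is to turn the integral against $\eg{\bq}$ into one against $(-\h^2\Delta-1)$ applied to a compactly supported function, and then integrate by parts using $(\h^2\Delta+1)u_\h^{\rm inc}=0$ in $\cO$. We write $\langle f_\h,\eg{\bq}\rangle=\langle u_\h^{\rm inc},\chi\eg{\bq}\rangle_{L^2(\cO)}$ and split into two cases according to the position of $\xm{\bm_\bq}$.

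\textbf{Case 1: $\dist(\xm{\bm_\bq},\operatorname{supp}\chi)\geq \h^{1/2-\varepsilon/4}$.} On $\operatorname{supp}\chi$ the Gaussian factor of $\eg{\bq}$ is bounded by $\h^{-d/4}e^{-\h^{-\varepsilon/2}/2}$. Hence $|\langle f_\h,\eg{\bq}\rangle|\leq C\,O(\h^\infty)\|u_\h^{\rm inc}\|_{L^2(\cO)}$, uniformly in $\bq$.

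\textbf{Case 2: $\xm{\bm_\bq}$ is $\h^{1/2-\varepsilon/4}$-close to $\operatorname{supp}\chi$.} For $\h$ small this point lies in $\Sigma$, at distance at least $\delta/2$ from $\Sigma^{\rm c}$. There $p_\h=|\bxi|^2-1$, so $\bq\notin\wit\Lambda_{\h,0}$ gives $\big||\xin{\bn_\bq}|^2-1\big|\geq\lambda\h^{1/2-\varepsilon}$. This is at least $2\h^{1/2-\varepsilon''}$ for some fixed $\varepsilon''<\varepsilon$ once $\h$ is small. If $|\xin{\bn_\bq}|$ is very large, we still have ellipticity; the polynomial growth factors are harmless because the Gaussian decay in frequency against the $H^1_\h$ function $\chi u$ gives decay in $|\bn|$. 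So we may assume $|\bq|\leq D\h^{-1/2}$.

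In Case 2 we apply the Gabor version~\eqref{eq:GeneralPropClef} of Proposition~\ref{prop: proposition-clé} from Remark~\ref{Rem:GeneralisationPropClef} to the constant-coefficient operator $Q_\h:=-\h^2\Delta-1$. This requires no symmetry assumption, and the proof only uses the local Lipschitz bound and the ellipticity at $\bq$, so the lack of Assumption~\ref{Hyp:BoundedLayers} in $\bx$ is irrelevant. It gives
\begin{equation*}
\eg{\bq}=\sum_{|\bs-\bq|\leq\h^{-\varepsilon'}}U_{\h,\bq,\bs}\,Q_\h\eg{\bs}+O_{\widehat H^p_\h}(\h^N),
\qquad |U_{\h,\bq,\bs}|\leq C\h^{-1/2}.
\end{equation*}
Multiplying by $\chi$ and commuting yields
\begin{equation*}
\chi\eg{\bq}=Q_\h\Big(\sum_\bs U_{\h,\bq,\bs}\,\chi\eg{\bs}\Big)+\sum_\bs U_{\h,\bq,\bs}\,[\h^2\Delta,\chi]\eg{\bs}+O(\h^N).
\end{equation*}
The first term pairs to zero with $u_\h^{\rm inc}$: $\chi\eg{\bs}$ is smooth with compact support in $\cO$, so integrating by parts gives $\langle u_\h^{\rm inc},Q_\h(\chi\eg{\bs})\rangle=\langle Q_\h u_\h^{\rm inc},\chi\eg{\bs}\rangle=0$. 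The commutator is $\h\,(\h(\Delta\chi)+2(\nabla\chi)\cdot\h\nabla)$, so each term is $\h$ times a first-order semiclassical operator with cutoff coefficients applied to $\eg{\bs}$. Each of these terms is therefore of size $\h\cdot\h^{-1/2}=\h^{1/2}$ (up to an $\h^{-C\varepsilon'}$ count of terms). Using Corollary~\ref{cor: formules d'approximation 1} and Lemma~\ref{lemme: P w scalaire w} in their Gabor versions, we re-expand it as $\tilde\chi\sum_{\bs'}c_{\bs'}\eg{\bs'}$ with $|\bs'-\bs|\leq\h^{-\varepsilon'}$, up to $O(\h^\infty)$, where $\tilde\chi\in C^\infty_{\rm c}(\cO)$ equals $1$ on $\operatorname{supp}\chi$.

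We then iterate, using a nested chain of cutoffs $\chi=\chi_0\prec\chi_1\prec\dots\prec\chi_K$, all in $C^\infty_{\rm c}(\cO)$. At step $k$, the new indices $\bs'$ have moved at most $k\h^{-\varepsilon'}$ from $\bq$. By the Lipschitz argument of Lemma~\ref{lemme: les qj ne bavent pas trop}, they still satisfy $\big||\xin{\bn_{\bs'}}|^2-1\big|\geq2\h^{1/2-\varepsilon''}$, or else their Gaussian lies outside $\operatorname{supp}\chi_k$ up to $O(\h^\infty)$ (Case 1). So the proposition applies again. After $K$ steps the remaining terms total $\h^{K(1/2-C\varepsilon')}$ times polynomially bounded coefficients, paired against $\|u_\h^{\rm inc}\|_{H^1_\h(\cO)}$. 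Choosing $\varepsilon'$ small and $K$ large gives $O(\h^\infty)$, with constants depending only on $\|\chi\|_{C^\ell}$, $\delta$, $R$, $\lambda$ and $\varepsilon$.

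The main obstacle is this iteration bookkeeping. Three things have to hold at every step: the gain of $\h^{1/2}$ must beat the $\h^{-C\varepsilon'}$ growth in the number of terms; the cutoffs must stay inside $\cO$ and inside the region where $p_\h=|\bxi|^2-1$; and the constants must stay independent of $\bq$, which requires handling large $|\xin{\bn_\bq}|$ separately through ellipticity.
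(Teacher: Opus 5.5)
Your route differs from the paper's, and its core is sound. For $|\bxi^{\h,\bn_\bq}|\le D$, you apply the Gabor parametrix \eqref{eq:GeneralPropClef} to $Q_\h=-\h^2\Delta-1$, which indeed needs neither Assumption~\ref{Hyp4:SymetrieSymbole} nor Assumption~\ref{Hyp:BoundedLayers}. You then kill $\langle u_\h^{\rm inc},Q_\h(\chi\Psi_{\h,\bs})\rangle$ by integration by parts and iterate on $[\h^2\Delta,\chi]=\h(\h\Delta\chi+2\nabla\chi\cdot\h\nabla)$. This does gain $\h^{1/2-C\varepsilon'}$ per step, and the drift of $\bigl||\bxi|^2-1\bigr|$ along the chain is $O(\h^{1/2-\varepsilon'})$, which stays below $\lambda\h^{1/2-\varepsilon}$.

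\textbf{The gap} is the sentence ``so we may assume $|\bq|\le D\h^{-1/2}$''. The parametrix of Remark~\ref{Rem:GeneralisationPropClef} holds for a fixed radius $D$, with $D$-dependent constants. The remainders coming from Corollaries~\ref{cor: formules d'approximation 1} and~\ref{cor: formules d'approximation 2} also carry factors $1+|\bq|^p$. So what you actually prove is $O_D(\h^\infty)$ for $|\bxi^{\h,\bn_\bq}|\le D$. That is not the uniformity in $\bq$ the lemma asserts, and which Theorem~\ref{thm:IncidentField} relies on (there $\bq$ ranges over $|\bz^{\h,\bq}|\le\h^{-1}$).

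Your justification does not close this. $H^1_\h$ regularity of $\chi u_\h^{\rm inc}$ only gives $|\langle \chi u_\h^{\rm inc},\Psi_{\h,\bq}\rangle|\lesssim(1+|\bxi^{\h,\bn_\bq}|)^{-1}\|u_\h^{\rm inc}\|_{H^1_\h(\cO)}$. Even full elliptic regularity, giving $(1+|\bxi^{\h,\bn_\bq}|)^{-m}$, is $O(\h^\infty)$ only once $|\bxi^{\h,\bn_\bq}|\ge\h^{-c}$. The whole range $D\le|\bxi^{\h,\bn_\bq}|\le\h^{-c}$ is therefore left open. To repair it along your lines, you would have to re-run the proof of the key proposition while tracking the (polynomial) dependence of its constants on $|\bxi|$. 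Ellipticity $|p_\h|\gtrsim|\bxi|^2$ actually improves the bounds on the $\wit{V}$ coefficients there. You would then combine this with the smoothness decay. Alternatively, treat $|\bxi|\ge 2$ by a direct elliptic argument.

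That direct argument is what the paper does, for every $\bq$ at once. Write $\bxi=\bxi^{\h,\bn_\bq}$ and $Q=(\h^2\Delta+|\bxi|^2)/(|\bxi|^2-1)$. Then $Qu_\h^{\rm inc}=u_\h^{\rm inc}$ in $\cO$, and $\|Q^\ell\Psi_{\h,\bq}\|_{L^2}\le C_\ell\bigl((|\bxi|^2+1)/\bigl||\bxi|^2-1\bigr|\bigr)^\ell\h^{\ell/2}\le C_{\ell,\lambda}\h^{\ell\varepsilon}$. This holds uniformly in $\bxi$, because the ratio stays bounded as $|\bxi|\to\infty$. The commutator expansion $\chi u_\h^{\rm inc}=\sum_{\ell=0}^m\binom{m}{\ell}Q^\ell\cR_{m-\ell}u_\h^{\rm inc}$ then gives $\h^{m\varepsilon}$ directly. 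Here $\cR_\ell$ is $\h^{2\ell}/(|\bxi|^2-1)^\ell$ times a differential operator of order at most $\ell$ with coefficients $\partial^{\balpha}\chi$.

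Compared with yours, this avoids the parametrix, the frame re-expansion of the commutator terms and the drift bookkeeping, and large frequencies cost nothing. Your approach has the merit of reusing the Section~\ref{section_parametrix} machinery as it stands. For this constant-coefficient statement, however, it is heavier and, as written, incomplete at large $|\bxi|$.
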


\begin{proof}
Let us first note that since $f_\h$ is compactly supported in $\cO$, we can
use elliptic regularity for $-\Delta$ together with the product rule
to show that
\begin{equation}
\label{tmp_elliptic_regularity}
\|f_\h\|_{\HH^m_\h}
\leq
R^m \|f_\h\|_{H^m_\h(\cO)}
\leq
C_m R^m \|\chi\|_{C^m} \|u_\h^{\rm inc}\|_{H^1_\h(\cO)}
\leq
C_{m,\chi} \|u_\h^{\rm inc}\|_{H^1_\h(\cO)}.
\end{equation}
for all $m \in \N$.

We then distinguish two cases, depending on whether $\bx^{\h,\bm_{\bq}} \in \Sigma$ or not.

{\bf Case 1.}
We first focus on indices $\bq$ that are far away
from the support of $f_\h$. More specifically, we consider
indices $\bq \in \Z^{2d} \setminus \wit{\Lambda}_{\h,0}$
with $\bx^{\h,\bm_{\bq}} \in \Sigma^{\rm c}$. We then know that
$\dist(\bx^{\h,\bm_{\bq}},\operatorname{supp} f_\h) \geq \delta$.
It follows that
\begin{equation*}
|\langle f_\h,\eg{\bq} \rangle|
\leq
C \h^{-\frac{d}{4}} \|f_\h\|_{L^1} e^{-\frac{\delta^2}{2\h}}
\leq
C \h^{-\frac{d}{4}} R^{d/2} \|f_\h\|_{L^2} e^{-\frac{\delta^2}{2\h}},
\end{equation*}
which concludes the proof for this case.

{\bf Case 2.}
In the second case, we consider $\bq \in \Z^{2d} \setminus \wit{\Lambda}_{\h,0}$
with $\bx^{\h,\bm_{\bq}} \in \Sigma$. For later references, we note that by definition we have
\begin{equation}
\label{tmp_lb_lambda}
||\bxi^{\h,\bn_{\bq}}|^2-1| \geq \lambda \h^{1/2-\varepsilon}.
\end{equation}

Next, we observe that~\cite[Proposition A.6]{CFDI} implies that
\begin{equation}
\label{tmp_hQ_Psi}
\|(\h^2\Delta+|\bxi^{\h,\bn_{\bq}}|^2)^\ell \eg{\bq}\|_{L^2}
\leq
C_\ell (1+|\bxi^{\h,\bn_{\bq}}|^2)^{\ell} \h^{\ell/2}
\end{equation}
for all $\ell \in \N$.
To see this, we apply the proposition to the operator $P_\h := -\h^2\Delta$,
and note that then we have $p_\h(\bz^{\h,\bq}) = |\bxi^{\h,\bn_{\bq}}|^2$ for the associated
symbol, so that $\h^2\Delta+|\bxi^{\h,\bn_{\bq}}|^2 = p_\h(\bz^{\h,\bq})-P_\h$.
For shortness, we introduce the operator
\begin{equation*}
Q \eq \frac{1}{|\bxi^{\h,\bn_{\bq}}|^2-1} (\h^2\Delta+|\bxi^{\h,\bn_{\bq}}|^2)
\end{equation*}
which, as a direct consequence of~\eqref{tmp_lb_lambda} and~\eqref{tmp_hQ_Psi}, satisfies
\begin{equation}
\label{tmp_Q_Psi}
\|Q^{\ell} \eg{\bq}\|_{L^2}
\leq
C_\ell \left |\frac{|\bxi^{\h,\bn_{\bq}}|^2+1}{|\bxi^{\h,\bn_{\bq}}|^2-1}\right |^\ell \h^{\ell/2}
\leq
C_{\ell,\lambda}\h^{\ell\varepsilon}
\quad
\forall \ell \in \N,
\end{equation}
uniformly for all the values of $\bq$ as in the statement of the Lemma. Indeed, we always have
\begin{equation*}
\left |\frac{|\bxi^{\h,\bn_{\bq}}|^2+1}{|\bxi^{\h,\bn_{\bq}}|^2-1}\right |
\leq
\frac{C}{\lambda}
\h^{-1/2+\varepsilon}.
\end{equation*}
for such values of $\bq$ due to~\eqref{tmp_lb_lambda}.

We now remark that if $\cR$ is a differential operator, we have
\begin{equation*}
\cR Q u_\h^{\rm inc}
=
\frac{1}{|\bxi^{\h,\bn_{\bq}}|^2-1}
\cR (\h^2 \Delta u_\h^{\rm inc}+|\bxi^{\h,\bn_{\bq}}|^2 u_\h^{\rm inc})
=
\frac{1}{|\bxi^{\h,\bn_{\bq}}|^2-1}
\cR (-u_\h^{\rm inc}+|\bxi^{\h,\bn_{\bq}}|^2 u_\h^{\rm inc})
=
\cR u_\h^{\rm inc}.
\end{equation*}
It follows that, for such an operator $\cR$, we have
\begin{equation*}
\cR u_\h^{\rm inc}
=
Q \cR u_\h^{\rm inc}
+
[\cR,Q] u_\h^{\rm inc}.
\end{equation*}
We now iterate this relation, starting with $\cR_0 \eq \chi$,
and then letting $\cR_{\ell+1} \eq [\cR_{\ell},Q]$
for $\ell \geq 0$. This gives
\begin{equation}
\label{tmp_QR_induction}
f_\h
=
\chi u_\h^{\rm inc}
=
\sum_{\ell=0}^m \left (\begin{array}{c} m \\ \ell\end{array}\right )
Q^\ell \cR_{m-\ell} u_\h^{\rm inc}
\end{equation}
for all $m \in \N$.

We are going to show that
\begin{equation}
\label{tmp_bound_Ru}
\|\cR_{\ell} u_\h^{\rm inc}\|_{L^2(\R^d)}
\leq
C_{\ell,\lambda,\chi} \h^{\ell\varepsilon} \|u_\h^{\rm inc}\|_{H^1_\h(\cO)}.
\end{equation}
This together with~\eqref{tmp_QR_induction} concludes the proof,
since from~\eqref{tmp_Q_Psi}, we have
\begin{equation*}
|\langle f_\h, \eg{\bq} \rangle|
=
\left |
\sum_{\ell=0}^m \left (\begin{array}{c} m \\ \ell\end{array}\right )
\langle \cR_{m-\ell} u_\h^{\rm inc},Q^\ell \eg{\bq} \rangle
\right |
\leq
C_{m,\lambda,\chi} \h^{m\varepsilon} \|u_\h^{\rm inc}\|_{H^1_\h(\cO)}
\end{equation*}
for all $m \in \N$.

We now establish~\eqref{tmp_bound_Ru}, by showing by induction that
for each $\ell \in \N$, there exist constants $c_\ell^{(\balpha,\bbeta)} \in \C$
such that
\begin{equation}
\label{tmp_induction_Rell}
\cR_\ell
=
\frac{\h^{2\ell}}{(|\bxi^{\h,\bn_{\bq}}|^2-1)^\ell}
\sum_{\substack{[\balpha] \leq 2\ell \\ [\beta] \leq \ell}}
c_\ell^{(\balpha,\bbeta)} \partial^{\balpha} \chi \partial^{\bbeta}.
\end{equation}
We first note that~\eqref{tmp_induction_Rell} clearly holds true for $\ell = 0$.
Assuming the relation holds for some $\ell \geq 0$, we have
\begin{equation*}
\cR_{\ell+1}
=
[\cR_\ell,Q]
=
\frac{\h^{2\ell}}{(|\bxi^{\h,\bn_{\bq}}|^2-1)^\ell}
\sum_{\substack{[\balpha] \leq 2\ell \\ [\bbeta] \leq \ell}}
c_\ell^{(\balpha,\bbeta)}
[\partial^{\balpha} \chi \partial^{\bbeta},Q]
=
\frac{\h^{2(\ell+1)}}{(|\bxi^{\h,\bn_{\bq}}|^2-1)^{\ell+1}}
\sum_{\substack{[\balpha] \leq 2\ell \\ [\bbeta] \leq \ell}}
c_\ell^{(\balpha,\bbeta)}
[\partial^{\balpha} \chi \partial^{\bbeta},\Delta].
\end{equation*}
since multiplication by the constant $|\bxi^{\h,\bn_{\bq}}|^2$
commutes with the differentiation operators.
Classically, we have for $1 \leq j \leq d$
\begin{equation*}
[\partial^{\balpha} \chi \partial^{\bbeta},\partial_j^2]
=
-\partial^{\balpha+2\be_j} \chi \partial^{\bbeta}
-
2
\partial^{\balpha+\be_j} \chi \partial^{\bbeta+\be_j},
\end{equation*}
from which~\eqref{tmp_induction_Rell} follows at rank $\ell+1$, and hence at all ranks.
A direct consequence of~\eqref{tmp_induction_Rell}, \eqref{tmp_lb_lambda} and of (\ref{tmp_elliptic_regularity})
is now that
\begin{equation*}
\|\cR_\ell u_\h^{\rm inc}\|_{L^2(\R^d)}
\leq
C_{\ell,\lambda}
\h^{\ell} \h^{\ell\varepsilon}
\sum_{\substack{[\balpha] \leq 2\ell \\ [\bbeta] \leq \ell}}
\|\partial^{\balpha} \chi\|_{L^\infty}
\|\partial^{\bbeta}u_\h^{\rm inc}\|_{L^2(\cO)}
\leq
C_{\ell,\lambda,\chi}
\h^{\ell} \h^{\ell\varepsilon}
\sum_{\substack{[\balpha] \leq 2\ell \\ [\bbeta] \leq \ell}}
\h^{-[\bbeta]}
\|u_\h^{\rm inc}\|_{H^1_\h(\cO)},
\end{equation*}
from which~\eqref{tmp_bound_Ru} follows. This concludes the proof.
\end{proof}

\begin{theorem}\label{thm:IncidentField}
Consider $f_\h$ as in Lemma~\ref{lemma_dot_uinc} above.
Then, for all $\lambda > 0$, we have
\begin{equation}
\label{eq_approx_uinc}
\left \|
f_\h-\sum_{\bq \in \wit{\Lambda}_{\h,0}} \langle f_\h,\eg{\bq}^\star \rangle \eg{\bq}
\right \|_{\HH^p_\h(\R^d)}
=
O(\h^\infty) \|u_\h^{\rm inc}\|_{H^1_\h(\cO)}
\end{equation}
where the implied constant in the $O(\h^\infty)$ notation only depends on
$\varepsilon$, $\lambda$, $p$, the norms $\|\chi\|_{C^\ell(\cO)}$
for $\ell \in \N$, $\delta$, and the smallest $R$ such that $|\bx| \leq R$
for all $\bx \in \cO$.
\end{theorem}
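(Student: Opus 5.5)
We use the dual-frame expansion \eqref{eq_dual_frame} in the form $f_\h = \sum_{\bq \in \Z^{2d}} \langle f_\h, \eg{\bq}\rangle \eg{\bq}^\star$, and show that the part of the sum with $\bq \notin \wit{\Lambda}_{\h,0}$ is negligible in $\HH^p_\h$. We cannot directly subtract the truncated expansion $\sum_{\bq \in \wit{\Lambda}_{\h,0}} \langle f_\h, \eg{\bq}^\star\rangle \eg{\bq}$, because the roles of $\eg{\bq}$ and $\eg{\bq}^\star$ are swapped. So we proceed in two stages.

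\textbf{Stage 1.} Set $g_\h := \sum_{\bq \in \Z^{2d}\setminus \wit{\Lambda}_{\h,0}} \langle f_\h, \eg{\bq}\rangle \eg{\bq}^\star$ and show $\|g_\h\|_{\HH^p_\h} = O(\h^\infty)\|u^{\rm inc}_\h\|_{H^1_\h(\cO)}$. We split the indices into those with $|\bq| \leq \h^{-1}$ and those with $|\bq| > \h^{-1}$.

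For $|\bq| \leq \h^{-1}$, there are polynomially many indices. Lemma~\ref{lemma_dot_uinc} bounds each coefficient uniformly by $O(\h^\infty)\|u^{\rm inc}_\h\|_{H^1_\h(\cO)}$. The Gabor analogue of \eqref{eq_norm_Phi_star}, namely $\|\eg{\bq}^\star\|_{\HH^p_\h} \leq C_p(1+|\bq|^p)$ (from Proposition~\ref{prop: approximation de g dans le frame des varphi} with the Gaussian frame and the estimates in the proof of Lemma~\ref{lemme: majoration de psi scalaire phi étoile}), then gives a polynomial loss that the $O(\h^\infty)$ bound absorbs.

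For $|\bq| > \h^{-1}$, we use decay of the coefficients in $\bq$ instead. Because of \eqref{tmp_elliptic_regularity}, $f_\h$ lies in $\HH^m_\h$ for every $m$, with norm controlled by $\|u^{\rm inc}_\h\|_{H^1_\h(\cO)}$. Integrating by parts against the Gaussian (equivalently, using that $\eg{\bq}$ is microlocalised at $\bz^{\h,\bq}$ with $|\bz^{\h,\bq}| \sim \sqrt{\h}|\bq|$) yields
\begin{equation*}
|\langle f_\h,\eg{\bq}\rangle| \leq C_m (1+|\bz^{\h,\bq}|)^{-m}\|f_\h\|_{\HH^m_\h}.
\end{equation*}
On this range $|\bz^{\h,\bq}| \gtrsim \h^{-1/2}$, so summing against $1+|\bq|^p$ converges and is $O(\h^\infty)$.

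\textbf{Stage 2.} We write $f_\h - g_\h = \sum_{\bq\in\wit{\Lambda}_{\h,0}} \langle f_\h,\eg{\bq}\rangle \eg{\bq}^\star$. Each $\eg{\bq}^\star$ is re-expanded on the $\eg{\bs}$ with $|\bs-\bq| \leq \h^{-\varepsilon/2}$, using Proposition~\ref{prop: approximation de g dans le frame des varphi} with $g = \eg{\bq}^\star$ and the stretched-exponential decay of $\langle \eg{\bq}^\star, \eg{\bs}^\star\rangle$. Each such error is $O(\h^\infty)$, and there are $O(\h^{-d})$ indices with bounded coefficients, so the total error is $O(\h^\infty)$. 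The resulting approximant has the form $\sum_{\bs} c_\bs \eg{\bs}$ with $\bs$ in a slightly enlarged layer; this is the $\varepsilon$-enlargement mechanism of Lemma~\ref{lemme: épaisseur des niveaux d'énergie}.

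\textbf{Identifying the coefficients.} It remains to show that the approximant can be taken to be exactly $\sum_{\bq \in \wit{\Lambda}_{\h,0}} \langle f_\h, \eg{\bq}^\star\rangle \eg{\bq}$. We apply Stage 1 with the roles swapped: $f_\h = \sum_{\bq} \langle f_\h,\eg{\bq}^\star\rangle\eg{\bq}$, so it suffices to show $\langle f_\h, \eg{\bq}^\star\rangle = O(\h^\infty)$ for $\bq \notin \wit{\Lambda}_{\h,0}$, with polynomial decay in $|\bq|$ for large $|\bq|$.

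The large-$|\bq|$ decay follows as before. For the rest, expand $\eg{\bq}^\star = \sum_{|\bs-\bq|\le \h^{-\varepsilon/2}} \langle\eg{\bq}^\star,\eg{\bs}^\star\rangle \eg{\bs} + O(\h^\infty)$. We then need Lemma~\ref{lemma_dot_uinc} at every $\bs$ within $\h^{-\varepsilon/2}$ of $\bq$. Such $\bs$ may fall slightly inside $\wit{\Lambda}_{\h,0}$, so we apply Lemma~\ref{lemma_dot_uinc} with a smaller parameter, $\lambda/2$ in place of $\lambda$. By the Lipschitz argument of Lemma~\ref{lemme: épaisseur des niveaux d'énergie}, every such $\bs$ lies outside $\wit{\Lambda}_{\h,0}(\lambda/2)$.

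\textbf{Main obstacle.} This last layer-shrinking step, together with the uniform control of large indices, is the main obstacle. In particular, Case 2 of Lemma~\ref{lemma_dot_uinc} must remain uniform for the unbounded set of $\bq$ with large $|\bxi|$. That lemma already provides this, since its implied constant does not depend on $\bq$.
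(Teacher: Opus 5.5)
Your proof is correct. Its decisive step, the paragraph ``Identifying the coefficients'', is the paper's proof. You expand $\langle f_\h,\eg{\bq}^\star\rangle=\sum_{\bs}\overline{\langle \eg{\bq}^\star,\eg{\bs}^\star\rangle}\langle f_\h,\eg{\bs}\rangle$. You discard the far indices using the stretched-exponential decay. You treat the near indices with the Lipschitz argument and Lemma~\ref{lemma_dot_uinc} applied with $\lambda/2$. Your radius $\h^{-\varepsilon/2}$ even removes the need for the small parameter $\tau$ that the paper uses, since $\sqrt{\h}\,\h^{-\varepsilon/2}=o(\h^{1/2-\varepsilon})$. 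Stages~1 and~2 are never used afterwards and can simply be deleted.

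The one genuine difference is how you handle indices with $|\bz^{\h,\bq}|$ large. The paper disposes of all indices with $|\bz^{\h,\bq}|>\h^{-1}$ at once by citing \cite[Theorem 3.1]{chaumontfrelet_ingremeau_2022a} together with \eqref{tmp_elliptic_regularity}. It therefore only needs a uniform $O(\h^\infty)$ bound on polynomially many dual coefficients. You instead need decay of $\langle f_\h,\eg{\bq}^\star\rangle$ in $\bq$.

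That decay does not follow ``as before'': $\eg{\bq}^\star$ is not an explicit Gaussian, so you cannot integrate by parts against it. It does follow from three ingredients you already have: the full Gram expansion above, your bound $|\langle f_\h,\eg{\bs}\rangle|\leq C_m(1+|\bz^{\h,\bs}|)^{-m}\|f_\h\|_{\HH^m_\h}$, and a split into $|\bs-\bq|\leq|\bq|/2$ versus $|\bs-\bq|>|\bq|/2$. In this range you must pair with $f_\h$ before truncating. The remainder in Proposition~\ref{prop: approximation de g dans le frame des varphi} is only $O(\h^\infty)$ uniformly in $\bq$, not decaying in $\bq$, so it cannot be summed over infinitely many indices.

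With that fix your route is self-contained. The paper's route is shorter, given the cited approximation theorem.
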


\begin{proof}
Due to Assumption~\ref{Hyp:BoundedLayers}, for all $\h$ small enough,
$\wit{\Lambda}_{\h,0} \subset B_{\h^{-1}}$, where
$B_{\h^{-1}} \eq \{ \br \in \Z^{2d} \; | \; |\bz^{\h,\br}| \leq \h^{-1} \}$.
For future reference, we note that $\mathrm{Card} (B_{\h^{-1}}) \leq C\h^{-3d}$.

Simple triangular inequalities along with \eqref{eq_dual_frame} now reveals that
\begin{align*}
\left \|
f_\h-\sum_{\bq \in \wit{\Lambda}_{\h,0}} \langle f_\h,\eg{\bq}^\star \rangle \eg{\bq}
\right \|_{\HH_{\h}^p}
&\leq
\left \|
f_\h-\sum_{\bq \in \B_{\h^{-1}}} \langle f_\h,\eg{\bq}^\star \rangle \eg{\bq}
\right \|_{\HH_{\h}^p}
\\
&+
\sum_{\bq \in \B_{\h^{-1}} \setminus \wit{\Lambda}_{\h,0}}
|\langle f_\h,\eg{\bq}^\star \rangle| \|\eg{\bq}\|_{\HH_{\h}^p}.
\end{align*}
For the first term, we can immediately invoke~\cite[Theorem 3.1]{chaumontfrelet_ingremeau_2022a}
together with~\eqref{tmp_elliptic_regularity}, leading to
\begin{equation*}
\left \|
f_\h-\sum_{\bq \in \B_{\h^{-1}}} \langle f_\h,\eg{\bq}^\star \rangle \eg{\bq}
\right \|_{\HH_{\h}^p}
\leq
C_{p, m} \h^{m} \|f_\h\|_{\HH_{\h}^{p+m}}
\leq
C_{p, m,\chi} \h^m \|u_\h^{\rm inc}\|_{H^1_\h}
\end{equation*}
for all $m \geq 0$. For the second term, we use the finite cardinality of $B_{\h^{-1}}$
together with the estimate for the $\HH_{\h}^p$ norms of $\eg{\bq}$ in~\cite[Lemma C.1]{chaumontfrelet_ingremeau_2022a}. This gives
\begin{equation*}
\sum_{\bq \in \B_{\h^{-1}} \setminus \wit{\Lambda}_{\h,0}}
|\langle f_\h,\eg{\bq}^\star \rangle| \|\eg{\bq}\|_{\HH_{\h}^p}
\leq
C \h^{-d}
\max_{\bq \in \B_{\h^{-1}} \setminus \wit{\Lambda}_{\h,0}}
|\langle f_\h,\eg{\bq}^\star \rangle| \|\eg{\bq}\|_{\HH_{\h}^p}
\leq
C_{p} \h^{- d-p}
\max_{\bq \in \B_{\h^{-1}} \setminus \wit{\Lambda}_{\h,0}} |\langle f_\h,\eg{\bq}^\star \rangle|.
\end{equation*}
As a result, it remains to show that
\begin{equation*}
|\langle f_\h,\eg{\bq}^\star \rangle|
=
O_{\chi}(\h^\infty) \|u_\h^{\rm inc}\|_{H^1_\h(\cO)}
\end{equation*}
uniformly for $\bq \in \Z^{2d} \setminus \wit{\Lambda}_{\h,0}$.
To do so, we invoke \eqref{eq_dual_frame} which shows that for all
$\bq \in \Z^{2d}$, we have
\begin{align*}
\langle f_\h,\eg{\bq}^\star \rangle
&=
\sum_{\bq' \in \Z^{2d}}
\langle \eg{\bq'}^\star,\eg{\bq}^\star \rangle
\langle f_\h,\eg{\bq'} \rangle
\\
&=
\sum_{\substack{\bq' \in \Z^{2d} \\ |\bq-\bq'| \leq \tau \lambda \h^{-\varepsilon}}}
\langle \eg{\bq'}^\star,\eg{\bq}^\star \rangle
\langle f_\h,\eg{\bq'} \rangle
+
\sum_{\substack{\bq' \in \Z^{2d} \\ |\bq-\bq'| > \tau \lambda \h^{-\varepsilon}}}
\langle \eg{\bq'}^\star,\eg{\bq}^\star \rangle
\langle f_\h,\eg{\bq'} \rangle
\end{align*}
where $0 < \tau \leq 1$ is a (small) parameter to be fixed later.
The value of $\tau$ will ultimately only depend on constants appearing in
Assumptions~\ref{Hyp1:BoundCoefs} and~\ref{Hyp:BoundedLayers}, so we don't
list it in the $O(\h^\infty)$ notations.

On the one hand,~\cite[Proposition 5.2]{chaumontfrelet_ingremeau_2022a} ensures that
$|\langle \eg{\bq'}^\star,\eg{\bq}^\star \rangle| \leq C e^{-|\bq-\bq'|^{1/2}}$,
from which we deduce that
\begin{equation*}
\left |
\sum_{\substack{\bq' \in \Z^{2d} \\ |\bq-\bq'| > \tau \lambda \h^{-\varepsilon}}}
\langle \eg{\bq'}^\star,\eg{\bq}^\star \rangle
\langle f_\h,\eg{\bq'} \rangle
\right |
=
O(\h^\infty) \|f_\h\|_{L^2}
=
O_{\chi}(\h^\infty) \|u_\h^{\rm inc}\|_{H^1_\h(\cO)}
\end{equation*}
for the first term. On the other hand, we have
\begin{equation}\label{eq:SecondTerm}
\left |\sum_{\substack{\bq' \in \Z^{2d} \\ |\bq-\bq'| \leq \tau \lambda \h^{-\varepsilon}}}
\langle \eg{\bq'}^\star,\eg{\bq}^\star \rangle
\langle f_\h,\eg{\bq'} \rangle
\right |
\leq
C\lambda^{2d}\h^{-2d\varepsilon}
\max_{\substack{\bq' \in \Z^{2d} \\ |\bq-\bq'| \leq \tau \lambda \h^{-\varepsilon}}}
|\langle f_\h,\eg{\bq'} \rangle|.
\end{equation}

Let us suppose for contradiction that the previous expression contains an index
$\bq'$ such that $|p_\h(\bz^{\h,\bq'})|\leq \frac{\lambda}{2}\h^{1/2-\varepsilon}$.
Since  $p_\h$ is Lispchitz continuous on the ball introduced in
Assumption~\ref{Hyp:BoundedLayers}, we have
\begin{equation*}
|p_\h(\bz^{\h,\bq})|
\leq
|p_\h(\bz^{\h,\bq'})|
+
C |\bz^{\h,\bq}-\bz^{\h,\bq'}|
\leq
\frac{\lambda}{2} \h^{1/2-\varepsilon}
+
C \tau \lambda \h^{1/2-\varepsilon}
<
\lambda \h^{1/2-\varepsilon}
\end{equation*}
upon selecting $\tau$ small enough, which contradicts the fact that $\bq \in \Z^{2d} \setminus \wit{\Lambda}_{\h,0}$.\newline
Therefore, the expression  in (\ref{eq:SecondTerm}) contains only indices $\bq'$ such that $|p_\h(\bz^{\h,\bq'})|> \frac{\lambda}{2}\h^{1/2-\varepsilon}$, and we may apply Lemma~\ref{lemma_dot_uinc} (with $\lambda$ replaced with $\frac{\lambda}{2}$ and with $\alpha = 0$) to show that (\ref{eq:SecondTerm}) is $O(\h^\infty)$.
\end{proof}

We deduce the following corollary, which, along with the result of
Section~\ref{subsec:ApproxInGoodSpace}, directly implies Corollary~\ref{corollary_uinc}.

\begin{corollary}
\label{corollary_rhs_uinc}
Let $\chi$ and $u_\h^{\rm inc}$ as in Lemma~\ref{lemma_dot_uinc} above.
Then,~\eqref{eq_approx_uinc} still holds for $f_\h \eq (\h^2\Delta+1)(\chi u_\h^{\rm inc})$.
\end{corollary}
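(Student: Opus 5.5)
The plan is to reduce Corollary~\ref{corollary_rhs_uinc} to Theorem~\ref{thm:IncidentField}, using the fact that $(\h^2\Delta+1)$ preserves the structure $\chi u_\h^{\rm inc}$ up to commutator terms. Since $\h^2\Delta u_\h^{\rm inc}+u_\h^{\rm inc}=0$ in $\cO$, we have
\begin{equation*}
(\h^2\Delta+1)(\chi u_\h^{\rm inc})
=
\h^2(\Delta\chi)u_\h^{\rm inc}
+
2\h^2\nabla\chi\cdot\nabla u_\h^{\rm inc}.
\end{equation*}
Both terms are of the form $\cR u_\h^{\rm inc}$ with $\cR$ a differential operator of order at most one whose coefficients are derivatives of $\chi$, compactly supported in $\cO$.

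The key observation is that the whole proof of Lemma~\ref{lemma_dot_uinc} only used $f_\h=\cR_0u_\h^{\rm inc}$ with $\cR_0=\chi$. We run the same induction starting instead from $\cR_0 \eq \h^2(\Delta\chi)+2\h^2\nabla\chi\cdot\nabla$, and set $\cR_{\ell+1}=[\cR_\ell,Q]$ as before. The identity $\cR u_\h^{\rm inc}=Q\cR u_\h^{\rm inc}+[\cR,Q]u_\h^{\rm inc}$ still holds, because it only requires $Qu_\h^{\rm inc}=u_\h^{\rm inc}$ on the support of the coefficients of $\cR$. The inductive formula~\eqref{tmp_induction_Rell} then acquires one extra derivative on $u_\h^{\rm inc}$ and one extra factor $\h$. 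Since $\h\partial$ costs nothing in $H^1_\h(\cO)$, we obtain $\|\cR_\ell u_\h^{\rm inc}\|_{L^2}\le C\h^{\ell\varepsilon}\|u_\h^{\rm inc}\|_{H^1_\h(\cO)}$.

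\textbf{Main obstacle.} The one point needing care is that terms of the form $\partial^{\bbeta}$ with $[\bbeta]$ up to $\ell+1$ acting on $u_\h^{\rm inc}$ have to be controlled by $\|u_\h^{\rm inc}\|_{H^1_\h(\cO)}$. I would handle this with interior elliptic regularity for the Helmholtz equation on a neighbourhood of $\operatorname{supp}\chi$, exactly as in~\eqref{tmp_elliptic_regularity}: the higher $\h$-derivatives of $u_\h^{\rm inc}$ on compact subsets of $\cO$ are bounded by $\|u_\h^{\rm inc}\|_{H^1_\h(\cO)}$, with constants depending on the distance from $\operatorname{supp}\chi$ to $\partial\cO$, which is controlled through $\chi$. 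Case~1 of Lemma~\ref{lemma_dot_uinc} (Gaussian decay away from the support) goes through unchanged. The analogue of~\eqref{tmp_elliptic_regularity} for the new $f_\h$ holds for the same reason, and gives $\|f_\h\|_{\HH^m_\h}\le C_{m,\chi}\|u_\h^{\rm inc}\|_{H^1_\h(\cO)}$.

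With these two ingredients we have the conclusion of Lemma~\ref{lemma_dot_uinc} for the new $f_\h$, together with its $\HH^m_\h$ bound. The proof of Theorem~\ref{thm:IncidentField} then applies verbatim, since it only used these two facts and the frame estimates. This yields~\eqref{eq_approx_uinc} for $f_\h=(\h^2\Delta+1)(\chi u_\h^{\rm inc})$.
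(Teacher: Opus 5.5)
Your proof is correct, but it takes a different route from the paper's: you reopen the proofs, whereas the paper reuses the result.

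\textbf{The paper's route.} After the same expansion, the paper writes $2\h^2\nabla\chi\cdot\nabla u_\h^{\rm inc}=\sum_j (2\h\,\partial_j\chi)(\h\,\partial_j u_\h^{\rm inc})$. It then notes that $\h\,\partial_j u_\h^{\rm inc}$ again solves $\h^2\Delta w+w=0$ in $\cO$, because the operator has constant coefficients. So $f_\h$ is a finite sum of terms $\widetilde\chi\,\widetilde u_\h^{\rm inc}$ with $\widetilde\chi\in\{\h^2\Delta\chi,\,2\h\,\partial_j\chi\}$, which are uniformly bounded in every $C^\ell$. Theorem~\ref{thm:IncidentField} then applies to each term as a black box, giving a two-line proof.

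\textbf{Hidden cost of the paper's route.} $\h\,\partial_j u_\h^{\rm inc}$ need not lie in $H^1(\cO)$. Strictly, one must apply the theorem on an intermediate open set $\cO'$ with $\operatorname{supp}\chi\subset\cO'\Subset\cO$; this set is still at distance at least $\delta$ from $\Sigma^{\rm c}$. One then bounds $\|\h\,\partial_j u_\h^{\rm inc}\|_{H^1_\h(\cO')}\le C\|u_\h^{\rm inc}\|_{H^1_\h(\cO)}$ by interior regularity. This is exactly the ingredient you single out as your main obstacle. It is also implicit in \eqref{tmp_elliptic_regularity}, so in both versions the constants really depend on $\dist(\operatorname{supp}\chi,\partial\cO)$ as well.

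\textbf{What your route buys.} You instead prove the stronger fact that Lemma~\ref{lemma_dot_uinc} holds for $\cR_0 u_\h^{\rm inc}$, for any differential operator $\cR_0$ with coefficients compactly supported in $\cO$. Your check is sound:
\begin{itemize}
\item The identity $\cR u_\h^{\rm inc}=Q\cR u_\h^{\rm inc}+[\cR,Q]u_\h^{\rm inc}$ only needs $Qu_\h^{\rm inc}=u_\h^{\rm inc}$ on the support of the coefficients.
\item The recursion \eqref{tmp_induction_Rell} picks up a prefactor $\h^2$ and one extra derivative on $u_\h^{\rm inc}$, a net gain of $\h$.
\item The proof of Theorem~\ref{thm:IncidentField} uses only two inputs: the conclusion of the lemma for every $\lambda>0$ (in particular $\lambda/2$), and the $\HH^m_\h$ bound on $f_\h$.
\end{itemize}
Your approach is therefore more general and avoids introducing an auxiliary incident field, at the price of re-verifying both proofs. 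The paper's approach is shorter but relies on the special structure that the derivative of an incident field is again an incident field.
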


\begin{proof}
We simply observe that
\begin{equation*}
f_\h
=
\h^2(\Delta \chi)u_\h^{\rm inc}
+
\h^2 \chi \Delta u_\h^{\rm inc}
+
2 \h^2 \nabla \chi \cdot \nabla u_\h ^{\rm inc}
+
\chi u_\h^{\rm inc} 
=
\h^2(\Delta \chi)u_\h^{\rm inc}
+
2 \h^2 \nabla \chi \cdot \nabla u_\h ^{\rm inc},
\end{equation*}
and apply~\eqref{eq_approx_uinc} to each of the terms in the sum. Indeed, this is a linear combination of terms
of the form ``$\widetilde{\chi} \widetilde{u}_h^{\rm inc}$'' with $\widetilde{\chi}$
and $\widetilde{u}_\h^{\rm inc}$ satisfying the required assumptions.
\end{proof}

\section{Convergence of the Galerkin method}
\label{section_convergence_galerkin}

In all this section, we suppose that Assumption~\ref{Hyp4:SymetrieSymbole} holds,
along with Assumptions~\ref{Hyp1:BoundCoefs},~\ref{Hyp:PolynResolv}
and~\ref{Hyp:BoundedLayers}.

\subsection{An approximate right-inverse away from the characteristic sets}

We now proceed with the proof of Theorem \ref{theorem_accuracy}.
To this end, we fix, throughout Section~\ref{section_convergence_galerkin},
$0 < \varepsilon < 1/2$, $0 \leq \alpha < \delta_0/2$ and $\lambda'> \lambda>0$,
and we build the same energy layers as in section \ref{subsec:EnergyLayers}
(taking $J=6$ here). As in the previous section, all the objects and constants
we introduce in this section may depend on the parameters $\varepsilon$, $\alpha$,
$\lambda$ and $\lambda'$, even though we will not write this dependence explicitly.

We also add a last energy layer
\begin{equation*}
\Lambda_{\h, J+1} :=   \Lambda_\h \setminus \Lambda_\h(\lambda'),
\end{equation*}
so that
\begin{equation*}
\Lambda_\h  = \bigsqcup_{j=0}^{J+1} \Lambda_{\h, j}.
\end{equation*}

As previously,  if $0\leq i \leq j \leq J+1$, we write $\Lambda_{\h, i,j} = \bigsqcup\limits_{\ell=i}^j \Lambda_{\h,\ell}$, and
\begin{equation*}
W_{\h, i,j} = \Vect \{\Phi_{\h,\bq} ; \bq\in \Lambda_{\h,i,j} \} = \cD_\h \left(\ell^2(\Lambda_{\h,i,j}) \right),
\end{equation*}
with  $\cD_\h$ as in \eqref{eq:DefOperateurReconstruction}.  We also write $W_{\h,j}$ instead of $W_{\h, j,j}$.

Recall that, if $w_\h\in W_{\h, 1, J+1}$, \eqref{eq:UniquenessDecomposition} implies that
\begin{equation}\label{eq:ExpressionMorceau}
w_{\hbar} = \sum_{\qvec \in \Lambda_{\h,1,J+1}} \langle w_\hbar, \w{\qvec}^\star\rangle  \w{\qvec}.
\end{equation} 

As before,  $(\mathcal{P}_\hbar, \pi_\hbar)$ will denote either $(P_\hbar, p_\hbar)$ or  $(P_\hbar^*, \overline{p_\hbar})$. We define  an operator $\Inv =  \Inv(\cP_\h) : W_{\h, 1, J+1} \longrightarrow W_{\h, 1, J+1}$ by
    \begin{equation}\label{eq:DefApproxInverse}
    \Inv w_\h := \sum_{\qvec \in  \Lambda_{\h,1,J+1}} \langle w_\hbar, \w{\qvec}^\star\rangle  \frac{1}{\pi_\hbar(\bz^{\h, \bq})} \w{\qvec}.
    \end{equation}

Note that, since we only consider  $w_\h\in W_{\h, 1, J+1}$ here, we always have $|\pi_\hbar(\bz^{\h, \bq})|\geq \lambda  \h^{\frac{1}{2}- \varepsilon}$. Therefore,  we deduce from (\ref{eq: relation entre la norme de U et celle de u pour Wilson}) that there exists $C>0$ such that, for all $\h\in (0,1]$, we have
\begin{equation}\label{eq:BoundedAproximateInverse}
\forall w_\h\in W_{\h, 1, J+1}, ~~ \|\Inv w_\h\|_{L^2}\leq C \h^{-\frac{1}{2}+\varepsilon} \|w_\h\|_{L^2}.
\end{equation}
    
    The following lemma tells us that the operator $\Inv$ is an approximate right-inverse for $\cP_\h$.

\begin{lemma}
    \label{lemme: erreur entre P étoile v67tilde et v67}
For any $w_\h\in W_{\h, 1, J+1}$, we have
        \begin{equation}
        \|\mathcal{P}_\hbar   \Inv  w_{\hbar} - w_{\hbar}\|_{L^2} \leq C \hbar^\frac{\varepsilon}{2} \|w_{\hbar}\|_{L^2}.
    \end{equation}
\end{lemma}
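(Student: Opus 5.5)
We expand both sides in the Wilson basis and compare term by term. Writing $w_\h = \sum_{\bq\in\Lambda_{\h,1,J+1}} W_\bq \w{\bq}$ with $W_\bq = \langle w_\h,\w{\bq}^\star\rangle$, we have
\begin{equation*}
\mathcal{P}_\hbar \Inv w_\h - w_\h
=
\sum_{\bq\in\Lambda_{\h,1,J+1}} \frac{W_\bq}{\pi_\h(\bz^{\h,\bq})}
\left(\mathcal{P}_\hbar - \pi_\h(\bz^{\h,\bq})\right)\w{\bq}
=:
\sum_{\bq} \frac{W_\bq}{\pi_\h(\bz^{\h,\bq})} R_{\h,\bq},
\end{equation*}
with $R_{\h,\bq}$ as in the proof of Lemma~\ref{lemme: 1er lemme de la proposition clé}. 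By Lemma~\ref{lemme: (P - p) Phi}, and since all $\bz^{\h,\bq}$ lie in a fixed bounded set (Assumption~\ref{Hyp:BoundedLayers}, because $\Lambda_\h\subset\{|p_\h|<\delta_0\}$), each term has $L^2$ norm $\le C\h^{1/2}|W_\bq|/|\pi_\h(\bz^{\h,\bq})| \le C\h^{\varepsilon}|W_\bq|$. Summing naively would cost a factor $\h^{-d}$, however, so almost-orthogonality is needed.

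The plan is to bound $\|\sum_\bq c_\bq R_{\h,\bq}\|_{L^2}^2 = \sum_{\bq,\bq'} c_\bq\overline{c_{\bq'}}\langle R_{\h,\bq},R_{\h,\bq'}\rangle$ via a Schur test, where $c_\bq = W_\bq/\pi_\h(\bz^{\h,\bq})$. We need two bounds on the Gram entries. First, the near-diagonal bound $|\langle R_{\h,\bq},R_{\h,\bq'}\rangle| \le \|R_{\h,\bq}\|\|R_{\h,\bq'}\| \le C\h$ by Cauchy--Schwarz. Second, off-diagonal decay: $R_{\h,\bq}$ is $\mathcal{P}_{\h,A}\w{\bq}$ for a second-order differential operator of the form in Lemma~\ref{lemme: P w scalaire w} (with the constant $\pi_\h(\bz^{\h,\bq})$ absorbed into the zeroth-order coefficient, which is uniformly bounded). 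Expanding $R_{\h,\bq'}$ via Corollary~\ref{cor: formules d'approximation 2} gives $|\langle R_{\h,\bq},R_{\h,\bq'}\rangle| \le C_m(1+|\bq-\bq'|)^{-m} + O(\h^\infty)$.

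We then split at $|\bq-\bq'|\le \h^{-\varepsilon/(4d)}$. For close pairs we use the first bound; the number of neighbours is $\le C\h^{-\varepsilon/2}$, so the Schur sum contributes $C\h\cdot\h^{-\varepsilon/2}$. For far pairs we use the second bound with $m$ large, which gives $O(\h^\infty)$ summable rows. The Schur test then yields
\begin{equation*}
\Big\|\sum_\bq c_\bq R_{\h,\bq}\Big\|_{L^2}^2 \le C\h^{1-\varepsilon/2}\|c\|_{\ell^2}^2 .
\end{equation*}
Since $\|c\|_{\ell^2}\le \lambda^{-1}\h^{-1/2+\varepsilon}\|W\|_{\ell^2}$ and $\|W\|_{\ell^2}\le C\|w_\h\|_{L^2}$ by \eqref{eq: relation entre la norme de U et celle de u pour Wilson}, we obtain
\begin{equation*}
\|\mathcal{P}_\hbar \Inv w_\h - w_\h\|_{L^2} \le C\h^{1/2-\varepsilon/4}\h^{-1/2+\varepsilon}\|w_\h\|_{L^2} \le C\h^{\varepsilon/2}\|w_\h\|_{L^2}.
\end{equation*}

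The main obstacle is the off-diagonal decay of $\langle R_{\h,\bq},R_{\h,\bq'}\rangle$. Lemma~\ref{lemme: P w scalaire w} controls $\langle \mathcal{P}\w{\bq},\w{\bs}\rangle$ but not pairings of two operator images. I would handle this by writing $\langle R_{\h,\bq},R_{\h,\bq'}\rangle = \langle \mathcal{Q}\w{\bq},\w{\bq'}\rangle$, where $\mathcal{Q} = (\mathcal{P}_\h-\pi_\h(\bz^{\h,\bq'}))^*(\mathcal{P}_\h-\pi_\h(\bz^{\h,\bq}))$ is a fourth-order operator in the class of Lemma~\ref{lemme: P w scalaire w}, with $n=4$ and uniformly bounded coefficients. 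Lemma~\ref{lemme: P w scalaire w} then gives the decay directly, with the factor $1+|\xin{\nvec_\bq}|^4$ bounded on our compact set.
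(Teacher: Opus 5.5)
Your proof is correct, but for this lemma it follows a different route from the paper's.

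\textbf{The paper's route.} The paper applies the one-step expansion lemma of Section~\ref{section_parametrix} to each $\Phi_{\hbar,\bq}$. That lemma re-expands the remainder $(\mathcal{P}_\hbar-\pi_\hbar(\bz^{\hbar,\bq}))\Phi_{\hbar,\bq}$ in the Wilson basis, using the localised dual-frame expansions. The result is a banded combination $\sum_{|\br-\bq|\le 2\hbar^{-\varepsilon_1}}\widetilde{V}_{\hbar,\bq,\br,\varepsilon_1}\Phi_{\hbar,\br}$ with $|\widetilde{V}|\le C\hbar^{\varepsilon-2d\varepsilon_1}$, up to an $O(\hbar^\infty)$ error. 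The paper then bounds the $L^2$ norm of this combination by the upper Riesz bound on its coefficient sequence, applies Cauchy--Schwarz on the band, and takes $\varepsilon_1=\varepsilon/(8d)$.

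\textbf{Your route.} You never re-expand. You bound the Gram matrix $\langle R_{\hbar,\bq},R_{\hbar,\bq'}\rangle$ directly in $L^2$ and apply a Schur test. The diagonal size $O(\hbar)$ comes from Lemma~\ref{lemme: (P - p) Phi}. The rapid off-diagonal decay comes from Lemma~\ref{lemme: P w scalaire w} with $n=4$ applied to $(\mathcal{P}_\hbar-\pi_\hbar(\bz^{\hbar,\bq'}))^*(\mathcal{P}_\hbar-\pi_\hbar(\bz^{\hbar,\bq}))$. This step is legitimate. Composition keeps you in the class with uniformly bounded coefficients. Alternatively, expand the composition into $\mathcal{P}_\hbar^*\mathcal{P}_\hbar$, $\mathcal{P}_\hbar^*$, $\mathcal{P}_\hbar$ and the identity, each multiplied by a constant bounded by $\delta_0^2$, so a fixed operator family suffices. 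Your first sketch, via the approximation corollary, would still need decay of the pairings with dual states. The composed-operator argument is the one that actually closes the proof.

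\textbf{Comparison.} Your argument is the near/far splitting the paper itself uses later, for the bound on $\|T_{W_\hbar}\|$ in Theorem~\ref{theorem_conditioning}. It is more self-contained: it needs no dual-frame decay estimates or re-expansion. It also handles the lower bound $|\pi_\hbar|\ge\lambda\hbar^{1/2-\varepsilon}$ through an explicit factor $\lambda^{-1}$, instead of the paper's ``possibly with a smaller $\varepsilon$'' adjustment. The paper's route costs nothing extra, because the expansion lemma is needed anyway for the discrete parametrix of Section~\ref{section_parametrix}. Neither bound is stronger in substance: yours gives $\hbar^{\varepsilon-d\eta}$ and the paper's gives $\hbar^{\varepsilon-4d\varepsilon_1}$, so both reach any exponent below $\varepsilon$.
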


\begin{proof}
 We set  $R_\hbar:=\mathcal{P}_\hbar    \Inv w_{\hbar} - w_{\hbar}$, so that
    \begin{align*}
        R_\hbar   &= \sum_{\qvec \in  \Lambda_{\h, 1, J+1}} \langle w_\hbar, \w{\qvec}^\star\rangle  \left[\frac{1}{\pi_\hbar(\bz^{\h, \bq})} \mathcal{P}_\hbar \w{\qvec} - \w{\qvec}\right].
    \end{align*}

Let $0<\varepsilon_1 < \frac{1}{2}$, which we will fix below.
We may apply Lemma~\ref{lemme: 1er lemme de la proposition clé} (possibly with a smaller $\varepsilon$)
to each of the $\w{\qvec}$ in the previous sum, to obtain
\begin{equation*}
R_\hbar
=
-\sum_{\qvec \in \Lambda_{\h, 1, J+1}}
\sum_{\substack{\rvec \in E_d\\|\qvec - \rvec| \leq 2 \hbar^{-\varepsilon_1}}}
\langle w_{\hbar}, \w{\qvec}^\star\rangle
\wit{V}_{\hbar, \qvec, \rvec, \varepsilon_1} \w{\rvec}
-
\wit{\gamma}_{\hbar, \varepsilon_1},
\end{equation*}
with
\begin{equation*}
\wit{\gamma}_{\hbar, \varepsilon_1}
:=
\sum_{\qvec \in \Lambda_{\h,1,J+1}}
\langle w_{\hbar}, \w{\qvec}^\star\rangle  \gamma_{\hbar, \qvec, \varepsilon_1},
\end{equation*}
so that $\|\wit{\gamma}_{\hbar, \varepsilon_1}\|_{L^2}= O_{\varepsilon_1}(\h^\infty)\times \|w_\h\|_{L^2}$.

We deduce that
\begin{align*}
\|R_\hbar\|_{L^2}^2
&\leq
C \left \|
\sum_{\rvec \in E_d}
\sum_{\substack{\qvec \in \Lambda_{\h,1,J+1} \\|\qvec - \rvec| \leq 2 \hbar^{-\varepsilon_1}}}
\langle w_{\hbar}, \w{\qvec}^\star\rangle
\wit{V}_{\hbar, \qvec, \rvec, \varepsilon_1} \w{\rvec}
\right \|_{L^2}^2
+
O_{\varepsilon_1}(\h^\infty)\times \|w_\h\|_{L^2}^2
\\
&\stackrel{\eqref{eq: relation entre la norme de U et celle de u pour Wilson}}{\leq}
C \sum_{\rvec \in E_d}
\left |
\sum_{\substack{\qvec \in \Lambda_{\h,1,J+1}\\|\qvec - \rvec| \leq 2 \hbar^{-\varepsilon_1}}}
\langle w_{\hbar}, \w{\qvec}^\star\rangle
\wit{V}_{\hbar, \qvec, \rvec, \varepsilon_1}
\right |^2
+
O_{\varepsilon_1}(\h^\infty)\times \|w_\h\|_{L^2}^2
\\
&\leq
C \sum_{\rvec \in E_d}
\left (
\sum_{\substack{\qvec \in \Lambda_{\h,1,J+1} \\|\rvec - \qvec| \leq 2 \hbar^{-\varepsilon_1}}}
\left |\left\langle w_{\hbar}, \w{\qvec}^\star\right\rangle \right|^2
\right )
\left (
\sum_{\substack{\svec \in \Lambda_{\h,1,J+1}\\|\rvec - \svec| \leq 2 \hbar^{-\varepsilon_1}}}
\left |\wit{V}_{\hbar, \svec, \rvec, \varepsilon_1}\right |^2
\right )
+
O_{\varepsilon_1}(\h^\infty)\times \|w_\h\|_{L^2}^2,
\end{align*} 
    by the Cauchy-Schwarz inequality.
    We have  $\left|\wit{V}_{\hbar, \svec, \rvec, \varepsilon_1}\right|^2 \leq C \hbar^{2 \varepsilon - 4d \varepsilon_1}$,  so that  $\sum\limits_{\substack{\svec \in\Lambda_{\h,1,J+1}\\|\rvec - \svec| \leq 2 \hbar^{-\varepsilon_1}}} \left|\wit{V}_{\hbar, \svec, \rvec, \varepsilon_1}\right|^2 \leq C \hbar^{2 \varepsilon - 6d \varepsilon_1}$. Therefore
    \begin{align*}
        \|R_\hbar\|_{L^2}^2 &\leq C \hbar^{2 \varepsilon - 6d \varepsilon_1} \sum_{\rvec \in E_d} \sum_{\substack{\qvec \in E_d\\|\rvec - \qvec| \leq 2 \hbar^{-\varepsilon_1}}} \left|\left\langle w_{\hbar}, \w{\qvec}^\star\right\rangle \right|^2 + O_{\varepsilon_1}(\h^\infty)\times \|w_\h\|_{L^2}^2\\
        &\leq C \hbar^{2 \varepsilon - 8d \varepsilon_1} \sum_{\qvec \in E_d} \left|\left\langle w_{\hbar}, \w{\qvec}^\star\right\rangle \right|^2 + O_{\varepsilon_1}(\h^\infty)\times \|w_\h\|_{L^2}^2\\
        &\leq  C \hbar^{2 \varepsilon - 8d \varepsilon_1} \|w_{\hbar}\|_{L^2}^2  + O_{\varepsilon_1}(\h^\infty)\times \|w_\h\|_{L^2}^2,
    \end{align*}
    thanks to  \eqref{eq: inégalité de trame pour Wilson étoile}. Taking $\varepsilon_1= \frac{\varepsilon}{8d}$, we obtain $\|R_\hbar\|_{L^2} \leq C \hbar^\frac{\varepsilon}{2} \|w_{\hbar}\|_{L^2}$,  which concludes the proof.
\end{proof}

\subsection{Microlocal properties of the Galerkin solution}
\label{section: micro-localisation de la solution de Galerkine}
In this subsection and the next, we consider a family of functions $(f_\h)_{\h \in \LH}$ with $f_\h \in W_{\h,0}$ for all $\h \in (0,1]$,
 and we consider the following Galerkin problem: 
\begin{equation}
    \label{eq: Problème de Galerkine}
     \quad \forall w_\hbar \in W_\h, \quad \langle P_\hbar v_\hbar, w_\hbar\rangle = \langle f_\hbar, w_\hbar\rangle.
\end{equation}

This problem does not have to well-posed a priori, but we will show that this is indeed
the case if $\h$ is small enough. In the remainder of this section, we will prove estimates
for any $v_\h$ solution to~\eqref{eq: Problème de Galerkine}. The existence and uniqueness
of such $v_\h$ will be established a posteriori in the proof of Theorem~\ref{theorem_accuracy}
below.

Let therefore $v_\h$ be any solution to~\eqref{eq: Problème de Galerkine}.
For every $0\leq i\leq j \leq J+1$, we shall write
\begin{equation}\label{eq:DecompoVLayers}
v_{\h,i,j} = \sum_{\qvec \in \Lambda_{\h, i,j}} \langle v_\hbar, \w{\qvec}^\star\rangle \w{\qvec},
\end{equation}
and we will simply write $v_{\h,j}$ instead of $v_{\h,j,j}$ when $i=j$.  Note that, by the frame properties, there exists $C>0$ such that, for all $0\leq i \leq j \leq J+1$ and all $\h>0$,
\begin{equation}\label{eq:BornerLesMorceaux}
\|v_{\h, i,j}\|_{L^2}\leq C \|v_h\|_{L^2}.
\end{equation}

In the sequel, we will decompose $v_\h$ as
\begin{equation}\label{eq:DecompoVen3Morceaux}
v_\h= v_{\h, 0,3} +v_{\h,4} + v_{\h,5, J+1}.
\end{equation}

The aim of this subsection is to prove the following proposition, which will be a direct consequence of Lemma  \ref{lemme: majoration de la norme H de v5} and Corollary \ref{lemme: majoration de la norme H de v67} below.  

\begin{proposition}
\label{prop: micro-localisation de la solution de Galerkine}
There exists $\h_0 >0$ such that, for any $p\in \N$, we have for $\h \in \LH \cap (0,\h_0]$
\begin{equation*}
\|v_\hbar - \vA\|_{\widehat{H}_\hbar^p}= O_p(\h^\infty) \times  \left(\|f_\hbar\|_{L^2} + \|v_\hbar\|_{L^2}\right).
\end{equation*}
\end{proposition}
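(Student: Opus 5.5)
The plan is to show separately that $v_{\h,4}$ and $v_{\h,5,J+1}$ are $O(\h^\infty)\,(\|f_\h\|_{L^2}+\|v_\h\|_{L^2})$ in every $\widehat{H}^p_\h$. By \eqref{eq:DecompoVen3Morceaux} and Lemma~\ref{lem:LePtiLemKiSauveToutLTemps}, which converts $\ell^2$ bounds on Wilson coefficients into $\widehat{H}^p_\h$ bounds at the cost of $\h^{-d/2}$, it suffices to prove that the coefficients $\langle v_\h,\w{\qvec}^\star\rangle$ with $\qvec\in\Lambda_{\h,4,J+1}$ are $O(\h^\infty)(\|f_\h\|+\|v_\h\|)$ in $\ell^2$. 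The one structural tool is Galerkin orthogonality with adjoint test functions. For every $w\in W_\h$,
\begin{equation*}
\langle v_\h, P_\h^* w\rangle=\langle P_\h v_\h,w\rangle=\langle f_\h,w\rangle .
\end{equation*}
Moreover, $\langle f_\h,w\rangle$ is superalgebraically small whenever $w$ is built from Wilson states in layers $\geq 2$. This follows from $f_\h\in W_{\h,0}$, the layer separation of Lemma~\ref{lemme: épaisseur des niveaux d'énergie}, and the Gram decay \eqref{eq: majoration de phi scalaire phi}.

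\textbf{Step 1 (the outer part $v_{\h,5,J+1}$).} Take $\cP_\h=P_\h^*$ and the approximate inverse $\Inv=\Inv(P_\h^*)$ of \eqref{eq:DefApproxInverse}. Set $g_\h:=\Pi v_{\h,5,J+1}$, or more conveniently $g_\h=\sum_{\qvec\in\Lambda_{\h,5,J+1}}\langle v_\h,\w{\qvec}^\star\rangle\w{\qvec}^{\star}$ re-expanded in $W_\h$ via Corollary~\ref{cor: formules d'approximation 1}. Then test with $w=\Inv g_\h\in W_h$. Lemma~\ref{lemme: erreur entre P étoile v67tilde et v67} gives
\begin{equation*}
\langle v_\h,g_\h\rangle=\langle f_\h,\Inv g_\h\rangle+\langle v_\h,g_\h-P_\h^*\Inv g_\h\rangle .
\end{equation*}
This only produces a gain of $\h^{\varepsilon/2}$, so I would rather use the sharper pointwise parametrix, Proposition~\ref{prop: proposition-clé} with $\cP_\h=P_\h^*$ and $\varepsilon'<\varepsilon$ (the layer width). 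For $\qvec\in\Lambda_{\h,5,J+1}$ it writes $\w{\qvec}=\sum_{|\svec-\qvec|\le\h^{-\varepsilon'}}U_{\h,\qvec,\svec}P_\h^*\w{\svec}+O(\h^N)$ with $|U|\le C\h^{-1/2}$. Hence
\begin{equation*}
\langle v_\h,\w{\qvec}\rangle=\sum_{\svec}\overline{U_{\h,\qvec,\svec}}\,\langle f_\h,\w{\svec}\rangle+O(\h^N)\|v_\h\| .
\end{equation*}
The $\svec$ lie in $\Lambda_{\h,4,J+1}$ by \eqref{eq:s proches de q dans lambda j-1,j+1}, so the terms $\langle f_\h,\w{\svec}\rangle$ are $O(\h^\infty)\|f_\h\|$. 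This is valid provided all such $\svec$ stay inside $\Lambda_\h$, so that $\w{\svec}\in W_\h$. I would then pass from $\langle v_\h,\w{\qvec}\rangle$ to $\langle v_\h,\w{\qvec}^\star\rangle$ using \eqref{eq_approx_Phi_star} together with the polynomial cardinality bounds.

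\textbf{Step 2 (the layer $v_{\h,4}$).} The same argument applies for $\qvec\in\Lambda_{\h,4}$. Its $\h^{-\varepsilon'}$-neighbours lie in $\Lambda_{\h,3,5}\subset\Lambda_\h$, and there $|p_\h|\ge\alpha+\lambda_3\h^{1/2-\varepsilon}\ge 2\cdot(\text{const})\h^{1/2-\varepsilon}$. After shrinking $\varepsilon$ slightly, this puts us in the regime of Proposition~\ref{prop: proposition-clé}. The pairing $\langle f_\h,\w{\svec}\rangle$ with $\svec\in\Lambda_{\h,3,5}$ is again negligible because layers $0$ and $3$ are non-adjacent.

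\textbf{Main obstacle.} The difficulty is the outermost layer $\Lambda_{\h,J+1}=\Lambda_\h\setminus\Lambda_\h(\lambda')$, whose outer boundary is the arbitrary edge of $\Lambda_\h$. There the neighbours $\svec$ in the parametrix may fall outside $\Lambda_\h$, so $\w{\svec}$ is not an admissible test function. To handle this I would first try to exploit that $\Lambda_\h\subset\{|p_\h|<\delta_0\}$ and Assumption~\ref{Hyp:BoundedLayers}. If that fails, the fallback is to combine the parametrix on the interior part with the approximate inverse $\Inv$ on the boundary part, using the fact that $\Inv$ maps $W_{\h,1,J+1}$ to itself and is therefore always admissible. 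One then gets $\|v_{\h,5,J+1}\|\le C\h^{\varepsilon/2}\|v_{\h,4,J+1}\|+O(\h^\infty)$-type bounds, and iterates via a Neumann-series bootstrap. Each iteration gains $\h^{\varepsilon/2}$ while the error terms only spread by $\h^{-\varepsilon_1}$ in index, which the layer separation absorbs.
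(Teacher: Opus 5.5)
There is a genuine gap, and it is in Step 1. Your Step 2 for $\vB$ is essentially the paper's argument, with one index shift. To turn $\langle v_\h,\w{\svec}\rangle$ into $\langle v_\h,\w{\qvec}^\star\rangle$ for $\qvec\in\Lambda_{\h,4}$, you need the parametrix for every $\svec\in\Lambda_{\h,3,5}$, and their neighbours lie in $\Lambda_{\h,2,6}\subset\Lambda_\h(\lambda')\subset\Lambda_\h$; this is exactly why $J=6$.

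In Step 1 the pointwise parametrix route only works for $\qvec\in\Lambda_{\h,5}$, because \eqref{eq:s proches de q dans lambda j-1,j+1} requires $j\le J-1$. For $\qvec\in\Lambda_{\h,J}\cup\Lambda_{\h,J+1}$ the neighbours $\svec$ can leave $\Lambda_\h$, whose outer edge is arbitrary (for instance $\Lambda_\h=\Lambda_\h(\lambda')$ is allowed). Then $\w{\svec}$ is not an admissible test function. The same problem reappears when you convert to dual coefficients via \eqref{eq_approx_Phi_star}: $\langle v_\h,\w{\svec}\rangle$ with $\svec\notin\Lambda_\h$ is not small in general.

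Your fallback does not close this. You never say how an inequality $\|\vC\|\le C\h^{\varepsilon/2}\|v_{\h,4,J+1}\|+O(\h^\infty)$ would arise. Your own identity with test function $\Inv g_\h$ pairs the whole $v_\h$ with the residual, so it only yields $C\h^{\varepsilon/2}\|v_\h\|$. Moreover, your $g_\h$, re-expanded via Corollary~\ref{cor: formules d'approximation 1}, need not lie in $W_\h$ near the edge. A layer-by-layer bootstrap also has nothing to work with at the outer edge, where there is no separation to absorb the index spreading.

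The missing idea is an absorption argument on the whole block $\vC$, which never looks at individual coefficients near the edge. Test with $\Inv\vC$, which lies in $W_{\h,1,J+1}\subset W_\h$. Lemma~\ref{lemme: erreur entre P étoile v67tilde et v67} gives $\vC=P_\h^*\Inv\vC-R_\h$ with $\|R_\h\|_{L^2}\le C\h^{\varepsilon/2}\|\vC\|_{L^2}$, hence
\[
\|\vC\|_{L^2}^2\le|\langle \vC,P_\h^*\Inv\vC\rangle|+C\h^{\varepsilon/2}\|\vC\|_{L^2}^2 .
\]
The last term is absorbed in one step, with no iteration. This is why the mere $\h^{\varepsilon/2}$ gain you dismissed is enough: it multiplies $\|\vC\|^2$, not $\|v_\h\|\,\|\vC\|$.

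For the first term, write $P_\h\vC=P_\h v_\h-P_\h\vA-P_\h\vB$ and treat the three pieces separately:
\begin{itemize}
\item Galerkin orthogonality gives $\langle P_\h v_\h,\Inv\vC\rangle=\langle f_\h,\Inv\vC\rangle$. This is $O(\h^\infty)\|f_\h\|\,\|v_\h\|$, since layer $0$ and layers $\ge 5$ are far apart.
\item $\langle P_\h\vA,\Inv\vC\rangle$ is $O(\h^\infty)\|v_\h\|^2$ by Lemma~\ref{lemme: P w scalaire w} and the buffer provided by layer $4$. The factor $|1/p_\h|\le C\h^{-1/2+\varepsilon}$ is harmless.
\item $\langle P_\h\vB,\Inv\vC\rangle$ is $O(\h^\infty)$ by your Step 2 together with \eqref{eq:BoundedAproximateInverse}. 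So $\vB$ must be controlled first.
\end{itemize}
This yields $\|\vC\|_{L^2}=O(\h^\infty)(\|f_\h\|+\|v_\h\|)$, and Lemma~\ref{lem:LePtiLemKiSauveToutLTemps} upgrades it to every $\widehat{H}^p_\hbar$.
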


\subsubsection{Bounding $\|\vB\|_{\widehat{H}_\hbar^p(\R^d)}$}

\begin{lemma}
    \label{lemme: majoration de (v, P étoile Phi)}
    For every $\rvec \in \Lambda_{\h,2,J+1}$, we have for $\h\in \LH$
    \begin{equation}
        |\langle v_\hbar, P_\hbar^* \w{\rvec}\rangle | =O(\h^\infty) \times \|f_\hbar\|_{L^2}.
    \end{equation}
\end{lemma}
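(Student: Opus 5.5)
Fix $\rvec \in \Lambda_{\h,2,J+1}$. The idea is to express $\Phi_{\h,\rvec}$, up to $O(\h^\infty)$, as a combination of $P_\h\Phi_{\h,\svec}$ with $\svec$ near $\rvec$. We then test the Galerkin equation against the adjoint version of this identity, and use that $f_\h \in W_{\h,0}$ is microlocally separated from $\rvec$.

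Since $\rvec \in \Lambda_{\h,2,J+1}$, we have $|p_\h(\bz^{\h,\rvec})| \ge \min_{\epsilonvec}|p_\h(\xm{\mvec_\rvec},\epsilonvec\xin{\nvec_\rvec})| \ge \alpha+\lambda_1\h^{1/2-\varepsilon} \ge 2\cdot\tfrac{\lambda_1}{2}\h^{1/2-\varepsilon}$. For $\h$ small this is at least $2\h^{1/2-\varepsilon''}$ for any fixed $\varepsilon''<\varepsilon$. Moreover $|\rvec| \le D\h^{-1/2}$ with $D$ depending only on $D_0$, because $\Lambda_\h$ sits inside $\{|p_\h|<\delta_0\}$, which is bounded by Assumption~\ref{Hyp:BoundedLayers}.

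We apply Proposition~\ref{prop: proposition-clé} with $\cP_\h = P_\h^*$ (which requires Assumption~\ref{Hyp4:SymetrieSymbole}), $\varepsilon''$ in place of $\varepsilon$, some $\varepsilon' < \varepsilon''$ (say $\varepsilon/4$), and arbitrary $N$. This gives
\begin{equation*}
\w{\rvec} = \sum_{\substack{\svec\in E_d\\ |\svec-\rvec|\le \h^{-\varepsilon'}}} U_{\h,\rvec,\svec}\, P_\h^*\w{\svec} + \eta_{\h}, \qquad \|\eta_\h\|_{L^2}\le C_N\h^N, \quad |U_{\h,\rvec,\svec}| \le C\h^{-1/2}.
\end{equation*}
The key observation is that every $\svec$ in this sum lies in $\Lambda_\h$: by \eqref{eq:s proches de q dans lambda j-1,j+1} and \eqref{eq: les s proches de q sont dans lambda 1,j+1}, such $\svec$ lie in $\Lambda_{\h,1,J+1}$ when $\rvec$ is in the layers $2,\dots,J-1$. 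For the outer layers $\Lambda_{\h,J}$ and $\Lambda_{\h,J+1}$, I expect this to be the main obstacle, since $\svec$ might leave $\Lambda_\h$ across its outer boundary. The first thing I would try is to choose $\varepsilon'$ small and use that $\Lambda_\h$ contains all of $\Lambda_\h(\lambda')$, so that only indices near the outer edge are affected. If that fails, I would instead apply $P_\h$ only to $\Phi_{\h,\svec}$ with $\svec \in \Lambda_\h$ and argue separately about the remaining ones. The statement is presumably intended for $\svec \in \Lambda_\h$, and the separation lemma is what makes this work.

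Next we pair with $v_\h$:
\begin{equation*}
\langle v_\h, P_\h^*\w{\rvec}\rangle = \sum_{\svec} \overline{U_{\h,\rvec,\svec}}\,\langle v_\h, P_\h^* P_\h^*\w{\svec}\rangle + \dots
\end{equation*}
On reflection, the cleaner route is to apply the proposition to $P_\h^*\w{\rvec}$ through the Galerkin identity. The Galerkin equation gives $\langle v_\h, P_\h^*\w{\svec}\rangle = \langle P_\h v_\h, \w{\svec}\rangle = \langle f_\h, \w{\svec}\rangle$ for every $\svec\in\Lambda_\h$. So I would instead prove the bound for $\langle f_\h,\w{\rvec}\rangle$, and for $\langle f_\h,\w{\svec}\rangle$ with $\svec\in\Lambda_{\h,2,J+1}$. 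Indeed, $\langle v_\h,P_\h^*\w{\rvec}\rangle = \langle f_\h,\w{\rvec}\rangle$ directly, since $\rvec\in\Lambda_\h$. Hence the whole proof reduces to showing $|\langle f_\h,\w{\rvec}\rangle| = O(\h^\infty)\|f_\h\|_{L^2}$.

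To bound this, write $f_\h = \sum_{\qvec\in\Lambda_{\h,0}}\langle f_\h,\w{\qvec}^\star\rangle\w{\qvec}$. By Lemma~\ref{lemme: épaisseur des niveaux d'énergie}, every $\qvec\in\Lambda_{\h,0}$ satisfies $|\qvec-\rvec| > C\h^{-\varepsilon}$, since $\rvec\notin\Lambda_{\h,0,1}$. Then \eqref{eq: majoration de phi scalaire phi} gives $|\langle\w{\qvec},\w{\rvec}\rangle| \le Ce^{-c\h^{-\varepsilon}}$. The coefficients are controlled by \eqref{eq: inégalité de trame pour Wilson étoile}, and there are $O(\h^{-d})$ terms by Lemma~\ref{lem:LePtiLemKiSauveToutLTemps}, so Cauchy--Schwarz yields $O(\h^\infty)\|f_\h\|_{L^2}$. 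With this reduction, the Proposition~\ref{prop: proposition-clé} detour is unnecessary, and the obstacle concerning the outer layer disappears.
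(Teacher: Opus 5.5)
Your final argument is correct and is essentially the paper's proof. You test the Galerkin equation with $\w{\rvec}\in W_\h$ to get $\langle v_\h,P_\h^*\w{\rvec}\rangle=\langle f_\h,\w{\rvec}\rangle$, then bound this by expanding $f_\h$ over $\Lambda_{\h,0}$ and combining the layer separation of Lemma~\ref{lemme: épaisseur des niveaux d'énergie} with the exponential decay \eqref{eq: majoration de phi scalaire phi}. The opening detour through Proposition~\ref{prop: proposition-clé}, including the ill-formed pairing with $P_\h^*P_\h^*\w{\svec}$ and the worry about outer layers, is not needed and should be deleted.
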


\begin{proof}
Let $\rvec \in \Lambda_{\h,2,J+1}$.
Thanks to \eqref{eq: Problème de Galerkine}, we have 
    \begin{equation*}
        \langle v_\hbar, P_\hbar^* \w{\rvec}\rangle  = \langle P_\hbar v_\hbar, \w{\rvec}\rangle  = \langle f_\hbar, \w{\rvec}\rangle .
    \end{equation*}
We know from our assumption on $f_\h$ and from (\ref{eq:UniquenessDecomposition}) that $f_\hbar = \sum\limits_{\svec \in \Lambda_{\h,0}} \langle f_\hbar, \w{\svec}^\star\rangle  \w{\svec}$, so that
    \begin{equation*}
        |\langle v_\hbar, P_\hbar^* \w{\rvec}\rangle | \leq  \|f_\hbar\|_{L^2} \sum_{\svec \in \Lambda_{\h,0}} \|\w{\svec}^\star\|_{L^2} |\langle \w{\svec}, \w{\rvec}\rangle |.
    \end{equation*}
Now, thanks to Lemmas \ref{lemme: majoration de psi scalaire phi étoile} and \ref{lemme: épaisseur des niveaux d'énergie},  we have $|\langle \w{\svec}, \w{\rvec}\rangle | \leq C e^{-\frac{\pi}{4}|\svec - \rvec|} \leq C e^{-\frac{\pi}{4} C' \hbar^{-\varepsilon}} = O(\h^\infty)$. 
The result then follows using Lemma \ref{lem:LePtiLemKiSauveToutLTemps} along with \eqref{eq:BornePhiStar}.
\end{proof}

\begin{lemma}
    \label{lemme: majoration de la norme H de v5}
For every $p\in \N$, we have for $\h\in \LH$
    \begin{equation*}
        \|\vB\|_{\widehat{H}_\hbar^p} = O_{p}(\h^\infty)\times  \left(\|f_\hbar\|_{L^2} + \|v_\hbar\|_{L^2}\right).
    \end{equation*}
\end{lemma}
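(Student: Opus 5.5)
We bound $\vB$ through its Wilson coefficients. By \eqref{eq:DecompoVLayers} and Lemma~\ref{lem:LePtiLemKiSauveToutLTemps}, it suffices to show
\begin{equation*}
|\langle v_\h,\w{\qvec}^\star\rangle| = O(\h^\infty)\times(\|f_\h\|_{L^2}+\|v_\h\|_{L^2})
\qquad\text{uniformly for } \qvec\in\Lambda_{\h,4}.
\end{equation*}
Indeed, all the corresponding $\bz^{\h,\qvec}$ lie in a fixed bounded set by Assumption~\ref{Hyp:BoundedLayers}, so the $\HH^p_\h$ norm costs at most $\h^{-d/2}$ times the $\ell^2$ norm of the coefficients. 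The cardinality of $\Lambda_{\h,4}$ is $O(\h^{-d})$, so summing the coefficient bounds stays superalgebraic.

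\textbf{Step 1: a parametrix for the dual state.} Fix $\qvec\in\Lambda_{\h,4}$. First apply \eqref{eq_approx_Phi_star} with exponent $\varepsilon/4$:
\begin{equation*}
\w{\qvec}^\star=\sum_{|\svec-\qvec|\le\h^{-\varepsilon/4}}\langle\w{\qvec}^\star,\w{\svec}^\star\rangle\w{\svec}+O(\h^\infty).
\end{equation*}
By \eqref{eq:s proches de q dans lambda j-1,j+1}, every such $\svec$ lies in $\Lambda_{\h,3,5}$. Hence $|p_\h(\bz^{\h,\svec})|\ge\alpha+\lambda_2\h^{1/2-\varepsilon}\ge 2(\lambda/2)\h^{1/2-\varepsilon}$.

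Next apply Proposition~\ref{prop: proposition-clé} to each $\w{\svec}$, with $\mathcal P_\h=P_\h^*$, exponent $\varepsilon'=\varepsilon/4$, and $N$ arbitrary. A subtlety: the proposition requires $|p_\h|\ge 2\h^{1/2-\varepsilon}$. We handle it by shrinking $\varepsilon$ slightly, so that $\lambda\h^{1/2-\varepsilon}\ge 2\h^{1/2-\tilde\varepsilon}$ for small $\h$. This gives
\begin{equation*}
\w{\qvec}^\star=\sum_{\rvec}\widetilde U_{\h,\qvec,\rvec}\,P_\h^*\w{\rvec}+\rho_{\h,\qvec},
\qquad \|\rho_{\h,\qvec}\|_{L^2}\le C_N\h^N,
\end{equation*}
where the sum runs over $|\rvec-\qvec|\le 2\h^{-\varepsilon/4}$. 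The coefficients satisfy $|\widetilde U|\le C\h^{-1/2}$ times a polynomial factor in $\h^{-1}$ coming from the number of $\svec$ and the bound on $\langle\w{\qvec}^\star,\w{\svec}^\star\rangle$.

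\textbf{Step 2: locating the indices.} Every $\rvec$ in this sum must lie in $\Lambda_{\h,2,J+1}$. Apply \eqref{eq:s proches de q dans lambda j-1,j+1} twice through $\svec$, or directly the separation \eqref{eq:QSLoin} with $2\h^{-\varepsilon/4}\ll\h^{-\varepsilon}$. This shows such $\rvec$ cannot belong to $\Lambda_{\h,0,1}$, because $\qvec\in\Lambda_{\h,4}$ is separated from $\Lambda_{\h,0,2}$ by $\gtrsim\h^{-\varepsilon}$.

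A second issue is that $\rvec$ must lie in $\Lambda_\h$ itself, not just in $E_d$. For $\rvec$ within $\h^{-\varepsilon/4}$ of $\qvec$, we have $|p_\h(\bz^{\h,\rvec})|$ close to $|p_\h(\bz^{\h,\qvec})|<\alpha+\lambda_4\h^{1/2-\varepsilon}$. By Lipschitz continuity of $p_\h$ (and Assumption~\ref{Hyp4:SymetrieSymbole} to handle the sign flips), this puts $\rvec\in\Lambda_\h(\lambda')\subset\Lambda_\h$, hence in $\Lambda_{\h,2,6}$ since $J=6$.

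\textbf{Step 3: conclusion.} Take inner products with $v_\h$:
\begin{equation*}
|\langle v_\h,\w{\qvec}^\star\rangle|\le\sum_{\rvec}|\widetilde U_{\h,\qvec,\rvec}|\,|\langle v_\h,P_\h^*\w{\rvec}\rangle|+C_N\h^N\|v_\h\|_{L^2}.
\end{equation*}
Lemma~\ref{lemme: majoration de (v, P étoile Phi)} makes each $\langle v_\h,P_\h^*\w{\rvec}\rangle$ equal to $O(\h^\infty)\|f_\h\|_{L^2}$. The number of terms and the coefficients are only polynomially large, and $N$ is arbitrary, so the coefficient bound follows. The main obstacle is the bookkeeping of Steps 1–2. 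We must make sure the parametrix indices stay inside the Galerkin test set $\Lambda_{\h,2,J+1}$, so that Lemma~\ref{lemme: majoration de (v, P étoile Phi)} applies, while also staying away from the characteristic layer, so that Proposition~\ref{prop: proposition-clé} applies. Placing $\qvec$ in the middle layer $4$ with $J=6$ is what leaves room on both sides.
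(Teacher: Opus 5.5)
Your proof is correct and follows the paper's argument. It uses the same ingredients: the parametrix from the key proposition with $\mathcal{P}_\h = P_\h^*$, the energy-layer separation lemma to place the resulting indices in $\Lambda_{\h,2,J+1}$ (in fact in $\Lambda_{\h,3,5}$), the Galerkin-orthogonality bound on $\langle v_\h, P_\h^*\w{\rvec}\rangle$, and the expansion of $\w{\qvec}^\star$ in nearby Wilson states. The only differences are minor: you substitute the parametrix into that expansion before pairing with $v_\h$, whereas the paper first bounds $\langle v_\h,\w{\svec}\rangle$ for $\svec\in\Lambda_{\h,3,5}$; you also spell out the shrinking of $\varepsilon$ needed to meet the $2\h^{1/2-\varepsilon}$ threshold, which the paper leaves implicit; and your appeal to Assumption~\ref{Hyp4:SymetrieSymbole} in Step~2 is unnecessary, since $\min_{\epsilonvec}|p_\h(\xm{\mvec_\qvec},\epsilonvec\xin{\nvec_\qvec})|$ is already Lipschitz in $\qvec$ on the relevant bounded set.
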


\begin{proof}
    \textbf{Step 1: Bounding the scalar product between $v_\h$ and some of the $\Phi_{\h, \bs}$.}
We set $\varepsilon' := \frac{\varepsilon}{2}$.  Thanks to Proposition \ref{prop: proposition-clé},  we know that for any $N\in \N$ and any $\svec \in\Lambda_{\h, 3,5}$, we may write
    \begin{equation*}
        \w{\svec} = \sum_{\substack{\rvec \in E_d\\|\svec - \rvec| \leq \hbar^{-\varepsilon/2}}} U_{\hbar, \svec, \rvec}^{N} P_\hbar^* \w{\rvec} + \eta_{\hbar, \svec, N}
    \end{equation*}
    with $\left|U_{\hbar, \svec, \rvec}^{N}\right| \leq C_{N} \hbar^{-\frac{1}{2}}$ and $\|\eta_{\hbar, \svec, N}\|_{L^2} \leq C_{N} \hbar^N$.
    We thus have
    \begin{equation}\label{eq:ScalarVPhi}
        \langle v_\hbar, \w{\svec}\rangle  = \sum_{\substack{\rvec \in E_d\\|\svec - \rvec| \leq \hbar^{-\varepsilon/2}}} \overline{U_{\hbar, \svec, \rvec}^{N}} \langle v_\hbar, P_\hbar^* \w{\rvec}\rangle  + \langle v_\hbar, \eta_{\hbar, \svec,  N}\rangle .
    \end{equation}

We may then apply (\ref{eq:s proches de q dans lambda j-1,j+1}) to see that all the $\br$ in the sum  belong to $\Lambda_{\h, 2,6}$ for $\h$ small enough. Thanks to Lemma \ref{lemme: majoration de (v, P étoile Phi)},  they thus satisfy $|\langle v_\hbar, P_\hbar^* \w{\rvec}\rangle | \leq C_{N} \hbar^N \|f_\hbar\|_{L^2}$.  Since the sum in  (\ref{eq:ScalarVPhi}) contains $O(\h^{-\varepsilon d})$ terms,  and since we may take $N$ arbitrarily large, we deduce that:
    \begin{equation}
        \label{eq: majoration de (v, Phi)}
      \forall \svec \in\Lambda_{\h, 3,5},~~~~  |\langle v_\hbar, \w{\svec}\rangle | = O(\h^\infty)\times \left(\|f_\hbar\|_{L^2} + \|v_\hbar\|_{L^2}\right).
    \end{equation}

    \textbf{Step 2: Bounding the scalar product between $v_\h$ and some of the $\Phi^\star_{\h, \bs}$.}
Thanks to Corollary \ref{cor: formules d'approximation 1},  we have, for all $\qvec \in \Lambda_{\h, 4}$,
    \begin{equation*}
        \w{\qvec}^\star = \sum_{\substack{\svec \in E_d\\|\qvec - \svec| \leq \hbar^{-\varepsilon'}}} \langle \w{\qvec}^\star, \w{\svec}^\star\rangle  \w{\svec} + \rho_{\hbar, \qvec}
    \end{equation*}
    with $\|\rho_{\hbar, \qvec}\|_{L^2} = O(\h^\infty)$, and thanks to Lemma \ref{lemme: épaisseur des niveaux d'énergie}, the $\bs$ in this sum do all belong to $\Lambda_{\h, 3,5}$.  We therefore have, for all $\qvec \in \Lambda_{\h,4}$,
    \begin{equation}\label{eq:MajorationPSV PhiStar}
        \langle v_\hbar, \w{\qvec}^\star\rangle  = \sum_{\substack{\svec \in E_d\\|\qvec - \svec| \leq \hbar^{-\varepsilon'}}} \overline{\langle \w{\qvec}^\star, \w{\svec}^\star\rangle } \langle v_\hbar, \w{\svec}\rangle  + \langle v_\hbar, \rho_{\hbar, \qvec}\rangle  = O(\h^\infty)\times \left(\|f_\hbar\|_{L^2} + \|v_\hbar\|_{L^2}\right),
    \end{equation}
    thanks to \eqref{eq: majoration de (v, Phi)}.
We may therefore deduce from Lemma \ref{lem:LePtiLemKiSauveToutLTemps}  and from \eqref{eq_norm_Phi_star} that, 
for any $p\in \N$,
    \begin{equation*}
        \|\vB\|_{\widehat{H}_\hbar^p} \leq \sum_{\qvec \in\Lambda_{\h,4}} \left|\left\langle v_\hbar, \w{\qvec}^\star\right\rangle \right| \times \|\w{\qvec}\|_{\widehat{H}_\hbar^p} = O_{p}(\h^\infty)\times \left(\|f_\hbar\|_{L^2} + \|v_\hbar\|_{L^2}\right).
    \end{equation*}
\end{proof}

\subsubsection{Bounding $\|\vC\|_{\widehat{H}_\hbar^p}$}

Our next goal is to estimate $\vC$.
Remember that $\vC$ has been defined in (\ref{eq:DecompoVLayers}), and that the operator $\Inv$ has been introduced in (\ref{eq:DefApproxInverse}). Here, we only consider the case when $\cP_\h= P_\h^*$, so that we will have $\Inv = \Inv(P_\h^*)$.

\begin{lemma}
    \label{lemme: majoration de (v67, P étoile v67tilde)}
   We have for $\h\in \LH$
    \begin{equation}
        |\langle \vC, P_\hbar^* \Inv \vC \rangle| = O(\h^\infty) \times \left(\|f_\hbar\|_{L^2(\R^d)} \|v_\hbar\|_{L^2(\R^d)} + \|v_\hbar\|_{L^2(\R^d)}^2\right).
    \end{equation}
\end{lemma}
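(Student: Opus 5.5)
The plan is to move $P_\h^*$ onto $\vC$ and expand $\vC$ in the Wilson basis. Set $\wit v \eq \Inv \vC$. Since $\vC \in W_{\h,5,J+1}\subset W_{\h,1,J+1}$, \eqref{eq:DefApproxInverse} and the uniqueness of the decomposition \eqref{eq:UniquenessDecomposition} give
\begin{equation*}
\wit v = \sum_{\qvec \in \Lambda_{\h,5,J+1}} \frac{\langle v_\h,\w{\qvec}^\star\rangle}{\overline{p_\h}(\bz^{\h,\bq})}\, \w{\qvec} \in W_{\h,5,J+1}.
\end{equation*}
Hence, with $\beta_\qvec \eq \langle v_\h,\w{\qvec}^\star\rangle/\overline{p_\h}(\bz^{\h,\bq})$,
\begin{equation*}
\langle \vC, P_\h^*\wit v\rangle = \sum_{\qvec\in\Lambda_{\h,5,J+1}} \overline{\beta_\qvec}\, \langle \vC, P_\h^* \w{\qvec}\rangle .
\end{equation*}
By \eqref{eq:BoundedAproximateInverse} and the Riesz bounds, $\sum_\qvec|\beta_\qvec|^2 \leq C\h^{-1+2\varepsilon}\|v_\h\|_{L^2}^2$, and by Lemma~\ref{lem:LePtiLemKiSauveToutLTemps} the index set has cardinality $O(\h^{-d})$. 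It therefore suffices to show that
\begin{equation*}
|\langle \vC, P_\h^*\w{\qvec}\rangle| = O(\h^\infty)\bigl(\|f_\h\|_{L^2}+\|v_\h\|_{L^2}\bigr)
\end{equation*}
uniformly for $\qvec\in\Lambda_{\h,5,J+1}$. Summing with Cauchy--Schwarz then yields the claim, since the polynomial losses are absorbed by $O(\h^\infty)$.

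For this, write $\vC = v_\h - \vA - \vB$ using \eqref{eq:DecompoVen3Morceaux}, and treat the three resulting terms separately.
\begin{itemize}
\item \emph{The $v_\h$ term.} Lemma~\ref{lemme: majoration de (v, P étoile Phi)} applies since $\qvec\in\Lambda_{\h,2,J+1}$, and gives $|\langle v_\h, P_\h^*\w{\qvec}\rangle| = O(\h^\infty)\|f_\h\|_{L^2}$.
\item \emph{The $\vB$ term.} By Cauchy--Schwarz, $|\langle \vB, P_\h^*\w{\qvec}\rangle| \leq \|P_\h\vB\|_{L^2}\|\w{\qvec}\|_{L^2}$. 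Lemma~\ref{lemme: P est continu} together with Lemma~\ref{lemme: majoration de la norme H de v5} makes this $O(\h^\infty)(\|f_\h\|_{L^2}+\|v_\h\|_{L^2})$.
\item \emph{The $\vA$ term.} Write
\begin{equation*}
\langle \vA, P_\h^*\w{\qvec}\rangle = \sum_{\svec\in\Lambda_{\h,0,3}} \langle v_\h,\w{\svec}^\star\rangle\, \langle P_\h\w{\svec},\w{\qvec}\rangle .
\end{equation*}
Lemma~\ref{lemme: P w scalaire w} (with $n=2$ and $|\xin{\nvec_\svec}|$ bounded) gives $|\langle P_\h\w{\svec},\w{\qvec}\rangle| \leq C_m(1+|\svec-\qvec|^m)^{-1}$. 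Since $\svec\in\Lambda_{\h,0,3}$ and $\qvec\notin\Lambda_{\h,0,4}$, Lemma~\ref{lemme: épaisseur des niveaux d'énergie} gives $|\svec-\qvec|\geq C\h^{-\varepsilon}$, so each term is $O(\h^{m\varepsilon})$ for every $m$. With $O(\h^{-d})$ terms and $|\langle v_\h,\w{\svec}^\star\rangle|\leq C\|v_\h\|_{L^2}$ by \eqref{eq:BornePhiStar}, this contribution is $O(\h^\infty)\|v_\h\|_{L^2}$.
\end{itemize}

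The main obstacle is the bookkeeping of the layer separation. Lemma~\ref{lemme: épaisseur des niveaux d'énergie} is stated for $\qvec \in \Lambda_{\h,j}$ with $j\leq J-1$ and $\svec\notin\Lambda_{\h,0,j+1}$. Applying it with $j=3$ to indices $\svec\in\Lambda_{\h,0,3}$ (via its layer index $j\le 3$) and $\qvec \in \Lambda_{\h,5,J+1}\subset E_d\setminus\Lambda_{\h,0,4}$ gives exactly the needed separation. The last layer $\Lambda_{\h,J+1}$ also requires $\qvec$ to stay in a bounded region of phase space, which holds because $\Lambda_\h$ lies in $\{|p_\h|<\delta_0\}$, compact by Assumption~\ref{Hyp:BoundedLayers}. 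This keeps the cardinality polynomial and makes Lemma~\ref{lem:LePtiLemKiSauveToutLTemps} applicable.
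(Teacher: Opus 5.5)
Your proof is correct and follows essentially the same route as the paper. The paper first splits $P_\h\vC = P_\h v_\h - P_\h\vA - P_\h\vB$ and only then expands $\Inv\vC$ in the Wilson basis inside each term, whereas you expand $\Inv\vC$ first and bound $\langle \vC, P_\h^*\w{\qvec}\rangle$ index by index. You use the same three ingredients: the Galerkin bound on $\langle v_\h, P_\h^*\w{\rvec}\rangle$, layer separation combined with Lemma~\ref{lemme: P w scalaire w}, and Lemmas~\ref{lemme: majoration de la norme H de v5} and~\ref{lemme: P est continu}; the only cosmetic difference is that for the $\vB$ term you apply Cauchy--Schwarz per index rather than globally via \eqref{eq:BoundedAproximateInverse}.
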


\begin{proof}
Thanks to (\ref{eq:DecompoVen3Morceaux}), we have $P_\hbar \vC = P_\hbar v_\hbar - P_\hbar \vA - P_\hbar \vB$,  so that 
\begin{multline}
\label{eq: majoration de (v67, P étoile v67tilde) par inégalité triangulaire}
|\langle \vC, P_\hbar^*\Inv  \vC \rangle|
\leq
\\
| \langle P_\hbar v_\hbar, \Inv  \vC\rangle |
+
|\langle P_\hbar \vA, \Inv  \vC\rangle |
+
|\langle P_\hbar \vB, \Inv  \vC\rangle |.
\end{multline}
    
    First of all, we have
    \begin{align*}
      |\langle P_\hbar v_\hbar,  \Inv  \vC\rangle | &= \left| \sum_{\qvec \in \Lambda_{\h, 5, J+1}} \overline{\langle v_\hbar, \w{\qvec}^\star\rangle } \frac{1}{p_\hbar(\bz^{\h, \bq})} \langle v_\hbar, P_\hbar^* \w{\qvec}\rangle \right|\\
      &\leq \sum_{\qvec \in \Lambda_{\h, 5, J+1}} \|v_\hbar\|_{L^2} \|\w{\qvec}^\star\|_{L^2} \frac{1}{|p_\hbar(\bz^{\h,\bq})|} |\langle v_\hbar, P_\hbar^* \w{\qvec}\rangle |.
    \end{align*}
    Now,  since the previous sum contains $O(\h^{-d})$ terms, since $\|\w{\qvec}^\star\|_{L^2} \leq C$ (by \eqref{eq:BornePhiStar}),  and since $|p_\hbar(\bz^{\h,\bq})| \geq \lambda_4 \hbar^{\frac{1}{2} - \varepsilon}\geq \lambda \h^{\frac{1}{2}-\varepsilon}$, we deduce from Lemma \ref{lemme: majoration de (v, P étoile Phi)} that
    \begin{equation}
        \label{eq: majoration de (Pv, v67tilde)}
        |\langle P_\hbar v_\hbar,  \Inv \vC\rangle| =O(\h^\infty) \times  \|f_\hbar\|_{L^2} \|v_\hbar\|_{L^2}.
    \end{equation}

We now turn to the second term in \eqref{eq: majoration de (v67, P étoile v67tilde) par inégalité triangulaire}. We have
\begin{equation}\label{eq:MajorationDeuxiemeTerme}
    \begin{aligned}
        |\langle P_\hbar \vA,  \Inv \vC\rangle | &= \left| \sum_{\qvec \in  \Lambda_{\h, 0,3}} \sum_{\svec \in  \Lambda_{\h, 5,J+1}} \langle v_\hbar, \w{\qvec}^\star\rangle  \overline{\langle v_\hbar, \w{\svec}^\star\rangle } \frac{1}{p_\hbar(\bz^{\h, \bs})} \langle P_\hbar \w{\qvec}, \w{\svec}\rangle \right|\\
        &\leq  \sum_{\qvec \in  \Lambda_{\h, 0,3}} \sum_{\svec \in  \Lambda_{\h, 5,J+1}} \|v_\hbar\|_{L^2}^2 \|\w{\qvec}^\star\|_{L^2} \|\w{\svec}^\star\|_{L^2} \frac{1}{|p_\hbar(\bz^{\h, \bs})|} |\langle P_\hbar \w{\qvec}, \w{\svec}\rangle |\\
        &\leq  C \h^{\varepsilon- \frac{1}{2}}  \|v_\hbar\|_{L^2}^2  \sum_{\qvec \in  \Lambda_{\h, 0,3}} \sum_{\svec \in  \Lambda_{\h, 5,J+1}} |\langle P_\hbar \w{\qvec}, \w{\svec}\rangle| ,
    \end{aligned}
    \end{equation}
    by (\ref{eq:BornePhiStar}).  Now, thanks to Lemmas \ref{lemme: P w scalaire w} and  \ref{lemme: épaisseur des niveaux d'énergie} and to Assumption~\ref{Hyp:BoundedLayers}, we have for all $\qvec \in \Lambda_{\h, 0, 3}$,  $\svec \in \Lambda_{\h, 5, J+1}$ and all $m \in \N$,
    \begin{equation*}
        |\langle P_\hbar \w{\qvec}, \w{\svec}\rangle | \leq C_m \frac{1 + \left|\xin{\nvec_\qvec}\right|^{2d}}{1 + |\qvec - \svec|^m} \leq \frac{C_m}{1 + C^m \hbar^{-\varepsilon m}} \leq C_m \hbar^{\varepsilon m}.
    \end{equation*}
    Since each sum in (\ref{eq:MajorationDeuxiemeTerme}) contains only $O(\h^{-d})$ terms and since $m$ is arbitrary,  we deduce that 
    \begin{equation}
        \label{eq: majoration de (Pv14, v67tilde)}
        |\langle P_\hbar \vA, \Inv \vC\rangle | =O(\h^\infty) \|v_\hbar\|_{L^2(\R^d)}^2.
    \end{equation}
    
    Finally,  to deal with the last term in \eqref{eq: majoration de (v67, P étoile v67tilde) par inégalité triangulaire}, we write
    \begin{equation}\label{eq: majoration de (Pv5, v67tilde)}
    \begin{aligned}
        |\langle P_\hbar \vB,  \Inv \vC\rangle | &\leq \|P_\hbar \vB\|_{L^2} \|\Inv \vC\|_{L^2} \\
        &\leq C \h^{-\frac{1}{2}+\varepsilon}  \|\vB\|_{\widehat{H}_\hbar^2} \|\vC\|_{L^2} ~~~~\text{by \eqref{eq:BoundedAproximateInverse} and Lemma \ref{lemme: P est continu}}\\
        &= O (\h^\infty) \times  \left(\|f_\hbar\|_{L^2} \|v_\hbar\|_{L^2} + \|v_\hbar\|_{L^2}^2\right),
    \end{aligned}
    \end{equation}    
    thanks to Lemma  \ref{lemme: majoration de la norme H de v5} and to (\ref{eq:BornerLesMorceaux}).
    
 Combining   \eqref{eq: majoration de (v67, P étoile v67tilde) par inégalité triangulaire}, \eqref{eq: majoration de (Pv, v67tilde)}, \eqref{eq: majoration de (Pv14, v67tilde)} and \eqref{eq: majoration de (Pv5, v67tilde)}, we obtain the result.
\end{proof}

\begin{corollary}
\label{lemme: majoration de la norme H de v67}
There exists $\h_0 >0$ such that, for any $p\in \N$, we have for $\h \in \LH' \cap (0,\h_0]$
\begin{equation}
\|\vC\|_{\widehat{H}_\hbar^p} =O_p(\h^\infty) \times \left(\|f_\hbar\|_{L^2} + \|v_\hbar\|_{L^2}\right).
\end{equation}
\end{corollary}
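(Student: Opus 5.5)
We aim to turn the near-vanishing of $\langle \vC, P_\hbar^*\Inv \vC\rangle$ from Lemma~\ref{lemme: majoration de (v67, P étoile v67tilde)} into smallness of $\|\vC\|_{L^2}$, using that $\Inv=\Inv(P_\h^*)$ is an approximate right-inverse of $P_\h^*$ (Lemma~\ref{lemme: erreur entre P étoile v67tilde et v67}). Then we upgrade the $L^2$ bound to $\widehat{H}^p_\hbar$ with Lemma~\ref{lem:LePtiLemKiSauveToutLTemps}.

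\textbf{Step 1: $L^2$ bound.} Since $\vC\in W_{\h,5,J+1}\subset W_{\h,1,J+1}$, Lemma~\ref{lemme: erreur entre P étoile v67tilde et v67} with $\cP_\h=P_\h^*$ gives
\begin{equation*}
P_\hbar^*\Inv \vC=\vC+R_\hbar,\qquad \|R_\hbar\|_{L^2}\le C\hbar^{\varepsilon/2}\|\vC\|_{L^2}.
\end{equation*}
Consequently
\begin{equation*}
\|\vC\|_{L^2}^2=\langle \vC,P_\hbar^*\Inv\vC\rangle-\langle \vC,R_\hbar\rangle
\le |\langle \vC,P_\hbar^*\Inv\vC\rangle|+C\hbar^{\varepsilon/2}\|\vC\|_{L^2}^2 .
\end{equation*}
Choose $\h_0$ so that $C\h_0^{\varepsilon/2}\le 1/2$. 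Absorbing the last term and applying Lemma~\ref{lemme: majoration de (v67, P étoile v67tilde)} yields
\begin{equation*}
\|\vC\|_{L^2}^2=O(\h^\infty)\bigl(\|f_\h\|_{L^2}\|v_\h\|_{L^2}+\|v_\h\|_{L^2}^2\bigr)\le O(\h^\infty)\bigl(\|f_\h\|_{L^2}+\|v_\h\|_{L^2}\bigr)^2 .
\end{equation*}
Taking square roots gives $\|\vC\|_{L^2}=O(\h^\infty)(\|f_\h\|_{L^2}+\|v_\h\|_{L^2})$.

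\textbf{Step 2: upgrading to $\widehat{H}^p_\hbar$.} The indices of $\vC$ lie in $\Lambda_{\h,5,J+1}\subset\Lambda_\h$. By hypothesis, $|p_\h(\bz^{\h,\bq})|<\delta_0$ on $\Lambda_\h$, so Assumption~\ref{Hyp:BoundedLayers} places every $\bz^{\h,\bq}$ in the fixed ball $\B(\bzero,D_0)$. Lemma~\ref{lem:LePtiLemKiSauveToutLTemps} then gives
\begin{equation*}
\|\vC\|_{\widehat{H}^p_\hbar}\le C_p\h^{-d/2}\|\vC\|_{L^2}.
\end{equation*}
The polynomial loss $\h^{-d/2}$ is absorbed into the $O(\h^\infty)$ factor, which proves the claim.

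The only delicate point is Step 1. The absorption requires that the error in Lemma~\ref{lemme: erreur entre P étoile v67tilde et v67} be $o(1)$ uniformly in $\h$; it is, at rate $\h^{\varepsilon/2}$, and this is what fixes $\h_0$, independently of $p$. Everything else is bookkeeping. Combining this corollary with Lemma~\ref{lemme: majoration de la norme H de v5} then yields Proposition~\ref{prop: micro-localisation de la solution de Galerkine}.
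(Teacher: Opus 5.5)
Your proposal is correct and follows the paper's proof essentially line by line. You expand $\|\vC\|_{L^2}^2$ using the approximate right-inverse identity $P_\h^*\Inv\vC=\vC+R_\h$, absorb the $C\h^{\varepsilon/2}\|\vC\|_{L^2}^2$ term for $\h$ small (independently of $p$), bound $\langle \vC,P_\h^*\Inv\vC\rangle$ by the preceding lemma, and then pass from $L^2$ to $\widehat{H}^p_\h$ with the finite-index lemma at the cost of a harmless $\h^{-d/2}$. Your explicit check that all indices of $\vC$ stay in the fixed ball $\B(\bzero,D_0)$, via the bounded-layers assumption, is a detail the paper leaves implicit.
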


\begin{proof}
   Recall that, thanks to Lemma \ref{lemme: erreur entre P étoile v67tilde et v67},  we have $\vC = P_\hbar^*  \Inv \vC - R_\hbar$, with 
    \begin{equation}
        \label{eq: rappel de la majoration de Rh}
        \|R_\hbar\|_{L^2} \leq C \hbar^\frac{\varepsilon}{2} \|\vC\|_{L^2}.
    \end{equation}
    We thus have 
    \begin{align*}
        \|\vC\|_{L^2}^2      &\leq |\langle \vC, P_\hbar^*\Inv  \vC\rangle | + |\langle \vC, R_\hbar \rangle |\\
        &\leq O (\h^\infty) \times  \left(\|f_\hbar\|_{L^2} + \|v_\hbar\|_{L^2}\right)^2+ C \h^{\frac{\varepsilon}{2}} \|\vC\|_{L^2}^{2}, 
    \end{align*}
thanks to Lemma  \ref{lemme: majoration de (v67, P étoile v67tilde)} and to \eqref{eq: rappel de la majoration de Rh}.
For $\h$ small enough,  we have $C \h^{\frac{\varepsilon}{2}}\leq \frac{1}{2}$, and we deduce that
    \begin{equation}
        \label{eq: majoration de la norme L2 de v67}
        \|\vC\|_{L^2} = O (\h^\infty) \left(\|f_\hbar\|_{L^2} + \|v_\hbar\|_{L^2}\right),
    \end{equation}
 so that the result follows from Lemma \ref{lem:LePtiLemKiSauveToutLTemps}.
\end{proof}

\subsection{End of the proof}

Recall that $f_\h$, $v_\h$ are as in the previous subsection.

\begin{lemma}
\label{lemme: majoration du résidu Hp}
 For any $p>0$, there exists $\h_0(p) >0$ such that we
have for $\h \in \LH' \cap (0,\h_0(p)]$
\begin{equation}
\label{eq: majoration du résidu L2}
\|P_\hbar v_\hbar - f_\hbar\|_{\widehat{H}_\hbar^p}
=O_p(\h^\infty)\times \|f_\hbar\|_{L^2}.
\end{equation}
\end{lemma}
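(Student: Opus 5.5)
We want to bound $\|P_\h v_\h - f_\h\|_{\HH^p_\h}$. The natural route is to replace $v_\h$ by its truncation $\vA$, show that $P_\h \vA$ is microlocalised in a slightly larger layer, and then use the Galerkin equations to kill the coefficients there.

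\textbf{Step 1: reduce to $\vA$.} By Proposition~\ref{prop: micro-localisation de la solution de Galerkine} and Lemma~\ref{lemme: P est continu}, for every $p$,
\begin{equation*}
\|P_\h v_\h - P_\h \vA\|_{\HH^p_\h} \leq C_p\|v_\h-\vA\|_{\HH^{p+2}_\h} = O_p(\h^\infty)\left(\|f_\h\|_{L^2}+\|v_\h\|_{L^2}\right).
\end{equation*}

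\textbf{Step 2: microlocalise $P_\h\vA$.} Since $\vA\in W_{\h,0,3}$ and $3\le J-2$ (recall $J=6$), Lemma~\ref{lemme: si v est micro-localisé alors Pv aussi} gives $\wzero\in W_{\h,0,5}$ with $\|P_\h\vA-\wzero\|_{\HH^p_\h}=O_p(\h^\infty)\|\vA\|_{L^2}$, and $\|\vA\|_{L^2}\le C\|v_\h\|_{L^2}$ by \eqref{eq:BornerLesMorceaux}. Then $g_\h:=\wzero-f_\h\in W_{\h,0,5}\subset W_\h$, because $\Lambda_{\h,0,5}\subset\Lambda_\h(\lambda')\subset\Lambda_\h$. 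So
\begin{equation*}
\|P_\h v_\h - f_\h - g_\h\|_{\HH^p_\h}=O_p(\h^\infty)\left(\|f_\h\|_{L^2}+\|v_\h\|_{L^2}\right).
\end{equation*}

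\textbf{Step 3: kill $g_\h$ with the Galerkin orthogonality.} By \eqref{eq: Problème de Galerkine}, $P_\h v_\h - f_\h$ is $L^2$-orthogonal to $W_\h\ni g_\h$. Hence
\begin{equation*}
\|g_\h\|_{L^2}^2 = \langle g_\h-(P_\h v_\h-f_\h),g_\h\rangle \le O(\h^\infty)\left(\|f_\h\|_{L^2}+\|v_\h\|_{L^2}\right)\|g_\h\|_{L^2},
\end{equation*}
so $\|g_\h\|_{L^2}=O(\h^\infty)(\|f_\h\|_{L^2}+\|v_\h\|_{L^2})$. Since $g_\h$ is a combination of Wilson states indexed in a bounded region of phase space, Lemma~\ref{lem:LePtiLemKiSauveToutLTemps} upgrades this to $\|g_\h\|_{\HH^p_\h}\le C_p\h^{-d/2}\|g_\h\|_{L^2}$. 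Combining with Step 2 gives
\begin{equation*}
\|P_\h v_\h-f_\h\|_{\HH^p_\h}=O_p(\h^\infty)\left(\|f_\h\|_{L^2}+\|v_\h\|_{L^2}\right).
\end{equation*}

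\textbf{Step 4: remove $\|v_\h\|_{L^2}$ (main obstacle).} This is where Assumption~\ref{Hyp:PolynResolv} enters, so it requires $\h\in\LH$. Take $p=0$ and write $v_\h = P_\h^{-1}(P_\h v_\h - f_\h) + P_\h^{-1}f_\h$. This yields
\begin{equation*}
\|v_\h\|_{L^2}\le C\h^{-N_0}\left(O(\h^\infty)\left(\|f_\h\|_{L^2}+\|v_\h\|_{L^2}\right)+\|f_\h\|_{L^2}\right).
\end{equation*}
For $\h\le\h_0$ the $\|v_\h\|_{L^2}$ term on the right is absorbed (using only the $p=0$ estimate, so $\h_0$ is fixed once). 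This gives $\|v_\h\|_{L^2}\le C\h^{-N_0}\|f_\h\|_{L^2}$. Substituting back into Step 3 and absorbing the polynomial loss $\h^{-N_0}$ into $O_p(\h^\infty)$ proves \eqref{eq: majoration du résidu L2}. The delicate point is to keep $\h_0$ independent of $p$ in this absorption; the $p$-dependent smallness only appears in the final $O_p(\h^\infty)$ constant.
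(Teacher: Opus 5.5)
Your proof is correct and follows the paper's argument essentially step by step: the reduction to $v_{\hbar,0,3}$ via Proposition~\ref{prop: micro-localisation de la solution de Galerkine}, then the microlocalisation of $P_\hbar v_{\hbar,0,3}$ in $W_{\hbar,0,5}$, then Galerkin orthogonality to control $w^0_\hbar - f_\hbar$, then the upgrade via Lemma~\ref{lem:LePtiLemKiSauveToutLTemps}, and finally Assumption~\ref{Hyp:PolynResolv} to remove $\|v_\hbar\|_{L^2}$. The only difference is the order of the absorption: the paper trades $\|v_\hbar\|_{L^2}$ for $\|P_\hbar v_\hbar - f_\hbar\|_{L^2}$ before the orthogonality step and absorbs that term at the end for each $p$, whereas you first close the $p=0$ estimate to get $\|v_\hbar\|_{L^2}\le C\hbar^{-N_0}\|f_\hbar\|_{L^2}$, which also gives a threshold $\hbar_0$ independent of $p$.
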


\begin{proof}
Thanks to Proposition \ref{prop: micro-localisation de la solution de Galerkine},  we may find $\h_0>0$ such that, for any $\h \in \LH'\cap (0,h_0]$,
    \begin{equation}
        \label{eq: v = v14 + rho}
        v_\hbar = \vA + \rho_\hbar
    \end{equation}
where, for any $p\in \N$,
    $\|\rho_\hbar\|_{\widehat{H}_\hbar^p}= O_p(\h^\infty) \left(\|f_\hbar\|_{L^2} + \|v_\hbar\|_{L^2}\right)$. Furthermore, thanks to Lemma \ref{lemme: si v est micro-localisé alors Pv aussi},  we may find  $\wzero \in W_{\h, 0, 5}\subset W_\h$ such that 
    \begin{equation}
        \label{eq: Pv = w0 + eta}
        P_\hbar \vA = \wzero + \eta_\hbar
    \end{equation}
    where, for any $p\in \N$,  $\|\eta_\hbar\|_{\widehat{H}_\hbar^p} = O_{p}(\h^\infty) \|\vA\|_{L^2} = O_{p}(\h^\infty) \|v_\h\|_{L^2} $, by (\ref{eq:BornerLesMorceaux}).
    
Combining  \eqref{eq: v = v14 + rho} and  \eqref{eq: Pv = w0 + eta},  we get
    \begin{equation*}
        P_\hbar v_\hbar = \wzero + \gamma_\hbar,
    \end{equation*}
with $\gamma_\hbar := \eta_\hbar + P_\hbar \rho_\hbar$.  Thanks to Lemma  \ref{lemme: P est continu}, we have, for any $p\in \N$,
    \begin{equation*}
    \|\gamma_\hbar\|_{\widehat{H}_\hbar^p} \leq \|\eta_\hbar\|_{\widehat{H}_\hbar^p} + C_p \|\rho_\hbar\|_{\widehat{H}_\hbar^{p + 2}}   = O_{p} (\hbar^\infty) \left(\|f_\hbar\|_{L^2} + \|v_\hbar\|_{L^2}\right).
    \end{equation*}

   We may now use Assumption~\ref{Hyp:PolynResolv} to see that  
    \begin{align*}
        \|v_\hbar\|_{L^2} \leq C \hbar^{-N_0} \|P_\hbar v_\hbar\|_{L^2}
        \leq C \hbar^{-N_0} \|P_\hbar v_\hbar - f_\hbar\|_{L^2} + C \hbar^{-N_0} \|f_\hbar\|_{L^2},
    \end{align*}
so that
    \begin{equation}
        \label{eq: majoration de la norme H de gamma}
        \|\gamma_\hbar\|_{\widehat{H}_\hbar^p} = O_{p} (\hbar^\infty) \left(\|P_\hbar v_\hbar - f_\hbar\|_{L^2} + \|f_\hbar\|_{L^2}\right).
    \end{equation}

On the other hand,  we have
    \begin{align*}
        \left\|\wzero - f_\hbar\right\|_{L^2}^2 &= \left|\left\langle P_\hbar v_\hbar - f_\hbar - \gamma_\hbar, \wzero - f_\hbar\right\rangle \right|\\
        &= \left|\left\langle \gamma_\hbar, \wzero - f_\hbar\right\rangle \right|.
    \end{align*}
Indeed,  we have $\left\langle P_\hbar v_\hbar - f_\hbar, \wzero - f_\hbar\right\rangle  = 0$ by \eqref{eq: Problème de Galerkine}, since $\wzero - f_\hbar \in W_\h$.  We therefore have $  \left\|\wzero - f_\hbar\right\|_{L^2} \leq \|\gamma_\hbar\|_{L^2}$, and hence, by Lemma \ref{lem:LePtiLemKiSauveToutLTemps}
    \begin{align*}
        \left\|\wzero - f_\hbar\right\|_{\widehat{H}_\hbar^p}
        &\leq C_p \hbar^{-\frac{d}{2}} \|\gamma_\hbar\|_{L^2}.
    \end{align*}
    In the end, we obtain
    \begin{align*}
    \|P_\hbar v_\hbar - f_\hbar\|_{\widehat{H}_\hbar^p} &\leq \left\|\wzero - f_\hbar\right\|_{\widehat{H}_\hbar^p} + \|\gamma_\hbar\|_{\widehat{H}_\hbar^p}\\
    & \leq C_p \h^{-\frac{d}{2}} \|\gamma_\hbar\|_{L^2} + \|\gamma_\hbar\|_{\widehat{H}_\hbar^p}\\
    &= O_{p} (\h^\infty) \times \left(\|P_\hbar v_\hbar - f_\hbar\|_{L^2} + \|f_\hbar\|_{L^2}\right).
    \end{align*}
Taking $\h$ small enough so that the $O_{p} (\h^\infty)$ factor is $\leq \frac{1}{2}$,  we deduce that $\|P_\hbar v_\hbar - f_\hbar\|_{\widehat{H}_\hbar^p} = O_{p} (\h^\infty)\times  \|f_\hbar\|_{L^2}$.
\end{proof}

We may now give the proof of Theorem \ref{theorem_accuracy}. 

\begin{proof}[Proof of Theorem \ref{theorem_accuracy}]
$ $

\textbf{Existence and uniqueness of the solution.}
Since $T_{W_\h}$ is an endomorphism of the finite-dimensional space $W_\h$,
we only have to show that it is injective. Let $v_\h \in \Ker(T_{W_\h})$, so that
$v_\h$ is solution of~\eqref{eq: Problème de Galerkine} with $f_\h = 0$.
Thanks to Lemma~\ref{lemme: majoration du résidu Hp}, for $\h$ smaller than
$\h_0= \h_0(0)$, we have $\|P_\h v_\h\|_{L^2} = 0$,
so that $P_\h v_\h = 0$, and $v_\h = 0$ by Assumption~\ref{Hyp:PolynResolv}.  

\textbf{Asymptotic convergence. }
If $f_\h \in  W_{\h,0}$ and if $v_\h$ is the unique solution of the
Galerkin problem~\eqref{eq: Problème de Galerkine}, then we know from
Lemma~\ref{lemme: majoration du résidu Hp} that, for all $p\in \N$,
there exists $\h'_0(p) = \min(\h_0(0), \h_0(p))>0$ such that, for all
$\h \in \LH\cap (0, \h'_0(p))$, we have
\begin{equation*}
\|P_\h v_\h - f_\h\|_{\widehat{H}_\h^p}
=
O_{p} (\h^\infty) \|f_\hbar\|_{L^2}.
\end{equation*}
Since
$
\|v_\h - u_\h\|_{\widehat{H}_\h^p} = \|P_\h^{-1} (P_\h v_\h - f_\h)\|_{\widehat{H}_\h^p}
\leq
C_p \h^{-N_0} \|P_\h v_\h- f_\h\|_{\widehat{H}_\h^p}
$ by Assumption~\ref{Hyp:PolynResolv},
we obtain \eqref{eq: convergence asymptotique pour Galerkine Wilson}
for $\h\in \LH \cap (0, \h'_0(p)]$.

On the other hand, for $\h \in \LH\cap \left(\h'_0(p),\h_0(0)\right]$,
we may use the fact that
\begin{align*}
\|v_\h - u_\h\|_{\widehat{H}_\h^p} &\leq C_p \h^{-N_0} \|P_\h v_\h - f_\h\|_{\widehat{H}_\h^p} \\
&\leq  C_p \h^{-N_0} \left( \|f_\h\|_{\widehat{H}_\h^p} + C'_p \|v_{\h}\|_{\widehat{H}_\h^{p+2}} \right) \quad \text{ by Lemma \ref{lemme: P est continu}}\\
&\leq C''_p \h^{-\frac{d}{2} - N_0}   \left( \|f_\h\|_{L^2} + \|v_{\h}\|_{L^2} \right) \quad \text{ by Lemma \ref{lem:LePtiLemKiSauveToutLTemps}}\\
&\leq C''_p \h^{-\frac{d}{2} - N_0}   \left( (1+O(\h^\infty) ) \|f_\h\|_{L^2} + \|u_{\h}\|_{L^2}  \right) \quad \text{ by \eqref{eq: convergence asymptotique pour Galerkine Wilson} for $p=0$}\\
&\leq C''_p \h^{-\frac{d}{2} - N_0}  \times \left (1+O(\h^\infty)  + C \h^{-N_0} \right) \|f_\h\|_{L^2}.
\end{align*}
We therefore see that, for any $N,p\in \N$, the quantity
$\frac{\|v_\h - u_\h\|_{\widehat{H}_\hbar^p}}{\h^N \|f_\h\|_{L^2}}$
is bounded for $\h \in \LH \cap (\h_0(p), \h_0(0)]$,
which gives~\eqref{eq: convergence asymptotique pour Galerkine Wilson}.
\end{proof}

\begin{proof}[Proof of Theorem \ref{theorem_conditioning}]
$ $

\textbf{Bound on $\|T_{W_\h}^{-1}\|_{L^2 \to L^2}$.}
Let $f_\h \in W_\h$. Just as in~\eqref{eq:DecompoVLayers},
for every $0\leq i\leq j \leq J+1$, we set 
\begin{equation}\label{eq:DecompoFLayers}
f_{\h,i,j} = \sum_{\qvec \in \Lambda_{\h, i,j}} \langle f_\hbar, \w{\qvec}^\star\rangle \w{\qvec},
\end{equation}
so that $f_\h = f_{\h,0}+ f_{\h,1,J+1}$,  where we write $f_{\h,0}$ instead of $f_{\h,0,0}$.

First of all,  we know from \eqref{eq: convergence asymptotique pour Galerkine Wilson} that, for $\h \in \LH$ small enough, we have
    \begin{equation}
        \label{eq: utilisation du théorème dans la preuve du conditionnement}
        \left\| T_{W_\h}^{-1} f_{\hbar, 0} - P_\hbar^{-1} f_{\hbar, 0}\right\|_{L^2} \leq C \|f_{\hbar, 0}\|_{L^2}.
    \end{equation}
We deduce from~\eqref{eq: utilisation du théorème dans la preuve du conditionnement}
and Assumption~\ref{Hyp:PolynResolv} that, for $\h \in \LH$ small enough,
\begin{equation}\label{eq:BornerPremierBoutConditionnement}
    \begin{aligned}
        \left\|T_{W_\h}^{-1} f_{\hbar, 0}\right\|_{L^2} &\leq \left\|T_{W_\h}^{-1} f_{\hbar, 0} - P_\hbar^{-1} f_{\hbar, 0}\right\|_{L^2} + \left\|P_\hbar^{-1} f_{\hbar, 0}\right\|_{L^2}\\
        &\leq C \|f_{\hbar, 0}\|_{L^2} + C \hbar^{-N_0} \|f_{\hbar, 0}\|_{L^2}\\
        &\leq C' \hbar^{-N_0} \|f_{\hbar, 0}\|_{L^2}\\
        &\leq C' \hbar^{-N_0} \|f_{\hbar}\|_{L^2}.
    \end{aligned}
    \end{equation}
    In the last inequality, we used the analogue of (\ref{eq:BornerLesMorceaux}).

On the other hand, we know from Lemma \ref{lemme: erreur entre P étoile v67tilde et v67} that
    \begin{equation}
        \label{eq: f27 est à peu près égal à f27 tilde}
        f_{\hbar, 1, J+1} = P_\hbar \Inv f_{\hbar, 1, J+1} - R_\hbar
    \end{equation}
with $\|R_\hbar\|_{L^2} \leq C \hbar^\frac{\varepsilon}{2} \|f_{\hbar, 1, J+1}\|_{L^2}\leq C'\hbar^\frac{\varepsilon}{2} \|f_{\hbar}\|_{L^2}$ by the analogue of (\ref{eq:BornerLesMorceaux}).
Applying $\Pi_{W_\h}$ (the $L^2$-orthogonal projection on $W_\h$) to both sides of \eqref{eq: f27 est à peu près égal à f27 tilde}, we obtain
    \begin{equation*}
        f_{\hbar, 1, J+1} = T_{W_\h} \Inv f_{\hbar, 1, J+1} - \Pi_{W_\h} R_\hbar,
    \end{equation*}
    and thus 
    \begin{equation}\label{eq:BornerDeuxiemeMorceauConditionnement}
\begin{aligned}
\left\|T_{W_\h}^{-1} f_{\hbar, 1, J+1}\right\|_{L^2} &\leq \|\Inv f_{\hbar, 1, J+1}\|_{L^2}+ \| T_{W_\h}^{-1} \Pi_{W_\h} R_\hbar\|_{L^2}\\
&\leq C \h^{- \frac{1}{2}+\varepsilon}  \|f_{\hbar, 1, J+1}\|_{L^2} +  C' \| T_{W_\h}^{-1}\|_{L^2\to L^2} \h^{\frac{\varepsilon}{2}} \|f_\h\|_{L^2}~~~~\text{ by \eqref{eq:BoundedAproximateInverse}}\\
&\leq  C \h^{- \frac{1}{2}+\varepsilon}  \|f_{\hbar}\|_{L^2} +  C' \| T_{W_\h}^{-1}\|_{L^2\to L^2} \h^{\frac{\varepsilon}{2}} \|f_\h\|_{L^2}.
\end{aligned}
\end{equation}    
    
Combining \eqref{eq:BornerDeuxiemeMorceauConditionnement} with (\ref{eq:BornerPremierBoutConditionnement}), we obtain, for $\h \in \LH$ small enough,
    \begin{equation*}
        \left\|T_{W_\h}^{-1} f_\hbar\right\|_{L^2} \leq C \hbar^{-N_0} \|f_\hbar\|_{L^2} + C \hbar^{-\frac{1}{2} + \varepsilon} \|f_\hbar\|_{L^2} + \frac{1}{2} \left\|T_{W_\h}^{-1}\right\|_{L^2 \to L^2} \|f_\hbar\|_{L^2},
    \end{equation*}
    and thus
    \begin{equation*}
        \left\|T_{W_\h}^{-1}\right\|_{L^2 \to L^2} \leq C \hbar^{-N_0} + C \hbar^{-\frac{1}{2} + \varepsilon} + \frac{1}{2} \left\|T_{W_\h}^{-1}\right\|_{L^2 \to L^2},
    \end{equation*}
    which gives us (\ref{eq:BorneConditionnementTh}).

\textbf{Bound on $\|T_{W_\h}\|_{L^2 \to L^2}$.}
Finally, we control the norm of $T_{W_\h}$ as in~\eqref{eq:BorneTh}.
For simplicity, let us write
\begin{equation*}
p_{\max,\h}
\eq
\max_{\bq \in \Lambda_\h}
|p_\h(\bz^{\h,\bq})|,
\qquad
\xi_{\max,\h}
\eq
\max_{\bq \in \Lambda_\h}
|\bxi^{\h,\bn_{\bq}}|.
\end{equation*}
By~\eqref{eq:CouchePasTropGrande}, we have
$p_{\max,\h} \leq \alpha + \lambda'' \h^{1/2-\varepsilon}$.
Besides, Assumption~\ref{Hyp:BoundedLayers} ensures
that $\xi_{\max,\h} \leq D_0$.

In view of the bounds on $p_{\max, \h}$ and $\xi_{\max, \h}$,
we then note that since $\|\Pi_{W_\h}\|_{L^2 \to L^2} \leq 1$
for the orthogonal projection $\Pi_{W_\h}$, it is sufficient to
show that
\begin{equation}
\label{tmp_goal_continuity}
\|P_\h w_\h\|_{L^2}
\leq
C
(p_{\rm max, \h} + (1+\xi_{\max, \h})\h^{(1-\varepsilon)/2})
\|w_\h\|_{L^2}
\end{equation}
for all $w_\h \in W_\h$, to establish~\eqref{eq:BorneTh}.
We thus consider $U_\h \in \ell^2(\Lambda_\h)$, and let $w_\h := \cD_\h U_\h$. We have
\begin{equation}
\label{tmp_goal_continuity_split}
P_\h w_\h
=
\sum_{\bq \in \Lambda_h}
U_{\h,\bq} P_\h \w{\qvec}
=
\sum_{\bq \in \Lambda_h}
U_{\h,\bq} (P_\h- p_\h(\bz^{\h,\bq}))\w{\qvec}
+
\sum_{\bq \in \Lambda_h}
p_\h(\bz^{\h,\bq}) U_{\h,\bq} \w{\qvec}.
\end{equation}
For the second term in the right-hand side
of~\eqref{tmp_goal_continuity_split}, equation \eqref{eq: relation entre la norme de U et celle de u pour Wilson} implies that
\begin{multline}
\label{tmp_goal_continuity_second}
\left \|
\sum_{\bq \in \Lambda_h}
p_\h(\bz^{\h,\bq})U_{\h,\bq} \w{\qvec}
\right \|_{L^2}^2
\leq
C
\sum_{\bq \in \Lambda_h}
|p_\h(\bz^{\h,\bq})U_{\h,\bq}|^2
\\
\leq
C
p_{\max,\h}^2
\sum_{\bq \in \Lambda_h}
|U_{\h,\bq}|^2
\leq
C p_{\max,\h}^2 \|w_\h\|_{L^2}^2.
\end{multline}
The analysis of the first term in the right-hand side of~\eqref{tmp_goal_continuity_split}
is more subtle. We introduce a parameter $\eta > 0$ to be fixed later.
We then expand
\begin{align*}
\left \|
\sum_{\bq \in \Lambda_h}
U_{\h,\bq} (P_\h- p_\h(\bz^{\h,\bq}))\w{\qvec}
\right \|_{L^2}^2
&=
\sum_{\bq \in \Lambda_h}
\sum_{\bq' \in \Lambda_h}
U_{\h,\bq} \overline{U_{\h,\bq'}}
\langle (P_\h- p_\h(\bz^{\h,\bq}))\w{\qvec},(P_\h-p_\h(\bz^{\h,\bq'}))\w{\bq'}\rangle
\\
&=
\sum_{\bq \in \Lambda_h}
\sum_{\substack{\bq' \in \Lambda_h \\|\bq-\bq'| \leq h^{-\eta}}}
U_{\h,\bq} \overline{U_{\h,\bq'}}
\langle (P_\h- p_\h(\bz^{\h,\bq}))\w{\qvec},(P_\h-p_\h(\bz^{\h,\bq'}))\w{\bq'}\rangle
\\
&+
\sum_{\bq \in \Lambda_h}
\sum_{\substack{\bq' \in \Lambda_h\\|\bq-\bq'| > h^{-\eta}}}
U_{\h,\bq} \overline{U_{\h,\bq'}}
\langle (P_\h- p_\h(\bz^{\h,\bq}))\w{\qvec},(P_\h-p_\h(\bz^{\h,\bq'}))\w{\bq'}\rangle
\\
&=:
\Sigma_{\rm near} + \Sigma_{\rm far}.
\end{align*}
By invoking Lemma~\ref{lemme: (P - p) Phi}, we have on the one hand
\begin{equation*}
\begin{aligned}
|\Sigma_{\rm near}|
&\leq
C(1+\xi_{\max,\h})^2 \h
\sum_{\bq \in \Lambda_h}
\sum_{\substack{\bq' \in \Lambda_h\\|\bq-\bq'| \leq h^{-\eta}}}
|U_{\h,\bq}| |U_{\h,\bq'}|\\
&
= 
C(1+\xi_{\max,\h})^2 \h \sum_{\bq,\bq'\in \Lambda_{\h}} \left( |U_{\h,\bq}| \boldsymbol{1}_{|\bq-\bq'| \leq h^{-\eta}}\right) \times  \left( |U_{\h,\bq'}| \boldsymbol{1}_{|\bq-\bq'| \leq h^{-\eta}}\right) \\
&\leq
C(1+\xi_{\max,\h})^2 \h^{1-2d\eta}\|w_\h\|_{L^2}^2,
\end{aligned}
\end{equation*}
by the Cauchy-Schwarz inequality. On the other hand, using the estimates in
Lemmas~\ref{lemme: majoration de psi scalaire phi étoile}
and~\ref{lemme: P w scalaire w}, recalling that
$p_{\max,\h},\xi_{\max,\h} \leq C$, we have
\begin{equation*}
|\langle (P_\h- p_\h(\bz^{\h,\bq}))\w{\qvec},(P_\h-p_\h(\bz^{\h,\bq'}))\w{\bq'}\rangle|
=
O_\eta(\h^\infty)
\end{equation*}
whenever $|\bq-\bq'| > \h^{-\eta}$.
Since the second sum $\Sigma_{\rm far}$ involves a number of terms that is bounded
by a power of $\h^{-1}$, we conclude that
\begin{equation*}
|\Sigma_{\rm far}|
=
\|w_\h\|_{L^2}^2 \times O_\eta(\h^\infty).
\end{equation*}
This leads us to
\begin{equation}
\label{tmp_goal_continuity_first}
\left \|
\sum_{\bq \in \Lambda_h}
U_{\h,\bq} (P_\h- p_\h(\bz^{\h,\bq}))\w{\qvec}
\right \|_{L^2}^2
\leq
C (1+\xi_{\max,\h})^2 \h^{1-2d\eta} \|w_\h\|_{L^2}^2
+
\|w_\h\|_{L^2}^2 \times O_\eta(\h^\infty).
\end{equation}
By selecting $\eta = \varepsilon/(2d)$, we obtain~\eqref{tmp_goal_continuity}
by combining~\eqref{tmp_goal_continuity_split},~\eqref{tmp_goal_continuity_second}
and~\eqref{tmp_goal_continuity_first}, and~\eqref{eq:BorneTh} follows.
\end{proof}

\section{Numerical examples}
\label{section_numerics}

We consider two numerical examples to illustrate the theoretical results obtained above
in a one-dimensional setting. The first test case is the constant wave speed Helmholtz
equation with a PML imposed for $|x| \geq L \eq 4$. Specifically, we consider the problem
of finding $u_k \in H^1(\R)$ such that
\begin{equation*}
-k^2 \mu_0 u_k-(\alpha_0 u_k')' = f_k
\end{equation*}
with coefficients given by
\begin{equation*}
\mu_0
\eq
1-i\vartheta(|x|)
\qquad
\alpha_0
\eq
\frac{1}{1-i\vartheta(|x|)}
\end{equation*}
where $\vartheta$ is a smooth transition function such that
$\vartheta(r) = 0$ whenever $0 \leq r \leq L$ and $\vartheta(r) = 1$ for $r \geq 5$.
The right-hand side is
\begin{equation*}
f_k(x) = -\chi''(|x|)e^{-ik|x|}+2ik\chi'(|x|)e^{-ik|x|},
\end{equation*}
where $\chi$ is a smooth cutoff function such that
$\chi(r) = 0$ for $0 \leq r \leq 1$ and $\chi(r) = 1$
for $r \geq 2$. The corresponding analytical solution (outside the PML)
is given by
\begin{equation*}
u_k(x) = \chi(|x|) e^{-ik|x|}.
\end{equation*}
Note that since the cutoff vanishes in a neighbourhood of the origin,
the right-hand side and the solution are smooth.
For the heterogeneous test case, we keep the same right-hand side,
but {we replace the coefficients $\mu_0$ and $\alpha_0$ by
\begin{equation*}
\mu(x) \eq \mu_0(x) + 0.7(b(x-3)-b(x+3)),
\qquad
\alpha(x) \eq \alpha_0(x) + 0.5(b(x-3)-b(x+3)),
\end{equation*}
where $b$ is a smooth positive bump function such that $b(0) = 1$
and $b(t) = 0$ whenever $|t| \geq 0.5$. This corresponds to
a localised increase of wave speed for negative $x$
(up to $\sqrt{0.5/0.3}$ at $x=-3$) and
a localised decrease for positive $x$
(down to $\sqrt{1.5/1.7}$ at $x=3$).
In this case, the solution is unknown.

In both cases, we consider frequencies
$2^2 \cdot 2\pi$ to $2^{17} \cdot 2\pi$,
and we select degrees of freedom using the rule
\begin{equation*}
|\xxkm| < 7
\qquad
\text{and}
\qquad
|p_\hbar(\xxkm,\xikn)| < \lambda k^{-1/2+\varepsilon}
\end{equation*}
with
\begin{equation}
\label{eq_lambda_varepsilon}
(\lambda,\varepsilon) \in \{(40,0.0),(20,0.1),(10,0.2)\}.
\end{equation}
This rule seems at first slightly different from the one we theoretically
analyse in~\eqref{eq:EnergyLayers}, which we now explain. First, the requirement
that $|x| < L+3=7$ is only made to ensure that the index set remains finite
for small frequencies. In fact, for large frequencies, this condition is already
implied by the second one, and the maximal $|x|$ considered tends to $L$ as the
frequency increases. For the second constraint, we did not use the ``symmetrised''
indices with the pseudometric $\delta$ as in~\eqref{eq:EnergyLayers}, but rather,
we directly employed the indices. As explained in Remark~\ref{remark_symetric_symbol},
this definition is almost equivalent to the one with symmetrised indices since the symbol
is itself symmetric here.

For both examples, we compute the relative residual
\begin{equation*}
\frac{\|P_k v_k-f_k\|_{L^2}}{\|f_k\|_{L^2}}
\end{equation*}
and, in first case (where the analytic solution is available outside
the PML) the error
\begin{equation*}
\frac{\|u_k-v_k\|_{H^1_k(-L,L)}}{\|u_k\|_{H^1_k(-L,L)}},
\end{equation*}
where $\|{\cdot}\|_{H^1_k(-L,L)}^2 \eq k^2\|{\cdot}\|_{L^2(-L,L)}^2+\|{\cdot}'\|_{L^2(-L,L)}$.
The results for the two cases are respectively reported on
Figures~\ref{figure_homogeneous} and~\ref{figure_heterogeneous}.
In the legends, we have just indicated the different values of $\varepsilon$,
but the corresponding values of $\lambda$ can be found as per~\eqref{eq_lambda_varepsilon}.
The configurations where $\varepsilon > 0$ are covered by the theoretical
analysis, and we expect the relative error and residual to decrease as the
frequency increases. On the other hand, we expect to observe constant
relative error and residual in the case where $\varepsilon = 0$.

In both cases, we first observe that there is a preasymptotic regime
where the number of degrees of freedom increases as $k^{0.8}$, before
stabilizing to the expected rate $k^{1/2+\varepsilon}$. In the preasymptotic
regime, the relative error and residual decrease, even when $\varepsilon = 0$.
In the asymptotic regime, we see that the relative error and residual become
constant when $\varepsilon = 0$, whereas the number of degrees of freedom grows as $k^{1/2}$,
which is in line with our expectations. We also see in this regime that the
solution is more accurate for higher values of $\varepsilon$ at cost of extra
degrees of freedom. This is especially visible in the heterogeneous test case.
However, in the homogeneous test case, this effect is harder to see due to
conditioning and quadrature errors.

\begin{figure}
\begin{minipage}{.45\linewidth}
\begin{tikzpicture}
\begin{axis}
[
	width=\linewidth,
	xlabel={$k$},
	ylabel={Relative residual},
	ymode=log,
	xmode=log,
	legend pos=north east
]

\plot[blue ,mark=x     ] table[x=k,y=res] {data/homogeneous/res_40_0.0.txt};
\plot[red  ,mark=o     ] table[x=k,y=res] {data/homogeneous/res_20_0.1.txt};
\plot[black,mark=square] table[x=k,y=res] {data/homogeneous/res_10_0.2.txt};

\legend{$\varepsilon=0.0$,$\varepsilon=0.1$,$\varepsilon=0.2$}

\end{axis}
\end{tikzpicture}
\end{minipage}
\begin{minipage}{.45\linewidth}
\begin{tikzpicture}
\begin{axis}
[
	xlabel={$k$},
	ylabel={\# Dofs},
	width=\linewidth,
	ymode=log,
	xmode=log,
	legend pos=north west
]

\plot[blue ,mark=x     ] table[x=k,y=N] {data/homogeneous/res_40_0.0.txt};
\plot[red  ,mark=o     ] table[x=k,y=N] {data/homogeneous/res_20_0.1.txt};
\plot[black,mark=square] table[x=k,y=N] {data/homogeneous/res_10_0.2.txt};

\plot [dashed,domain=1e1:5e2] {8*x^0.8};
\SlopeTriangle{0.5}{0.2}{0.2}{0.8}{$k^{0.8}$}{}

\plot [dashed,domain=1e3:1e5] {80*x^0.5};
\SlopeTriangle{0.9}{0.2}{0.6}{0.5}{$k^{0.5}$}{}

\legend{$\varepsilon=0.0$,$\varepsilon=0.1$,$\varepsilon=0.2$}

\end{axis}
\end{tikzpicture}
\end{minipage}

\begin{minipage}{.45\linewidth}
\begin{tikzpicture}
\begin{axis}
[
	xlabel={$k$},
	ylabel={Relative error},
	width=\linewidth,
	ymode=log,
	xmode=log,
	legend pos=north east
]

\plot[blue ,mark=x     ] table[x=k,y=err] {data/homogeneous/res_40_0.0.txt};
\plot[red  ,mark=o     ] table[x=k,y=err] {data/homogeneous/res_20_0.1.txt};
\plot[black,mark=square] table[x=k,y=err] {data/homogeneous/res_10_0.2.txt};

\legend{$\varepsilon=0.0$,$\varepsilon=0.1$,$\varepsilon=0.2$}

\end{axis}
\end{tikzpicture}
\end{minipage}
\begin{minipage}{.45\linewidth}
$ $
\end{minipage}

\caption{Numerical results in the homogeneous case.}
\label{figure_homogeneous}
\end{figure}

\begin{figure}
\begin{minipage}{.45\linewidth}
\begin{tikzpicture}
\begin{axis}
[
	width=\linewidth,
	xlabel={$k$},
	ylabel={Relative residual},
	ymode=log,
	xmode=log,
	legend pos=south west
]

\plot[blue ,mark=x     ] table[x=k,y=res] {data/heterogeneous/res_40_0.0.txt};
\plot[red  ,mark=o     ] table[x=k,y=res] {data/heterogeneous/res_20_0.1.txt};
\plot[black,mark=square] table[x=k,y=res] {data/heterogeneous/res_10_0.2.txt};

\legend{$\varepsilon=0.0$,$\varepsilon=0.1$,$\varepsilon=0.2$}

\end{axis}
\end{tikzpicture}
\end{minipage}
\begin{minipage}{.45\linewidth}
\begin{tikzpicture}
\begin{axis}
[
	xlabel={$k$},
	ylabel={\# Dofs},
	width=\linewidth,
	ymode=log,
	xmode=log,
	legend pos=north west
]

\plot[blue ,mark=x     ] table[x=k,y=N] {data/heterogeneous/res_40_0.0.txt};
\plot[red  ,mark=o     ] table[x=k,y=N] {data/heterogeneous/res_20_0.1.txt};
\plot[black,mark=square] table[x=k,y=N] {data/heterogeneous/res_10_0.2.txt};

\plot [dashed,domain=1e1:5e2] {8*x^0.8};
\SlopeTriangle{0.5}{0.2}{0.2}{0.8}{$k^{0.8}$}{}

\plot [dashed,domain=1e3:1e5] {80*x^0.5};
\SlopeTriangle{0.9}{0.2}{0.6}{0.5}{$k^{0.5}$}{}

\legend{$\varepsilon=0.0$,$\varepsilon=0.1$,$\varepsilon=0.2$}

\end{axis}
\end{tikzpicture}
\end{minipage}

\caption{Numerical results in the heterogeneous case.}
\label{figure_heterogeneous}
\end{figure}

\appendix

\section{Boundedness of the operator $P_\h$}
\label{app:ProofContinuity}

The aim of this appendix is to prove Lemma \ref{lemme: P est continu}.

\begin{proof}[Proof of Lemma \ref{lemme: P est continu}]
    We have
    \begin{equation}\label{eq:DecompoOperateurP}
        \|P_\hbar w\|_{\widehat{H}_\hbar^p} \leq \hbar^2 \sum_{j,l=1}^d \left\|a_{j,l}^\hbar \partial_{j,l}^2 w\right\|_{\widehat{H}_\hbar^p} + \hbar \sum_{j=1}^d \left\|b_j^\hbar \partial_j w\right\|_{\widehat{H}_\hbar^p} + \left\|c^\hbar w\right\|_{\widehat{H}_\hbar^p}.
    \end{equation}
To bound the first term, we rewrite it as
    \begin{align*}
        \left\|a_{j,l}^\hbar \partial_{j,l}^2 w\right\|_{\widehat{H}_\hbar^p}^2 &= \sum_{[\alphavec] \leq p} \sum_{q \leq p - [\alphavec]} \hbar^{2[\alphavec]} \left\|\;|\xvec|^q \partial^{\alphavec} \left(a_{j,l}^\hbar \partial_{j,l}^2 w\right)\right\|_{L^2}^2\\
        &= \sum_{[\alphavec] \leq p} \sum_{q \leq p - [\alphavec]} \hbar^{2[\alphavec]} \left\|\;|\xvec|^q \sum_{\betavec \leq \alphavec} \binom{\alphavec}{\betavec} \partial^{\betavec} \partial_{j,l}^2 w \ \partial^{\alphavec - \betavec} a_{j,l}^\hbar\right\|_{L^2}^2\\
        &\leq \sum_{[\alphavec] \leq p} \sum_{q \leq p - [\alphavec]} \hbar^{2[\alphavec]} \left(\sum_{\betavec \leq \alphavec} \binom{\alphavec}{\betavec} \left\|\;|\xvec|^q \partial^{\betavec} \partial_{j,l}^2 w \ \partial^{\alphavec - \betavec} a_{j,l}^\hbar \right\|_{L^2}\right)^2\\
        &\leq C_p \sum_{[\alphavec] \leq p} \sum_{q \leq p - [\alphavec]} \hbar^{2[\alphavec]} \sum_{\betavec \leq \alphavec} \left\|\;|\xvec|^q \partial^{\betavec} \partial_{j,l}^2 w \ \partial^{\alphavec - \betavec} a_{j,l}^\hbar \right\|_{L^2}^2.
    \end{align*}
Now, for every  $\alphavec$ such that $[\alphavec] \leq p$,  we have
    \begin{align*}
        \sum_{\betavec \leq \alphavec} \left\|\;|\xvec|^q \partial^{\betavec} \partial_{j,l}^2 w \ \partial^{\alphavec - \betavec} a_{j,l}^\hbar \right\|_{L^2}^2 &= \sum_{\betavec \leq \alphavec} \int_{\R^d} |\xvec|^{2q} \left|\partial^{\betavec} \partial_{j,l}^2 w(\xvec)\right|^2 \left|\partial^{\alphavec - \betavec} a_{j,l}^\hbar(\xvec)\right|^2 d\xvec\\
        &\leq \left\|a_{j,l}^\hbar\right\|_{C^p(\R^d)}^2 \sum_{\betavec \leq \alphavec} \int_{\R^d} |\xvec|^{2q} \left|\partial^{\betavec + \boldsymbol{e}_j + \boldsymbol{e}_l} w(\xvec)\right|^2 d\xvec\\
        &\leq C_{\mathrm{coef}, p}^2 \sum_{[\betavec] \leq [\alphavec]} \left\|\;|\xvec|^q \partial^{\betavec + \boldsymbol{e}_j + \boldsymbol{e}_l} w \right\|_{L^2}^2\\
        &\leq C_{\mathrm{coef}, p}^2 \sum_{[\gammavec] \leq [\alphavec] + 2} \left\|\;|\xvec|^q \partial^{\gammavec} w \right\|_{L^2}^2.
    \end{align*}
Consequently,
    \begin{align*}
        \left\|a_{j,l}^\hbar \partial_{j,l}^2 w\right\|_{\widehat{H}_\hbar^p}^2 &\leq C_p \sum_{[\alphavec] \leq p} \sum_{q \leq p - [\alphavec]} \hbar^{2[\alphavec]} \sum_{[\gammavec] \leq [\alphavec] + 2} \left\|\;|\xvec|^q \partial^{\gammavec} w \right\|_{L^2}^2\\
        &\leq C_p \sum_{[\alphavec] \leq p} \sum_{[\gammavec] \leq [\alphavec] + 2} \sum_{q \leq p - [\alphavec]} \hbar^{2[\gammavec] - 4} \left\|\;|\xvec|^q \partial^{\gammavec} w \right\|_{L^2}^2\\
        &\leq C_p \hbar^{-4} \sum_{[\alphavec] \leq p} \sum_{[\gammavec] \leq [\alphavec] + 2} \sum_{q \leq p + 2 - [\gammavec]} \hbar^{2[\gammavec]} \left\|\;|\xvec|^q \partial^{\gammavec} w \right\|_{L^2}^2\\
        &\leq C_p \hbar^{-4} \sum_{[\alphavec] \leq p} \left(\sum_{[\gammavec] \leq p + 2} \sum_{q \leq p + 2 - [\gammavec]} \hbar^{2[\gammavec]} \left\|\;|\xvec|^q \partial^{\gammavec} w \right\|_{L^2}^2\right)\\
        &\leq C_p \hbar^{-4} \|w\|_{\widehat{H}_\hbar^{p+2}}^2,
    \end{align*}
    so that $\left\|a_{j,l}^\hbar \partial_{j,l}^2 w \right\|_{\widehat{H}_\hbar^p} \leq C_p \hbar^{-2} \|w\|_{\widehat{H}_\hbar^{p+2}}$.  We show in a similar way that $\left\|b_j^\hbar \partial_j w \right\|_{\widehat{H}_\hbar^p(\R^d)} \leq C_p \hbar^{-1} \|w\|_{\widehat{H}_\hbar^{p+1}}$ and that  $\left\|c^\hbar w \right\|_{\widehat{H}_\hbar^p} \leq C_p \|w\|_{\widehat{H}_\hbar^p}$. The result then follows from (\ref{eq:DecompoOperateurP}).
\end{proof}

\printbibliography
\end{document}